\def\namedlabel#1#2{\begingroup
    #2%
    \def\@currentlabel{#2}%
    \label{#1}\endgroup
}
\newtheorem*{theorema}{Theorem A}
\newtheorem*{theorema'}{Theorem A$^\prime$}
\newtheorem*{theoremb}{Theorem B}
\newtheorem*{theoremc}{Theorem C}
\newtheorem{Thm}{Theorem}[section]
\newtheorem{Prop}[Thm]{Proposition}
\newtheorem{Lem}[Thm]{Lemma}
\newtheorem{Cor}[Thm]{Corollary}
\newtheorem*{Thm*}{Theorem}
\theoremstyle{definition}
\theoremstyle{remark}
\newtheorem{Rem}[Thm]{Remark}}
\newenvironment{Proof}{\rm \trivlist\item[\hskip \labelsep{\bf
Proof.\quad}]}{\hfill\qed\par\medskip\endtrivlist}
\newenvironment{nalign}{ 
    \begin{equation}
    \begin{aligned}
}{
    \end{aligned}
    \end{equation}
    \ignorespacesafterend
}
\newcommand{\Stab}{\operatorname{Stab}}
\newcommand{\Is}{\mathop{\mathrm{Iso}}}
\newcommand{\Gal}{\mathrm{Gal}}
\newcommand{\idq}{/\!\!{/}}
\newcommand{\G}{\mathcal G}
\newcommand{\K}{K_0}
\newcommand{\supp}{\operatorname{supp}}
\renewcommand{\o}{\{0\}}
\renewcommand{\int}{\operatorname{Int}}
\renewcommand{\S}{S^\sharp} 
\newcommand{\stein}[1]{K\G({#1})} 
\newcommand{\tight}[1]{K\G_T({#1})} 
\renewcommand{\empty}{\varepsilon} 
\newcommand{\sing}[2]{I_{#1}(#2)} 
\newcommand{\tightid}[2]{{\mathcal T}_{#1}(#2)} 
\numberwithin{equation}{section}
\title{Simplicity of inverse semigroup and \'etale groupoid algebras}
\author{Benjamin Steinberg}
\address[B.~Steinberg]{%
    Department of Mathematics\\
    City College of New York\\
    Convent Avenue at 138th Street\\
    New York, New York 10031\\
    USA}
\email{bsteinberg@ccny.cuny.edu}
\author{N\'ora Szak\'acs}
\address[N.~Szak\'acs]{%
Department of Mathematics\\ University of York \\
Heslington, York, YO10 5DD \\ UK\\}
\email{szakacsn@math.u-szeged.hu}
\thanks{The first author was partially supported by the Fulbright Commission.}
\thanks{\noindent
\setlength\intextsep{0pt}
\begin{wrapfigure}{l}{0cm}
\includegraphics[height=4em]{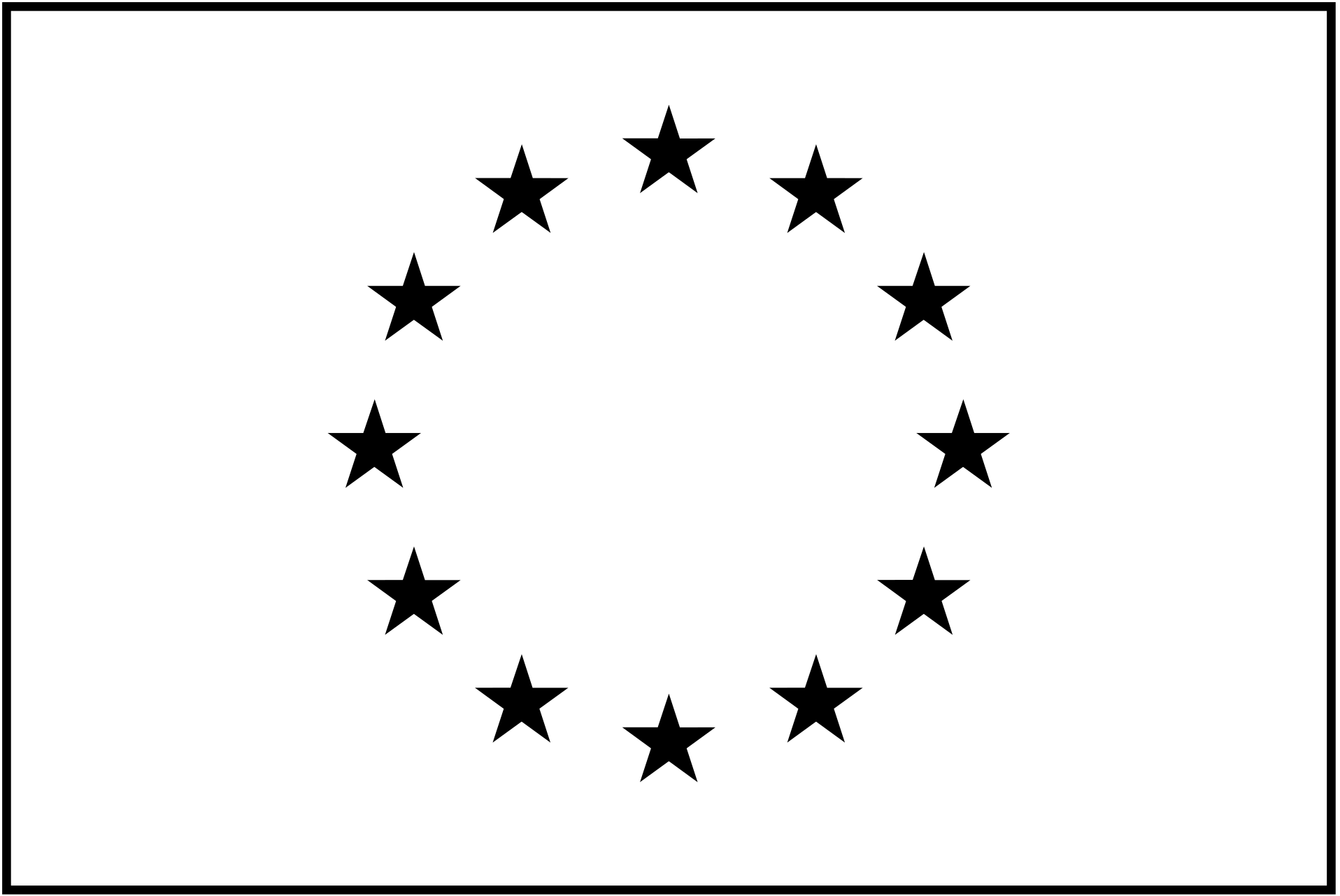}
\end{wrapfigure}
The second author was funded by the European Union’s
Horizon 2020 research and innovation programme under the
Marie Sk{\l}odowska-Curie grant agreement No 799419. \\
Part of the work was carried out during the workshop ``Higher rank graphs'' funded by the ICMS, Edinburgh.}
\date{\today}
\keywords{\'etale groupoids, inverse semigroups, simplicity, groupoid algebras, semigroup algebras}
\subjclass[2010]{20M18,20M25, 16S99,16S36, 22A22, 18B40}
\begin{document}

\begin{abstract}
In this paper, we prove that the algebra of an \'etale groupoid with totally disconnected unit space has a simple algebra over a field if and only if the groupoid is minimal and effective and the only function of the algebra that vanishes on every open subset is the null function.  Previous work on the subject required the groupoid to be also topologically principal in the non-Hausdorff case, but we do not.  Furthermore, we provide the first examples of minimal and effective but not topologically principal \'etale groupoids with totally disconnected unit spaces.  Our examples come from self-similar group actions of uncountable groups.
More generally, we show that the essential algebra of an \'etale groupoid (the quotient by the ideal of functions vanishing on every open set), is simple if and only if the groupoid is minimal and topologically free, generalizing to the algebraic setting a recent result for essential $C^*$-algebras.

The main application of our work is to provide a description of the simple contracted inverse semigroup algebras, thereby answering a question of Munn from the seventies.

Using Galois descent, we show that simplicity of \'etale groupoid and inverse semigroup algebras depends only on the characteristic of the field and can be lifted from positive characteristic to characteristic $0$.  We also provide examples of inverse semigroups and \'etale groupoids with simple algebras outside of a prescribed set of prime characteristics.
\end{abstract}

\maketitle

\section{Introduction}

This paper undertakes what we believe to be a definitive study of the simplicity of contracted inverse semigroup algebras and algebras of \'etale groupoids with totally disconnected unit spaces (also known as ample groupoids~\cite{Paterson}).   These are two interrelated subjects, each with their own rich history.

  Munn initiated the study of inverse semigroups with zero whose algebras are simple in the seventies~\cite{Munnsimplealgebra,MunnAlgebraSurvey,Munntwoexamples}.  In particular, he gave non-trivial examples which in modern terminology come from self-similar group actions on infinite alphabets and from the Leavitt path algebra of a graph with one vertex and infinitely many edges.  He also introduced in his groundbreaking work the condition on a semilattice that is equivalent to all filters being tight in the sense of Exel~\cite{Exel}, anticipating many future developments.  Munn  posed then the question of characterizing inverse semigroups with a simple algebra.  The examples considered by Munn were $0$-$E$-unitary in modern terminology, and hence have Hausdorff universal groupoids, and he imposed the condition that forced their universal groupoids to coincide with their tight groupoids and so he obtained a simplicity result that was field independent.  One gets the impression that at the time it was expected that simplicity depended only on the inverse semigroup and not on the field of coefficients.

The first author introduced algebras of ample groupoids over arbitrary coefficient rings in~\cite{mygroupoidalgebra} (nowadays termed `Steinberg algebras') in order to study inverse semigroup algebras and, in particular, to try and make progress towards Munn's question on simplicity.  The algebra of an inverse semigroup is the Steinberg algebra of its associated universal groupoid, introduced by Paterson~\cite{Paterson} to study inverse semigroup $C^*$-algebras. Conversely, all Steinberg algebras arise as particularly nice quotients of inverse semigroup algebras.  Since then, in a large part because of connections to the theory of $C^*$-algebras and Leavitt path algebras~\cite{LeavittBook}, there has been a flurry of activity concerning Steinberg algebras of ample groupoids.  See for example~\cite{Strongeffective,mygroupoidalgebra,operatorsimple1,operatorguys2,groupoidbundles,GroupoidMorita,Nekrashevychgpd,CarlsenSteinberg,ClarkPardoSteinberg,CenterLeavittGroupoid,Hazrat2017,Demeneghi,MyEffrosHahn,GonRoy2017a,mydiagonal,groupoidprimitive,groupoidprime,groupoidapproachleavitt,nonhausdorffsimple}.

In a seminal paper~\cite{operatorsimple1}, Brown~\textit{et al.} characterized Hausdorff ample groupoids with a simple complex algebra as those which are minimal and effective.  This characterization was shortly thereafter extended to arbitrary coefficient fields~\cite{operatorguys2,groupoidprimitive}.  Simplicity in the Hausdorff setting depends only on the groupoid and not the field.  Recall here that an \'etale groupoid is \emph{minimal} if all its orbits are dense in the unit space and it is \emph{effective} if the interior of the isotropy subgroupoid is the unit space.  A closely related notion is that of a topologically principal groupoid.   An \'etale groupoid is \emph{topologically principal} if the objects with trivial isotropy are dense in the unit space.  Every second countable effective \'etale groupoid is topologically principal.  Hausdorff topologically principal groupoids are effective.   Hence in the world of second countable Hausdorff groupoids, effective and topologically principal are equivalent notions.  However, topologically principal does not imply effective for non-Hausdorff groupoids.  In~\cite{operatorsimple1} an example of a Hausdorff effective groupoid was given such that each isotropy group was non-trivial, but the example was not minimal and it was unknown whether there are effective and minimal ample groupoids that are not topologically principal.

Major progress on simplicity of non-Hausdorff groupoids was obtained in~\cite{Nekrashevychgpd} and~\cite{nonhausdorffsimple}.  Nekrashevych~\cite{Nekrashevychgpd} introduced an ideal for minimal and topologically principal groupoids and proved simplicity when you factor by that ideal.  Clark~\textit{et al.}~\cite{nonhausdorffsimple}, unaware of the work of Nekrashevych, introduced an ideal, called the ideal of singular functions,
and proved that a second countable ample groupoid has a simple algebra over a field $K$ if and only if it is minimal, effective and this ideal vanishes.
Here a function is singular if it vanishes on every open set; non-zero singular functions do not exist in the Steinberg algebra of a Hausdorff groupoid.  The authors of~\cite{nonhausdorffsimple} also showed that the Nekrashevych algebra of the Grigorchuk group is simple over fields of characteristic different than $2$ but not simple over fields of characteristic $2$ (the latter was already shown in~\cite{Nekrashevychgpd}).  This was the first example showing that simplicity can depend on the field of coefficients and not just the groupoid in the non-Hausdorff setting.  They left open the question of whether there are minimal, effective and second countable groupoids whose algebras are not simple over the complex numbers\footnote{We were informed by E.~Pardo after posting the first version of this paper on ArXiv that V.~Nekrashevych has constructed such an example (unpublished).}.

It should be mentioned that simplicity of skew inverse semigroup rings is studied in~\cite{simpleskew}.  Inverse semigroup algebras and Steinberg algebras of ample groupoids are special cases of skew inverse semigroups rings, but the characterization of simple skew inverse semigroup rings is not very easy to apply, even for these special cases.  For instance, it takes a fair bit of work in~\cite{simpleskew} to recover the results of~\cite{nonhausdorffsimple} via this theory and the authors still restrict to the realm of topologically principal groupoids.

In light of the results of~\cite{nonhausdorffsimple}, a number of natural questions arise,  first and foremost of which is whether the assumption of the groupoid being topologically principal can be removed to achieve a complete characterization of simple ample groupoid algebras.  To this effect, we prove the following theorem.

\begin{theorema}
Let $\G$ be an ample groupoid and $K$ a field.  Then the Steinberg algebra $K\G$ is simple if and only if $\G$ is minimal and effective and the ideal of singular functions vanishes.
\end{theorema}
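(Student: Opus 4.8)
The plan is to derive simplicity of $K\G$ from a ``detection'' property of the diagonal subalgebra $D:=C_c(\G^{(0)},K)\subseteq K\G$, organised around three facts. \textbf{(i)} If $\G$ is effective and the ideal of singular functions vanishes, then every nonzero ideal $I$ of $K\G$ meets $D$ nontrivially. \textbf{(ii)} $I\cap D$ is always an ideal of $D$ stable under the maps $d\mapsto 1_B d\,1_{B^{-1}}$ for $B$ a compact open bisection (since $B\G^{(0)}B^{-1}\subseteq\G^{(0)}$ and $I$ is two--sided), and under the bijection between ideals of $D$ and open subsets of $\G^{(0)}$ the stable ones correspond to the open invariant subsets; hence minimality of $\G$ forces $I\cap D\in\{0,D\}$. \textbf{(iii)} If $D\subseteq I$ then $I=K\G$, because $1_B=1_{r(B)}1_B\in I$ for every compact open bisection $B$ and these span $K\G$. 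Chaining (i)--(iii) gives the ``if'' direction. For ``only if'': if $\G$ is not minimal, the functions supported on $s^{-1}(W)=r^{-1}(W)$ for a proper nonempty open invariant $W\subseteq\G^{(0)}$ form a proper nonzero ideal; the ideal of singular functions is proper because a locally constant singular function vanishes, so it meets $D$ trivially, and it is nonzero by hypothesis; and if $\G$ is not effective, then $1_B-1_{r(B)}$ generates a proper nonzero ideal for any compact open bisection $B\subseteq\Iso(\G)\setminus\G^{(0)}$, properness being checked by the standard argument (cf.\ the Hausdorff case).

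The substantive step is (i), which I would prove as follows. Let $0\neq a\in I$ and write $a=\sum_{i=1}^n c_i1_{B_i}$ with the $B_i$ pairwise disjoint compact open bisections and $c_i\in K^\times$, chosen so that $n$ is minimal among all nonzero elements of $I$. Left--multiplying by $c_1^{-1}1_{B_1^{-1}}$ keeps the element nonzero inside $I$ and, using minimality of $n$, rewrites it as $a=1_U+\sum_{i=2}^n c_i1_{B_i}$ with $U:=s(B_1)$ a compact open subset of $\G^{(0)}$ disjoint from the remaining $B_i$ (when $n=1$ we already have $a=1_{s(B_1)}\in I\cap D$). Next I claim $\{a\neq 0\}\subseteq\Iso(\G)$. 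If not, pick $g$ with $a(g)\neq 0$ and $s(g)\neq r(g)$; then $g\in B_{i_0}$ for some $i_0\geq 2$ (it is not a unit), and choosing disjoint compact open $P,Q\subseteq\G^{(0)}$ with $r(g)\in P\subseteq U$ and $s(g)\in Q$, the element $1_P a\,1_Q\in I$ is nonzero, with value $c_{i_0}$ at $g$, but has at most $n-1$ bisection terms, since $1_P1_U1_Q=1_{P\cap Q}=0$ annihilates the diagonal term --- contradicting minimality of $n$. As $\Iso(\G)$ is closed (the Hausdorff unit space makes the diagonal closed), it follows that $\supp(a)\subseteq\Iso(\G)$. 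Now split $a=a_d+a_o$ with $a_d:=a|_{\G^{(0)}}\in D$ and $a_o:=a-a_d\in K\G$. Since $\G^{(0)}$ is open, $\G\setminus\G^{(0)}$ is closed, so $\supp(a_o)\subseteq\Iso(\G)\cap(\G\setminus\G^{(0)})$; by effectiveness $\int(\Iso(\G))=\G^{(0)}$, whence $\int(\supp(a_o))\subseteq\G^{(0)}\cap(\G\setminus\G^{(0)})=\emptyset$, i.e.\ $a_o$ is singular. Vanishing of the singular ideal then forces $a_o=0$, so $0\neq a=a_d\in I\cap D$, proving (i).

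I expect this last passage --- ``$\supp(a)\subseteq\Iso(\G)$ implies that the off--diagonal part is singular, hence zero'' --- to be the main obstacle, and it is precisely where the hypothesis is used in full. In the Hausdorff case $a_o$ has open support, so effectiveness already makes it zero, which is how the classical proofs run; in general $a_o$ is only forced to be singular, and it is exactly ``the singular ideal vanishes'', and nothing stronger such as topological principality or second countability, that is needed to finish. The remaining ingredients are routine but require care in the write--up: the common--refinement normal form together with the minimality bookkeeping in the reduction above, the ideal/open--set dictionary for $D$ and its equivariance under bisections, and the three short arguments for the ``only if'' direction.
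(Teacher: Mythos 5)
Your overall architecture for the ``if'' direction---reduce a nonzero element of the ideal to one supported in $\Iso(\G)$, split off the diagonal part, and observe that effectiveness forces the off-diagonal part to be singular, hence zero---is sound in outline, and you have correctly located where the hypothesis ``the singular ideal vanishes'' must enter. The fatal problem is the very first step of your proof of (i): in a non-Hausdorff ample groupoid a nonzero element of $K\G$ need \emph{not} admit a representation $\sum_{i=1}^n c_i\chi_{B_i}$ with the $B_i$ pairwise disjoint compact open bisections. Such a representation forces the set $\{a\neq 0\}=\bigcup_{c_i\neq 0}B_i$ to be open; but whenever $\G$ is non-Hausdorff there are compact open bisections $U,V$ with $U\cap V$ not open, and then $\chi_U-\chi_V$ has non-open support (it is nonzero at the frontier points of $U\cap V$ inside $U$, every neighbourhood of which meets $U\cap V$, where the function vanishes). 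This failure has nothing to do with the singular ideal: such elements exist, and are typically non-singular, in every non-Hausdorff $\G$, so assuming the singular ideal vanishes does not rescue the normal form. Consequently the quantity $n$ you minimise is undefined for a general element of $I$, and the entire bookkeeping---``$a(g)=c_{i_0}$'', ``$1_Pa\,1_Q$ has at most $n-1$ terms'', the normalisation by $1_{B_1^{-1}}$---collapses. This is exactly the obstruction that separates the non-Hausdorff case from the Hausdorff one, and it is why the paper does not argue inside $K\G$ at all: it passes to the contracted algebra $\K\Gamma$ of the inverse semigroup $\Gamma$ of compact local bisections, where $\Gamma\setminus\{0\}$ \emph{is} a basis so that supports and coefficients are unambiguous, and replaces your disjointness bookkeeping by the notion of a \emph{magic element} $t$ for $a$ (one with $\sum_{s\geq u}a_s\neq 0$ for all $0\neq u\leq t$), which is what guarantees that cutting down by idempotents keeps the element nonzero; see Proposition~\ref{magic} and the proof of Theorem~\ref{uniqueness}.

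Two secondary points would also need repair. The choice ``$r(g)\in P\subseteq U$'' is impossible when $r(g)\notin U$ (though only $P\cap Q=\emptyset$ is actually needed there). More seriously, in the ``only if'' direction non-effectiveness does not by itself produce a compact open bisection $B\subseteq\Iso(\G)\setminus\G^0$: that requires failure of \emph{topological freeness}, and a groupoid can be topologically free yet not effective, in which case any $B\subseteq\operatorname{Int}(\Iso(\G))$ with $B\not\subseteq\G^0$ meets $\G^0$ densely and $\chi_B-\chi_{r(B)}$ is a nonzero \emph{singular} function rather than an element you can directly show generates a proper ideal (cf.\ Proposition~\ref{p:qfundvsfund} and the remark preceding Theorem~\ref{amplesimple}). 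But the missing normal form in step (i) is the essential gap.
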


To show that this theorem has teeth, we construct the first examples of minimal and effective ample groupoids for which every isotropy group is non-trivial (and, in fact, uncountable).  Our examples come from self-similar group actions of uncountable groups.  We have  both Hausdorff and non-Hausdorff examples and the non-Hausdorff examples have simple complex algebras, but one cannot prove it using any result prior to Theorem~A.  Moreover, our examples are universal groupoids of inverse semigroups.

Recent work of Exel and Pitts~\cite{EP19} and  Kwasniewski and Meyer~\cite{KM} suggests an alternative approach to associating $C^*$-algebras to non-Hausdorff \'etale groupoids that replaces the usual reduced $C^*$-algebra by a certain quotient called the essential algebra.  The analogue in the algebraic setting would be to define the essential algebra of an ample groupoid $\G$ to be the quotient of $K\G$ by the ideal of singular functions.

A characterization of simple essential $C^\ast$-algebras was obtained in~\cite{KM}.  It involves a simultaneous weakening of the notions of effective and topologically principal groupoids.
 An \'etale groupoid is termed \emph{topologically free} in~\cite{KM} if the unit space is dense in the interior of the isotropy subgroupoid.  Effective groupoids are topologically free and the two notions coincide for Hausdorff groupoids.  Topologically principal groupoids are also topologically free and the two notions coincide for second countable groupoids~\cite{KM}.    In~\cite{KM} it is shown that minimality and topological freeness are the precise conditions that give simplicity of the essential $C^*$-algebra.  The algebraic analogue is the following result, which implies Theorem~A.

\begin{theorema'}
Let $\G$ be an ample groupoid and $K$ a field.  Then the essential algebra of $\G$ over $K$ is simple if and only if $\G$ is minimal and topologically free.
\end{theorema'}

Notice that simplicity of the essential algebra depends only on the groupoid and not the field of coefficients.  It is at the moment unclear whether the Steinberg algebra or the essential algebra is the more appropriate algebra to study for non-Hausdorff groupoids.  The fact that simplicity is field independent is certainly a strong selling point for essential algebras.  One advantage of Steinberg algebras is that it is easy to construct representations of them (using induced representations~\cite{mygroupoidalgebra,MyEffrosHahn} and sheaves~\cite{groupoidbundles}) and  Morita equivalences (using groupoid equivalences~\cite{GroupoidMorita,groupoidbundles}); they also admit nice presentations as quotients of inverse semigroup algebras.  On the other hand, it can be non-trivial to show that a representation of the Steinberg algebra factors through the essential algebra, cf.~\cite{simplicityNekr}.

While Theorem~A in some sense answers Munn's question on simple contracted inverse semigroup algebras, the fact that it relies on understanding what singular functions are, which is a topological notion, is probably not satisfactory from an inverse semigroup point-of-view.  In fact, our approach to proving Theorems~A and~A$^\prime$ is via inverse semigroup algebras.  We introduce what we call the \emph{singular ideal} of a contracted inverse semigroup algebra. It consists of all elements $a\in \K S$ such that, for each non-zero idempotent $e$ of $S$, there is a non-zero idempotent $f\leq e$ with $af=0$.  This condition is equivalent to its left-right dual.  It turns out that the singular ideal consists precisely of those elements of $\K S$ which map to a singular function of the algebra of the tight groupoid of $S$ (in the sense of Exel~\cite{Exel}); this is the only point in our arguments where topology plays a role.   Theorems~A and~A$^\prime$ are proved by working with this algebraic formulation of the singular ideal, using Exel's result that every ample groupoid is the tight groupoid of its inverse semigroup of compact local bisections.

Our second main theorem answers Munn's question on simplicity of inverse semigroup algebras in a semigroup theoretic language.  Munn observed that an inverse semigroup with zero must be congruence-free (have no proper non-zero quotients) to have a simple contracted algebra.

\begin{theoremb}
Let $S$ be an inverse semigroup with zero and $K$ a field.  Then the contracted inverse semigroup algebra $\K S$ is simple if and only if $S$ is congruence-free and the singular ideal of $\K S$ vanishes.
\end{theoremb}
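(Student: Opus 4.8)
The plan is to reduce Theorem~B to Theorem~A by passing to the tight groupoid of $S$. Recall that $\K S$ is canonically isomorphic to the Steinberg algebra of the universal groupoid $\G(S)$, and that restriction to the tight part of the spectrum of $E(S)$ induces a surjection $\pi\colon\K S\twoheadrightarrow\tight{S}$ onto the Steinberg algebra of the tight groupoid $\G_T(S)$. The only genuinely topological ingredient is the identification --- proved in the earlier sections --- of the singular ideal $\operatorname{Sing}(\K S)$ with the preimage under $\pi$ of the ideal of singular functions of $\tight{S}$; in particular $\ker\pi\subseteq\operatorname{Sing}(\K S)$. Thus if $\operatorname{Sing}(\K S)=0$, then $\pi$ is injective, hence an isomorphism, and (being the image under $\pi$ of $\operatorname{Sing}(\K S)$) the ideal of singular functions of $\tight{S}$ vanishes as well. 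Since $\operatorname{Sing}(\K S)$ is moreover a \emph{proper} two-sided ideal --- it contains no non-zero idempotent, for if $e=e^{2}\neq 0$ lay in it there would be a non-zero $f\le e$ with $ef=0$, forcing $f=ef=0$ --- Theorem~A yields
\[
\K S \text{ is simple}\qquad\Longleftrightarrow\qquad \operatorname{Sing}(\K S)=0\ \text{ and }\ \G_T(S)\text{ is minimal and effective}.
\]

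For the ``only if'' direction it remains to recover congruence-freeness, and here I would invoke Munn's observation directly: if $\rho$ is a congruence on $S$ that is neither trivial nor universal, the induced surjection $\K S\twoheadrightarrow\K(S/\rho)$ has kernel that is non-zero (the difference of two distinct $\rho$-related elements of $S$ is a non-zero element of $\K S$ killed by it) and proper (because $\K(S/\rho)\neq 0$, as $S/\rho\neq\{0\}$); so simplicity of $\K S$ forces $S$ to be congruence-free. Combined with the displayed equivalence this gives one direction.

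For the ``if'' direction, assume $S$ is congruence-free and $\operatorname{Sing}(\K S)=0$ (we may clearly take $S\neq\{0\}$); by the displayed equivalence it suffices to prove $\G_T(S)$ is minimal and effective. Congruence-freeness rules out Rees congruences, so $S$ has no proper non-zero ideal, and as $S\neq\{0\}$ one checks $S^{2}\neq\{0\}$; hence $S$ is $0$-simple, which through the standard correspondence between ideals of $S$, $S$-invariant order ideals of $E(S)$, and invariant open subsets of the tight spectrum is exactly minimality of $\G_T(S)$. Congruence-freeness also makes $S$ fundamental: otherwise the maximum idempotent-separating congruence $\mu$ would be non-trivial and, being idempotent-separating, could not be universal (as $E(S)$ has at least two elements, namely $0$ and any non-zero idempotent), so $\mu$ would be a congruence that is neither trivial nor universal. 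Finally, $\operatorname{Sing}(\K S)=0$ forces the canonical map from $E(S)$ into the generalized Boolean algebra of compact open subsets of the tight spectrum to be injective (two idempotents with the same tight characters would produce a non-zero element of $\ker\pi\subseteq\operatorname{Sing}(\K S)$); consequently a germ $[s,\xi]$ of $\G_T(S)$ lying in the interior of the isotropy bundle but outside the unit space would force $s$ to act as the identity on a basic neighbourhood of $\xi$ determined by a non-zero idempotent $e$ in the domain of $s$ while $se\neq e$, and the injectivity then turns this into a pair $(se,e)\in\mu$ with $se\neq e$, contradicting fundamentality. So $\G_T(S)$ is effective, and $\K S$ is simple.

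The step I expect to be the main obstacle is this last one --- deducing effectiveness of $\G_T(S)$ from fundamentality of $S$ together with the vanishing of $\operatorname{Sing}(\K S)$. In the non-Hausdorff setting effectiveness is not implied by density of the units with trivial isotropy, so one cannot take the detour through topological principality and must work with the interior of the isotropy bundle directly; the other delicate point is ensuring that the tight spectrum ``sees'' enough of $E(S)$, which is precisely what $\operatorname{Sing}(\K S)=0$ provides. The identification of $\operatorname{Sing}(\K S)$ with the preimage of the singular functions of $\tight{S}$ is also a genuine input rather than a formality, but it is exactly the topological lemma isolated in the earlier part of the paper, which I would simply cite.
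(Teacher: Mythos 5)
Your overall strategy---reducing to Theorem~A via the tight groupoid, using that the singular ideal of $\K S$ is the preimage of the ideal of singular functions of $\tight{S}$---is legitimate, though it inverts the paper's logical order: the paper proves Theorem~B first, purely algebraically, from the inverse-semigroup uniqueness theorem (Theorem~\ref{uniqueness} via Theorem~\ref{simple} and Baird's characterization, Theorem~\ref{simplesg}), and only afterwards derives Theorem~A through Boolean inverse semigroups. Your displayed equivalence, the ``only if'' direction, and the minimality of $\G_T(S)$ are all fine (the last is glossed but true and citable). The gap is exactly where you predicted it: effectiveness.

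Your effectiveness argument assumes that a germ $[s,\xi]$ in the interior of the isotropy admits a basic neighbourhood of the form $(se,\,D(e)\cap\widehat{E(S)}_T)$ for a single non-zero idempotent $e$. But basic open sets of the tight spectrum have the form $D(e)\cap D(e_1)^c\cap\cdots\cap D(e_n)^c\cap\widehat{E(S)}_T$, and such a set containing $\xi$ cannot in general be shrunk to some $D(e')\cap\widehat{E(S)}_T$ still containing $\xi$---that is precisely the non-Hausdorff phenomenon at issue. So what your argument actually establishes is the implication ``fundamental $+$ $E(S)$ injects into the compact opens of the tight spectrum $\Rightarrow$ $\G_T(S)$ effective,'' and this implication is \emph{false}: the paper's counterexample to~\cite[Proposition~5.4]{LalondeMilan} ($C_2=\{1,a\}$ acting self-similarly on $\{x,y\}\sqcup Z$ with $Z$ infinite) is congruence-free, hence fundamental, with strongly $0$-disjunctive $E(S)$, so the tight ideal vanishes and $E(S)$ injects; yet $[a,\varepsilon]\in\int(\Is(\G_T(S)))\setminus\G_T(S)^0$ because $a$ acts identically on $D(\varepsilon)\cap D(x)^c\cap D(y)^c$ while no element strictly below $a$ is defined at the character of the empty word (and indeed $a\,xx^\ast a^\ast=yy^\ast\neq xx^\ast$, so $a\not\mathrel{\mu}1$). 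In that example the singular ideal is non-zero, so Theorem~B itself is safe, but your proof of effectiveness uses nothing of the vanishing of the singular ideal beyond injectivity on $E(S)$ and therefore cannot distinguish this example from the good case. Closing the gap requires the full hypothesis: e.g.\ first show $\G_T(S)$ is topologically free---which for a quasi-fundamental $S$ with $0$-disjunctive $E(S)$ is Corollary~\ref{c:top.free.from.qf}, itself resting on the uniqueness theorem---and then upgrade to effectiveness using $\sing{K}{\G_T(S)}=0$ via Proposition~\ref{p:qfcoinf} together with Propositions~\ref{identify:singular} and~\ref{translation}. In short, the uniqueness theorem, which your proposal never engages, is unavoidable, and the paper's direct algebraic route to Theorem~B is the shorter path.
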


Our third main theorem concerns the issue of whether simplicity of inverse semigroup and Steinberg algebras depends on the coefficient field or just the characteristic of the coefficient field, as well as what the implications are between simplicity over different fields.

\begin{theoremc}
Simplicity of the Steinberg algebra of a minimal and effective ample groupoid $\G$ depends only on the characteristic of the ground field.  Moreover, the following hold:
\begin{enumerate}
  \item if $\G$ has a simple algebra over some field of positive characteristic, then it has a simple algebra over all fields of characteristic $0$;
  \item if $\mathcal P$ is any set of primes, then there are a minimal and effective second countable ample groupoid and a countable congruence-free inverse semigroup whose algebras are simple over precisely those fields whose characteristic does not belong to $\mathcal P$;
  \item there are minimal and effective second countable ample groupoids whose algebras are not simple over any field and which are universal groupoids of an inverse semigroup; these ample groupoids also do not have a simple reduced $C^*$-algebra.
\end{enumerate}
\end{theoremc}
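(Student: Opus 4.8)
\smallskip\noindent\emph{Proof proposal.}\ The plan is to reduce every assertion to the behaviour of the ideal of singular functions $\sing{K}{\G}$ of $K\G$ under extension of the coefficient field, using Theorem~A to replace simplicity of $K\G$ (for minimal and effective $\G$) by the single requirement $\sing{K}{\G}=0$. Recall that $K\G$ is the $K$-span, inside the $K$-algebra of all functions $\G\to K$, of the indicator functions of compact open bisections, that $L\G=L\otimes_K K\G$ for any field extension $L/K$, and that a function is singular precisely when it vanishes on a dense open subset of $\G$. The technical heart of the first two assertions is the base-change identity
\[
\sing{L}{\G}=L\otimes_K\sing{K}{\G}.
\]
The inclusion ``$\supseteq$'' is immediate: a scalar multiple of a singular function is singular, and if $f,g$ vanish on dense open sets $U,V$ then $f+g$ vanishes on the dense open set $U\cap V$. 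For ``$\subseteq$'', fix a $K$-basis $\{\lambda_i\}$ of $L$ and write $a\in\sing{L}{\G}$ as $a=\sum_i\lambda_i b_i$ with $b_i\in K\G$; if $a$ vanishes on a dense open $U$, then for $x\in U$ we get $0=a(x)=\sum_i\lambda_i b_i(x)$ with all $b_i(x)\in K$, hence $b_i(x)=0$ for every $i$ by linear independence, so each $b_i$ vanishes on $U$ and lies in $\sing{K}{\G}$. (One may equally run this with the purely algebraic description of the singular ideal via idempotents given in the introduction.) In particular $\sing{K}{\G}=0$ if and only if the corresponding ideal vanishes over the prime subfield of $K$, so by Theorem~A simplicity of $K\G$ for minimal and effective $\G$ depends only on $\ch K$.

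To lift from positive characteristic to characteristic $0$ we work over $\mathbb Z$, using that $\mathbb Z\G$ is a free $\mathbb Z$-module with $\mathbb Z\G\otimes_{\mathbb Z}\mathbb Q=\mathbb Q\G$ and $\mathbb Z\G\otimes_{\mathbb Z}\mathbb F_p=\mathbb F_p\G$. If $\sing{\mathbb Q}{\G}\neq 0$, take a nonzero singular element, clear denominators and divide by the content of its coordinate vector to obtain a primitive $a\in\mathbb Z\G$, still nonzero and singular. Since $a$ takes integer values and vanishes on a dense open $U$, its reduction $\bar a\in\mathbb F_p\G$ vanishes on $U$, and $\bar a\neq 0$ because $a$ is primitive; thus $\sing{\mathbb F_p}{\G}\neq 0$ for \emph{every} prime $p$. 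Contrapositively, if $\sing{\mathbb F_p}{\G}=0$ for even one prime $p$, then $\sing{\mathbb Q}{\G}=0$ and hence $\sing{L}{\G}=0$ for all $L$ with $\ch L=0$ by the base-change identity; with Theorem~A this is assertion~(1). It also follows that the set of ``bad'' characteristics of $\G$ (those $p$, including $0$, admitting no simple algebra in characteristic $p$) is either $\{0\}$ together with all primes, or a set of primes not containing $0$.

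For~(2) and~(3) we realise prescribed bad sets by ample groupoids of self-similar group actions, in the spirit of Nekrashevych's example and of the examples built earlier in this paper. For a single prime $p$, a suitable level-transitive self-similar action of a group modelled on the Grigorchuk group ($p=2$) or a Gupta--Sidki-type group (odd $p$) produces a minimal, effective, second countable ample groupoid $\G_p$, which one can arrange to be the universal groupoid of a countable congruence-free inverse semigroup, with $\sing{\mathbb F_p}{\G_p}\neq 0$ and $\sing{K}{\G_p}=0$ whenever $\ch K\neq p$. Given a set $\mathcal P$ of primes, we assemble the $\G_p$ with $p\in\mathcal P$ into a single self-similar action --- of the restricted direct product of the groups acting on a product of the underlying trees --- chosen so that the singular obstruction of each factor persists and no new one is created at any other characteristic; minimality, effectiveness and second countability are inherited, congruence-freeness of the ambient inverse semigroup follows from the standard correspondence between its congruences and the structure of its universal groupoid, and by the previous paragraph $\mathbb Q$ remains good unless $\mathcal P$ is the set of all primes, which one accommodates simply by not introducing any characteristic-$0$ obstruction. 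For~(3) one instead engineers the self-similar action so that its inverse semigroup is genuinely redundant --- some element is strictly dominated by a compact bisection, making the groupoid non-Hausdorff already over $\mathbb Z$ --- which yields a nonzero singular element with coefficients in $\{0,\pm 1\}$; this element is nonzero in $K\G$ for every field $K$, so by Theorem~A the algebra is simple over no field, and, regarded inside $C^*_r(\G)$, it is a nonzero singular element of the reduced $C^*$-algebra and hence obstructs its simplicity in view of \cite{KM}.

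The field-theoretic part --- the base-change identity and the reduction over $\mathbb Z$ --- is routine once Theorem~A is in hand. The substantive difficulty, which I expect to be the main obstacle, lies entirely in~(2) and~(3): producing self-similar actions that realise an arbitrary (in particular infinite, and all-primes) set of bad primes, together with a genuine characteristic-$0$ obstruction for~(3), while simultaneously keeping the resulting groupoid minimal, effective, second countable and realisable as a universal groupoid --- and, most delicately, certifying that the combined construction has not inadvertently become simple at some unintended characteristic.
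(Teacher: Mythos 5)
Your handling of the field-independence claim is correct and, interestingly, simpler than the paper's. Because elements of $K\G$ are honest $K$-valued functions, the map $L\otimes_K K\G\to L\G$ is an isomorphism for an \emph{arbitrary} extension $L/K$: writing $a=\sum_i\lambda_ib_i$ over a $K$-basis $\{\lambda_i\}$ of $L$ and evaluating at each $\gamma$ shows the $b_i\in K\G$ are unique, and $a(\gamma)=0$ forces $b_i(\gamma)=0$ for all $i$, so $\supp(b_i)\subseteq\supp(a)$ and each $b_i$ is singular whenever $a$ is. That already gives $\sing{K}{\G}=0\iff\sing{L}{\G}=0$ in one stroke, whereas the paper factors $L/K$ into purely transcendental, Galois and purely inseparable pieces and invokes Galois descent and the trace form for the middle one (Lemmas~\ref{galois:case}--\ref{purelytrans}, Theorem~\ref{fulldescent}). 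Two cautions: singular means the zero set is \emph{dense} (equivalently $\int(\supp f)=\emptyset$), not that $f$ vanishes on a dense \emph{open} set, so your justification that sums of singular functions are singular (intersecting dense open zero sets) does not parse; that inclusion is not needed for the vanishing statement, and the ideal property is supplied by Proposition~\ref{identify:singular}.

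The descent to positive characteristic in item (1) has a genuine gap. You divide a singular $a\in\mathbb Z\G$ by the content of its coordinate vector and assert $\bar a\neq0$ in $\mathbb F_p\G$ ``because $a$ is primitive''. In the non-Hausdorff setting the $\chi_U$ satisfy nontrivial $\mathbb Z$-linear relations (already $\chi_U+\chi_V-\chi_{U\cup V}-\chi_{U\cap V}=0$ for idempotent bisections), so primitivity of one chosen coefficient vector does not prevent the reduction from vanishing: $\chi_U+\chi_V+\chi_{U\cup V}+\chi_{U\cap V}$ has content $1$ but is $2(\chi_{U\cup V}+\chi_{U\cap V})$ as a function, hence reduces to $0$ mod $2$. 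What is actually needed is that a nonzero $f\in\mathbb Z\G$ factors as $f=p^mh$ with $h\in\mathbb Z\G$ and $\pi(h)\neq0$ (Corollary~\ref{c:p-adic}); the nontrivial ingredient is that the kernel of $\mathbb Z\G\to\mathbb F_p\G$ consists exactly of the functions expressible with all coefficients divisible by $p$ (Lemma~\ref{l:reduct}), which rests on the presentation of $R\G$ as a quotient of $R_0\Gamma$ with kernel generated by relations defined over $\mathbb Z$ (Corollary~\ref{c:grpdpres}). Your argument is missing this step.

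Items (2) and (3) are where most of the work of Theorem~C lives, and your proposal only sketches a plan while explicitly deferring the hard part (``certifying that the combined construction has not inadvertently become simple at some unintended characteristic''). The paper's constructions are concrete: for (2), $G=\bigoplus_{p\in\mathcal P}C_p\times C_p$ acting self-similarly on an alphabet of cosets of index-$p$ subgroups, countably inflated so that the universal and tight groupoids coincide; singularity of elements of $KG$ is characterized combinatorially (Proposition~\ref{p:singular.criterion.const}), any singular element is pushed into $KG$ (Corollary~\ref{c:reduce.to.group}), and Maschke's theorem plus the cyclicity of finite subgroups of $E^\times$ show the singular ideal vanishes precisely when $\ch K\notin\mathcal P$ (Theorem~\ref{thm.construction}). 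For (3) the paper takes the countable inflation of a weakly regular branch group and exhibits the explicit element $(1-g_1)(1-g_2)$ with $\psi(g_1)=(1,h_1,1,\ldots,1)$, $\psi(g_2)=(h_2,1,\ldots,1)$, which is nonzero with coefficients $\pm1$ and singular over every coefficient ring. Your suggested building blocks (Grigorchuk, Gupta--Sidki over finite alphabets) do not even yield universal groupoids of inverse semigroups as required, and ``non-Hausdorff already over $\mathbb Z$'' does not by itself produce a nonzero singular element. So while your first paragraph is a genuine improvement over the paper's route, the proposal as a whole does not prove the theorem.
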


Theorem~C provides the first examples of congruence-free inverse semigroups whose contracted semigroup algebras are simple over some fields, but not over others.
The proof of the first part of Theorem~C relies on the technique of Galois descent. The examples in the third item  are constructed from weakly regular branch groups~\cite{Branching}, that we then make act over an infinite alphabet.

In order to prove these theorems, we prove a number of intermediate results that are of interest in their own right.  We introduce a generalization of fundamental inverse semigroups, which we call quasi-fundamental inverse semigroups, to capture topological freeness in a semigroup theoretic manner. Our most important result is a Cuntz-Krieger uniqueness theorem for the quotient of the contracted inverse semigroup algebra of a quasi-fundamental inverse semigroup (with $0$-disjunctive semilattice of idempotents) by its singular ideal.  The inverse semigroup of compact local bisections of an ample groupoid is fundamental (respectively, quasi-fundamental) if and only if the groupoid is effective (respectively, topologically free).  The tight groupoid of a Hausdorff fundamental inverse semigroup (with $0$-disjunctive semilattice of idempotents) is always effective~\cite{groupoidprimitive}, but it turns out not to be the case for non-Hausdorff inverse semigroups, as we show by an example.

The paper is organized as follows.
Section~\ref{s:prelim} recalls basic facts about inverse semigroups and ample groupoid algebras, and how these two notions are naturally intertwined. An important intermediary role is played by  Boolean inverse semigroups, which are the abstractly defined analogues of the inverse semigroups of compact local bisections of an ample groupoid; a definitive treatment can be found in~\cite{LawsonLenz}. The Steinberg algebra of an ample groupoid is a quotient of the semigroup algebra of its inverse semigroup of compact local bisections; we recall the algebraic description of the kernel, which enables us to use inverse semigroup theory to study ample groupoid algebras.
 We prove that minimality and effectiveness of the groupoid corresponds to its semigroup of compact local bisections having no non-trivial quotients in the category of Boolean inverse semigroups. This gives a transparent explanation as to why minimality and effectiveness are necessary conditions for simplicity: a proper Boolean inverse semigroup quotient gives rise to a proper quotient of Steinberg algebras.

Section~\ref{s:simpsemialg} introduces the singular ideal of an inverse semigroup algebra, proves our main uniqueness theorem for the quotient by the singular ideal and characterizes simple contracted inverse semigroup algebras, proving Theorem~B. We remark that this section can be understood without any familiarity with ample groupoids. After handling the case of inverse semigroups, we turn in Section~\ref{s:groupoidcase} to the proof of the simplicity criterion for ample groupoid algebras using Boolean inverse semigroups and the uniqueness theorem.
The following section uses descent theory to show that the property of having a non-trivial singular ideal descends from larger fields to smaller fields and from the rational numbers to finite fields, proving the first half of Theorem~C.

The final section is devoted to examples arising from inverse semigroups associated to self-similar group actions~\cite{selfsimilar,ExPadKatsura,LawsonCorrespond}. We prove the second half of Theorem~C, and also provide a counterexample to the claim of~\cite{LalondeMilan} that the tight groupoid of a fundamental inverse semigroup with $0$-disjunctive semilattice of idempotents is effective in the non-Hausdorff setting.

\section{\'Etale groupoids and Inverse Semigroups}
\label{s:prelim}

This section develops and recalls fundamental facts about inverse semigroups, ample groupoids and Boolean inverse semigroups.

\subsection{Inverse semigroups}
An \emph{inverse semigroup} is a semigroup $S$ such that for all $s\in S$, there exists a unique $s^\ast\in S$ with $ss^\ast s=s$ and $s^\ast ss^\ast=s^\ast$.   The set $E(S)$ of idempotents of $S$ forms a commutative subsemigroup.  Inverse semigroups are partially ordered via the relation $s\leq t$ if any of the following four equivalent conditions hold: $s\in tE(S)$; $s\in E(S)t$; $s=ts^\ast s$; or $s=ss^\ast t$.  The ordering is compatible with product and preserved by the involution.  The idempotents form a meet semilattice under this ordering, and the meet is given by the product.  We shall primarily be concerned with inverse semigroups containing a zero element $0$.  A good introduction to inverse semigroups is Lawson's book~\cite{Lawson}.

A \emph{filter} on a semilattice $E$ with zero is a subset $\mathcal F\subseteq E\setminus \{0\}$ closed under meets and closed upwards in the ordering.  An \emph{ultrafilter} is a maximal filter.  Zorn's lemma shows that every filter is contained in an ultrafilter and hence every non-zero element of $E$ belongs to some ultrafilter.  If $e\in E$ and $e_1,\ldots, e_n\leq e$, then one says that $e_1,\ldots, e_n$ \emph{cover} $e$ if, for all $0\neq f\leq e$, there exists $i$ with $fe_i\neq 0$.  Exel defined a filter $\mathcal F$ to be \emph{tight}~\cite{Exel} if whenever $e_1,\ldots, e_n$ cover $e$ and $e\in \mathcal F$, then $\{e_1,\ldots, e_n\}\cap \mathcal F\neq \emptyset$.  One can show that every ultrafilter is tight and that if $E$ is a Boolean algebra (not necessarily unital), then the tight filters are precisely the ultrafilters~\cite{Exel}.

A \emph{character} of a semilattice $E$ with zero is a non-zero, zero-preserving homomorphism $\chi\colon E\to \{0,1\}$ (where $\{0,1\}$ is a semigroup under usual multiplication); notice that if $K$ is a field, then any homomorphism $E(S)\to K^\times$ takes values in $\{0,1\}$ and so this notion corresponds to usual multiplicative characters in representation theory.  The characters of $E$ are precisely the characteristic functions of filters, cf.~\cite{Paterson,Exel,mygroupoidalgebra}.  \emph{Ultracharacters} are the characteristic functions of ultrafilters and  \emph{tight characters} are the characteristic functions of tight filters.  The \emph{spectrum} of $E$ is the topological space $\widehat{E}$ of characters; it is locally compact and Hausdorff in the topology of pointwise convergence. It furthermore has a basis of compact open subsets, that is, it is zero-dimensional. If $e\in E$, then put $D(e) = \{\theta\in \widehat{E}: \theta(e)=1\}$; the complement of this set is denoted $D(e)^c$.  The sets of the form $D(e)\cap D(e_1)^c\cap\cdots \cap D(e_n)^c$ with $e_1,\ldots, e_n <e$ (possibly $n=0$) are a basis of compact open sets for the topology on $\widehat{E}$.
The \emph{tight spectrum} of $E$ is the subspace $\widehat{E}_T$ of tight characters, with the induced topology;  Exel showed that the tight spectrum is the closure of the space of ultracharacters inside of $\widehat{E}$~\cite{Exel}.  Thus $\widehat{E}_T$ is also locally compact, Hausdorff and zero-dimensional.  A basis for $\widehat{E}_T$ is given by the intersection of $\widehat{E}_T$ with basic compact open sets $D(e)\cap D(e_1)^c\cap\cdots \cap D(e_n)^c$ with $e_1,\ldots, e_n <e$ not a cover of $e$ (possibly $n=0$).

An inverse semigroup $S$ is \emph{fundamental} if $Z_S(E(S)) = E(S)$, that is, the centralizer in $S$ of the semilattice of idempotents is $E(S)$; this is equivalent to saying that $E(S)$ is a maximal commutative subsemigroup of $S$.  Alternatively, $S$  is fundamental if it admits no idempotent-separating congruence except the equality relation, that is, any homomorphism which is injective on $E(S)$ is injective on $S$.  The largest idempotent separating congruence $\mu$ on $S$ is given by $s\mathrel{\mu} t$ if and only if $ses^\ast=tet^\ast$ for all $e\in E(S)$.  Then $S$ is fundamental if and only if $\mu$ is the equality relation.  The reader is referred to~\cite[Chapter~5.2]{Lawson} for details.

We shall need a weakening of the notion of fundamental that will have a natural interpretation in the groupoid context.  We say that an inverse semigroup $S$ with zero is \emph{quasi-fundamental} if each non-zero element of $Z_S(E)$ is above a non-zero idempotent, that is, if $0\neq s\in Z_S(E(S))$, then there exists $0\neq e\in E(S)$ with $e\leq s$.  It is immediate that a fundamental inverse semigroup is quasi-fundamental but the converse is false.
For example, let $G$ be any non-trivial group and $z\notin G$. Put a semigroup structure on $S=G\cup \{z,0\} $ by putting $z^2=z$ and $gz=z=zg$ for all $g\in G$ (and, of course, $0$ is a zero and $G$ is a submonoid).  Then $S$ is quasi-fundamental as $z$ is below all non-zero elements but $S$ is not fundamental as $S=Z_S(E(S))$.
The next lemma shows that an inverse semigroup is quasi-fundamental if and only if whenever two non-zero elements are $\mu$-equivalent, they have a common non-zero lower bound.

\begin{Lem}\label{l:qfundamental}
Let $S$ be an inverse semigroup with zero.  Then $S$ is quasi-fundamental if and only if whenever $0\neq s,t\in S$ with $ses^\ast=tet^\ast$ for all idempotents $e\in E(S)$, there is $0\neq u\leq s,t$.
\end{Lem}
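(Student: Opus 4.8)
The plan is to route both directions through one auxiliary fact: the centralizer $Z_S(E(S))$ is precisely the set of elements that are $\mu$-equivalent to an idempotent, where (as in the excerpt) $s\mathrel{\mu}t$ abbreviates the condition $ses^{\ast}=tet^{\ast}$ for all $e\in E(S)$. Recall that this relation already forces $ss^{\ast}=tt^{\ast}$ and $s^{\ast}s=t^{\ast}t$ (plug in $e=ss^{\ast}$ and $e=tt^{\ast}$, using also the dual form $s^{\ast}es=t^{\ast}et$, which is equivalent via the involution).

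First I would prove the auxiliary fact. If $a\mathrel{\mu}e$ with $e\in E(S)$, then $a^{\ast}a=aa^{\ast}=e$, hence $ae=ea=a$; substituting into $aga^{\ast}=ege=eg$ (valid for all $g\in E(S)$) and multiplying by $a$ on the right gives $ag=ega$, and the mirror computation applied to $a^{\ast}$ yields $ga=ag$, so $a\in Z_S(E(S))$. Conversely, if $s\in Z_S(E(S))$, write $e=ss^{\ast}$ and $f=s^{\ast}s$; elementary manipulations with $es=s=se$ and $fs=s=sf$ force $e=f$, and then $sgs^{\ast}=gss^{\ast}=ge=ege$ for every $g\in E(S)$, that is, $s\mathrel{\mu}e$. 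In particular every nonzero $s\in Z_S(E(S))$ satisfies $s\mathrel{\mu}s^{\ast}s$ with $s^{\ast}s\neq 0$ (this is standard; cf.\ \cite[Chapter~5.2]{Lawson}). I expect the only genuine computation here --- extracting the commutativity $ag=ga$ from the quadratic identity $aga^{\ast}=eg$ --- to be the fiddliest step of the whole proof; everything else is manipulation of the natural partial order.

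For the backward implication of the lemma, suppose the displayed lower-bound condition holds and take $0\neq s\in Z_S(E(S))$. By the auxiliary fact $s\mathrel{\mu}s^{\ast}s$ with $s^{\ast}s\neq 0$, so the hypothesis applied to the pair $(s,s^{\ast}s)$ produces $0\neq u\leq s,\ s^{\ast}s$. Since $u$ lies below the idempotent $s^{\ast}s$, we have $u\in(s^{\ast}s)E(S)\subseteq E(S)$, so $u$ is itself a nonzero idempotent below $s$, proving $S$ quasi-fundamental.

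For the forward implication, suppose $S$ is quasi-fundamental and take $0\neq s,t$ with $s\mathrel{\mu}t$. Applying the involution and right-multiplying by $t$ gives $s^{\ast}t\mathrel{\mu}t^{\ast}t$, so $s^{\ast}t$ is $\mu$-equivalent to an idempotent and hence lies in $Z_S(E(S))$ by the auxiliary fact; moreover $s^{\ast}t\neq 0$ since $(s^{\ast}t)^{\ast}(s^{\ast}t)=t^{\ast}(ss^{\ast})t=t^{\ast}(tt^{\ast})t=t^{\ast}t\neq 0$. Quasi-fundamentality supplies a nonzero idempotent $f\leq s^{\ast}t$; because $(s^{\ast}t)^{\ast}(s^{\ast}t)=t^{\ast}t=s^{\ast}s$, one checks that $f=(s^{\ast}s)f$, that is, $f\leq s^{\ast}s$. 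Finally I would set $u=sf$: then $u\in sE(S)$ gives $u\leq s$; $f\leq s^{\ast}t$ together with compatibility of the order with products gives $u=sf\leq ss^{\ast}t\leq t$; and $u^{\ast}u=f(s^{\ast}s)f=f\neq 0$ gives $u\neq 0$. Thus $0\neq u\leq s,t$, completing the proof.
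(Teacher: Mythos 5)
Your proof is correct and follows essentially the same route as the paper's: the forward direction hinges on showing that $s^\ast t$ is a non-zero element of $Z_S(E(S))$, extracting an idempotent $f\leq s^\ast t$, and forming the common lower bound ($sf$ for you, $tf$ in the paper), while the backward direction applies the hypothesis to the pair $(s,s^\ast s)$ exactly as the paper applies it to $(s,ss^\ast)$; your auxiliary characterization of $Z_S(E(S))$ as the elements $\mu$-related to an idempotent is just a repackaging of computations the paper performs inline. The one imprecision is the parenthetical claim that the dual identity $s^\ast es=t^\ast et$ is ``equivalent via the involution'': applying $\ast$ to $ses^\ast=tet^\ast$ merely returns the same equation, so you should instead invoke the standard fact that $\mu$ is a $\ast$-congruence contained in $\mathcal H$ (Lawson, Ch.~5.2, which the paper cites) or give the short direct computation; since those facts are true and standard, this is a misattributed justification rather than a gap.
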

\begin{Proof}
Assume first that $S$ is quasi-fundamental and $s \mathrel{\mu} t$.
Since $0\neq t=t(t^\ast t)t^\ast t =s(t^\ast t)s^\ast t$, we deduce that $s^\ast t\neq 0$.
Note that $s^\ast t\in Z_S(E(S))$.  Indeed, if $e\in E(S)$, then $s^\ast te= s^\ast tet^\ast t= s^\ast ses^\ast t =es^\ast t$.  Thus, by the definition of quasi-fundamental, there is $0\neq f\leq s^\ast t$.  Then $sf = ss^\ast tf=sfs^\ast t=tft^\ast t= tf$ (as $s^\ast t\in Z_S(E(S))$) and $0\neq f=s^\ast tf$ and so $tf\neq 0$.  Thus if $u=tf$, then $0\neq u\leq s,t$.

Conversely, if $s\in Z_S(E(S))$ is non-zero, then $ses^\ast = ss^\ast e=ss^\ast e(ss^\ast)^\ast$ for all $e\in E(S)$ and so by assumption, there exists $0\neq u\leq s,ss^\ast$.  But then $u\in E(S)$ and so $S$ is quasi-fundamental.
This completes the proof.
\end{Proof}

An inverse semigroup $S$ with zero is \emph{$0$-simple} if it has no proper non-zero ideals; equivalently, $S$ is $0$-simple if $SsS=S$ for all $s\neq 0$.  Recall that an \emph{ideal} of a semigroup $S$ is a non-empty subset $I$ such that $SI\cup IS\subseteq I$.  The \emph{Rees quotient} $S/I$ is the quotient of $S$ by the congruence identifying all elements of $I$ and performing no other identifications.   A semilattice $E$ is \emph{$0$-disjunctive} if $0\neq f<e$ implies there exists $0\neq g\leq e$ with $fg=0$; in other words, no element admits a cover by a single element below it.  A non-empty inverse semigroup $S$ is \emph{congruence-free} if it admits no congruences except the equality relation and the universal relation; in other words, every homomorphism $\varphi\colon S\to T$ is either injective or satisfies $|\varphi(S)|=1$.  This property cannot be described in terms of ideals alone, so we proceed to give a characterization for inverse semigroups with zero due to Baird~\cite{Baird}.

We begin with a Cuntz-Krieger uniqueness type result for inverse semigroups with zero that we will play an important role in our investigations.

\begin{Prop}
\label{p:zero-disj:faithful}
Let $S$ be an inverse semigroup with zero that is fundamental and has a $0$-disjunctive semilattice of idempotents.  Let $\varphi\colon S\to T$ be a zero-preserving homomorphism to a semigroup with zero.  Then the following are equivalent.
\begin{itemize}
\item [(1)] $\varphi$ is injective.
\item [(2)] $\varphi|_{E(S)}$ is injective.
\item [(3)] $\varphi^{-1}(0)=\{0\}$.
\item [(4)] $\varphi^{-1}(0)\cap E(S)=\{0\}$.
\end{itemize}
\end{Prop}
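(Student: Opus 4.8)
The plan is to establish the cycle of implications $(1)\Rightarrow(2)\Rightarrow(4)\Leftrightarrow(3)$ and then close the loop with the essential implication $(4)\Rightarrow(1)$, which is where the hypotheses of fundamentality and $0$-disjunctivity do their work. The first three implications are formal: $(1)\Rightarrow(2)$ is trivial since $E(S)\subseteq S$; $(2)\Rightarrow(4)$ holds because if $\varphi(e)=0=\varphi(0)$ with $e$ idempotent, injectivity on $E(S)$ forces $e=0$; and $(3)\Leftrightarrow(4)$ follows from the standard fact that in an inverse semigroup $\varphi(s)=0$ iff $\varphi(s^*s)=0$ (as $s = s(s^*s)$ and $s^*s\in E(S)$), so the preimage of $0$ is determined by its intersection with $E(S)$. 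It remains to prove $(4)\Rightarrow(1)$.

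For $(4)\Rightarrow(1)$, I would argue by contrapositive or directly: assume $\varphi^{-1}(0)\cap E(S)=\{0\}$ and suppose $\varphi(s)=\varphi(t)$ for some $s,t\in S$; I want $s=t$. The first step is to reduce to comparing idempotents. Multiplying the relation $\varphi(s)=\varphi(t)$ on the left by $\varphi(t^*)$ and on the right by $\varphi(t^*t)$, one finds relations among idempotents of $S$ that, using hypothesis $(4)$, force equalities of idempotents in $S$; concretely one should be able to show $s^*s = t^*t$ and $ss^* = tt^*$, i.e.\ $s$ and $t$ have the same domain and range idempotents. More substantially, for every idempotent $e\in E(S)$ we have $\varphi(ses^*) = \varphi(tet^*)$, and since $ses^*, tet^*$ are idempotents, hypothesis $(4)$ applied to their "symmetric difference" (the idempotent $ses^* \cdot tet^*$ compared against each, exploiting that in a semilattice $\varphi(f)=\varphi(g)$ with $(4)$ gives $f=g$ — this needs the observation that $\varphi$ restricted to $E(S)$ is injective, which is exactly $(4)\Leftrightarrow(2)$ going the easy way, or a short direct meet-semilattice argument) yields $ses^* = tet^*$ for all $e\in E(S)$. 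That is precisely $s\mathrel{\mu} t$. Since $S$ is fundamental, $\mu$ is the equality relation, so $s=t$ and $\varphi$ is injective.

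The main obstacle — and the only place I expect real content — is verifying that $(4)$ implies $\varphi|_{E(S)}$ is injective, i.e.\ that $\varphi(f)=\varphi(g)$ for idempotents $f,g$ forces $f=g$. This is not immediate from $\varphi^{-1}(0)\cap E(S)=\{0\}$ alone for a general semilattice homomorphism into an arbitrary semigroup with zero, because the target need not be commutative or idempotent-friendly. Here is where $0$-disjunctivity should enter: if $f\neq g$, without loss of generality $fg < f$ (after replacing by meets, using that $\varphi(f)=\varphi(g)$ implies $\varphi(fg)=\varphi(f)^2=\varphi(f)$ since $\varphi(f)$ is idempotent in $T$), so $0\neq fg < f$, and by $0$-disjunctivity there is $0\neq h\leq f$ with $fg\cdot h = 0$. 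Then $\varphi(h)=\varphi(h)\varphi(f)=\varphi(h)\varphi(g)=\varphi(hg)=\varphi(hfg\cdot\text{something})$—more carefully, $h\leq f$ gives $\varphi(h) = \varphi(hf) = \varphi(h)\varphi(f) = \varphi(h)\varphi(g) = \varphi(hg)$, and $hg = h\cdot g$; but $h(fg) = (hf)g = hg$ since $h\leq f$, so $hg = h(fg) = 0$ by choice of $h$. Thus $\varphi(h) = \varphi(hg) = \varphi(0) = 0$ with $0\neq h\in E(S)$, contradicting $(4)$. Hence $f=g$. Once idempotent-injectivity is secured, the $\mu$-triviality argument above finishes the proof cleanly, and fundamentality is exactly the hypothesis that licenses the last step.
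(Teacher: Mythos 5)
Your proof is correct and follows essentially the same route as the paper's: use $0$-disjunctivity together with hypothesis (4) to show that $\varphi$ separates idempotents, then invoke fundamentality (via triviality of $\mu$) to conclude injectivity, the remaining implications being formal. The only omission is the case $fg=0$, where you cannot assert $0\neq fg<f$ before applying $0$-disjunctivity; but there $\varphi(f)=\varphi(fg)=0$ forces $f=0$ by (4) and hence $g=0$, which is exactly the first case treated in the paper's proof.
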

\begin{Proof}
Obviously, (1) implies both (2) and (3).   Also  both (2) and (3) imply (4).  It remains to show that (4) implies (1).   Suppose that (4) holds.  Then we claim that $\varphi$ is idempotent separating.  Suppose that $\varphi(e)=\varphi(f)$. If $ef=0$, then $0=\varphi(ef) = \varphi(e)\varphi(f)=\varphi(e)=\varphi(f)$ and $e=0=f$ by (4).   If $ef\neq 0$ and $ef<e$, then there exists $0\neq g\leq e$ with $gef=0$ as $E(S)$ is $0$-disjunctive.  Then $0=\varphi(gef)=\varphi(gf)=\varphi(g)\varphi(f)=\varphi(g)\varphi(e) = \varphi(ge)$ and so $g=ge=0$, a contradiction. Therefore, $ef=e$.   By symmetry, $ef=f$ and so $e=f$.  Thus $\varphi$ is idempotent separating and so $\varphi$ is injective as $S$ is fundamental.
\end{Proof}

One immediately deduces Baird's characterization of congruence-free inverse semigroups with zero (see~\cite[Theorem~IV.4.11]{petrich} for a more detailed proof).

\begin{Thm}
\label{simplesg}
An inverse semigroup $S$ with zero is congruence-free if and only if it is
fundamental, $0$-simple and $E(S)$ is $0$-disjunctive.
\end{Thm}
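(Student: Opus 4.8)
The plan is to prove the two implications separately; the substantive direction is an immediate consequence of Proposition~\ref{p:zero-disj:faithful}, while the converse is essentially due to Baird and rests on two standard congruences together with one genuinely semigroup-theoretic point.

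\emph{Sufficiency.} Assume $S$ is fundamental, $0$-simple, and $E(S)$ is $0$-disjunctive, and let $\rho$ be a congruence on $S$ other than equality. The $\rho$-class of $0$ is a zero of $S/\rho$, so the quotient map $\varphi\colon S\to S/\rho$ is zero-preserving, and it is not injective. By the equivalence of (1) and (3) in Proposition~\ref{p:zero-disj:faithful}, we get $\varphi^{-1}(0)\neq\{0\}$. But $\varphi^{-1}(0)$ is the preimage of the ideal $\{0\}$ of $S/\rho$, hence a non-zero ideal of $S$, so $0$-simplicity forces $\varphi^{-1}(0)=S$. Then $S/\rho$ is trivial, i.e.\ $\rho$ is the universal congruence. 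Hence $S$ is congruence-free.

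\emph{Necessity.} Assume $S$ is congruence-free; we may suppose $|S|>1$, the case $S=\{0\}$ being trivial. For fundamentality, recall that the maximum idempotent-separating congruence $\mu$ restricts to equality on $E(S)$; since $ss^\ast$ is a non-zero idempotent for every $0\neq s\in S$, the congruence $\mu$ is not universal, hence equals equality, so $S$ is fundamental. For $0$-simplicity, let $I$ be a non-zero ideal of $S$; then $0\in I$ and $|I|>1$, so the Rees congruence $\rho_I$ is a congruence different from equality, hence universal, which forces $I=S$. Finally, for $0$-disjunctivity, suppose toward a contradiction that there is $0\neq f<e$ with $f$ essential below $e$, meaning $fg\neq 0$ for all $0\neq g\leq e$. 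Then the relation on $E(S)$ declaring $g$ and $h$ equivalent precisely when $g\wedge h$ is essential below both $g$ and $h$ is a conjugation-invariant congruence on the semilattice $E(S)$ that identifies the distinct elements $e$ and $f$; using fundamentality one promotes it to a congruence on $S$ which is neither equality nor universal, contradicting congruence-freeness. The details of this last step are in~\cite[Theorem~IV.4.11]{petrich}.

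I expect the main obstacle to be exactly this last point in the necessity proof: upgrading the essential-equivalence congruence on $E(S)$ to a proper, non-trivial congruence on all of $S$. Everything else is formal --- $\mu$ and the Rees congruences dispatch fundamentality and $0$-simplicity mechanically, and the sufficiency direction is just the slogan that Proposition~\ref{p:zero-disj:faithful} turns non-injectivity into killing a non-zero idempotent, while $0$-simplicity then turns that into triviality. It is worth noting that, unlike the later results of the paper, no topology and no field of coefficients enter the argument.
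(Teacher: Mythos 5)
Your proof is correct, and most of it coincides with the paper's argument: sufficiency is exactly the paper's route (non-injectivity of a quotient map forces a non-zero idempotent, hence by Proposition~\ref{p:zero-disj:faithful} and $0$-simplicity the whole of $S$, into the kernel), and the fundamentality and $0$-simplicity halves of necessity are dispatched by $S/\mu$ and Rees quotients just as in the paper. The one place you genuinely diverge is the step you yourself flag as the main obstacle: deducing that $E(S)$ is $0$-disjunctive. You route this through a conjugation-invariant (normal) congruence on $E(S)$ and the Petrich/Reilly--Scheiblich machinery for extending a normal trace to a congruence on $S$; that does work, but note that fundamentality is not what powers the extension (the minimum congruence with a prescribed normal trace exists on any inverse semigroup), and you would still owe the verification that your ``essential below both'' relation is transitive, is a semilattice congruence, and is normal. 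The paper sidesteps all of this with a single self-contained device: the syntactic congruence of $\{0\}$, namely $s\equiv t$ iff $usv=0\Leftrightarrow utv=0$ for all $u,v\in S$. This is automatically a congruence on all of $S$; no non-zero $s$ satisfies $s\equiv 0$ (take $u=ss^\ast$, $v=s^\ast s$); and if $0\neq f<e$ with $fg\neq 0$ for every $0\neq g\leq e$, then $f\equiv e$, since $ufv\neq 0$ and $uev\neq 0$ reduce to the non-vanishing of $(u^\ast u)(vv^\ast)f$ and $(u^\ast u)(vv^\ast)e$ respectively, and these are equivalent by the essentiality hypothesis. Hence $\equiv$ is neither equality nor universal, and the promotion step you anticipated as the obstacle never arises.
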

\begin{Proof}
If $S$ is not $0$-simple or fundamental, it obviously has a proper quotient ($S/I$ with $I$ a proper non-zero ideal for the former and $S/\mu$ for the latter).  To see that $E(S)$ is $0$-disjunctive, consider the syntactic congruence of $\{0\}$ given by $s\equiv t$ if, for all $u,v\in S$, $usv=0\iff utv=0$. Notice that
if $0\neq f<e$ and $fg\neq 0$ for all $0\neq g\leq e$, then $f\equiv e$. So if $S$ is not $0$-disjunctive, $S/{\equiv}$ is a proper, non-trivial quotient as no non-zero element is equivalent to $0$.

Conversely, if $S$ is fundamental, $0$-simple and $E(S)$ is $0$-disjunctive and $\varphi\colon S\to T$ is proper quotient, then by Proposition~\ref{p:zero-disj:faithful} we have some $0\neq e\in \varphi^{-1}(0)\cap E(S)$ and hence $S=SeS\subseteq \varphi^{-1}(0)$ by $0$-simplicity.  Thus $\varphi$ is the zero map.
\end{Proof}

\subsection{Ample groupoids}
 In this paper, following the convention popular in analysis, we shall view a groupoid $\G$ as consisting of a set of arrows; an object is identified with the identity element at that object.  The set of identities, or \emph{unit space}, is denoted $\G^0$.  If $\G$ is endowed with a topology such that the domain, range, multiplication and inversion maps are continuous, then we call $\G$ a \emph{topological groupoid}.  We always consider $\G^0$ with the subspace topology.  The topological groupoid $\G$ is
  \emph{\'etale} if its domain map $d$ (or, equivalently, its range map $r$) is a local homeomorphism.  In this case,  $\G^0$ is an open subspace and the multiplication map is a local homeomorphism.  We always consider the case where $\G^0$ is locally compact and Hausdorff; however, $\G$ need not be Hausdorff and that is crucial in this paper.  If, in addition, $\G^0$ has a basis of compact open sets, then $\G$ is called \emph{ample} following Paterson~\cite{Paterson}. Details can be found in~\cite{Paterson,resendeetale,Exel}.

A \emph{local bisection} of an \'etale groupoid $\G$ is an open subset $U\subseteq \G$ such that $d|_U$ and $r|_U$ are homeomorphisms of $U$ with $d(U)$ and $r(U)$, respectively (which are necessarily both open as these maps are local homeomorphisms).  The set $\Gamma$ of compact local bisections is a basis for the topology of an ample groupoid $\G$~\cite{Paterson,Exel} and is also an inverse semigroup under the binary operation \[UV = \{uv: u\in U,\ v\in V,\ d(u)=r(v)\}.\] The semigroup inverse is given by $U^* = \{u^{-1}: u\in U\}$ and $E(\Gamma)$ consists of the compact open subsets of $\G^0$.

The \emph{isotropy subgroupoid} of a groupoid $\G$ is the subgroupoid \[\Is(\G)=\{g\in \G : d(g)=r(g)\}.\]  An \'etale groupoid is said to be \emph{effective} if $\G^0=\mathrm{Int}(\Is(\G))$.   The \emph{isotropy group} of $\G$ at an object $x$ is the group of all arrows $g\colon x\to x$.  An \'etale groupoid is called \emph{topologically principal} if the set of objects with trivial isotropy is dense in the unit space.
 Another related notion that appears frequently in the literature, and which is equivalent to effectiveness for Hausdorff \'etale groupoids, but is different for non-Hausdorff groupoids, is termed in~\cite{KM} \emph{topological freeness}.  An \'etale groupoid $\G$ is said to be \emph{topologically free} if $\mathrm{Int}(\Is(\G)\setminus \G^0)=\emptyset$, that is, $\G^0$ is dense in $\int(\Is(\G))$.  Clearly effective groupoids are topologically free, as are topologically principal groupoids.  It is shown in~\cite{KM} via a Baire category argument  that for second countable groupoids being topologically free is equivalent to being topologically principal.  This implies the older result, cf.~\cite{nonhausdorffsimple}, that effective second countable \'etale groupoids are topologically principal.    So for second countable Hausdorff \'etale groupoids, these three notions coincide, but they are different in the generality considered in this paper.

In the work of~\cite{nonhausdorffsimple} (and the earlier related work of Nekrashevych~\cite{Nekrashevychgpd}), the property of being topologically principal played a key role in the study of simplicity and for this reason (and also because of subsequent study of $C^*$-algebras), the authors of~\cite{nonhausdorffsimple} restrict to second countable groupoids.  We shall see, however, that the property of being topologically principal is superfluous to the study of simplicity of Steinberg algebras and only effectiveness is relevant, as was the case for Hausdorff groupoids~\cite{operatorsimple1,operatorguys2,groupoidprimitive}.  In fact, if one passes to the essential algebra of an ample groupoid (to be defined later), then it is topological freeness that plays the starring role.

If $x\in \G^0$, then the \emph{orbit}  of $x$ consists of all $y\in \G^0$ such that there is an arrow $g\in \G$ with $d(g)=x$ and $r(g)=y$.  The orbits form a partition of $\G^0$.  A subset $X\subseteq \G^0$ is \emph{invariant} if it is a union of orbits.
 An \'etale groupoid is said to be \emph{minimal} if $\G^0$ has no proper, non-empty open invariant subsets or, equivalently, if each orbit is dense in $\G^0$.  The \emph{restriction} of $\G$ to an invariant subspace $X$ of $\G^0$ is the groupoid $\G|_X$ with object space $X$ and arrows all morphisms between elements of $X$ equipped with the subspace topology.  If $\G$ is ample and $X$ is either open or closed, then $\G|_X$ is also ample.

A key example of an \'etale groupoid is that of a groupoid of germs.  Let $S$ be an inverse semigroup acting on a locally compact Hausdorff space $X$ by partial homeomorphisms.  For an idempotent $e\in E(S)$, we denote by $X_e$ the domain of $e$.  We assume, to avoid degenerate situations, that $X=\bigcup_{e\in E(S)}X_e$.  The \emph{groupoid of germs $\G=S\ltimes X$} is defined as follows. One puts $\G^0=X$ and \[\G=\{(s,x)\in S\times X: x\in X_{s^*s}\}/{\sim}\] where $(s,x)\sim (t,y)$ if and only if $x=y$ and there exists $u\leq s,t$ with $x\in X_{u^*u}$. The $\sim$-class of an element $(s,x)$ is denoted $[s,x]$.  The topology on $\G$ has basis all sets of the form $(s,U)$ where $U\subseteq X_{s^*s}$ is open and $(s,U) = \{[s,x]: x\in U\}$. Each arrow $[t,x]$ has a basis of neighborhoods of the form $(t,U)$ with $U\subseteq X_{t^*t}$. The identification of $X$ with $\G^0$ sends $x\in X_e$ to $[e,x]$ (which is independent of the choice of $e$).  However, for groupoids of germs it is more convenient to view $X$ as the unit space.   One puts $d([s,x])=x$, $r([s,x])=sx$ and defines $[s,ty][t,y]=[st,y]$.  Inversion is given by $[s,x]^{-1} = [s^*,sx]$.  Note that $(s,X_{s^*s})$ is a local bisection and is compact if $X_{s^*s}$ is compact.  If $X$ has a basis of compact open sets and the $X_e$ are all compact open, then the groupoid of germs $\G$ is ample.  See~\cite{Exel,Paterson} for details.

The principal example for us is the (contracted) \emph{universal groupoid} $\G(S)$ of an inverse semigroup $S$ with zero.  (This slighlty differs from Paterson's universal groupoid~\cite{Paterson}; to obtain his one can adjoin an external zero and then do the following construction.)  The inverse semigroup $S$ acts by partial homeomorphisms on the spectrum $\widehat{E(S)}$ of its semilattice of idempotents.  The domain of $e\in E(S)$ is $D(e)$ and the partial homeomorphism $\beta_s\colon D(s^\ast s)\to D(ss^\ast)$ is given by $\beta_s(\theta)(e) = \theta(s^\ast es)$.  In particular, as the $D(e)$ are compact open, we have that the groupoid of germs $\G(S)= S\ltimes \widehat{E(S)}$ of this action is ample.  If $\Gamma$ is the inverse semigroup of compact local bisections, then there is an embedding of $S$ into $\Gamma$ sending $s$ to $(s,D(s^\ast s))$.

The space $\widehat{E(S)}_T$ of tight characters is a closed invariant subspace of $\widehat{E(S)}$ and hence we can form an ample groupoid called the \emph{tight groupoid} $\G_T(S)$ by taking $\G(S)|_{\widehat{E(S)}_T}$~\cite{Exel}.
A fundamental result of Exel~\cite{Exel} is that if $\Gamma$ is the inverse semigroup of compact local bisections of an ample groupoid $\G$, then $\G\cong \G_T(\Gamma)$.  Hence the tight groupoid construction is general enough to capture all ample groupoids.

\subsection{Boolean inverse semigroups}
Recall that two elements $s,t$ of an inverse semigroup $S$ are \emph{compatible} if $st^\ast, s^\ast t\in E(S)$.  This is a necessary condition for $s,t$ to have a join in $S$ with respect to the natural partial order. In particular, if $s,t$ are \emph{orthogonal}, meaning $st^\ast=0=s^\ast t$, then they are compatible.  Notice that inversion is an order isomorphism of an inverse semigroup and hence preserves all compatible joins.
 A Boolean inverse semigroup is an inverse semigroup $S$ such that $E(S)$ is a Boolean algebra (not necessarily unital), $S$ admits (binary) compatible joins and products distribute over joins.  We shall denote, for $e,f\in E(S)$,  the relative complement by $e\setminus f$  and the join by $e\vee f$.  Joins of compatible semigroup elements will also be denoted by $\vee$.

Because join is only partially defined on a Boolean inverse semigroup, it is convenient to introduce two totally defined operations which enable one to describe Boolean inverse semigroups as universal algebras  in an extended signature (containing the usual multiplication and inversion operations on an inverse semigroup).  This was done by Wehrung~\cite{wehrung}.  If $S$ is a Boolean inverse semigroup and $s,t\in S$, then their \emph{skew difference} is $s\ominus t = (ss^\ast\setminus tt^\ast)s(s^\ast s\setminus t^\ast t)$.      This is the largest element below $s$ and orthogonal to $t$; in particular, if $t\leq s$, then $s\ominus t$ is the relative complement of $t$ in $s$.   Also, note that $(s\ominus t)^\ast = s^\ast\ominus t^\ast$. As $s\ominus t$ and $t$ are orthogonal, we can define  \emph{skew addition} by  $s\triangledown t =(s\ominus t)\vee t$.  Wehrung shows that Boolean inverse semigroups form a variety of unary semigroups with these additional two binary operations.
In fact, the following is shown in~\cite[Theorem~3.2.5]{wehrung}.

\begin{Thm}\label{bias.hom}
Let $S$ and $T$ be Boolean inverse semigroups and $\varphi\colon S\to T$ a semigroup homomorphism.  Then the following are equivalent.
\begin{enumerate}
\item $\varphi$ preserves skew difference and skew additions;
\item $\varphi\colon E(S)\to E(T)$ is a Boolean algebra homomorphism;
\item $\varphi$ preserves binary compatible joins;
\item $\varphi$ preserves joins of orthogonal elements;
\item $\varphi$ preserves joins of orthogonal idempotents.
\end{enumerate}
\end{Thm}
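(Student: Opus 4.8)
The plan is to organise the five conditions around condition~(2): since the implications $(3)\Rightarrow(4)\Rightarrow(5)$ are immediate (orthogonal elements are in particular compatible, and orthogonal idempotents are in particular orthogonal elements), it suffices to establish $(1)\Leftrightarrow(2)$, $(2)\Rightarrow(3)$ and $(5)\Rightarrow(2)$, which together yield the chain $(1)\Leftrightarrow(2)\Rightarrow(3)\Rightarrow(4)\Rightarrow(5)\Rightarrow(2)$. Throughout I use the standard facts that a (zero-preserving) homomorphism $\varphi$ of inverse semigroups preserves the involution (since $\varphi(s^\ast)$ is forced to be the inverse of $\varphi(s)$), the zero, the natural partial order, and meets of idempotents (these being products); and that $\varphi$ carries compatible pairs to compatible pairs and orthogonal pairs to orthogonal pairs, so every join written below exists in $T$.

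The substantive point is $(2)\Rightarrow(3)$, whose core is the following \emph{orthogonal join lemma}: if $\varphi|_{E(S)}$ is a Boolean algebra homomorphism, then $\varphi$ preserves joins of orthogonal elements. To see this, let $s,t$ be orthogonal with $u=s\vee t$. Expanding $u^\ast u=(s^\ast\vee t^\ast)(s\vee t)$ by distributivity in $S$ and using $s^\ast t=0=t^\ast s$ gives the identity $u^\ast u=s^\ast s\vee t^\ast t$, so
\[
  \varphi(u)^\ast\varphi(u)=\varphi(u^\ast u)=\varphi(s^\ast s\vee t^\ast t)=\varphi(s^\ast s)\vee\varphi(t^\ast t),
\]
the last step using that $\varphi|_{E(S)}$ preserves the join of the orthogonal idempotents $s^\ast s,t^\ast t$. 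Since $\varphi(s^\ast s)$ and $\varphi(t^\ast t)$ are orthogonal idempotents of $T$, distributivity of multiplication over joins \emph{in $T$} yields
\[
  \varphi(u)=\varphi(u)\,\varphi(u)^\ast\varphi(u)=\varphi(u)\bigl(\varphi(s^\ast s)\vee\varphi(t^\ast t)\bigr)=\varphi(u\,s^\ast s)\vee\varphi(u\,t^\ast t)=\varphi(s)\vee\varphi(t),
\]
where $u\,s^\ast s=s$ and $u\,t^\ast t=t$ because $s,t\le u$. Granting the lemma, for arbitrary compatible $s,t$ one uses the orthogonal decomposition $t=(t\ominus s)\vee(t\wedge s)$, in which $t\wedge s=t s^\ast s\le s$ and $t\ominus s$ is orthogonal to both $s$ and $t\wedge s$; absorbing $t\wedge s$ into $s$ gives $s\vee t=s\vee(t\ominus s)$, a join of orthogonal elements, whence $\varphi(s\vee t)=\varphi(s)\vee\varphi(t\ominus s)$ by the lemma. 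On the other hand $\varphi(t)=\varphi(t\ominus s)\vee\varphi(t\wedge s)$ (again by the lemma) and $\varphi(t\wedge s)\le\varphi(s)$, so $\varphi(s)\vee\varphi(t)=\varphi(s)\vee\varphi(t\ominus s)$; hence $\varphi(s\vee t)=\varphi(s)\vee\varphi(t)$.

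The remaining equivalences are formal. For $(1)\Rightarrow(2)$, restrict the defining formulas to idempotents: there $e\ominus f=e\setminus f$ and $e\triangledown f=e\vee f$, so a $\varphi$ preserving $\ominus$ and $\triangledown$ preserves relative complements and joins on $E(S)$, which together with preservation of meets and $0$ is precisely~(2) (in fact $\varphi(0)=\varphi(s\ominus s)=\varphi(s)\ominus\varphi(s)=0$ is then automatic). For $(2)\Rightarrow(1)$, the skew difference $s\ominus t=(ss^\ast\setminus tt^\ast)\,s\,(s^\ast s\setminus t^\ast t)$ is built from products, the involution and relative complements of idempotents, all preserved under~(2), so $\varphi$ preserves $\ominus$; and $s\triangledown t=(s\ominus t)\vee t$ with $s\ominus t$ orthogonal to $t$, so $\varphi$ preserves $\triangledown$ by the orthogonal join lemma. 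Finally, for $(5)\Rightarrow(2)$, fix $e,f\in E(S)$ and use the orthogonal decomposition $e=(e\setminus f)\vee(e\wedge f)$: by~(5), $\varphi(e)=\varphi(e\setminus f)\vee\varphi(e\wedge f)$, where $\varphi(e\wedge f)=\varphi(e)\wedge\varphi(f)$ and $\varphi(e\setminus f)\wedge\varphi(f)=\varphi\bigl((e\setminus f)f\bigr)=\varphi(0)=0$. Uniqueness of relative complements in the Boolean algebra $E(T)$ forces $\varphi(e\setminus f)=\varphi(e)\setminus\varphi(f)$, and then $\varphi(e\vee f)=\varphi\bigl((e\setminus f)\vee f\bigr)=\bigl(\varphi(e)\setminus\varphi(f)\bigr)\vee\varphi(f)=\varphi(e)\vee\varphi(f)$; thus $\varphi$ preserves joins, meets, $0$ and relative complements on $E(S)$, i.e.\ $(2)$ holds.

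The step I expect to require the most care is the orthogonal join lemma, precisely because it is the one place where semigroup structure genuinely enters: one must combine the structural identity $u^\ast u=s^\ast s\vee t^\ast t$ with distributivity of multiplication over joins in the \emph{codomain} $T$, rather than in $S$. A smaller point needing attention is the verification that $t=(t\ominus s)\vee(t\wedge s)$ really is a decomposition of $t$ into orthogonal summands; once that, and the preservation of involution, zero, order and idempotent meets, are in place, the remaining manipulations are routine.
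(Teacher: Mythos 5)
Your proof is correct, but it follows a genuinely different route from the paper, because the paper does not prove items (1)--(4) at all: it cites Wehrung's Theorem~3.2.5 for their equivalence and only adds item (5), which it gets by a bootstrapping trick --- a Boolean algebra is itself a Boolean inverse semigroup, so applying the already-cited equivalence $(2)\Leftrightarrow(4)$ to the restriction $\varphi|_{E(S)}\colon E(S)\to E(T)$ converts ``preserves joins of orthogonal idempotents'' directly into ``is a Boolean algebra homomorphism''. You instead give a self-contained proof of the whole cycle, with the orthogonal join lemma (push $u^\ast u=s^\ast s\vee t^\ast t$ through $\varphi$, then redistribute $\varphi(u)$ over the resulting join in $T$) doing the real work, and the reduction of compatible joins to orthogonal ones via $t=(t\ominus s)\vee(t\wedge s)$; all the individual steps check out, including that decomposition, which for compatible $s,t$ follows from the identity $t\ominus s=t(t^\ast t\setminus s^\ast s)$. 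The paper's approach buys brevity; yours buys independence from the reference. One convention you rightly flag but should state outright: the implications $(3),(4),(5)\Rightarrow(2)$ genuinely require $\varphi(0)=0$ (the constant map onto a fixed non-zero idempotent of $T$ preserves all compatible joins yet is not a Boolean algebra homomorphism on idempotents), so the theorem must be read with the standing assumption that homomorphisms of inverse semigroups with zero are zero-preserving, as in Wehrung's setting; your argument uses this both in $\varphi\bigl((e\setminus f)f\bigr)=0$ and in asserting that $\varphi$ sends orthogonal pairs to orthogonal pairs.
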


A homomorphism satisfying the above equivalent properties is called \emph{additive}.  Note that Wehrung does not include (5) in his statement, but it follows from the equivalence of the rest:  if $\varphi\colon S\to T$ preserves joins of orthogonal idempotents, then $\varphi|_{E(S)}\colon E(S)\to E(T)$ is a Boolean algebra homomorphism by the equivalence of (2) and (4).

 A quotient of $S$ by a congruence preserving skew differences and skew addition is a Boolean inverse semigroup and the  quotient map is additive by the theorem.

We shall call a non-zero Boolean inverse semigroup $S$ \emph{additively congruence-free} if every non-zero additive homomorphism $\varphi\colon S\to T$ of Boolean inverse semigroups is injective.  This is equivalent to $S$ being congruence-free in the enriched signature including skew difference and skew addition.

It turns out that additively congruence-free inverse semigroups correspond precisely to effective and minimal ample groupoids under non-commutative Stone duality~\cite{LawsonLenz} via Exel's tight groupoid construction~\cite{Exel}.  First we need the notion of an additive ideal of a Boolean inverse semigroup~\cite{LawsonVdovina}.  An ideal $I$ of $S$ is \emph{additive} if $e,f\in I\cap E(S)$ and $ef=0$ implies $e\vee f\in I$.  This is equivalent to $I$ being closed under compatible joins.  The usual quotient of an inverse semigroup by an ideal (the Rees quotient) does not result in a Boolean inverse semigroup even if the ideal is additive.  For an additive ideal $S$,  define $S\idq I$ to be the quotient of $S$ by the congruence given by $s\equiv t$ if there exists $u\leq s,t$ with $s\ominus u, t\ominus u\in I$.  Then $S\idq I$ is a Boolean inverse semigroup, the quotient map $S\to S\idq I$ is additive and $s\equiv 0$ if and only if $s\in I$; see~\cite{LawsonVdovina} for details.  Let us say that a non-zero Boolean inverse semigroup $S$ is \emph{additively $0$-simple} if it has no non-zero proper additive ideals.  Note that if $\varphi\colon S\to T$ is an additive homomorphism, then $\varphi^{-1}(0)$ is an additive ideal of $S$.

We next show that if $S$ is a Boolean inverse semigroup with maximum idempotent-separating congruence $\mu$, then $S/\mu$ is a Boolean inverse semigroup and the quotient map is additive. We recall that $\mu$ is given by $s\mathrel{\mu} t$ if and only if $ses^\ast=tet^\ast$ for all $e\in E(S)$.

\begin{Prop}\label{fundquot}
Let $S$ be a Boolean inverse semigroup and $\mu$ the largest idempotent separating congruence on $S$.  Then $S/\mu$ is a Boolean inverse semigroup and the natural projection $S\to S/\mu$ is additive.
\end{Prop}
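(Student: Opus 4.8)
The plan is to verify that $\mu$ is compatible, in addition to multiplication and inversion, with the two extra operations $\ominus$ (skew difference) and $\triangledown$ (skew addition); once this is done, both assertions follow at once from the observation recorded just after Theorem~\ref{bias.hom}, namely that the quotient of a Boolean inverse semigroup by a congruence preserving skew difference and skew addition is again a Boolean inverse semigroup, with additive quotient map.

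The fact that makes everything go through is that $\mu$-equivalent elements have the same domain and range. Indeed, every congruence on an inverse semigroup is compatible with the involution, so $s\mathrel{\mu}t$ forces $s^*\mathrel{\mu}t^*$, hence $s^*s\mathrel{\mu}t^*t$ and $ss^*\mathrel{\mu}tt^*$; since $\mu$ is idempotent separating and these are idempotents, $s^*s=t^*t$ and $ss^*=tt^*$. Therefore, in the formula $s\ominus t=(ss^*\setminus tt^*)\,s\,(s^*s\setminus t^*t)$ the two idempotents flanking $s$ depend only on the $\mu$-classes of $s$ and $t$. So if $s\mathrel{\mu}s'$ and $t\mathrel{\mu}t'$, then writing $g=ss^*\setminus tt^*$ and $h=s^*s\setminus t^*t$ we get $s\ominus t=gsh$ and $s'\ominus t'=gs'h$, whence $s\ominus t\mathrel{\mu}s'\ominus t'$ because $\mu$ is a semigroup congruence.

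For $\triangledown$, using $s\triangledown t=(s\ominus t)\vee t$ and the orthogonality of $s\ominus t$ and $t$, it suffices to establish: if $a\perp c$, $b\perp d$, $a\mathrel{\mu}b$ and $c\mathrel{\mu}d$, then $a\vee c\mathrel{\mu}b\vee d$ (apply this with $a=s\ominus t$, $b=s'\ominus t'$, $c=t$, $d=t'$, invoking the previous paragraph). I would prove it by expanding, via $(a\vee c)^*=a^*\vee c^*$ and distributivity of products over joins, $(a\vee c)\,e\,(a\vee c)^*=aea^*\vee cea^*\vee aec^*\vee cec^*$ for an idempotent $e$. Orthogonality gives $aa^*\cdot cc^*=a(a^*c)c^*=0$ and $a^*a\cdot c^*c=a^*(ac^*)c=0$, and since idempotents commute the cross terms collapse, $aec^*=a\bigl[(a^*a)\,e\,(c^*c)\bigr]c^*=a\cdot 0\cdot c^*=0$ and likewise $cea^*=0$. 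Hence $(a\vee c)\,e\,(a\vee c)^*=aea^*\vee cec^*$ and, by the same computation, $(b\vee d)\,e\,(b\vee d)^*=beb^*\vee ded^*$; since $aea^*=beb^*$ and $cec^*=ded^*$ for every $e$, we conclude $a\vee c\mathrel{\mu}b\vee d$.

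With $\mu$ shown to preserve $\ominus$ and $\triangledown$ (it trivially preserves $0$, multiplication and inversion), the cited consequence of Wehrung's theorem yields that $S/\mu$ is a Boolean inverse semigroup and that $S\to S/\mu$ is additive, which is exactly what is wanted. The only step requiring genuine care is the vanishing of the cross terms $aec^*$ and $cea^*$; everything else is a routine consequence of the domain/range invariance of $\mu$ and of $\mu$ being a semigroup congruence.
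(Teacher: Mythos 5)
Your proof is correct and follows essentially the same route as the paper's: verify that $\mu$ is compatible with $\ominus$ and $\triangledown$ (the latter via the join of orthogonal $\mu$-related pairs) and then invoke the remark following Theorem~\ref{bias.hom}. Your treatment of the skew difference is in fact slightly slicker — observing that the flanking idempotents depend only on the $\mu$-classes reduces that step to $\mu$ being a multiplicative congruence, where the paper instead verifies $(s_1\ominus t_1)\mathrel{\mu}(s_2\ominus t_2)$ by a direct computation of $u e u^*$ — but this is a cosmetic difference.
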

\begin{Proof}
We show that $\mu$ is compatible with skew difference and skew addition.
Let $s_1\mathrel{\mu} s_2$ and $t_1\mathrel{\mu} t_2$.  Then $s_1s_1^\ast = s_2s_2^\ast$, $s_1^\ast s_1=s_2^\ast s_2$ and $t_1t_1^\ast = t_2t_2^\ast$, $t_1^\ast t_1=t_2^\ast t_2$ as $\mu$ is idempotent separating.  It follows then that if $e\in E(S)$, then
\begin{align*}
(s_1\ominus t_1) e (s_1\ominus t_1)^* &= (s_1s_1^\ast\setminus t_1t_1^\ast)s_1(s_1^\ast s_1\setminus t_1^\ast t_1)es_1^*(s_1s_1^\ast\setminus t_1t_1^\ast)\\
(s_2\ominus t_2) e (s_2\ominus t_2)^* &= (s_2s_2^\ast\setminus t_2t_2^\ast)s_2(s_2^\ast s_2\setminus t_2^\ast t_2)es_2^*(s_2s_2^\ast\setminus t_2t_2^\ast).
\end{align*}
Since $s_1\mathrel{\mu} s_2$, we deduce that $s_1(s_1^\ast s_1\setminus t_1^\ast t_1)es_1^*=s_1(s_2^\ast s_2\setminus t_2^\ast t_2)es_1^*=s_2(s_2^\ast s_2\setminus t_2^\ast t_2)es_2^*$ and so $(s_1\ominus t_1)\mathrel{\mu} (s_2\ominus t_2)$.

To prove that $\mu$ is compatible with skew addition, it suffices to show (by what we have just proved) that if $s_1\mathrel{\mu} s_2$ and $t_1\mathrel{\mu} t_2$ with $s_1^\ast t_1=0=s_1t_1^\ast$ and $s_2^\ast t_2=0=s_2t_2^\ast$, then $(s_1\vee t_1)\mathrel{\mu} (s_2\vee t_2$).  But if $e\in E(S)$, then $(s_1\vee t_1)e(s_1\vee t_1)^\ast = s_1es_1^\ast \vee t_1et_1^\ast = s_2es_2^\ast\vee t_2et_2^\ast= (s_2\vee t_2)e(s_2\vee t_2)^\ast$ by distributivity and the orthogonality assumption.  Thus we have $(s_1\vee t_1)\mathrel{\mu} (s_2\vee t_2)$, as required.
\end{Proof}

The second part of the following proposition is due to Exel~\cite{Exel}.

\begin{Prop}
\label{Booleanalg}
Let $E$ be a Boolean algebra.
\begin{enumerate}
\item $E$ is $0$-disjunctive.
\item $e_1,\ldots, e_n\leq e$ cover $e$ if and only if $e=e_1\vee\cdots \vee e_n$.
\end{enumerate}
\end{Prop}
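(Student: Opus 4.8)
The plan is to deduce both statements from the single observation that in a (not necessarily unital) Boolean algebra $E$ one has, for every pair $f\leq e$, a relative complement $e\setminus f$: the unique element $g\leq e$ with $fg=0$ and $f\vee g=e$. Since all the joins and complements that occur in the statement live below a fixed element $e$, one may argue entirely inside the Boolean algebra $\{x\in E: x\leq e\}$, which sidesteps any issue coming from non-unitality.

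For part~(1), given $0\neq f<e$, I would set $g=e\setminus f$. Then $g\leq e$ and $fg=0$ by definition of the relative complement, and $g\neq 0$ because $g=0$ would force $f=f\vee g=e$, contradicting $f<e$. This exhibits a non-zero $g\leq e$ with $fg=0$, so $E$ is $0$-disjunctive.

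For part~(2), the ``if'' direction is a distributivity computation: if $e=e_1\vee\cdots\vee e_n$ and $0\neq f\leq e$, then $f=fe=(fe_1)\vee\cdots\vee(fe_n)$, so the meets $fe_i$ cannot all be $0$, which is exactly the covering condition. For the ``only if'' direction, suppose $e_1,\dots,e_n\leq e$ cover $e$, put $h=e_1\vee\cdots\vee e_n\leq e$ and $f=e\setminus h$; then $f\leq e$ and $fe_i\leq fh=0$ for each $i$, so the covering hypothesis forces $f=0$ and hence $e=h\vee f=e_1\vee\cdots\vee e_n$. The degenerate case $n=0$ is consistent: the empty family covers $e$ precisely when no non-zero element lies below $e$, i.e.\ when $e=0$, which is also the value of the empty join. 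There is really no hard step here; the only point requiring a little care is to take every complement relative to an element that dominates the one being complemented, which the choices $e\setminus f$ and $e\setminus h$ respect.
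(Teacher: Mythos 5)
Your proof is correct and follows essentially the same route as the paper: take the relative complement $e\setminus f$ (resp.\ $e\setminus(e_1\vee\cdots\vee e_n)$) and use distributivity for the converse direction. The extra remarks about the degenerate case $n=0$ and working inside the principal ideal below $e$ are fine but do not change the argument.
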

\begin{Proof}
Let $0\neq e$ be an idempotent and $0\neq f<e$.  Then $0\neq e\setminus f$ and $f(e\setminus f)=0$.  Thus $E$ is $0$-disjunctive.  Clearly, if $e=e_1\vee\cdots \vee e_n$, then $e_1,\ldots, e_n$ cover $e$ as $fe=fe_1\vee\cdots \vee fe_n$ for $f\in E$.  Conversely, if $e_1,\ldots, e_n$ is a cover of $e$ and $f=e_1\vee\cdots \vee e_n$, then $e\setminus f$ is orthogonal to $e_1,\ldots, e_n$ and hence $e\setminus f=0$ by definition of a cover.  Thus $e=f$.
\end{Proof}

We remark that it follows from Proposition~\ref{Booleanalg} that a filter on a Boolean algebra is tight if and only if it is an ultrafilter.

\begin{Thm}
\label{addcongfree}
A non-zero Boolean inverse semigroup is additively congruence-free if and only if it is fundamental and additively $0$-simple.
\end{Thm}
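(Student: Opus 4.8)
The plan is to prove the two implications separately, in each direction reducing ``additively congruence-free'' to the combination of ``fundamental'' and ``additively $0$-simple'' by exhibiting, or ruling out, suitable proper additive quotients.

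For the forward direction, suppose $S$ is additively congruence-free. If $S$ were not fundamental, then by Proposition~\ref{fundquot} the quotient map $S\to S/\mu$ is a nonzero additive homomorphism onto a Boolean inverse semigroup, and it is not injective since $\mu$ is a nontrivial congruence; this contradicts additive congruence-freeness. Hence $S$ is fundamental. If $S$ were not additively $0$-simple, pick a nonzero proper additive ideal $I$; then the quotient map $S\to S\idq I$ (from the discussion before the statement) is additive, is nonzero because $S\idq I$ is nonzero, and is not injective because $s\equiv 0$ in $S\idq I$ exactly when $s\in I$ and $I\neq \{0\}$. Again this contradicts the hypothesis. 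So $S$ is fundamental and additively $0$-simple.

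For the converse, assume $S$ is fundamental and additively $0$-simple, and let $\varphi\colon S\to T$ be a nonzero additive homomorphism of Boolean inverse semigroups; I must show $\varphi$ is injective. The key point is that $\varphi^{-1}(0)$ is an additive ideal of $S$ (noted in the excerpt just before the definition of additively $0$-simple). By additive $0$-simplicity, $\varphi^{-1}(0)$ is either $S$ or $\{0\}$; it cannot be all of $S$ since $\varphi$ is nonzero, so $\varphi^{-1}(0)=\{0\}$, i.e.\ $\varphi^{-1}(0)\cap E(S)=\{0\}$. Now I invoke Proposition~\ref{p:zero-disj:faithful}: its hypotheses are that $S$ is fundamental and $E(S)$ is $0$-disjunctive. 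Fundamentality is assumed, and $E(S)$ is a Boolean algebra, hence $0$-disjunctive by Proposition~\ref{Booleanalg}(1). Therefore condition (4) of Proposition~\ref{p:zero-disj:faithful} holds, and the proposition gives that $\varphi$ is injective. Hence $S$ is additively congruence-free.

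The only subtlety, and the step I'd be most careful about, is making sure that the ``obvious'' quotients in the forward direction really are \emph{additive} homomorphisms of Boolean inverse semigroups onto Boolean inverse semigroups: for $S/\mu$ this is exactly the content of Proposition~\ref{fundquot}, and for $S\idq I$ with $I$ additive this is the cited result from~\cite{LawsonVdovina}, so both are already available. One should also note that these quotients are \emph{nonzero} (the image of the top idempotents of $S$ is nonzero, since a nonzero idempotent of $S$ is not identified with $0$ in either construction), so that they are legitimate witnesses against additive congruence-freeness; this is immediate from the explicit descriptions of the congruences. With these observations in place, the argument is essentially a bookkeeping assembly of Propositions~\ref{p:zero-disj:faithful}, \ref{fundquot}, and~\ref{Booleanalg} together with the properties of $S\idq I$.
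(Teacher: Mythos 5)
Your proposal is correct and follows essentially the same route as the paper: the forward direction uses the non-injective additive quotients $S\to S/\mu$ (via Proposition~\ref{fundquot}) and $S\to S\idq I$, and the converse combines additive $0$-simplicity of $\varphi^{-1}(0)$ with Proposition~\ref{p:zero-disj:faithful} and the $0$-disjunctivity of Boolean algebras from Proposition~\ref{Booleanalg}. Your extra care about the quotients being nonzero is a sound (if brief in the paper) observation.
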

\begin{Proof}
If $S$ is additively congruence-free, then since $S\to S/\mu$ is additive by Proposition~\ref{fundquot}, and non-zero, we must have that $S$ is fundamental.  If $0\neq I$ is an additive ideal of $S$, then $S\to S\idq I$ is additive and $s\equiv 0$ if and only if $s\in I$.  Thus $I=0$ or $I=S$ and so $S$ is additively $0$-simple.  Suppose that $S$ is fundamental and additively $0$-simple and let $\varphi\colon S\to T$ be a non-zero additive homomorphism of Boolean inverse semigroups.  Then $\varphi^{-1}(0)$ is an additive ideal of $S$ and hence must be $0$.  Thus $\varphi$ is injective by Proposition~\ref{p:zero-disj:faithful}, as $E(S)$ is $0$-disjunctive by Proposition~\ref{Booleanalg} and $S$ is fundamental.
\end{Proof}

For any inverse semigroup $S$, if $F\subseteq S$, we write $F^{\downarrow}$ for the order ideal generated by $F$; so $F^{\downarrow}$ consists of all $s\in S$ with $s\leq t$ for some $t\in F$.  We use $s^{\downarrow}$ as shorthand for $\{s\}^{\downarrow}$.
An inverse semigroup is called \emph{Hausdorff} if, for any two $s,t\in S$, the set $s^{\downarrow}\cap t^{\downarrow}$ is a finitely generated order ideal; that is, $s^{\downarrow}\cap t^{\downarrow}=F^{\downarrow}$ for some finite $F\subseteq S$. This is equivalent to the intersection of finitely generated order ideals being finitely generated.  Note that a Boolean inverse semigroup is Hausdorff if and only if it has meets with respect to the natural partial order since the elements of $s^{\downarrow}\cap t^{\downarrow}$ are compatible and hence this set is closed under finite joins; the join of a finite generating set for this order ideal gives a meet.  It was shown in~\cite{mygroupoidalgebra} that $S$ is Hausdorff if and only if the groupoid $\G(S)$ is Hausdorff, hence the term.
We note that for Hausdorff Boolean inverse semigroups, the notions of fundamental and quasi-fundamental coincide.

\begin{Prop}\label{p:qv=fund.hs}
A Hausdorff Boolean inverse semigroup $S$ is quasi-fundamental if and only if it is fundamental.
\end{Prop}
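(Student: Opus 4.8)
One implication needs no work, since every fundamental inverse semigroup is quasi-fundamental; so the plan is to prove the converse. Assume $S$ is a Hausdorff Boolean inverse semigroup that is quasi-fundamental, and fix a non-zero $s \in Z_S(E(S))$; the goal is to show $s \in E(S)$.

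The idea is to split $s$ into an idempotent part and an orthogonal ``idempotent-free'' remainder, and then kill the remainder. For the splitting I would use that a Hausdorff Boolean inverse semigroup has binary meets with respect to the natural partial order (as recalled above), and set $e_0 = s \wedge s^\ast s$. Since $e_0 \le s^\ast s$ it is idempotent, and it is in fact the largest idempotent below $s$: any idempotent $e \le s$ also satisfies $e = e^\ast e \le s^\ast s$, hence $e \le s \wedge s^\ast s = e_0$. Now put $r = s \ominus e_0 = (ss^\ast \setminus e_0)\, s\, (s^\ast s \setminus e_0)$. Because $e_0 \le s$, this $r$ is the relative complement of $e_0$ in $s$: one has $r \le s$, $r$ orthogonal to $e_0$, and $s = r \vee e_0$. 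Writing $r = e\, s\, f$ with $e,f \in E(S)$ and using that $s$ centralizes $E(S)$ and that idempotents commute, a one-line check gives $r \in Z_S(E(S))$.

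It then remains to show $r = 0$. If $r \ne 0$, quasi-fundamentality provides a non-zero idempotent $g \le r$. From $g \le r \le s$ and the maximality of $e_0$ we get $g \le e_0$, so $g = g e_0$; but $g \le r$ and $r$ orthogonal to $e_0$ give $g e_0 = 0$, whence $g = 0$, a contradiction. Hence $r = 0$ and $s = r \vee e_0 = e_0 \in E(S)$, as required.

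The only genuinely non-formal ingredient is the use of Hausdorffness in the first step: it is what guarantees that the (possibly infinite) collection of idempotents below $s$ has a single largest element $e_0$, which is exactly what makes the splitting $s = r \vee e_0$ available; without it one could only extract a cofinal family of idempotents below $s$ and the peeling argument would break. Everything else is routine manipulation of the natural partial order and of orthogonality, so I expect no real obstacle beyond getting that first step right.
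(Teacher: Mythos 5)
Your proof is correct and follows essentially the same route as the paper's: both take the meet $e_0=s\wedge s^\ast s$ as the largest idempotent below $s$, form the orthogonal remainder (you use $s\ominus e_0$, the paper uses $s(s^\ast s\setminus e_0)$, which differ only in bookkeeping), and invoke quasi-fundamentality to show the remainder vanishes. The only cosmetic difference is that you conclude via $s=r\vee e_0=e_0$, while the paper deduces $s^\ast s=e_0\leq s$ and then $s=ss^\ast s=s^\ast s$; both are fine.
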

\begin{Proof}
It suffices to show that if $S$ is quasi-fundamental, then $S$ is fundamental.  Let $0\neq s\in Z_S(E(S))$.  As remarked above, $s$ and $s^\ast s$ have a meet $e\in E(S)$ because $S$ is a Hausdorff Boolean inverse semigroup.  Then $\{f \in E(S): f \leq s\}=e^{\downarrow}$.  Consider $t=s(s^\ast s\setminus e)$.  Then $t\leq s$ and $t\in Z_S(E(S))$.  If $t\neq 0$, then there exists $0\neq f\leq t\leq s$ an idempotent because $S$ is quasi-fundamental.  But then $f\leq e$, which is impossible as $f=tf=tef=0$.  Thus $t=0$ and so $s^\ast s\setminus e=0$, that is, $s^\ast s=e\leq s$.  But then $s=ss^\ast s=s^\ast s$, and so $s\in E(S)$.  Thus $S$ is fundamental.
\end{Proof}

The assumption that $S$ is Boolean in Proposition~\ref{p:qv=fund.hs} can be weakened to $S$ having binary meets and $E(S)$ being $0$-disjunctive without changing the conclusion.  But it does not hold for arbitrary Hausdorff inverse semigroups, or even those possessing binary meets. For example if $G$ is a group, then the inverse semigroup $S=G\cup \{z,0\}$ with $zG=\{z\}=Gz$ and $z^2=z$, constructed earlier, is quasi-fundamental but not fundamental and it is Hausdorff since it has binary meets.

If $\G$ is an ample groupoid, let $\Gamma$ be the inverse semigroup of compact local bisections of $\G$; it is well known and easy to check that $\Gamma$ is a Boolean inverse semigroup, cf.~\cite{LawsonLenz}. Joins are given by set-theoretic union. As mentioned earlier, Exel proved that
$\G\cong \G_T(\Gamma)$~\cite{Exel}. Conversely, for any Boolean inverse semigroup $S$, its tight groupoid (which is the same as its ultrafilter groupoid) $\G_T(S)$ is an ample groupoid and $S$ is isomorphic to the Boolean inverse semigroup of compact local bisections of $\G_T(S)$ via $s\mapsto (s,D(s^\ast s)\cap \widehat{E(S)}_T)$; cf.~\cite{LawsonLenz}.
 We now show that under this dictionary between ample groupoids and Boolean inverse semigroups, the notions of fundamental and additively $0$-simple correspond to effectiveness and minimality, respectively.  We also show that quasi-fundamental corresponds to topologically free.

The following observation is essentially folklore, but we include a proof for completeness.
\begin{Prop}\label{p:centralizerofidems}
Let $\G$ be an ample groupoid  with inverse semigroup of compact local bisections $\Gamma$.  Let $\Gamma'$ be the inverse semigroup of compact local bisections of the ample groupoid $\int(\Is(\G))$.    Then $Z_{\Gamma}(E(\Gamma)) = \Gamma'$.
\end{Prop}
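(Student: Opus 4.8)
The plan is to prove the two inclusions separately. For the inclusion $\Gamma' \subseteq Z_\Gamma(E(\Gamma))$, take a compact local bisection $U$ of $\int(\Is(\G))$; since $U \subseteq \Is(\G)$, every $g \in U$ satisfies $d(g) = r(g)$, so $U$ is in particular a compact local bisection of $\G$ (it is an open subset of $\G$ that happens to lie in the isotropy, and $\int(\Is(\G))$ is open in $\G$ because $\G$ is effective-friendly — more precisely $\int(\Is(\G))$ is by definition open). I need to check $UV = VU$ for every $V \in E(\Gamma)$, i.e. for every compact open $V \subseteq \G^0$. Writing $V = \{x : x \in V\}$ as identities, the product $UV$ consists of those $u \in U$ with $d(u) \in V$, while $VU$ consists of those $u \in U$ with $r(u) \in V$; since $d(u) = r(u)$ for $u \in U$, these coincide. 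Hence $U \in Z_\Gamma(E(\Gamma))$.

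For the reverse inclusion $Z_\Gamma(E(\Gamma)) \subseteq \Gamma'$, let $U$ be a compact local bisection with $UV = VU$ for all compact open $V \subseteq \G^0$. I want to show $U \subseteq \int(\Is(\G))$. First I would show $U \subseteq \Is(\G)$: fix $u \in U$, and suppose for contradiction that $d(u) \neq r(u)$. Using that $\G^0$ is Hausdorff with a basis of compact open sets, choose a compact open $V \subseteq \G^0$ with $d(u) \in V$ but $r(u) \notin V$ (or vice versa). Then $u \in UV$ since $d(u) \in V$, but $u \notin VU$ since $r(u) \notin V$; more care is needed because conceivably some \emph{other} arrow of $VU$ lies over the same pair of units, but $U$ being a bisection means $d|_U$ is injective, so the only candidate in $U$ mapping to $d(u)$ under $d$ is $u$ itself, and $VU \subseteq U$. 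This contradicts $UV = VU$. Hence every $u \in U$ lies in $\Is(\G)$, so $U \subseteq \Is(\G)$. Since $U$ is open in $\G$, this gives $U \subseteq \int(\Is(\G))$, and as $U$ is a compact local bisection of $\G$ contained in the open subgroupoid $\int(\Is(\G))$, it is a compact local bisection of $\int(\Is(\G))$, i.e. $U \in \Gamma'$.

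The main obstacle is the bookkeeping in the reverse inclusion: one must be careful that the equality $UV = VU$ of \emph{sets} of arrows genuinely forces the unit-space condition, which relies crucially on $d|_U$ and $r|_U$ being injective (the bisection hypothesis) together with the non-Hausdorff-robust fact that $\G^0$ is Hausdorff, so that units can be separated by compact open sets. A clean way to package this is to note that for a compact local bisection $U$, the set $UV$ is just $U \cap d^{-1}(V) = (d|_U)^{-1}(V)$ and $VU = U \cap r^{-1}(V) = (r|_U)^{-1}(V)$; then $UV = VU$ for all compact open $V \subseteq \G^0$ says exactly that $(d|_U)^{-1}(V) = (r|_U)^{-1}(V)$ for all such $V$, and since the compact open subsets of $\G^0$ separate points (and $d|_U, r|_U$ are injective), this forces $d|_U = r|_U$, i.e. $U \subseteq \Is(\G)$. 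Openness of $U$ then finishes it. I would present the argument in this packaged form to keep it short.
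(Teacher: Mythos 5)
Your proposal is correct and follows essentially the same route as the paper's proof: both directions proceed by computing $UV=\{g\in U: d(g)\in V\}$ and $VU=\{g\in U: r(g)\in V\}$ for $V\in E(\Gamma)$, and the converse uses exactly the same separation of $d(g)$ from $r(g)$ by a compact open subset of the Hausdorff unit space to derive a contradiction. The extra bookkeeping you worry about (other arrows of $VU$ hitting the same units) is not needed, since $VU\subseteq U$ and the containment $u\in UV\setminus VU$ already contradicts equality of sets.
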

\begin{Proof}
Suppose first that $U\in \Gamma'$. Now, given such a $U$,  if $V\in E(\Gamma)$, then $V$ is a compact open
subset of the unit space and so $VU= \{g\in U: r(g)\in V\}=\{g\in U: d(g)\in V\}=UV$ as $d(g)=r(g)$ for all $g\in U$.  Thus $U\in Z_{\Gamma}(E(\Gamma))$.   Conversely, assume that $U\in \Gamma$ centralizes $E(\Gamma)$.  Suppose that there exists $g\in U$ with $d(g)\neq r(g)$.  Then since the unit space of $\G$ is Hausdorff, we can find $V\in E(\Gamma)$ with $d(g)\in V$ and $r(g)\notin V$.  Then $g\in UV$ and $g\notin VU$, contradicting $UV=VU$.  Thus $U\in \Gamma'$.  This completes the proof.
\end{Proof}

\begin{Prop}
\label{translation}
Let $\G$ be an ample groupoid and $\Gamma$ its inverse semigroup of compact local bisections.
\begin{itemize}
\item [(1)] $\G$ is effective if and only if $\Gamma$ is fundamental;
\item [(2)] $\G$ is minimal if and only if $\Gamma$ is additively $0$-simple;
\item [(3)] $\G$ is topologically free if and only if $\Gamma$ is quasi-fundamental.
\end{itemize}
\end{Prop}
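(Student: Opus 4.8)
The plan is to verify each of the three equivalences by translating the topological condition on $\G$ into a statement about the inverse semigroup $\Gamma$ of compact local bisections, using the descriptions already assembled in the excerpt. For (1), recall that $\G$ is effective iff $\G^0 = \int(\Is(\G))$. By Proposition~\ref{p:centralizerofidems}, $Z_\Gamma(E(\Gamma)) = \Gamma'$, the inverse semigroup of compact local bisections of $\int(\Is(\G))$. If $\G$ is effective, then $\int(\Is(\G)) = \G^0$, so $\Gamma' = E(\Gamma)$, i.e.\ $\Gamma$ is fundamental. Conversely, if $\G$ is not effective, then $\int(\Is(\G)) \supsetneq \G^0$, so there is a nonempty open subset of $\int(\Is(\G))\setminus\G^0$; since compact local bisections form a basis, we can find a nonzero $U \in \Gamma'$ with $U \cap \G^0 = \emptyset$ (pick a small basic bisection around a point $g$ with $d(g)=r(g)\ne g$), and then $U \in Z_\Gamma(E(\Gamma))\setminus E(\Gamma)$, so $\Gamma$ is not fundamental.

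For (2), I would match up minimality of $\G$ with additive $0$-simplicity of $\Gamma$ via the correspondence between open invariant subsets of $\G^0$ and additive ideals of $\Gamma$. Given a proper nonzero additive ideal $I \lhd \Gamma$, the set $X_I = \bigcup_{e \in I \cap E(\Gamma)} e$ is an open, invariant, proper nonempty subset of $\G^0$: it is open and invariant because it is a union of compact open subsets of $\G^0$ closed under the partial action (any $U \in I$ forces $U^*U, UU^* \in I \cap E(\Gamma)$, and $UeU^* \in I$ for $e \in E(\Gamma)$ moves the domain idempotent to the range idempotent), it is nonempty since $I \ne 0$ and it is proper since $I \ne \Gamma$ (otherwise $\G^0 \in I$, and as $\G^0$ is a directed union of compact opens one recovers all of $\Gamma$). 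Conversely, from a proper nonempty open invariant $X \subsetneq \G^0$ one forms $I_X = \{U \in \Gamma : d(U) \subseteq X\}$, which is an additive ideal that is nonzero (use a basic compact open inside $X$) and proper (any $g \notin \G|_X$ lies in a compact local bisection disjoint from $I_X$, using invariance of $X$ to keep the whole bisection outside). This gives the order-reversing bijection, so one side has a proper nonzero object iff the other does.

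For (3), the argument parallels (1), replacing effectiveness by topological freeness and fundamental by quasi-fundamental, and invoking Lemma~\ref{l:qfundamental} only if needed. Topological freeness says $\G^0$ is dense in $\int(\Is(\G))$, i.e.\ $\int(\Is(\G)\setminus\G^0) = \emptyset$. In semigroup terms I claim this says: every nonzero $U \in \Gamma' = Z_\Gamma(E(\Gamma))$ lies above a nonzero idempotent, i.e.\ $\Gamma$ is quasi-fundamental. If $\G$ is topologically free and $0 \ne U \in \Gamma'$, then $U \subseteq \int(\Is(\G))$ is a nonempty open set, so by density of $\G^0$ it meets $\G^0$; since $\G^0$ is open and $\Gamma$-sets form a basis, $U \cap \G^0$ contains a nonzero compact open $e \in E(\Gamma)$, and $e \le U$ (an element of $U$ that is a unit is its own $d$-image, so the germ of $U$ there is a unit). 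Conversely, if $\G$ is not topologically free, pick $g$ with $d(g)=r(g)\ne g$ in the interior of $\Is(\G)\setminus\G^0$ and a basic compact local bisection $U \ni g$ with $U \subseteq \int(\Is(\G))$ and $U \cap \G^0 = \emptyset$; then $0 \ne U \in Z_\Gamma(E(\Gamma))$ has no nonzero idempotent below it (such an idempotent would be a unit in $U$, contradicting $U \cap \G^0 = \emptyset$), so $\Gamma$ is not quasi-fundamental.

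The main obstacle is the bookkeeping in item (2): one must be careful that the map $I \mapsto X_I$ really lands in \emph{invariant} open sets and that properness is preserved in both directions, the subtle point being that $\G^0$ itself need not be a compact local bisection, so ``$X = \G^0$'' must be detected through the directed family of compact opens covering it rather than through a single idempotent. Once the dictionary between additive ideals and open invariant subsets is set up cleanly, items (1) and (3) are short consequences of Proposition~\ref{p:centralizerofidems} together with the Hausdorffness of $\G^0$ (used to separate $d(g)$ from $r(g)$ and to find idempotents inside $\G^0$).
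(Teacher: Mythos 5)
Your items (2) and (3) are fine: (3) is essentially the paper's argument, and for (2) you supply a direct verification of the bijection between additive ideals and open invariant subsets that the paper simply cites from the literature (your treatment of properness via compactness of $V^\ast V$ and orthogonalized joins is the right way to close that loop). The problem is in the converse direction of (1).

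There you argue: if $\G$ is not effective, then $\int(\Is(\G))\supsetneq\G^0$, ``so there is a nonempty open subset of $\int(\Is(\G))\setminus\G^0$,'' from which you extract a compact local bisection $U\in\Gamma'$ with $U\cap\G^0=\emptyset$. This step is false in general: $\int(\Is(\G))\setminus\G^0$ being nonempty does not give it nonempty interior. Indeed, ``$\int(\Is(\G)\setminus\G^0)=\emptyset$'' is precisely the definition of topological freeness, so what you have actually shown is that \emph{not topologically free} implies \emph{not fundamental} --- i.e.\ you have reproved one direction of (3), not of (1). The distinction is not vacuous: the paper's own example in the last section (the $C_2$-action where $a$ strongly fixes all but two letters) produces a groupoid that is topologically free but not effective, and there every neighborhood of the offending isotropy germ $[a,\varepsilon]$ meets the unit space, so no $U\ni[a,\varepsilon]$ with $U\cap\G^0=\emptyset$ exists. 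The fix is easy and is what the paper's one-line proof implicitly does: the compact local bisections contained in $\Is(\G)$ form a basis for $\int(\Is(\G))$, so $\G$ is effective iff every such bisection lies in $\G^0$, i.e.\ iff $\Gamma'=E(\Gamma)$; for the converse you only need a $U$ with $g\in U\subseteq\Is(\G)$ for some $g\in\int(\Is(\G))\setminus\G^0$, and then $U\in Z_\Gamma(E(\Gamma))\setminus E(\Gamma)$ because $U\not\subseteq\G^0$ --- no disjointness from $\G^0$ is needed or available.
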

\begin{Proof}
Since a basis for the interior of the isotropy subgroupoid $\mathrm{Iso}(\G)$ is the set of $U\in \Gamma$ with $U\subseteq \mathrm{Iso}(\G)$, we see that $\G$ is effective if and only if $\Gamma'=E(\Gamma)$, where $\Gamma'$ is the inverse semigroup of compact local bisections of $\int(\Is(\G))$.  We may then deduce (1) from Proposition~\ref{p:centralizerofidems}.

It is shown in~\cite{LawsonVdovina} that the lattice of additive ideals of $\Gamma$ is isomorphic to the lattice of open invariant subsets of the unit space of $\G$.  Thus $\Gamma$ is additively $0$-simple if and only if $\G$ is minimal.  The correspondence sends an additive ideal $I$ to $\bigcup_{U\in I\cap E(\Gamma)} U$ and takes an open invariant subset $W$ of the unit space to \[\bigcup\limits_{U\in E(\Gamma), U\subseteq W}\Gamma U\Gamma\]  (this bijection is folklore and can also be essentially found in~\cite{LalondeMilanK} for tight groupoids of inverse semigroups).

If $\G$ is topologically free and $0\neq U\in Z_\Gamma(E(\Gamma))$, then by Proposition~\ref{p:centralizerofidems} we have
that $U\subseteq \int(\Is(\G))$ and hence $U\cap \G^0\neq\emptyset$ because $\G$ is topologically free.  Since $\G^0$ is open, as is $U$, we can find a compact open subset $V\neq \emptyset$ with $V\subseteq U\cap \G^0$ and then in $\Gamma$ we have $0\neq V\leq U$ with $V\in E(\Gamma)$.  Thus $\Gamma$ is quasi-fundamental.  Conversely, if  $\G$ is not topologically free, we can find $g\in \int(\Is(\G)\setminus \G^0)$.  Then we can find a compact local bisection $V$ with $g\in V\subseteq \Is(\G)\setminus \G^0$.
 We deduce from Proposition~\ref{p:centralizerofidems} that $V\in Z_\Gamma(E(\Gamma))$.  Since the idempotents of $\Gamma$ are the compact open subsets of $\G^0$ and the natural partial order on $\Gamma$ is inclusion, we deduce from $V\cap \G^0=\emptyset$ that there is no non-zero idempotent below $V$ and hence $\Gamma$ is not quasi-fundamental.  This completes the proof of (3).
\end{Proof}

\subsection{Inverse semigroup and ample groupoids algebras}
If $S$ is an inverse semigroup and $K$ is a commutative ring with unit, then the semigroup algebra $KS$ is the $K$-algebra with basis $S$ and product extending that of $S$.  For semigroups with zero, it is convenient to identify the zeroes of $S$ and of $K$.  So the \emph{contracted semigroup algebra} $\K S$ of $S$ is the $K$-algebra with basis $\S=S\setminus \{0\}$ and multiplication extending that of $S$ (where the zero of $S$ and $\K S$ are the same).  Of course, $KS = \K[S^0]$, where $S^0$ is the result of adjoining an external zero to $S$, and so there is no real loss of generality sticking with contracted semigroup algebras.  Note that the natural map $S\to \K S$ is the universal zero-preserving homomorphism of $S$ into a $K$-algebra.

If $K$ is a commutative ring with unit and $\G$ is an ample groupoid, the so-called \emph{Steinberg algebra}~\cite{mygroupoidalgebra} $K\G$ is the $K$-span of the characteristic functions of compact local bisections (in $K^{\G}$) with the convolution product
\[f\ast g(\gamma) = \sum_{\alpha\beta=\gamma}f(\alpha)g(\beta).\]  The $K$-algebra $K\G$ is associative, but is unital if and only if $\G^0$ is compact.  If $\Gamma$ is the inverse semigroup of compact local bisections, then $\chi_U\ast\chi_V = \chi_{UV}$ for $U,V\in \Gamma$ (where $\chi_X$ denotes the characteristic function of a set $X$).  Thus there is a surjective $K$-algebra homomorphism $\Psi_K\colon \K \Gamma\to K\G$.  The kernel of this map is generated as an ideal by all $U+V-(U\cup V)$ with $U,V$ disjoint compact open subsets of $\G^0$.  This was proved in~\cite{mygroupoidalgebra} when $\G$ is Hausdorff and in the general case by A.~Buss and R.~Meyer (private communication).  We provide a proof in Corollary~\ref{c:grpdpres} as a consequence of a more general result.

  If $\G$ is Hausdorff, then $K\G$ consists of the locally constant $K$-valued functions on $S$ with compact support.  In particular, the support of any function is compact open in the Hausdorff case.  The situation for non-Hausdorff groupoids is quite different.  We are primarily interested in the case where $K$ is a field here, but sometimes we shall need to deal with more general rings.  The following theorem was the initial motivation for introducing Steinberg algebras and explains why the study of simplicity of Steinberg algebras and contracted inverse semigroup algebras are interconnected; see~\cite[Theorem~5.1]{groupoidprimitive}, although the analogue for inverse semigroup algebras was proved earlier in~\cite{mygroupoidalgebra}.

\begin{Thm}
\label{isom}
If $S$ is an inverse semigroup with zero and $\G(S)$ is the universal groupoid of $S$, then $\K S$ and $\stein S$ are isomorphic via the linear map which sends $s\in \S$ to $\chi_{(s, D(s^\ast s))}$.
\end{Thm}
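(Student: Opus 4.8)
The plan is to show that the $K$-linear map $\varphi\colon\K S\to\stein S$ determined on the basis $\S$ by $\varphi(s)=\chi_{(s,D(s^\ast s))}$ is a $K$-algebra isomorphism, proceeding in three steps: $\varphi$ is a homomorphism, $\varphi$ is surjective, and $\varphi$ is injective.

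For the homomorphism property I would use that $S$ embeds into the Boolean inverse semigroup $\Gamma$ of compact local bisections of $\G(S)$ via $s\mapsto(s,D(s^\ast s))$, and that $\chi_U\ast\chi_V=\chi_{UV}$ in $K\G(S)$; it then suffices to check the identity $(s,D(s^\ast s))(t,D(t^\ast t))=(st,D((st)^\ast(st)))$ in $\Gamma$. This is a short computation with germs: $[s,x][t,y]$ is defined exactly when $x=\beta_t(y)$, in which case it equals $[st,y]$, and $\beta_t(y)\in D(s^\ast s)$ iff $y\in D(t^\ast s^\ast s t)=D((st)^\ast(st))$; when $st=0$ the right-hand side is the empty bisection, consistent with the fact that $st=0$ iff $(st)^\ast(st)=0$. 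For surjectivity, I would observe that the image of $\varphi$ is a $K$-subalgebra containing $\chi_{(s,D(e))}=\varphi(se)$ for every $e\le s^\ast s$, and hence, by an inclusion--exclusion identity, also every function $\chi_{(s,\,D(e)\cap D(e_1)^c\cap\cdots\cap D(e_n)^c)}$. Since every compact open subset of $\widehat{E(S)}$ is a finite disjoint union of such basic compact opens, and the bisections $(s,U)$ with $U\subseteq D(s^\ast s)$ compact open form a basis of $\G(S)$, the span of these functions is all of $K\G(S)$.

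The heart of the proof is injectivity. Suppose $0\neq a=\sum_{s\in X}c_s s$ with $X\subseteq\S$ finite, all $c_s\neq 0$, and $\varphi(a)=0$; I aim for a contradiction. I would pick $s_0\in X$ that is maximal in $X$ for the natural partial order, and let $x_0\in\widehat{E(S)}$ be the character given by the characteristic function of the principal filter ${\uparrow}(s_0^\ast s_0)$ (a filter since $s_0^\ast s_0\neq 0$). Since $x_0(s_0^\ast s_0)=1$, the germ $[s_0,x_0]$ is an arrow of $\G(S)$, and evaluating the function $\varphi(a)$ at it yields $\sum_{s\in X}c_s\,\chi_{(s,D(s^\ast s))}([s_0,x_0])=0$. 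The key point is that $\chi_{(s,D(s^\ast s))}([s_0,x_0])=1$ holds precisely when $s_0\le s$: for $[s,x_0]=[s_0,x_0]$ one needs $0\neq u\le s,s_0$ with $x_0(u^\ast u)=1$, but $u\le s_0$ forces $u^\ast u\le s_0^\ast s_0$ while $x_0(u^\ast u)=1$ means $u^\ast u\ge s_0^\ast s_0$, so $u^\ast u=s_0^\ast s_0$, whence $u=s_0$ and $s_0\le s$. By maximality of $s_0$ in $X$ the sum reduces to $c_{s_0}=0$, contradicting $c_{s_0}\neq 0$.

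I expect the one genuine obstacle to be finding the right point of $\G(S)$ at which to evaluate $\varphi(a)$ so that it isolates a single term; the crucial realization is that over a \emph{principal} character $x_{s_0^\ast s_0}$ the germs of $\G(S)$ collapse exactly along the natural order of $S$, so a $\le$-maximal element of $\supp(a)$ becomes isolated there. Ultracharacters do not suffice for this --- when $E(S)$ fails to be $0$-disjunctive distinct support elements can remain identified at every ultracharacter --- which is why the argument must use the full spectrum $\widehat{E(S)}$ rather than only its tight part. (One could instead deduce the result from the presentation of $K\G(S)$ as a quotient of $\K\Gamma$ recalled above, but the direct evaluation argument seems cleaner and self-contained.)
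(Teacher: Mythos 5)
Your proof is correct. The paper does not actually prove Theorem~\ref{isom} --- it imports it from \cite{groupoidprimitive} and \cite{mygroupoidalgebra} --- and your argument is essentially the standard one from those sources: multiplicativity reduces to the identity $(s,D(s^\ast s))\,(t,D(t^\ast t))=(st,D((st)^\ast(st)))$ among compact local bisections, and injectivity comes from evaluating at the germ of a $\leq$-maximal support element over the \emph{principal} character of its domain idempotent, which is exactly the right device (your remark that ultracharacters would not separate the terms is also on point; it is precisely why the tight algebra is a proper quotient in general). The one step you state more tersely than a complete write-up requires is the end of surjectivity: having $\chi_{(s,B)}$ for every basic compact open $B\subseteq D(s^\ast s)$ in the image, one must still express $\chi_V$ for an \emph{arbitrary} compact local bisection $V$ in terms of these, which is done by covering $V$ with finitely many basic bisections $(s_i,U_i)\subseteq V$ and applying inclusion--exclusion inside $V$ (the pairwise intersections are again basic compact bisections because $V$ is homeomorphic to a subset of the Hausdorff unit space, so they are compact open). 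In the non-Hausdorff setting this is a genuine, though standard, point rather than an immediate consequence of the $(s,U)$ forming a basis.
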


The following reformulation of Theorem~\ref{isom} will be useful.

\begin{Cor}
\label{isomnice}
For any $a=\sum\limits_{s \in \S}a_ss \in \K S$, let
\[f_a \colon \G(S) \to K,\quad  [t, \varphi]\mapsto \sum\limits_{[u, \varphi]=[t,\varphi]}a_u.\]
Then the map $\Phi\colon \K S \to\stein S, a \mapsto f_a$ is an isomorphism.
\end{Cor}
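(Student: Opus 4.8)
The plan is to deduce this reformulation directly from Theorem~\ref{isom}, by tracking what happens to a general element $a = \sum_{s\in\S} a_s s$ under the isomorphism there. Theorem~\ref{isom} gives a $K$-linear isomorphism $\K S \to \stein S$ sending the basis element $s\in\S$ to $\chi_{(s,D(s^\ast s))}$; call this map $\Phi$. By linearity, $\Phi(a) = \sum_{s\in\S} a_s \chi_{(s,D(s^\ast s))}$, and this is a function on $\G(S)$, so all that remains is to evaluate it at an arbitrary germ $[t,\varphi]$ and check the value agrees with the formula defining $f_a$.

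First I would fix a germ $[t,\varphi] \in \G(S)$, where $\varphi \in \widehat{E(S)}$ with $\varphi(t^\ast t) = 1$. For each $s\in\S$, the characteristic function $\chi_{(s,D(s^\ast s))}$ takes the value $1$ at $[t,\varphi]$ precisely when $[t,\varphi] \in (s, D(s^\ast s))$, i.e.\ when there is a representative of $[t,\varphi]$ of the form $(s,\psi)$ with $\psi\in D(s^\ast s)$; unwinding the definition of the germ equivalence $\sim$, this happens exactly when $\varphi(s^\ast s) = 1$ and $[s,\varphi] = [t,\varphi]$, which is the condition written as $[u,\varphi] = [t,\varphi]$ in the statement (with $u = s$; note that $[u,\varphi]=[t,\varphi]$ already forces $\varphi$ to be in the domain $D(u^\ast u)$, so the two formulations coincide). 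Hence
\[
\Phi(a)([t,\varphi]) \;=\; \sum_{s\in\S} a_s\, \chi_{(s,D(s^\ast s))}([t,\varphi]) \;=\; \sum_{[u,\varphi]=[t,\varphi]} a_u \;=\; f_a([t,\varphi]).
\]
This shows $\Phi(a) = f_a$ for every $a$, so the map $a\mapsto f_a$ coincides with the isomorphism $\Phi$ of Theorem~\ref{isom} and is therefore itself an isomorphism.

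The only genuinely substantive point is the unwinding in the previous paragraph: one must check that $[s,\varphi]=[t,\varphi]$ is equivalent to the membership $[t,\varphi]\in(s,D(s^\ast s))$, and in particular that the sum indexing set $\{u\in\S : [u,\varphi]=[t,\varphi]\}$ is exactly the set of $s$ contributing a $1$. This is immediate from the definitions of the germ topology and of $\sim$ recalled in Section~\ref{s:prelim}, and there is also the trivial observation that the (possibly infinite) sum defining $f_a$ is really finite, since only finitely many $a_s$ are nonzero. So I do not expect any real obstacle here; the corollary is essentially a bookkeeping restatement of Theorem~\ref{isom} in a form convenient for later computations with the convolution product.
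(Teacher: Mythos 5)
Your proposal is correct and is essentially the paper's own argument: both reduce the claim to the single observation that $[t,\varphi]\in(s,D(s^\ast s))$ if and only if $[s,\varphi]=[t,\varphi]$, combined with linearity and Theorem~\ref{isom}. The only cosmetic difference is direction — the paper checks $\Phi(s)=\chi_{(s,D(s^\ast s))}$ on basis elements and extends by linearity, whereas you expand $\Phi(a)$ for a general $a$ and evaluate pointwise — but these are the same computation.
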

\begin{Proof}
Notice that $\Phi$ is linear, so by Theorem~\ref{isom}, it suffices to check that $\Phi(s)=\chi_{(s, D(s^\ast s))}$ for any $s \in \S$. By definition,
$$f_s([t, \varphi])=
\begin{cases}
1, \hbox{ if }[s, \varphi]=[t, \varphi]\\
0, \hbox{ otherwise, }
\end{cases}$$
and we have $(s, D(s^\ast s))=\{[s, \varphi] \in \mathcal G(S): \varphi \in D(s^\ast s)\}=\{[t, \varphi] \in \G(S): [t, \varphi]=[s, \varphi]\}$. So $f_s=\chi_{(s, D(s^\ast s))}$ indeed.
\end{Proof}

Recall that the idempotents of any semigroup (and hence any ring) are partially ordered by putting $e\leq f$ if $ef=e=fe$.
In any ring $R$, given two commuting idempotents $e,f$, they have a join $e\vee f=e+f-ef$ and a relative complement $e\setminus f=e-ef$ in the natural partial order on idempotents.  They also have meet $ef$.  Notice that if $ef=0=fe$, i.e., the idempotents are orthogonal, then $e\vee f=e+f$.  More generally, any set of pairwise commuting idempotents in a ring generates a Boolean algebra under these operations.  In particular, in $KS$ (or any quotient of $KS$), we have that $E(S)$ (or its image) generates a Boolean algebra.

Let $\G_T(S)$ be the tight groupoid of an inverse semigroup.  We wish to give a generating set for the kernel of the natural homomorphism $\K S\to \tight S$ as an ideal.  This was done in~\cite[Corollary 5.3]{groupoidprimitive} for the Hausdorff case;  we handle here the general case.

Let $\G$ be an ample groupoid and $X$ a closed invariant subspace of the unit space with open invariant complement $X^c$.  Then  $K\G|_{X^c}$ is an ideal in $K\G$ with $K\G/K\G|_{X^c}\cong K\G|_X$; more precisely, restricting an element $f\in K\G$ to $\G|_X$ gives a valid element of $K\G|_X$, the restriction homomorphism is onto and the kernel of the restriction homomorphism is $K\G|_{X^c}$ (this is not obvious in the non-Hausdorff case).  A proof is given in the discussion following Proposition~5.3 of~\cite{gcrccr}.

\begin{Prop}\label{p:presentation}
Let $S$ be an inverse semigroup, $X$ a closed invariant subspace of the unit space of its universal groupoid $\G(S)$ and $K$ a commutative ring with unit. Then \[K\G(S)|_X\cong KS/\left(\prod_{i=1}^n(e-e_i)\mid e_i\leq e, D(e)\cap D(e_1)^c\cap\cdots \cap D(e_n)^c\cap X=\emptyset\right)\] (where possibly $n=0$).
\end{Prop}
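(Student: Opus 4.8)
The plan is to identify the kernel of the composite surjection $KS \twoheadrightarrow K\G(S) \twoheadrightarrow K\G(S)|_X$ and show it is generated by the stated products. First I would recall from the paragraph preceding the statement that, since $X$ is closed and invariant with open invariant complement $X^c$, restriction gives a surjection $K\G(S) \to K\G(S)|_X$ whose kernel is the ideal $K\G(S)|_{X^c}$; hence it suffices to describe the preimage of this kernel in $KS$. Composing with the isomorphism $\Phi\colon KS \to K\G(S)$ of Corollary~\ref{isomnice} (which sends $e \in E(S)$ to $\chi_{D(e)}$, and where idempotent joins and relative complements in $KS$ correspond to the Boolean operations on compact open subsets of $\widehat{E(S)}$), I would unwind what $\prod_{i=1}^n (e - e_i)$ becomes: under $\Phi$ it is the characteristic function of $D(e) \cap D(e_1)^c \cap \cdots \cap D(e_n)^c$, because $e - e_i \mapsto \chi_{D(e)} - \chi_{D(e_i)} = \chi_{D(e)\setminus D(e_i)}$ when $e_i \le e$, and products of such characteristic functions are characteristic functions of intersections. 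So the condition $D(e)\cap D(e_1)^c\cap\cdots\cap D(e_n)^c\cap X = \emptyset$ says exactly that this basic compact open subset of $\G(S)^0$ lies in $X^c$, i.e.\ $\Phi$ carries the proposed generator into $K\G(S)|_{X^c}$.

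The two containments then go as follows. For "$\subseteq$" (the proposed generators lie in the kernel): the image under $\Phi$ of each generator is the characteristic function of a compact open subset of the unit space contained in $X^c$, which is visibly an element of $K\G(S)|_{X^c}$; hence the ideal $J$ they generate maps into $K\G(S)|_{X^c}$, so $KS/J \to K\G(S)|_X$ is well-defined. For "$\supseteq$" (the kernel is no bigger): I need that $J$ already contains the full preimage of $K\G(S)|_{X^c}$. Since $K\G(S)|_{X^c}$ is spanned as a $K$-module (in fact generated as an ideal) by characteristic functions $\chi_V$ of compact open $V \subseteq X^c$ with $V \subseteq \G(S)^0$, and since the basic sets $D(e)\cap D(e_1)^c\cap\cdots\cap D(e_n)^c$ form a basis of compact open sets for $\widehat{E(S)}$, any such $V$ is a finite disjoint union of basic sets each contained in $X^c$; so $\chi_V = \Phi\bigl(\sum_j p_j\bigr)$ where each $p_j = \prod_i (e^{(j)} - e^{(j)}_i)$ is one of the listed generators (the disjointness making the sum of characteristic functions a characteristic function), proving $\Phi^{-1}(K\G(S)|_{X^c}) \subseteq J$ on unit-space elements, and then closing up under the $KS$-ideal structure. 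Combining the two containments, $J = \ker(KS \to K\G(S)|_X)$, which is the claimed presentation.

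The main obstacle is the "$\supseteq$" direction, specifically the passage from "$f$ restricts to $0$ on $\G(S)|_X$" to "$f \in K\G(S)|_{X^c}$" in the non-Hausdorff case: one cannot simply read off the support of $f$, since in a non-Hausdorff ample groupoid the support of an element of $K\G$ need not be open. This is precisely the subtle point flagged in the paragraph before the statement, and the honest route is to invoke the identification of $K\G/K\G|_{X^c}$ with $K\G|_X$ from~\cite{gcrccr} (Proposition~5.3 and the surrounding discussion), which packages exactly this fact. Granting that, the only remaining care is bookkeeping: checking that the Boolean algebra generated by the idempotents in $KS$ maps isomorphically onto the Boolean algebra of compact open subsets of $\widehat{E(S)}$ under $\Phi$ (so that "disjoint union of basics" lifts to "sum of generators"), and that one may restrict attention to ideal generators supported on the unit space because $K\G(S)|_{X^c}$ is generated as an ideal by its unit-space elements. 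Both are routine given the structure theory already recalled in Section~\ref{s:prelim}.
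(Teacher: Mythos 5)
Your proposal is correct and follows essentially the same route as the paper: reduce to identifying the preimage of $K\G(S)|_{X^c}$ under the isomorphism $KS\cong K\G(S)$ using the restriction result cited from~\cite{gcrccr}, observe that $K\G(S)|_{X^c}$ is generated as an ideal by characteristic functions of compact open subsets of $X^c$ (via $\chi_U=\chi_U\ast\chi_{U^\ast U}$), and decompose these into basic compact opens whose characteristic functions are the images of $\prod_i(e-e_i)$. The only cosmetic difference is that the paper writes $\chi_U$ for a finite union of basics as an alternating sum of products via inclusion--exclusion, whereas you refine to a disjoint union of basics, which is also valid but quietly relies on the (routine) fact that the basic sets form a semiring so that such a disjoint refinement exists.
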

\begin{Proof}
First note that if $U$ is a compact local bisection in $\G(S)|_{X^c}$, then $\chi_U = \chi_U\ast \chi_{U^\ast U}$ and so $I=K\G(S)|_{X^c}$ is generated as an ideal by the characteristic functions of compact open subsets of $X^c$.
If $U\subseteq X^c$ is compact open, then $U=\bigcup_{i=1}^nB_i$ where $B_i$ are basic compact open subsets of $\widehat {E(S)}$ (necessarily satisfying $B_i\cap X=\emptyset$) and hence $\chi_U=\bigvee_{i=1}^n \chi_{B_i}$.  We deduce that $I$ is generated by the characteristic functions $\chi_B$ of basic compact open subsets $B$ of $X^c$, as a join can be expressed as an alternating sum of products via the principle of inclusion-exclusion.  Such a basic neighborhood $B$ is of the form $B=D(e)\cap D(e_1)^c\cap\cdots \cap D(e_n)^c$ where $e_1,\ldots, e_n\leq e$ with $B\cap X=\emptyset$.  Then $\chi_B=\prod_{i=1}^n (\chi_{D(e)}-\chi_{D(e_i)})$.  Under the isomorphism $KS\to K\G (S)$, we have that $\chi_B$ is the image of $\prod_{i=1}^n(e-e_i)$ and so the proposition follows.
\end{Proof}

As a corollary, we obtain the following result.

\begin{Cor}\label{c:tightpres}
Let $S$ be an inverse semigroup with zero and let $K$ be a commutative ring with unit.  Then
\begin{align*}
\tight S &\cong \K S/\left(e-\bigvee F\mid F\ \text{covers}\ e\right)\\
&\cong \K S/\left(\prod_{f\in F}(e-f)\mid F\ \text{covers}\ e\right).
\end{align*}
Hence the natural map $S\to \tight S$  is the universal homomorphism from $S$ into a $K$-algebra sending covers to joins.
\end{Cor}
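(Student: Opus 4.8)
The plan is to obtain both isomorphisms as instances of Proposition~\ref{p:presentation}, taking for $X$ the closed invariant subspace $\widehat{E(S)}_T$ of tight characters of $\widehat{E(S)}$. Since $\G_T(S)=\G(S)|_{\widehat{E(S)}_T}$ by construction, and since $\K S\cong K\G(S)$ via Theorem~\ref{isom} (working with the contracted universal groupoid), Proposition~\ref{p:presentation} gives
\[
\tight S\;\cong\;\K S/\bigl({\textstyle\prod_{i=1}^{n}(e-e_i)}\mid e_1,\dots,e_n\le e,\ D(e)\cap D(e_1)^c\cap\cdots\cap D(e_n)^c\cap\widehat{E(S)}_T=\emptyset\bigr),
\]
so the whole content is to recognize the indexing condition as ``$\{e_1,\dots,e_n\}$ covers $e$'' and then to pass between the two forms of the generators. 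I would therefore first derive the second displayed isomorphism of the corollary and afterwards convert it to the first.

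So, first I would prove: for $e_1,\dots,e_n\le e$ in $E(S)$, the basic set $D(e)\cap D(e_1)^c\cap\cdots\cap D(e_n)^c$ meets $\widehat{E(S)}_T$ if and only if $\{e_1,\dots,e_n\}$ does \emph{not} cover $e$. If $\{e_1,\dots,e_n\}$ covers $e$ and $\theta$ is a tight character with $\theta(e)=1$, then $\theta$ is the characteristic function of a tight filter containing $e$, so by Exel's definition of tightness it contains some $e_i$, giving $\theta(e_i)=1$ and $\theta\notin D(e_i)^c$; hence the intersection with $\widehat{E(S)}_T$ is empty. Conversely, if $\{e_1,\dots,e_n\}$ does not cover $e$, there is $0\ne f\le e$ with $fe_i=0$ for all $i$; choosing an ultrafilter $\mathcal U\ni f$ (which exists by Zorn's lemma and is automatically tight), we get $e\in\mathcal U$ while no $e_i$ lies in $\mathcal U$, since $e_i\in\mathcal U$ would force $0=fe_i\in\mathcal U$. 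Thus the associated ultracharacter lies in $D(e)\cap D(e_1)^c\cap\cdots\cap D(e_n)^c\cap\widehat{E(S)}_T$. Feeding this equivalence into the displayed presentation yields the isomorphism with relations $\prod_{f\in F}(e-f)$ ranging over all finite $F\subseteq e^{\downarrow}$ that cover $e$, i.e.\ the second line of the corollary.

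For the first line I would argue that the two generating sets are \emph{identical} as subsets of $\K S$, not merely that they generate the same ideal. Since the images of idempotents $f\le e$ commute and are idempotent in $\K S$, the ring identities $e\vee f=e+f-ef$ and $e\setminus f=e-ef$ give, by an easy induction on $|F|$ (equivalently, by inclusion--exclusion), that $e-\bigvee F=\prod_{f\in F}(e-f)$ in $\K S$ whenever every $f\in F$ satisfies $f\le e$; in the degenerate case $F=\emptyset$ the emptiness condition forces $e=0$, both sides equal $e$, and the relation is trivial. Hence the two quotients coincide on the nose. Finally, the concluding assertion is just the universal property read off the presentation: a zero-preserving homomorphism $\varphi\colon S\to A$ into a $K$-algebra extends uniquely to $\K S\to A$, and this extension kills every $e-\bigvee F$ with $F$ a cover of $e$ exactly when $\varphi(e)=\bigvee_{f\in F}\varphi(f)$ (a join of commuting idempotents of $A$) for every such cover — that is, exactly when $\varphi$ sends covers to joins; so $S\to\tight S$ is the universal homomorphism with this property.

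The only step that is not pure bookkeeping is the covering/empty-intersection equivalence, and within it the use of Zorn's lemma to manufacture a tight character witnessing the failure of a cover; I expect this to be essentially the only point requiring care. One should also keep in mind that Proposition~\ref{p:presentation} is being applied to the \emph{contracted} universal groupoid, so that its ``$KS$'' reads as ``$\K S$'' here; this is harmless, since $D(0)=\emptyset$ makes every relation indexed by $e=0$ trivial and matches the convention (used already in the proof of Proposition~\ref{p:presentation}) of reading the empty product as $e$ and $\bigvee\emptyset$ as $0$.
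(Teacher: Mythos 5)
Your proposal is correct and follows essentially the same route as the paper: both apply Proposition~\ref{p:presentation} with $X=\widehat{E(S)}_T$ and reduce everything to showing that $D(e)\cap D(e_1)^c\cap\cdots\cap D(e_n)^c$ misses the tight spectrum exactly when $e_1,\dots,e_n$ cover $e$, with an ultrafilter through a witness $0\neq z\leq e$ orthogonal to the $e_i$ handling the non-cover direction. Your explicit verification that $e-\bigvee F=\prod_{f\in F}(e-f)$ and your spelling-out of the universal property are details the paper leaves implicit, but they do not change the argument.
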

\begin{Proof}
Let $X=\widehat{E(S)}_T$ be the tight spectrum of $E(S)$.  We just need to show that $D(e)\cap D(e_1)^c\cap\cdots\cap D(e_n)^c\cap X=\emptyset$ if and only if $e_1,\ldots, e_n$ cover $e$, where possibly $n=0$, in which case $e=0$.
If $e_1,\ldots, e_n$  is a cover of $e$, then $D(e)\cap D(e_1)^c\cap\cdots\cap D(e_n)^c$ cannot contain a tight character.  Conversely, if $e_1,\ldots, e_n$ is not a cover of $e$, then there exists $z$ with $0\neq z\leq e$ and $ze_i=0$ for $i=1,\ldots,n$.  Let $\mathcal F$ be an ultrafilter containing $z$.   Then $e\in \mathcal F$ and $e_1,\ldots, e_n\notin \mathcal F$.  Thus $\chi_{\mathcal F}\in D(e)\cap D(e_1)^c\cap\cdots\cap D(e_n)^c\cap X$ because the characteristic function of an ultrafilter is a tight character.  Since the empty set covers $0$, the result now follows from Proposition~\ref{p:presentation}.
\end{Proof}

As every ample groupoid is isomorphic to one of the form $\G_T(S)$, the above result lets us apply inverse semigroup theory to study groupoid algebras.  We call the ideal of $\K S$ in Corollary~\ref{c:tightpres} the \emph{tight ideal} of $\K S$ and denote it by $\tightid{K}{S}$.  We denote by $\eta$ the canonical homomorphism $\eta\colon S\to \K S/\tightid{K}{S}$.  We shall most of the time identify $\K S/\tightid{K}{S}$ with $\tight S$, but it is important to note that as a function in $\tight S$, we have that $\eta(s) = \chi_{(s,D(s^\ast s)\cap \widehat{E(S)}_T)}$.

Applying Corollary~\ref{c:tightpres} to the case of Boolean inverse semigroups yields:

\begin{Cor}[Buss-Meyer]\label{c:grpdpres}
Let $\G$ be an ample groupoid and $K$ a commutative ring with unit. Let $\Gamma$ be the inverse semigroup of compact local bisections of $\G$.  Then \[K\G\cong \K \Gamma/(U+V-(U\cup V):U\cap V=\emptyset, U,V\in E(\Gamma)).\]
\end{Cor}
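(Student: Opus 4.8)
The plan is to deduce this from Corollary~\ref{c:tightpres} applied to the Boolean inverse semigroup $S=\Gamma$, using Exel's identification $\G\cong\G_T(\Gamma)$. First I would recall that for a Boolean inverse semigroup, Proposition~\ref{Booleanalg}(2) tells us that a finite family $e_1,\ldots,e_n\leq e$ of idempotents covers $e$ precisely when $e=e_1\vee\cdots\vee e_n$ in the Boolean algebra $E(\Gamma)$. So Corollary~\ref{c:tightpres} presents $K\G\cong K\G_T(\Gamma)$ as $\K\Gamma$ modulo the ideal generated by the elements $e-\bigvee F$, where $F=\{e_1,\ldots,e_n\}$ ranges over finite subsets of $E(\Gamma)$ with $\bigvee F=e$. (Recall that in $\K\Gamma$ or any quotient, commuting idempotents generate a Boolean algebra whose join matches the semigroup join, as noted in the paragraph preceding Proposition~\ref{p:presentation}, so the notation $\bigvee F$ is unambiguous.)

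The remaining task is therefore purely algebraic: to show that the ideal $J$ generated by $\{\,e-(e_1\vee\cdots\vee e_n): e_1,\ldots,e_n\in E(\Gamma),\ e_1\vee\cdots\vee e_n=e\,\}$ coincides with the ideal $J'$ generated by $\{\,U+V-(U\cup V): U,V\in E(\Gamma),\ U\cap V=\emptyset\,\}$. For $J'\subseteq J$: if $U\cap V=\emptyset$ then $U\cup V=U\vee V$, and $U+V-(U\cup V)=-(U\vee V-(U+V))$; taking $e=U\vee V$ with the two-element cover $\{U,V\}$, and noting that since $U,V$ are orthogonal idempotents $U\vee V=U+V-UV=U+V$ already holds in $\K\Gamma$... wait, that would make the element zero, so one must be careful here: the point is that $U\vee V$ as computed in the Boolean algebra $E(\Gamma)$ equals $U\cup V$ as a set, whereas $U+V-UV=U+V$ is a different element of $\K\Gamma$ unless $U\cup V$ happens to equal $U+V$ there. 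So the generator $e-(e_1\vee e_2)$ with $e_1\vee e_2=e$ and $e_1e_2=0$ is exactly $(U\cup V)-(U+V)$ up to sign, giving $J'\subseteq J$. For $J\subseteq J'$: given $e=e_1\vee\cdots\vee e_n$, I would use inclusion–exclusion to write $\bigvee_{i=1}^n e_i$ as an alternating sum $\sum_{\emptyset\neq I\subseteq\{1,\ldots,n\}}(-1)^{|I|+1}\prod_{i\in I}e_i$ of products of the $e_i$, and then show by induction on $n$ that $e-\bigvee_{i=1}^n e_i$ lies in $J'$ using the orthogonalization trick: replace $e_1,\ldots,e_n$ by the pairwise orthogonal idempotents $f_1=e_1$, $f_k=e_k\setminus(e_1\vee\cdots\vee e_{k-1})$, which satisfy $f_1\vee\cdots\vee f_n=e_1\vee\cdots\vee e_n$, $f_k\leq e_k$, and are pairwise disjoint as sets; then repeatedly applying the defining relations $U+V-(U\cup V)\in J'$ for disjoint pairs collapses $\sum f_i$ to $\bigcup f_i=e$, while the difference $f_k - e_k$ between the orthogonalized and original idempotents is itself handled by the same relations. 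One also needs that the relations of $J'$ force $\chi_U - (\text{sum of basic pieces})$ into $J'$, exactly as in the proof of Proposition~\ref{p:presentation}.

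The main obstacle is bookkeeping rather than conceptual: carefully tracking how a $K$-linear combination that vanishes in $K\G$ can be rewritten, modulo the two-variable disjointness relations, into the standard form, and confirming that passing between the set-theoretic union $U\cup V$ and the ring-theoretic expression $U+V-UV$ is controlled by these relations. In fact the cleanest route is probably to avoid re-proving Proposition~\ref{p:presentation}'s inclusion–exclusion step from scratch: apply Proposition~\ref{p:presentation} directly with $X=\widehat{E(\Gamma)}_T$, observe via Proposition~\ref{Booleanalg} and the remark after it that the basic compact opens $D(e)\cap D(e_1)^c\cap\cdots\cap D(e_n)^c$ meeting $X$ trivially are exactly those with $e=e_1\vee\cdots\vee e_n$, so the generator $\prod_{i=1}^n(e-e_i)$ simplifies; then a short computation in the Boolean algebra generated by $E(\Gamma)$ in $\K\Gamma/J'$ shows $\prod_{i=1}^n(e-e_i)=e-\bigvee e_i=0$ in that quotient when $\bigvee e_i = e$ and $e_i$ are replaced by an orthogonal refinement, while conversely each defining generator of $J'$ is visibly of the form $e-(e_1\vee e_2)$ up to sign. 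This reduces everything to the single identity, valid in any commutative ring, that for orthogonal idempotents $f_1,\ldots,f_n$ one has $\bigvee f_i = \sum f_i$, together with the observation that the relation $U+V-(U\cup V)$ lets us replace, one disjoint pair at a time, a sum of disjoint characteristic functions by the characteristic function of their union.
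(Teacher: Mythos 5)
Your overall strategy is the one the paper uses: identify $K\G$ with $\tight{\Gamma}$ via Exel's isomorphism $\G\cong\G_T(\Gamma)$, invoke Proposition~\ref{Booleanalg} to translate covers in $E(\Gamma)$ into finite joins, and then show that the two-variable disjointness relations generate exactly the tight ideal $\tightid{K}{\Gamma}$. Your treatment of the easy inclusion (each generator $U+V-(U\cup V)$ is, up to sign, a cover relation $e-\bigvee F$ with $F=\{U,V\}$ orthogonal) is the paper's argument verbatim, and your momentary worry there is resolved correctly: $U\cup V$ is a single basis element of $\K\Gamma$, distinct from the ring element $U+V$. Where you diverge is on the harder inclusion $\tightid{K}{\Gamma}\subseteq I$. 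The paper disposes of it in one line by applying Theorem~\ref{bias.hom} to the Boolean algebra $E(\Gamma)$ and the Boolean algebra generated by its image in $\K\Gamma/I$: since the quotient map preserves joins of orthogonal idempotents by construction of $I$, Wehrung's theorem says it preserves all finite joins, so every cover relation dies in the quotient. You instead propose to prove this special case by hand, via the orthogonalization $f_1=e_1$, $f_k=e_k\setminus(e_1\vee\cdots\vee e_{k-1})$ and an induction showing that the set-theoretic and ring-theoretic joins agree modulo $I$. This works (the induction goes through because meets are literal products and each orthogonalized piece $f_k$ agrees modulo $I$ with its ring-theoretic counterpart $e_k-e_k\bigvee_{i<k}e_i$ once the inductive hypothesis identifies $\bigvee_{i<k}e_i$ with $\bigcup_{i<k}e_i$), but it amounts to reproving the relevant instance of Theorem~\ref{bias.hom}; citing that theorem, as the paper does, buys you the conclusion without the bookkeeping you rightly identify as the main burden of your version.
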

\begin{Proof}
Let $I$ be the ideal generated by all $U+V-(U\cup V)$ with $U,V$ compact open disjoint subsets of the unit space of $\G$.
The isomorphism $\mathcal G \to \mathcal G_T(\Gamma)$ induces an isomorphism $K\G \to \tight \Gamma$ sending $\chi_U$ to $\chi_{(U,D(U^\ast U)\cap \widehat{E(\Gamma)}_T)}$, so it suffices to show that $I=\tightid{K}{\Gamma}$. It follows from Proposition~\ref{Booleanalg} and Corollary~\ref{c:tightpres} that $I\subseteq \tightid{K}{\Gamma}$.  However, the natural map $E(\Gamma)\to \K\Gamma/I$ preserves joins of orthogonal idempotents by the definition of $I$ and hence preserves all joins by Theorem~\ref{bias.hom} (applied to the Boolean algebra $E(\Gamma)$ and the Boolean algebra in $\K\Gamma/I$ generated by the image of $E(\Gamma)$).  Thus $\tightid{K}{\Gamma}\subseteq I$ by Proposition~\ref{Booleanalg} and Corollary~\ref{c:tightpres}. This completes the proof.
\end{Proof}

\section{Simplicity of inverse semigroup algebras and the singular ideal}
\label{s:simpsemialg}
In this section we introduce the algebraic analogue of the ideal of singular functions in the context of contracted inverse semigroup algebras.  We prove a uniqueness theorem for the quotient by the singular ideal for quasi-fundamental inverse semigroups with $0$-disjunctive semilattice of idempotents and use the uniqueness theorem to describe simple contracted inverse semigroup algebras, proving Theorem~B.

\subsection{The singular ideal}
In this subsection assume that $K$ is a commutative ring with unit.
If $a=\sum_{s\in \S}a_ss\in \K S$, then by the support of $a$ we mean the set $\supp(a)$ of $s\in \S$ with $a_s\neq 0$; this is a finite set.

\begin{Prop}
\label{singdef}
Let $S$ be an inverse semigroup with zero, and $\K S$ its contracted algebra. Let $a=\sum\limits_{s \in \S}a_ss$ be an arbitrary element of $\K S$. Then the following are equivalent:
\begin{enumerate}
\item[\namedlabel{singdef:symm}{(S)}] for all $t \in \S$, there exists $0 \neq u \leq t$ such that $\sum_{s \geq u} a_s=0$;
\item[\namedlabel{singdef:right}{(R)}] for all $e \in E\setminus \o$, there exists $0 \neq f \leq e$ such that $af=0$;
\item[\namedlabel{singdef:left}{(L)}] for all $e \in E\setminus \o$, there exists $0 \neq f \leq e$ such that $fa=0$.
\end{enumerate}
\end{Prop}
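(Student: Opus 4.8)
The plan is to prove the two equivalences (S)$\,\Leftrightarrow\,$(R) and (S)$\,\Leftrightarrow\,$(L) separately, deducing the second from the first via the left--right symmetry of the whole setup. Throughout I write $a=\sum_{s\in\S}a_s s$ with $\supp(a)$ finite, and I will lean on one elementary bookkeeping fact: for any $b=\sum_{w\in\S}b_w w\in\K S$ and any $0\neq u\in\S$, the coefficient of $u$ in the product $b\,(u^\ast u)$ equals $\sum_{w\geq u}b_w$, since $w(u^\ast u)=u$ if and only if $u\leq w$. In particular, the sum $\sum_{s\geq u}a_s$ occurring in (S) is exactly the coefficient of $u$ in $a\,(u^\ast u)$.

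Granting this, (R)$\,\Rightarrow\,$(S) is quick: given $t\in\S$, apply (R) to $e=t^\ast t$ to obtain $0\neq f\leq t^\ast t$ with $af=0$; then $u:=tf$ has $u^\ast u=f\neq 0$, so $u\neq 0$ and $u\leq t$, and $\sum_{s\geq u}a_s$ is the coefficient of $u$ in $a\,(u^\ast u)=af=0$. For the other direction I would first record an auxiliary lemma, to be used twice: \emph{if $a$ satisfies (S), then so does $ag$ for every $g\in E(S)$.} To verify it, take $t\in\S$; if $tg\neq 0$, feed $tg$ into (S) for $a$ to get $0\neq u\leq tg$, observe $u^\ast u\leq g$ so that $(ag)(u^\ast u)=a(gu^\ast u)=a(u^\ast u)$, and read off that the coefficient of $u$ there is $\sum_{s\geq u}a_s=0$; if $tg=0$, then $gu^\ast u=0$ for every $u\leq t$, so $(ag)(u^\ast u)=0$ and any $0\neq u\leq t$ works.

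The substance is (S)$\,\Rightarrow\,$(R), which I would establish by induction on $|\supp(a)|$, the case $a=0$ being trivial. Fix $0\neq e\in E(S)$. Since $af=(ae)f$ whenever $f\leq e$, and $ae$ still satisfies (S) by the lemma, I may replace $a$ by $ae$ and so assume $s^\ast s\leq e$ for all $s\in\supp(a)$; note this does not increase the support size. If $\supp(a)=\emptyset$, then $f=e$ does it. Otherwise choose any $r\in\supp(a)$ and apply (S) with $t=r$ to get $0\neq u\leq r$ with $\sum_{s\in\supp(a),\,s\geq u}a_s=0$. Put $h=u^\ast u$, so $0\neq h\leq r^\ast r\leq e$. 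In the product $ah$, every $s\in\supp(a)$ with $s\geq u$ contributes $a_s(su^\ast u)=a_s u$, and these contributions add up to $(\sum_{s\geq u}a_s)u=0$; since $r$ is one of them, $|\supp(ah)|\leq|\supp(a)|-1$. As $ah$ satisfies (S) (lemma again), the inductive hypothesis yields $0\neq f\leq h$ with $(ah)f=0$, whence $af=0$ and $f\leq e$, as needed.

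Finally, condition (S) is literally unchanged when $S$ is replaced by its opposite inverse semigroup $S^{\mathrm{op}}$ --- the idempotents and the natural partial order are the same --- whereas condition (R) interpreted in $\K(S^{\mathrm{op}})$ is precisely condition (L) in $\K S$ (here commutativity of $K$ is used). So applying the equivalence (S)$\,\Leftrightarrow\,$(R) to $S^{\mathrm{op}}$ gives (S)$\,\Leftrightarrow\,$(L), finishing the proof. I expect the only genuinely delicate point to be the inductive step of (S)$\,\Rightarrow\,$(R): the crucial idea is to feed a \emph{support element} $r$, rather than an idempotent, into (S), because then right multiplication by $u^\ast u$ collapses all support elements lying above $u$ --- in particular $r$ itself --- into the single basis element $u$ with coefficient $0$, which is what forces the support to shrink while (S) is preserved. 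The auxiliary lemma and the reduction to $ae$ are routine once the bookkeeping fact is in hand, but should be written out carefully.
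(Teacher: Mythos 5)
Your proof is correct and follows essentially the same route as the paper's: the same computation identifying $\sum_{s\geq u}a_s$ as the coefficient of $u$ in $a(u^\ast u)$, the same auxiliary lemma that (S) is preserved under right multiplication by idempotents, and the same support-shrinking trick of feeding a support element into (S); the only difference is that you run a direct induction on $|\supp(a)|$ where the paper phrases it as a minimal counterexample, and you invoke the opposite semigroup where the paper simply says ``by a symmetric argument.''
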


We will call such an element $a$ \emph{singular} in the sequel.  Note that $0$ is singular.

\begin{Proof}
We prove that~\ref{singdef:symm} and~\ref{singdef:right} are equivalent, the equivalence of~\ref{singdef:symm} and~\ref{singdef:left} follows from a symmetric argument.

Suppose $a$ satisfies~\ref{singdef:right}, and let $t \in \S$. Choose $0 \neq f \leq t^\ast t$ with $af=0$, and put $u=tf$. We prove that $\sum_{s \geq u} a_s=0$.
The condition $s \geq u$ is equivalent to $u=sf$:
if $sf=u$, then clearly $s \geq u$, and conversely if $s \geq u$, then $u=su^\ast u=sft^\ast tf=sf$. Hence the coefficient of $u$ in $af$ is \[\sum\limits_{u=sf} a_s=\sum\limits_{s \geq u} a_s,\] which is of course $0$. So $a$ satisfies~\ref{singdef:symm} indeed.

The converse implication~\ref{singdef:symm} $\Longrightarrow$~\ref{singdef:right} we prove by contradiction.
 Suppose that $a=\sum_{s \in \S}a_ss$ satisfies~\ref{singdef:symm} but not~\ref{singdef:right}, furthermore suppose $a$ has support of minimal cardinality with respect to this property. Note that $a \neq 0$.

We claim that, for any idempotent $e$, $ae$ still satisfies~\ref{singdef:symm}. Put
\[ae=\sum\limits_{s \in \S} a_s(se)=\sum\limits_{v \in \S} b_vv,\] where $b_v=\sum_{se=v}a_s$, and let $t \in \S$. We need $0 \neq u \leq t$ with $\sum_{v \geq u} b_v=0$.
If $te=0$, then $\sum_{v \geq t}b_v=\sum_{se \geq t}a_s=\sum_{s \in \emptyset}a_s=0$, as
 $se \geq t$ would imply $t=set^\ast t=st^\ast te=0$. Hence in this case $ae$ satisfies~\ref{singdef:symm} with $u=t$.

Otherwise, choose $0 \neq u \leq te$ such that  $\sum_{s \geq u}a_s=0$. We claim that $s \geq u$ if and only if $se \geq u$, that is, $su^\ast u=u$ if and only if $seu^\ast u=u$:
notice that, as $u \leq te$, we have $u=ue$ and so $u^\ast u=u^\ast ue=e u^\ast u$, so for any $s \in S$ we have $su^\ast u=se u^\ast u$. This implies
\[0=\sum\limits_{s \geq u}a_s=\sum\limits_{se \geq u}a_s=\sum\limits_{v \geq u}b_v,\]
that is, $ae$ satisfies~\ref{singdef:symm} for any idempotent $e$ indeed.

Since $a$ does not satisfy~\ref{singdef:right}, there exists some idempotent $e \in E\setminus \o$ such that for all $0 \neq f \leq e$, $af \neq 0$. Then $ae$ also does not satisfy~\ref{singdef:right} on account of $e$ (as $aef=af$, for all $f\leq e$) but satisfies~\ref{singdef:symm}, and $|\supp(ae)|\leq |\supp(a)|$. Therefore, by replacing $a$ by $ae$, we may without loss of generality assume $ae=a$ holds for our counterexample, that is, whenever $t \in \supp(a)$, we have $t^\ast t \leq e$ and $af\neq 0$ for all $0\neq f\leq e$.

Let $t \in \supp(a)$, and choose $0 \neq u \leq t$ such that $\sum_{s \geq u}a_s=0$. Notice that $0 \neq u^\ast u \leq t^\ast t \leq e$ and so $au^\ast u\neq 0$.  Again, $au^\ast u$ does not satisfy~\ref{singdef:right} on  account of $u^\ast u$ (since if $0\neq f\leq u^\ast u$, then $f\leq e$ and so $au^\ast uf=af\neq 0$) but satisfies~\ref{singdef:symm} by our above claim. We claim that $|\supp(au^\ast u)|<|\supp(a)|$, which would contradict the minimality of $\supp(a)$. In particular, the coefficient of $u=tu^\ast u$ is
\[\sum\limits_{su^\ast u=u}a_s=\sum\limits_{s\geq u}a_s=0.\]
Since $\supp(au^\ast u)\subseteq \supp(a)u^\ast u$, we obtain $|\supp(au^\ast u)|<|\supp(a)|$,  yielding a contradiction.
\end{Proof}

\begin{Thm}
The set of all singular elements forms an ideal of $\K S$.
\end{Thm}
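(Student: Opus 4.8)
The plan is to work entirely with the three equivalent descriptions of singularity furnished by Proposition~\ref{singdef}, choosing whichever of the one-sided conditions~\ref{singdef:right} or~\ref{singdef:left} is convenient for each closure property. Write $I$ for the set of singular elements of $\K S$; it is nonempty, since $0\in I$. There are three things to check: that $I$ is closed under scalar multiplication, that it is closed under addition, and that it is closed under left and right multiplication by arbitrary elements of $\K S$.

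Scalar multiplication is immediate: if $a\in I$ and $\lambda\in K$, then for any $0\neq e\in E(S)$ a witness $0\neq f\leq e$ with $af=0$ also satisfies $(\lambda a)f=\lambda(af)=0$, so $\lambda a$ satisfies~\ref{singdef:right}. For addition, let $a,b\in I$ and fix $0\neq e\in E(S)$. Apply~\ref{singdef:right} to $a$ to obtain $0\neq f_1\leq e$ with $af_1=0$, and then apply~\ref{singdef:right} to $b$ at the nonzero idempotent $f_1$ to obtain $0\neq f_2\leq f_1$ with $bf_2=0$. Since $f_2\leq f_1$ in the semilattice $E(S)$ we have $f_1f_2=f_2$, hence $af_2=a(f_1f_2)=(af_1)f_2=0$, and therefore $(a+b)f_2=af_2+bf_2=0$ with $0\neq f_2\leq e$. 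Thus $a+b$ satisfies~\ref{singdef:right}, so $a+b\in I$ and $I$ is a $K$-submodule.

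For the ideal property I would use the two one-sided conditions in tandem. Given $a\in I$ and $b\in\K S$, to see $ab\in I$ it suffices to verify~\ref{singdef:left}: for $0\neq e\in E(S)$, choose (using~\ref{singdef:left} for $a$) an idempotent $0\neq f\leq e$ with $fa=0$; then $f(ab)=(fa)b=0$. Symmetrically, to see $ba\in I$ it suffices to verify~\ref{singdef:right}: for $0\neq e\in E(S)$, choose (using~\ref{singdef:right} for $a$) an idempotent $0\neq f\leq e$ with $af=0$; then $(ba)f=b(af)=0$. Hence $I$ is a two-sided ideal.

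The only step carrying any content is closure under addition, where the single idea is to nest the witness idempotent for $b$ below that for $a$; everything else is formal once Proposition~\ref{singdef} is available. So I do not expect a genuine obstacle here — the substantive work was already done in proving the equivalence of~\ref{singdef:symm},~\ref{singdef:right}, and~\ref{singdef:left}.
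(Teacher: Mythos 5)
Your proof is correct and follows essentially the same route as the paper: the same nesting of the witness idempotent for $b$ below the one for $a$ to handle sums, and the same use of~\ref{singdef:left} for $ab$ and~\ref{singdef:right} for $ba$. The only difference is that you spell out closure under scalar multiplication, which the paper leaves implicit.
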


\begin{Proof}
Suppose that $a$, $b$ are singular. Let $0\neq e \in E$ and choose $0 \neq f_a \leq e$ such that $af_a=0$ and $0 \neq f_b \leq f_a$ such that $bf_b=0$. Then $(a+b)f_b=af_af_b+bf_b=0$, and so $a+b$ satisfies~\ref{singdef:right} and is therefore singular.
Furthermore for any $c \in \K S$, it is immediate that $ac$ satisfies~\ref{singdef:left} and $ca$ satisfies~\ref{singdef:right}, which proves the statement.
\end{Proof}

We call the ideal of singular elements the \emph{singular ideal} of $\K S$. The quotient of $\K S$ by it singular ideal will be called the \emph{essential algebra} of $S$ over $K$ in analogy with the $C^*$-algebra terminology in~\cite{EP19,KM}.

\begin{Prop}
\label{tight}
The singular ideal contains the tight ideal $\tightid{K}{S}$.
\end{Prop}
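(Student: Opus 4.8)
The plan is to show that every generator of the tight ideal $\tightid{K}{S}$ is singular, and then invoke the fact that singular elements form an ideal (the previous theorem) to conclude. By Corollary~\ref{c:tightpres}, the tight ideal is generated as a two-sided ideal by the elements $\prod_{f\in F}(e-f)$ where $F$ is a finite set covering $e$ (including the case $F=\emptyset$, $e=0$). Since the singular ideal is an ideal, it suffices to prove that each such generator $a_{e,F} := \prod_{f\in F}(e-f)$ is singular, i.e.\ satisfies condition~\ref{singdef:right} of Proposition~\ref{singdef}.

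First I would dispose of the case $e=0$: then $a_{0,\emptyset}=0$ (the empty product is the identity of the algebra, but as the generator associated to $F=\emptyset$ covering $0$ the relevant element is $e=0$ itself after multiplying out — more carefully, the generator from Proposition~\ref{p:presentation} with $n=0$ and $e=0$ is just $0$), which is singular. For the main case, fix $0\neq e'\in E(S)$; I must produce $0\neq g\leq e'$ with $a_{e,F}\,g=0$. Expanding the product, $a_{e,F} = \sum_{F'\subseteq F}(-1)^{|F'|} e\prod_{f\in F'}f$, so every element in the support of $a_{e,F}$ lies below $e$. Hence if $e'e=0$ then already $a_{e,F}e' = 0$ and we take $g=e'$. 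If $e'e\neq 0$, set $h=e'e\leq e$, $h\neq 0$. Now I use that $F$ covers $e$: by the definition of a cover (recalled in the excerpt), since $0\neq h\leq e$, there is some $f_0\in F$ with $hf_0\neq 0$. I then want to produce, below $h$, an idempotent $g$ with $g\,a_{e,F}=0$ or $a_{e,F}\,g=0$; the natural candidate is $g = hf_0 \leq h \leq e'$. Indeed multiplying $a_{e,F}$ on the right by $g=hf_0$ kills every summand: for any $F'\subseteq F$, the term $e\prod_{f\in F'}f$ times $g$ equals $\prod_{f\in F'}f\cdot hf_0$; pairing up the subsets $F'$ and $F'\cup\{f_0\}$ (which have opposite signs and give the same product once multiplied by $g$, since $f_0 g = f_0 h f_0 = hf_0 = g$), the contributions cancel in pairs. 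So $a_{e,F}\,g = 0$ with $0\neq g\leq e'$, establishing~\ref{singdef:right}.

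The only subtlety — and the step I would be most careful about — is making the sign-cancellation argument watertight: one must check that the map $F'\mapsto F'\triangle\{f_0\}$ is a fixed-point-free involution on subsets of $F$ partitioning them into pairs, that within each pair the two expanded terms of $a_{e,F}$ have opposite signs, and that after right-multiplication by $g=hf_0$ the two terms become literally equal (using $f_0 g = g$ and commutativity of idempotents, so that $e\bigl(\prod_{f\in F'}f\bigr)g = e\bigl(\prod_{f\in F'\cup\{f_0\}}f\bigr)g$). A clean way to phrase this without bookkeeping: $a_{e,F}\,g = \prod_{f\in F}(e-f)\cdot g = \prod_{f\in F\setminus\{f_0\}}(e-f)\cdot (e-f_0)g$, and $(e-f_0)g = eg - f_0 g = g - g = 0$ since $g=hf_0\leq e$ gives $eg=g$ and $g\leq f_0$ gives $f_0g=g$. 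This factorization makes the cancellation transparent and avoids the inclusion–exclusion entirely. Finally, since each generator of $\tightid{K}{S}$ is singular and the singular elements form an ideal, $\tightid{K}{S}$ is contained in the singular ideal, as claimed.
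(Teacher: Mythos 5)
Your proof is correct and follows essentially the same strategy as the paper: show each generator from Corollary~\ref{c:tightpres} satisfies condition~\ref{singdef:right} by using the cover property to find $f_0\in F$ with $e'ef_0\neq 0$ and taking $g=e'ef_0$ as the witness, then invoke that singular elements form an ideal. The only cosmetic difference is that you work with the product-form generators $\prod_{f\in F}(e-f)$ while the paper uses the join-form generators $e-\bigvee F$ (both supplied by the same corollary), and your factorization $(e-f_0)g=0$ plays exactly the role of the paper's computation $(e-\bigvee_i f_i)f_jh=f_jh-f_jh=0$.
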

\begin{Proof}
 If $e, f_1, \ldots, f_n \in E\setminus \o$ are such that $f_1, \ldots, f_n$ cover $e$, then $e-\bigvee_{i} f_i$ is always singular by the following argument. For any $0\neq h \in E$, if $eh=0$, then $(e-\bigvee_{i} f_i)h=0$; otherwise, $eh\leq e$ and so there exists some index $j$ such that $f_jh=f_jeh\neq 0$.  Thus $(e-\bigvee_{i} f_i)f_jh=f_jh-f_jh=0$.  Corollary~\ref{c:tightpres} then yields the statement.
\end{Proof}

 We shall later need the following observation.

\begin{Prop}
\label{embed:singular.quotient}
Let $S$ be an inverse semigroup with zero and $I$ the singular ideal of $\K S$.  Then $I$ contains no element of $\S$; in particular, the singular ideal is proper.
\end{Prop}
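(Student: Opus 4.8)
The plan is to show that no nonzero element $s \in \S$ can be singular, by testing the defining property \ref{singdef:right} against a cleverly chosen idempotent. First I would take $s \in \S$ and consider the idempotent $e = s^\ast s \in E \setminus \o$. Applying condition \ref{singdef:right} to $e$ would give some $0 \neq f \leq s^\ast s$ with $sf = 0$ in $\K S$. But $sf$ is a single basis element of $\K S$, namely the element $sf \in S$ (since the product of two elements of $S$ is again an element of $S$), so $sf = 0$ would mean $sf = 0$ in $S$. On the other hand, since $f \leq s^\ast s$ is nonzero, we have $sf \cdot (sf)^\ast = sff s^\ast = sfs^\ast$, and more directly $(sf)^\ast(sf) = fs^\ast s f = f \neq 0$, so $sf \neq 0$ in $S$ — a contradiction.

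So the key steps, in order, are: (1) suppose for contradiction that some $s \in \S$ lies in the singular ideal $I$; (2) instantiate \ref{singdef:right} with $e = s^\ast s$ to obtain $0 \neq f \leq s^\ast s$ with $sf = 0$ in $\K S$; (3) observe that $sf$, being a product in $S$, corresponds to a single basis element of $\K S$, so the equation $sf = 0$ in $\K S$ forces $sf = 0$ in $S$; (4) compute $(sf)^\ast(sf) = f s^\ast s f = f \neq 0$ (using $f \leq s^\ast s$), which shows $sf \neq 0$ in $S$, the desired contradiction. The final sentence ``in particular, the singular ideal is proper'' then follows immediately, since a proper ideal is one not equal to all of $\K S$, and $\K S$ certainly contains elements of $\S$ (indeed every element of $\S$), so $I \neq \K S$.

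I do not anticipate any real obstacle here; the only subtlety worth spelling out is step (3), the distinction between $sf = 0$ as an identity in the algebra $\K S$ versus in the semigroup $S$ — these coincide precisely because a product of two nonzero semigroup elements is again a single (possibly zero) semigroup element, hence a scalar multiple of a basis vector, so it vanishes in $\K S$ if and only if it is the zero of $S$. Everything else is a one-line computation with the natural partial order and the involution.
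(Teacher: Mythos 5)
Your proof is correct and is essentially the paper's own argument: the paper simply observes that for $s\in\S$ and any $0\neq f\leq s^\ast s$ one has $sf\neq 0$ (your computation $(sf)^\ast(sf)=fs^\ast sf=f\neq 0$ makes this explicit), so no element of $\S$ satisfies \ref{singdef:right}. Your extra care in step (3), distinguishing vanishing in $\K S$ from vanishing in $S$, is a worthwhile clarification but does not change the argument.
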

\begin{Proof}
Elements of $\S$ are never singular in $\K S$ as $se\neq 0$ for all $0\neq e\leq s^\ast s$.
\end{Proof}

Recall that an inverse semigroup is called Hausdorff if, for any two $s,t\in S$, the set $s^{\downarrow}\cap t^{\downarrow}$ is a finitely generated order ideal. For Hausdorff inverse semigroups, the tight ideal and the singular ideal coincide.  This can be proved topologically using a topological characterization of the singular ideal, but we prefer a direct algebraic proof.

\begin{Prop}
\label{tightissingular:hauss}
Let $S$ be a Hausdorff inverse semigroup with zero.  Then the singular ideal and the tight ideal of $\K S$ coincide for any commutative ring with unit $K$.
\end{Prop}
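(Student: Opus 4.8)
We already know from Proposition~\ref{tight} that $\tightid{K}{S}\subseteq I$, where $I$ denotes the singular ideal, so the whole content is the reverse inclusion $I\subseteq \tightid{K}{S}$ for Hausdorff $S$. The plan is to take a singular element $a=\sum_{s\in\S}a_s s$ and show it lies in the tight ideal by an induction on $|\supp(a)|$, exploiting the fact that in a Hausdorff inverse semigroup any two elements $s,t$ have a meet $s\wedge t$ (equivalently $s^{\downarrow}\cap t^{\downarrow}$ is finitely generated, and since the generators are pairwise compatible one may take a single generator after passing to $E(S)$-truncations). Working modulo $\tightid{K}{S}$, I may freely replace covers by joins.

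First I would handle the case $\supp(a)=\{t\}$, i.e.\ $a=a_t t$ with $a_t\ne 0$. Singularity via condition~\ref{singdef:symm} applied to $t$ itself gives $0\ne u\le t$ with $\sum_{s\ge u}a_s=0$; but the only $s\in\supp(a)$ is $t$, and $t\ge u$, so $a_t=0$, a contradiction. Hence a nonzero singular element has support of size at least $2$, and in fact this argument shows more: for any $t\in\supp(a)$, the element $u\le t$ witnessing~\ref{singdef:symm} must satisfy that \emph{some other} $s\in\supp(a)$ with $s\ge u$ has nonzero coefficient, and by replacing $a$ with $au^{\ast}u$ (which is still singular by the argument inside the proof of Proposition~\ref{singdef}, and whose support is $\subseteq \supp(a)u^{\ast}u$) I can arrange that at least two elements of the support share the idempotent $u^{\ast}u$ below their domains. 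For the inductive step I would argue as follows. Pick $t\in\supp(a)$ and the witness $0\ne u\le t$ for~\ref{singdef:symm}. The set $F$ of $s\in\supp(a)$ with $s\ge u$ has $\sum_{s\in F}a_s=0$ and $|F|\ge 2$; moreover each such $s$ satisfies $su^{\ast}u=u$. Using Hausdorffness I can compare the idempotents $s^{\ast}s$ for $s\in F$ pairwise: replacing $a$ by $ae$ for a suitable small idempotent $e\le u^{\ast}u$ (still singular by the same claim) one reduces to the situation where all these elements agree on a common idempotent $g=e$ with $0\ne g\le u^{\ast}u$, so that $sg=tg$ for all $s\in F$. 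Then $ag=\sum_{s\in F}a_s (sg)=\big(\sum_{s\in F}a_s\big)tg=0$, so in fact the coefficients of all members of $F$ collapse: modulo the relation $g=\bigvee(\text{cover of }g)$ — which is in the tight ideal — and since $s g' = t g'$ whenever $g'\le g$ is below the relevant domains, the contributions of $F$ to $a$ can be folded into fewer terms, strictly decreasing the support. Iterating, $a$ is rewritten modulo $\tightid{K}{S}$ as a singular element of strictly smaller support, which by induction (base case: support empty, $a=0\in\tightid{K}{S}$) lies in the tight ideal. Hence $a\in\tightid{K}{S}$.

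\textbf{The main obstacle.} The delicate point is the book-keeping in the inductive step: from singularity one only gets, for each fixed $t$, a \emph{single} idempotent $u^{\ast}u$ on which $t$ and the other support elements above $u$ agree up to the natural order, and one must leverage Hausdorffness to make the finitely many support elements simultaneously agree on a common nonzero idempotent so that a genuine cancellation of coefficients (and hence a genuine reduction of support size) occurs modulo the tight ideal — rather than merely producing new small terms that reinflate the support. Concretely, the worry is that $su^{\ast}u=u$ for $s\in F$ does not by itself force $sg=tg$ for small $g$; one needs that $s$ and $t$ have a meet and that below that meet they literally coincide, which is exactly where the finitely-generated-order-ideal hypothesis is used, together with the observation (from the proof of Proposition~\ref{singdef}) that truncating $a$ by an idempotent preserves singularity. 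Once the simultaneous-agreement reduction is in place, the identity $e=\bigvee F$ for covers $F$ (Corollary~\ref{c:tightpres}) closes the loop, since replacing $g$ by a cover by smaller idempotents and regrouping is precisely a manipulation inside $\K S/\tightid{K}{S}$. I expect the remaining verifications — that each truncation $ae$, $au^{\ast}u$ stays singular, that support sizes behave as claimed, and that the regrouped element is still singular — to be routine given the machinery already developed in Proposition~\ref{singdef} and Corollary~\ref{c:tightpres}.
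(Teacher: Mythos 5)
Your overall strategy --- induction on $|\supp(a)|$, using the finitely generated order ideals $s^{\downarrow}\cap t^{\downarrow}$ supplied by Hausdorffness, and rewriting modulo $\tightid{K}{S}$ via the relation ``idempotent $=$ join of a cover'' --- is the same as the paper's. But the heart of the inductive step is missing, and what you put in its place does not work. The only way to change $a$ modulo the tight ideal is to replace some $e\in E(S)$ by $\bigvee F'$ for $F'$ a \emph{cover} of $e$; multiplying $a$ by a single well-chosen idempotent $g\le u^{\ast}u$ is \emph{not} such a move, since $ag\in\tightid{K}{S}$ (or $ag=0$) tells you nothing about $a$ itself. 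So the step you describe --- ``replacing $a$ by $ae$ \dots\ so that $sg=tg$ for all $s\in F$, then $ag=0$'' --- cannot by itself decrease the support of $a$ modulo $\tightid{K}{S}$; your subsequent appeal to ``the relation $g=\bigvee(\text{cover of }g)$'' is exactly the point at issue, because you never say which cover, nor why multiplying $a$ by each of its members collapses support elements. (As a smaller slip, $ag=\sum_{s\in F}a_s(sg)$ is false in general, since $\supp(a)$ need not be contained in $F$.)

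What is actually needed, and what the paper proves, is a covering claim: writing $s^{\downarrow}\cap t^{\downarrow}=F_{s,t}^{\downarrow}$ and $F=\bigcup_{s\neq t\in\supp(a)}\{u^{\ast}u:u\in F_{s,t}\}$, one shows that $s^{\ast}sF$ is a cover of $s^{\ast}s$ for \emph{every} $s\in\supp(a)$. This is where singularity is genuinely used, and not just once: for each $0\neq e\le s^{\ast}s$ one takes $0\neq f\le e$ with $af=0$, deduces that $sf=tf$ for some $t\neq s$ in the support, hence $sf\le u$ for some $u\in F_{s,t}$ and $f\le u^{\ast}u$, so $e$ meets an element of $s^{\ast}sF$. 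Then $s\equiv s(\bigvee F)$ modulo $\tightid{K}{S}$ for every $s\in\supp(a)$, so $a\equiv a(\bigvee F)$, and expanding the join by inclusion--exclusion writes $a(\bigvee F)$ as a signed sum of terms $ae_1\cdots e_k$ with $e_1=u^{\ast}u$, $u\in F_{s,t}$; each such term is still singular and satisfies $se_1=u=te_1$, hence has strictly smaller support, and induction applies to each term separately. Your single-witness set $F=\{s\in\supp(a):s\ge u\}$ already agrees at $u^{\ast}u$ without any use of Hausdorffness, which is a sign that the reduction you are performing is the one from Proposition~\ref{singdef} (truncation by an idempotent), not a reduction modulo the tight ideal. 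Until the covering claim is stated and proved, the induction does not close.
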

\begin{Proof}
We already know that the tight ideal $\tightid{K}{S}$ is contained in the singular ideal $I$ by Proposition~\ref{tight}.  For the converse, we prove by induction on $|\supp(a)|$ that if $a\in I$, then $a\in \tightid{K}{S}$.  This is obvious if $a=0$, i.e., $\supp(a)=\emptyset$.  Suppose that it is true for elements of $I$ of smaller cardinality support than $a\in I$.  Since $S$ is Hausdorff, for each $s\neq t\in S$, we can find a finite subset $F_{s,t}$ with $s^{\downarrow}\cap t^{\downarrow} = F_{s,t}^{\downarrow}$.  Let \[F=\bigcup_{s, t\in \supp(a), s\neq t}\{u^\ast u: u\in F_{s,t}\}.\]  We claim that $s^*sF$ is a cover of $s^*s$ for all $s\in \supp(a)$; in particular, $F$ is non-empty.  Let $0\neq e\leq s^\ast s$. We need $e'\in s^\ast sF$ such that $ee'\neq 0$.  Since $a\in I$, we can find $0\neq f\leq e$ with $af=0$.  Then $sf\neq 0$, but $\sum_{tf=sf}a_t=0$ and so there is some $t\neq s$ in $\supp(a)$ with $tf=sf$.  Then $sf=tf\leq s,t$ and so $sf\leq u$ for some $u\in F_{s,t}$ and hence $f=(sf)^\ast (sf)\leq u^\ast u=s^\ast su^\ast u\in s^\ast sF$.  Thus $u^\ast uf\neq 0$ (whence, $u^\ast ue\neq 0$) and so $e'=u^\ast u\in s^\ast sF$ is not orthogonal to $e$, establishing that $s^\ast sF$ is a cover of $s^\ast s$.  We deduce that $s(s^\ast s-s^\ast s(\bigvee F))=s-s(\bigvee F)\in \tightid{K}{S}$, i.e, $s+\tightid{K}{S}=s(\bigvee F)+\tightid{K}{S}$.  As $s\in \supp(a)$ was arbitrary, we have that $a+\tightid{K}{S}= a(\bigvee F)+\tightid{K}{S}$.  Moreover, $a(\bigvee F)$ is a finite sum of terms of the form $\pm ae_1\cdots e_k$ where $e_1,\ldots, e_k\in F$.  Such a term belongs to $I$.  We claim that $|\supp(ae_1\cdots e_k)|<|\supp(a)|$ for all $e_1,\ldots, e_k\in F$.  It will then follow by induction that $a(\bigvee F)\in \tightid{K}{S}$ and hence $a\in \tightid{K}{S}$.  Indeed, there are $s\neq t$ with $s,t\in \supp(a)$ and with $e_1=u^\ast u$ for some $u\in F_{s,t}$.  Then $se_1=u=te_1$ and so $|\supp(ae_1\cdots e_k)|\leq |\supp(ae_1)|\leq |\supp(a)e_1|< |\supp(a)|$.  This completes the proof.
\end{Proof}

\subsection{Simplicity and simple quotients}
We now assume that $K$ is a field.
If $a=\sum_{s \in \S}a_ss$ is a non-singular element of $\K S$, then it fails~\ref{singdef:symm} and so there exists $t \in \S$ such that, for all $0 \neq u \leq t$, we have $\sum_{s \geq u}a_s \neq 0$. We call such $t$ \emph{magic} for $a$.

\begin{Prop}
\label{magic}
If $a$ is non-singular with magic element $t$, and $e,f \in E\cup \{1\}$ (where $1$ is an adjoined identity) are such that $etf\neq 0$, then $eaf$ is non-singular with magic element $etf$.
\end{Prop}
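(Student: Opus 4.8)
The plan is to show directly that $etf$ is magic for $eaf$, i.e.\ that for every $0\neq u\leq etf$ we have $\sum_{s\geq u}(eaf)_s\neq 0$, where $(eaf)_s$ denotes the coefficient of $s$ in the expansion $eaf=\sum_{s\in\S}(eaf)_s\,s$. The key computational fact I would establish first is a description of the coefficients of $eaf$: since $e(a_s s)f = a_s\,(esf)$, one has $(eaf)_v = \sum_{esf=v} a_s$ for $v\in\S$ (with the convention that terms with $esf=0$ are discarded). So the magic condition for $etf$ amounts to: for each $0\neq u\leq etf$,
\[
\sum_{v\geq u}\ \sum_{esf=v} a_s \;=\; \sum_{\substack{s\in\S\\ esf\geq u}} a_s \;\neq\; 0 .
\]

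The heart of the argument is therefore a change-of-index lemma: for $0\neq u\leq etf$ and $s\in\S$, the condition $esf\geq u$ should be equivalent to $s\geq u$ (or at least to some condition that lets me apply magic-ness of $t$ for $a$). First I would observe that $u\leq etf\leq t$ gives $u = eu = uf$, hence $u^\ast u = u^\ast u f = f u^\ast u f$ and $uu^\ast = euu^\ast = euu^\ast e$; this is the same trick used in the proof of Proposition~\ref{singdef} to absorb idempotents. Using $u=eu=uf$, for any $s$ one computes $esf\geq u$ iff $esf\,u^\ast u = u$ iff $e s f u^\ast u = u$, and since $fu^\ast u = u^\ast u$ (because $u=uf$ forces $u^\ast u \leq f$, so $fu^\ast u=u^\ast u$) and similarly $uu^\ast e = uu^\ast$, this collapses to $s\,u^\ast u = u$ after left-multiplying by $e$ and right-multiplying by $f$ appropriately — equivalently $s\geq u$. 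One has to be slightly careful that $esf\neq 0$ whenever $s\geq u$: if $s\geq u$ then $u = su^\ast u$, so $esf \geq eu f = u \neq 0$. Hence
\[
\sum_{\substack{s\in\S\\ esf\geq u}} a_s \;=\; \sum_{s\geq u} a_s,
\]
and the right-hand side is nonzero because $0\neq u\leq t$ and $t$ is magic for $a$. This gives $(eaf)$ non-singular with magic element $etf$.

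The step I expect to be the main obstacle is the change-of-index equivalence $\{s : esf\geq u\} = \{s : s\geq u\}$: one direction ($s\geq u\Rightarrow esf\geq u$, together with $esf\neq 0$) is easy as above, but the converse — deducing $s\geq u$ from $esf\geq u$ — requires carefully exploiting $u=eu=uf$ and the resulting relations $u^\ast u\leq f$, $uu^\ast\leq e$ to cancel the $e$ and $f$, and it is the only place where the hypothesis $u\leq etf$ (as opposed to merely $u\leq t$) is really used. Everything else is bookkeeping with the natural partial order and the formula for coefficients of $eaf$. Note the edge cases where $e$ or $f$ equals the adjoined identity $1$ are handled uniformly by the same computation, since then the corresponding relation ($u=eu$ or $u=uf$) is vacuous.
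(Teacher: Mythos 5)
Your proposal is correct and takes essentially the same approach as the paper: both reduce the statement to the claim that, for $0\neq u\leq etf$, one has $s\geq u$ if and only if $esf\geq u$, and then reindex the sum $\sum_{v\geq u}(eaf)_v=\sum_{esf\geq u}a_s=\sum_{s\geq u}a_s\neq 0$. The only difference is that the step you flag as the main obstacle is immediate in the paper: since $esf\leq s$ always, $esf\geq u$ gives $s\geq u$ by transitivity, with no need for the idempotent-absorption computation.
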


\begin{Proof}
Let $0\neq u \leq etf$.
We claim that, for any $s$, we have $s \geq u$ if and only if $esf \geq u$: indeed,
if $s \geq u$, then $esf \geq euf =u$, and if  $esf \geq u$, then $s \geq esf$ yields $s \geq u$. So if $eaf=\sum\limits_{s \in \S}a_sesf=\sum\limits_{v \in \S}b_v v$, then
$$\sum\limits_{v \geq u}b_v=\sum\limits_{esf \geq u}a_s=\sum\limits_{s \geq u}a_s \neq 0,$$
as $0 \neq u \leq t$ and $t$ was magic for $a$.
Thus $etf$ is magic for $eaf$.
\end{Proof}

In order to prove our two main results on simplicity of contracted inverse semigroup algebras and Steinberg algebras, we prove an inverse semigroup analogue of the Cuntz-Krieger uniqueness theorem.

\begin{Thm}[Uniqueness]
\label{uniqueness}
Let $S$ be a quasi-fundamental inverse semigroup with zero whose idempotent semilattice is $0$-disjunctive and $K$ a field.  Let $I$ be the singular ideal of $\K S$.  Then the following are equivalent for an ideal $J\lhd \K S$.
\begin{itemize}
\item [(1)]  $J\cap S=\{0\}$.
\item [(2)]  $J\cap E(S)=\{0\}$.
\item [(3)]  $J\subseteq I$.
\end{itemize}
Moreover, if $S$ is fundamental, then these are equivalent to:
\begin{itemize}
\item [(4)] The canonical mapping $S\to \K S/J$ is injective;
\item [(5)]  The canonical mapping $E(S)\to \K S/J$ is injective;
\end{itemize}
and, consequently, $I$ is the unique largest ideal $J$ of $\K S$ such that the canonical mapping $S\to \K S/J$ is injective.
\end{Thm}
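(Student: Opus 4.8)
The plan is to prove the cycle $(1)\Rightarrow(2)\Rightarrow(3)\Rightarrow(1)$ first, then address the fundamental case. The implication $(1)\Rightarrow(2)$ is trivial since $E(S)\setminus\{0\}\subseteq \S$. For $(3)\Rightarrow(1)$, note that by Proposition~\ref{embed:singular.quotient} the singular ideal $I$ contains no element of $\S$, so if $J\subseteq I$ then $J\cap S\subseteq I\cap\S\cup\{0\}=\{0\}$. The heart of the argument is $(2)\Rightarrow(3)$: assuming $J\cap E(S)=\{0\}$, I must show every $a\in J$ is singular. I would argue by contradiction. Suppose $a\in J$ is non-singular; by the discussion preceding Proposition~\ref{magic}, $a$ has a magic element $t\in\S$. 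The strategy is to multiply $a$ by carefully chosen idempotents and semigroup elements (all of which keep us inside $J$, since $J$ is an ideal) to manufacture a non-zero idempotent lying in $J$, contradicting $(2)$.

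Here is the mechanism I expect to use. Replacing $a$ by $t^\ast a t$ (nonzero and still in $J$ by Proposition~\ref{magic}, with magic element $t^\ast t t=t^\ast t\in E(S)$ — actually one should be slightly careful and first pass to $e=t^\ast t$, using $eae$ which has magic element $e$), we may assume $a$ has a magic \emph{idempotent} $e$, i.e. $\sum_{s\geq f}a_s\neq 0$ for every $0\neq f\leq e$. In particular the coefficient $\sum_{s\geq e}a_s$ of $e$ itself is nonzero; after scaling we may take it to be $1$, so $a = e + \sum_{s\in\supp(a),\, s\not\geq e}a_s s$ with the "tail" consisting of elements not above $e$. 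The goal is to kill the tail. For a tail element $s$ with $s\not\geq e$, the element $ses^\ast\cdot e$ or rather the idempotent $s^\ast s e$ is either $0$ or strictly below... this is where $0$-disjunctivity and quasi-fundamentality must enter: for each troublesome $s$, either $s\in Z_S(E(S))$ — and then quasi-fundamentality gives a nonzero idempotent below $s$, but since $s\not\geq e$ one uses $0$-disjunctivity to find a nonzero $g\leq e$ with $g$ orthogonal to that idempotent, hence $sg$ is "smaller" — or $s$ fails to centralize some idempotent $g\leq e$, and then $sg\neq gs$ allows one to shrink the support of $a$ by cutting down by a suitable idempotent below $e$ while preserving the magic property (via Proposition~\ref{magic}, since $ege=g\neq 0$). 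Iterating this support-reduction, which is legitimate because $|\supp(\cdot)|$ strictly drops while magicality is preserved, we reach an element of $J$ of the form $c\cdot f$ with $c\in K^\times$ and $f$ a nonzero idempotent, forcing $f\in J\cap E(S)=\{0\}$, a contradiction. This establishes $(1)\Leftrightarrow(2)\Leftrightarrow(3)$.

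For the fundamental case, I would first observe $(4)\Rightarrow(5)$ is trivial and $(5)\Rightarrow(2)$ holds because the kernel of $E(S)\to \K S/J$ is exactly $J\cap E(S)$. It remains to derive $(4)$ from, say, $(2)$. Assuming $J\cap E(S)=\{0\}$, the composite $S\to \K S\to \K S/J$ is a zero-preserving semigroup homomorphism (onto its image) whose kernel on idempotents is $\{0\}$; since $S$ is fundamental with $0$-disjunctive $E(S)$, Proposition~\ref{p:zero-disj:faithful} (condition (4)$\Rightarrow$(1) there) gives that this map is injective, which is $(4)$. Finally, for the "consequently" clause: $I$ is an ideal with $I\cap S=\{0\}$ (Proposition~\ref{embed:singular.quotient}), so $S\to\K S/I$ is injective by what we just proved; and if $J$ is any ideal with $S\to\K S/J$ injective, then $J\cap S=\{0\}$, hence $J\subseteq I$ by the equivalence of (1) and (3). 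Thus $I$ is the unique largest such ideal.

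The main obstacle I anticipate is making the support-reduction step in $(2)\Rightarrow(3)$ both correct and terminating: one needs to verify that at each stage the chosen cutting-down idempotent $g\leq e$ genuinely decreases $|\supp|$ (i.e. that $e$ survives and at least one tail element dies, with no tail element above $e$ re-appearing), and that quasi-fundamentality is invoked only for the elements that centralize all idempotents while the genuinely non-central ones are handled purely by the failure of commutativity. Getting the case analysis — central vs. non-central tail elements — to dovetail cleanly with $0$-disjunctivity is the delicate point; this is presumably where the hypotheses are used in an essential way, and where the bulk of the work lies.
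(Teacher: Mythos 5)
Your handling of the easy implications, of the fundamental case via Proposition~\ref{p:zero-disj:faithful}, and of the final ``consequently'' clause is correct and matches the paper. The problem is the central implication $(2)\Rightarrow(3)$, which you only sketch, and the sketch as written does not work. First, the reduction to an \emph{idempotent} magic element is unjustified: Proposition~\ref{magic} only covers multiplication by idempotents (or $1$), so it says nothing about $t^\ast a t$, and applied to $eae$ with $e=t^\ast t$ it yields magic element $ete=t^\ast t\cdot t\cdot t^\ast t=t$, not $e$. (Indeed, if $\supp(a)$ is a single non-idempotent, no amount of multiplying by idempotents produces an idempotent magic element; one must at some point multiply by $s^\ast$ for $s$ in the support, and the magic property under such products is exactly what Proposition~\ref{magic} does not give you.) Second, the decomposition $a=e+\sum_{s\not\geq e}a_ss$ presumes $e\in\supp(a)$, which need not hold: being magic only says $\sum_{s\geq f}a_s\neq 0$ for all $0\neq f\leq e$, and that sum can be nonzero with $e$ absent from the support entirely.

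Third, and most importantly, the dichotomy you propose for killing the tail --- each tail element either lies in $Z_S(E(S))$ or fails to centralize some idempotent --- is not the dichotomy that makes the argument go through. The argument that works is an induction on $|\supp(a)|$ comparing \emph{pairs} $s\neq t$ of support elements, after first normalizing (by separate inductive reductions, each using $0$-disjunctivity or the failure of magicality to strictly shrink the support) so that all support elements share the domain and range idempotents of the magic element and are all themselves magic. Then either $ses^\ast=tet^\ast$ for all $e\in E(S)$, in which case Lemma~\ref{l:qfundamental} (this is where quasi-fundamentality enters, applied to the pair, not to a single central element) produces $0\neq u\leq s,t$ and cutting by $u^\ast u$ merges $s$ and $t$; or $ses^\ast\neq tet^\ast$ for some $e$, in which case $0$-disjunctivity applied to $ses^\ast tet^\ast<tet^\ast$ produces an $f$ with $fse=0\neq fte$, again shrinking the support while Proposition~\ref{magic} preserves magicality. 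Your own closing paragraph concedes that making the support reduction ``correct and terminating'' is where the bulk of the work lies; that work is precisely the content of the theorem, and the framing you chose (single-element centrality, tail relative to an idempotent $e$) would have to be abandoned to carry it out. This is a genuine gap, not a detail to be routinely filled in.
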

\begin{Proof}
The equivalence of (1) and (2) is trivial since $s=ss^\ast s$ and $s^\ast s$ is idempotent.
 On the other hand, (3) implies (1) by Proposition~\ref{embed:singular.quotient}.  Proposition~\ref{p:zero-disj:faithful} shows the equivalence of (1), (2), (4) and (5) in the case of fundamental inverse semigroups.  It remains to show that (2) implies (3).

We will prove by induction on $|\supp(a)|$ that, for any $a \in \K S \setminus I$, we have $(a)\cap E(S)\neq 0$, where $(a)$ denotes the two-sided ideal generated by $a$. If $|\supp(a)|=1$, then $a=a_ss$ with $s\in \S$ and $a_s\neq 0$ and so $0\neq s^\ast s=\frac{1}{a_s} s^\ast a\in (a)\cap E(S)$.

Suppose that $|\supp(a)|\geq 2$, and assume that the claim holds for all elements of $\K S \setminus I$ with smaller cardinality support.
Choose a magic element $t \in S$ for $a$. Since $(tt^\ast at^\ast t) \subseteq (a)$, it suffices to prove $(tt^\ast at^\ast t)\cap E(S)\neq 0$. By Proposition~\ref{magic},  $tt^\ast at^\ast t$ is, too, non-singular with magic element $tt^\ast tt^\ast t=t$. If $|\supp(tt^\ast at^\ast t)| <|\supp(a)|$, then this completes the proof by induction. Otherwise, by replacing $a$ with $tt^\ast at^\ast t$, we may assume without loss of generality that $a=tt^\ast at^\ast t$, that is, assume that, for all $s \in \supp(a)$, we have $s^ \ast s \leq t^\ast t$ and $ss^\ast \leq tt^\ast$.

Suppose $s^\ast s < t^\ast t$ for some $s \in \supp(a)$. Then since $E(S)$ is $0$-disjunctive, there exists $0 \neq f < t^\ast t$ such that $s^\ast sf=0$. Then by Proposition~\ref{magic}, $af$ is non-singular with magic element $tf$. But also $sf=0$, therefore  $|\supp(af)| < |\supp(a)|$, and so, by induction, $0\neq (af)\cap E(S)\subseteq (a)\cap E(S)$, which completes the proof.

Otherwise, we have that, for all $s \in \supp(a)$,  $s^\ast s = t^\ast t$ holds. By a dual argument, if $ss^\ast\neq tt^\ast$ for some $s\in \supp(a)$, then we have $(a)\cap E(S)\neq 0$.  So we may assume that both $ss^\ast=tt^\ast$ and $s^\ast s = t^\ast t$ hold for all $s\in \supp(a)$.

Suppose there exists some $t' \in \supp(a)$ which is not magic for $a$. Then there exists $0 \neq u \leq t'$ with $\sum\limits_{s \geq u}a_s=0$.
Then the coefficient of $t'u^\ast u=u$ in $au^\ast u$ is
\[\sum\limits_{su^\ast u=u}a_s=\sum\limits_{s \geq u}a_s=0,\]
so $|\supp(au^\ast u)|<|\supp(a)|$. Note that $tu^\ast u \neq 0$, as $t^\ast tu^\ast u=(t')^\ast (t')u^\ast u=u^\ast u\neq 0$, and so, by Proposition~\ref{magic}, $au^\ast u$ is non-singular with magic element $tu^\ast u$. Then by induction we obtain that $0\neq (au^\ast u)\cap E(S) \subseteq (a)\cap E(S)$, which completes the proof. For the rest of the proof we are only concerned with that case that all elements of $\supp(a)$ are magic, and $ss^\ast =tt^\ast$, $s^\ast s= t^\ast t$ for all $s,t\in \supp(a)$.

Choose two elements $s \neq t$ of $\supp(a)$. First assume that $ses^\ast=tet^\ast$ for all $e\in E(S)$.  Then by Lemma~\ref{l:qfundamental} we can find $0\neq u\leq s,t$ as $S$ is quasi-fundamental.  Putting $e=u^\ast u$, we have that $se=te=u\neq 0$ and so by Proposition~\ref{magic}, we have that $ae$ is non-singular.  But $|\supp(ae)|<|\supp(a)|$ as $se=te$.  Therefore, by induction, $0\neq (ae)\cap E(S)\subseteq (a)\cap E(S)$, as was required.

Otherwise, there exists $e\in E(S)$ with  $ses^\ast \neq tet^\ast$. Necessarily $se,te\neq 0$, for if $se=0$, then $t^\ast te=s^\ast se=0$, and so $ses^\ast=0=tt^\ast tet^\ast = tet^\ast$, and dually $te=0$ implies $ses^\ast=0=tet^\ast$. By Proposition~\ref{magic}, $se$ and $te$ are magic for $ae$. The coefficients of $0\neq se\leq s$ and $0\neq te\leq t$ in $ae$ are $\sum_{z \geq se}a_z$ and $\sum_{z \geq te}a_z$, respectively, neither of which are $0$ as $s,t$ are magic for $a$. Hence $se, te \in \supp(ae)$.

Without loss of generality, we may assume that $ses^\ast \not\geq tet^\ast$. Then $ses^\ast tet^\ast < tet^\ast$, and so, by the $0$-disjunctive property of $E(S)$, there must exist an idempotent $0 \neq f \leq tet^\ast$ such that $0=f tet^\ast ses^\ast=fses^\ast$. Note that $0 \neq f \leq tet^\ast$ implies $fte \neq 0$, whence $fae$ is non-singular with magic element $fte$ by Proposition~\ref{magic}. Furthermore, $fse=(fses^\ast)s=0$, and so $|\supp(fae)|<|\supp(a)|$.  Therefore, by induction, we have $0\neq (fae)\cap E(S) \subseteq (a)\cap E(S)$. This completes the proof.
\end{Proof}

A partial converse of Theorem~\ref{uniqueness} holds.

\begin{Lem}\label{l:converse.uniq}
Let $S$ be a non-zero inverse semigroup and $K$ a field.  Let  $I$ be the singular ideal of $\K S$ and assume that each ideal properly containing $I$ contains a non-zero idempotent $e\in E(S)$.  Then $S$ is quasi-fundamental.
\end{Lem}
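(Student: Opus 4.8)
The plan is to prove the contrapositive: assuming $S$ is not quasi-fundamental, I will exhibit an ideal $J$ of $\K S$ with $I\subsetneq J$ and $J\cap E(S)=\{0\}$; since $S$ is non-zero it has a non-zero idempotent, so such a $J$ is automatically proper and contradicts the hypothesis. Fix a witness $0\neq s\in Z_S(E(S))$ having no non-zero idempotent below it. Using that $s$ commutes with $s^\ast s$ and with $ss^\ast$ one computes $s=(s^\ast s)s=s^\ast s^2$ and $s=s(ss^\ast)=s^2s^\ast$, and combining these gives $s^\ast s=ss^\ast$; write $e$ for this idempotent (note $e\neq 0$ since $s\neq 0$). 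Also $s\notin E(S)$, for otherwise $e=s$ would be a non-zero idempotent below $s$. The key observation, which I would isolate first, is that $s\mathrel{\mu}e$ for the maximum idempotent-separating congruence $\mu$: for every $f\in E(S)$ one has $sfs^\ast=fss^\ast=fe=efe^\ast$ because $s$ centralizes $E(S)$, and this is precisely the defining relation of $\mu$.

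The ideal I would take is $J=I+(s-e)$, where $(s-e)$ denotes the two-sided ideal of $\K S$ generated by $s-e$. To see $I\subsetneq J$ it is enough to check $s-e$ is not singular. For $f\in E(S)$, since $se=s$ we get $(s-e)f=s(ef)-ef$, and $s(ef)$ satisfies $(s(ef))(s(ef))^\ast=sfs^\ast=fe=ef$; hence if $ef\neq 0$ then $s(ef)\neq 0$, and the equality $s(ef)=ef$ would exhibit $ef$ as a non-zero idempotent below $s$, which is impossible. Thus $(s-e)f=0$ if and only if $ef=0$, and taking $f=e$ shows that condition~\ref{singdef:right} of Proposition~\ref{singdef} fails for $s-e$; moreover $s-e\neq 0$ because $s\neq e$. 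So $s-e\in J\setminus I$.

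It remains to prove $J\cap E(S)=\{0\}$. Let $\psi\colon S\to S/\mu$ be the quotient map and $\bar\psi\colon\K S\to\K(S/\mu)$ the induced $K$-algebra homomorphism. Since $s\mathrel{\mu}e$ we have $s-e\in\ker\bar\psi$, so $J\subseteq I+\ker\bar\psi$. Suppose $0\neq g\in E(S)\cap J$ and write $g=\iota+k$ with $\iota\in I$ and $k\in\ker\bar\psi$; applying $\bar\psi$ gives $\psi(g)=\bar\psi(\iota)$ in $\K(S/\mu)$. I would then use that $\psi$ is surjective and idempotent-separating, so that $\psi|_{E(S)}$ is an isomorphism onto $E(S/\mu)$, to see that $\bar\psi$ carries singular elements to singular elements: for $0\neq\bar e\in E(S/\mu)$, lift to $0\neq e''\in E(S)$, pick $0\neq f\le e''$ with $\iota f=0$, and note $0\neq\psi(f)\le\bar e$ with $\bar\psi(\iota)\psi(f)=0$. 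Hence $\psi(g)=\bar\psi(\iota)$ lies in the singular ideal of $\K(S/\mu)$; but $\psi$ is injective on idempotents and $g\neq 0$, so $\psi(g)$ is a non-zero element of $S/\mu$, contradicting Proposition~\ref{embed:singular.quotient}. Therefore $g=0$, so $J\cap E(S)=\{0\}$, which completes the contrapositive.

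The step I expect to be the main obstacle is spotting the first claim — that the non-quasi-fundamental witness $s$ is automatically $\mu$-equivalent to $e=s^\ast s=ss^\ast$. Once this is in hand the ideal $J=I+(s-e)$ is essentially forced, and the rest of the proof reduces to two routine facts: that singular elements descend along the idempotent-separating quotient $S\to S/\mu$, and that, by Proposition~\ref{embed:singular.quotient}, a non-zero idempotent is never singular.
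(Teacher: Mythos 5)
Your proof is correct. The core mechanism is the same one the paper uses: push singular elements forward along the idempotent-separating quotient $\psi\colon S\to S/\mu$ (your lifting argument via surjectivity of $\psi|_{E(S)}$ onto $E(S/\mu)$ is exactly the paper's first step) and then invoke Proposition~\ref{embed:singular.quotient} to see that a non-zero idempotent cannot land in the singular ideal of $\K (S/\mu)$. The packaging, however, is genuinely different. The paper argues directly: it shows that $\gamma^{-1}(J')$, for $J'$ the singular ideal of $\K(S/\mu)$, is an ideal containing $I$ and meeting $E(S)$ only in $0$, hence equals $I$ by hypothesis; then for any non-zero $\mu$-related pair $s,t$ it concludes $s-t\in I$, extracts $0\neq f\leq s^\ast s$ with $sf=tf$ from singularity, and finishes with the characterization of quasi-fundamentality in Lemma~\ref{l:qfundamental}. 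You instead run the contrapositive around an explicit witness, which requires the two observations the paper never needs: that a central $s$ with no idempotent below it automatically satisfies $s^\ast s=ss^\ast$ and $s\mathrel{\mu}s^\ast s$, and that $s-s^\ast s$ fails condition~\ref{singdef:right} of Proposition~\ref{singdef} precisely because no idempotent sits below $s$. This buys you a completely concrete offending ideal $I+(s-s^\ast s)$ and lets you bypass Lemma~\ref{l:qfundamental} entirely; the paper's version is marginally more economical in that the single ideal $\gamma^{-1}(J')$ handles all failures of quasi-fundamentality at once. Both arguments are sound and of comparable length.
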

\begin{proof}
Let $T=S/\mu$ where $\mu$ is the largest idempotent separating congruence on $S$ and let $J$ be the singular ideal of $\K T$.  Note that $T$ is non-zero as $S$ is non-zero and the projection $\gamma\colon S\to T$ is idempotent separating.  We claim that $\gamma(I)\subseteq J$, where we denote the extension $\K S\to \K T$ of $\gamma$ also by $\gamma$.  Indeed, suppose that $a\in I$ and let $0\neq e'\in E(T)$.  Then by Lallement's lemma, there is an idempotent $e\in S$ with $\gamma(e)=e'$ (take, for example, $e=s^\ast s$ where $\gamma(s)=e'$).  Then since $a$ is singular, there is $0\neq f\leq e$ with $af=0$.  As $\gamma$ is idempotent separating, $\gamma(f)\neq 0$ and so $0\neq \gamma(f)\leq e'$ satisfies $\gamma(a)\gamma(f)=\gamma(af)=0$.  Thus $\gamma(a)$ is singular. It follows that $\gamma^{-1}(J)$ is an ideal containing $I$. Note that $\gamma(E(S))\cap J=\{0\}$ by Proposition~\ref{embed:singular.quotient} and hence, since $\gamma$ is idempotent separating,  we have that $\gamma^{-1}(J)\cap E(S)=\{0\}$.  Therefore, $\gamma^{-1}(J)=I$ by hypothesis.     Suppose that $0\neq s,t\in S$ with $ses^\ast=tet^\ast$ for all $e\in E(S)$.  Then $\gamma(s)=\gamma(t)$ and so $s-t\in \gamma^{-1} (J)=I$.  Hence, we can find $0\neq e\leq s^\ast s$ with $(s-t)e=0$, i.e., $se=te$.  But $e=s^\ast se$ implies $se\neq 0$ and so $0\neq se\leq s,t$, whence $S$ is quasi-fundamental by Lemma~\ref{l:qfundamental}.
\end{proof}

The next theorem is our first main result.  It shows that the contracted algebra of a congruence-free inverse semigroup has a unique simple quotient and identifies that quotient.

\begin{Thm}
\label{simple}
Let $S$ be a congruence-free inverse semigroup with zero.  Then the singular ideal $I \lhd \K S$ is the unique maximal ideal of $\K S$ and hence $\K S /I$ is the unique simple quotient.
\end{Thm}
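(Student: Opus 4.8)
The plan is to combine the Uniqueness Theorem (Theorem~\ref{uniqueness}) with the structure theory of congruence-free inverse semigroups recorded in Theorem~\ref{simplesg}. Recall that, by Theorem~\ref{simplesg}, a congruence-free inverse semigroup $S$ with zero is fundamental, $0$-simple, and has $0$-disjunctive semilattice of idempotents; in particular it is quasi-fundamental, so the hypotheses of Theorem~\ref{uniqueness} are met. Thus for any ideal $J\lhd \K S$ we have $J\cap E(S)=\{0\}$ if and only if $J\subseteq I$, and by Proposition~\ref{embed:singular.quotient} the ideal $I$ itself satisfies $I\cap E(S)=\{0\}$, so $I$ is proper.

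First I would show $I$ is maximal. Suppose $J$ is an ideal with $I\subsetneq J \subseteq \K S$. By Theorem~\ref{uniqueness} (equivalence of (2) and (3)), since $J\not\subseteq I$ we must have $J\cap E(S)\neq\{0\}$, so there is a non-zero idempotent $e\in J$. Now invoke $0$-simplicity of $S$: we have $SeS=S$, hence every $s\in \S$ lies in the ideal of $\K S$ generated by $e$, which is contained in $J$. Therefore $\S\subseteq J$, so $J=\K S$. This shows $I$ is a maximal ideal. Combined with properness of $I$, the quotient $\K S/I$ is simple, and since any maximal ideal is in particular a proper ideal avoiding $E(S)$ only when it equals a subideal of $I$ — more precisely, any proper ideal $J$ satisfies $J\cap E(S)=\{0\}$ (else $J=\K S$ by the $0$-simplicity argument just given) and hence $J\subseteq I$ by Theorem~\ref{uniqueness} — we conclude $I$ is the \emph{unique} maximal ideal. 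Consequently $\K S/I$ is the unique simple quotient of $\K S$.

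I do not anticipate a serious obstacle here: the real work has already been done in the Uniqueness Theorem, and the remaining argument is a short two-line deduction using $0$-simplicity to promote "contains a non-zero idempotent" to "equals the whole algebra." The only point requiring a little care is the direction "every proper ideal is contained in $I$": one must note that a proper ideal $J$ cannot meet $E(S)\setminus\{0\}$ (otherwise $0$-simplicity forces $J=\K S$), whence $J\cap E(S)=\{0\}$ and Theorem~\ref{uniqueness} gives $J\subseteq I$. This simultaneously establishes maximality and uniqueness of $I$, and that $\K S$ is not itself simple (as $I$ is a proper non-zero ideal whenever $S$ has a non-trivial singular ideal — though note $I$ could be zero, in which case $\K S$ is already simple and the statement still holds vacuously with $I=\{0\}$ the unique maximal, hence zero, ideal).
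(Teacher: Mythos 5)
Your proposal is correct and follows essentially the same route as the paper: apply Theorem~\ref{uniqueness} (whose hypotheses hold since congruence-free implies fundamental, $0$-simple and $0$-disjunctive by Theorem~\ref{simplesg}) to find a non-zero idempotent in any ideal not contained in $I$, then use $SeS=S$ to conclude that ideal is all of $\K S$. The paper's proof is just a more compressed version of the same argument.
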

\begin{Proof}
By Theorem~\ref{uniqueness} if $J\lhd \K S$ is an ideal not contained in $I$, then there is an element $0\neq e\in J\cap E(S)$.  As $SeS=S$ by $0$-simplicity, we deduce that $J=\K S$.
\end{Proof}

\begin{Rem}
\label{r:more.general}
Theorem~\ref{simple} holds more generally if $S$ is quasi-fundamental, $0$-simple and has a $0$-disjunctive semilattice of idempotents with the exact same proof.
\end{Rem}

In~\cite{groupoidprimitive} an inverse semigroup was called tight if all its principal filters are tight (or, equivalently, all its filters are tight).  In concrete terms, this means that if $0\neq e$ and $e_1,\ldots, e_n<e$, then there exists $0\neq f\leq e$ with $fe_i=0$ for $i=1,\ldots, n$, i.e., $e$ has no non-trivial finite covers.    This is a stronger property than $E(S)$ being $0$-disjunctive and was considered by Munn quite early on in his study of simple contracted inverse semigroup algebras~\cite{Munnsimplealgebra}.  He called a semilattice with this property  \emph{strongly $0$-disjunctive} and so we shall use this term, as it is much older.  The tight ideal of $S$ is zero if and only if $E(S)$ is  strongly $0$-disjunctive by Corollary~\ref{c:tightpres}.  We are now prepared to characterize simplicity of contracted inverse semigroup algebras, that is, prove Theorem~B.

\begin{Cor}
\label{simplicity}
Let $S$ be an inverse semigroup with zero. Then $\K S$ is simple if and only if the following hold:
\begin{enumerate}
\item $S$ is fundamental;
\item $S$ is $0$-simple;
\item the singular ideal of $\K S$ is trivial.
\end{enumerate}
\end{Cor}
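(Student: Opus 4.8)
The plan is to assemble this from the machinery already in place, principally Theorem~\ref{simplesg} (Baird's characterization of congruence-freeness) and Theorem~\ref{simple}. The key preliminary observation is that condition~(3) is stronger than it appears: by Proposition~\ref{tight} the singular ideal contains the tight ideal $\tightid{K}{S}$, so if the singular ideal vanishes then $\tightid{K}{S}=0$, which by Corollary~\ref{c:tightpres} forces $E(S)$ to be strongly $0$-disjunctive, in particular $0$-disjunctive. Thus conditions~(1) and~(2) together with~(3) already encode ``fundamental, $0$-simple, and $0$-disjunctive'', which by Theorem~\ref{simplesg} is precisely the statement that $S$ is congruence-free. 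So the corollary reduces to: $\K S$ is simple if and only if $S$ is congruence-free and the singular ideal of $\K S$ is trivial, which is Theorem~B.

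For the ``if'' direction I would argue as follows. Assuming~(1)--(3), the observation above and Theorem~\ref{simplesg} give that $S$ is congruence-free, so Theorem~\ref{simple} applies: the singular ideal $I$ is the unique maximal ideal of $\K S$, and $\K S/I$ is its unique simple quotient. Since $I=\{0\}$ by~(3), and $\K S\neq 0$ because $S$, being $0$-simple, is a nonzero semigroup, we conclude that $\K S=\K S/I$ is simple.

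For the ``only if'' direction, suppose $\K S$ is simple; then $\K S\neq 0$, so $S\neq\{0\}$. Condition~(3) is immediate: the singular ideal is a proper ideal by Proposition~\ref{embed:singular.quotient}, hence must be $\{0\}$. For~(1) and~(2) it suffices, via Theorem~\ref{simplesg}, to prove that $S$ is congruence-free, and here the plan is to show that any congruence $\rho$ on $S$ other than the equality relation is universal. Indeed, by the universal property of $\K S$ the zero-preserving composite $S\to S/\rho\to \K(S/\rho)$ extends to a surjective $K$-algebra homomorphism $\pi\colon\K S\to \K(S/\rho)$; if $s\neq t$ with $s\mathrel{\rho}t$, then $s-t$ (or simply $t$, in the case $s=0$) is a nonzero element of $\ker\pi$, so $\ker\pi\neq 0$, whence $\ker\pi=\K S$ by simplicity, forcing $\K(S/\rho)=0$, i.e.\ $S/\rho$ has no nonzero element, i.e.\ $\rho$ is universal.

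I do not anticipate a genuine obstacle here: all the substantive work was already carried out in Theorems~\ref{uniqueness} and~\ref{simple}, and this corollary is essentially bookkeeping. The one point that requires care is exactly the reduction flagged above --- that hypothesis~(3) silently supplies the $0$-disjunctivity needed to invoke Theorem~\ref{simplesg} and Theorem~\ref{simple}, so that $0$-disjunctivity never has to be imposed as a separate fourth hypothesis --- together with the routine check that simplicity of $\K S$ rules out the degenerate case $S=\{0\}$.
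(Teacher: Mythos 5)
Your proposal is correct and follows essentially the same route as the paper's proof: the forward direction extracts $0$-disjunctivity from the vanishing of the singular ideal via Proposition~\ref{tight}, invokes Theorem~\ref{simplesg} and then Theorem~\ref{simple}, and the converse uses Proposition~\ref{embed:singular.quotient} for condition~(3) and the observation that a proper congruence on $S$ induces a proper quotient of $\K S$ for conditions~(1)--(2). You merely spell out the latter step (and the exclusion of $S=\{0\}$) in more detail than the paper does.
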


\begin{Proof}
If the singular ideal of $\K S$ is trivial, then $\tightid{K}{S}=0$ by Proposition~\ref{tight} and hence $S$ must have a strongly $0$-disjunctive semilattice of idempotents and so, in particular, a $0$-disjunctive semilattice of idempotents. But (1)--(2) and $0$-disjunctive semilattice are equivalent to $S$ being congruence-free by Theorem~\ref{simplesg}. So by Theorem~\ref{simple}, the conditions above do imply the simplicity of $\K S$. On the other hand, if $\K S$ is simple, than $S$ is clearly congruence-free (as proper quotients of $S$ naturally give rise to proper quotients of $\K S$).  Therefore, (1)--(2) hold by Theorem~\ref{simplesg}. Also the singular ideal, which is a proper ideal by Proposition~\ref{embed:singular.quotient}, must be trivial.  Thus (1)--(3) hold.
\end{Proof}

Note that we can replace fundamental by quasi-fundamental in Corollary~\ref{simplicity} by Remark~\ref{r:more.general}. In fact, quasi-fundamental and fundamental are equivalent as soon as the singular ideal vanishes, as the following proposition implies.

\begin{Prop}\label{p:qfundvsfund}
Let $S$ be a quasi-fundamental inverse semigroup and $0\neq s\in Z_S(E(S))$.  Then $s-s^\ast s$ is singular for any commutative ring $K$.
\end{Prop}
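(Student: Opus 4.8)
The plan is to verify directly that $a=s-s^\ast s$ satisfies condition~\ref{singdef:right} of Proposition~\ref{singdef}: given an arbitrary idempotent $0\neq e\in E(S)$, I must exhibit a non-zero idempotent $f\leq e$ with $af=0$, that is, with $sf=s^\ast s\,f$. The whole argument will revolve around the single auxiliary element $t:=se$, of which I would first record two properties. Since $s$ centralizes $E(S)$ and idempotents commute, for every $h\in E(S)$ we get $th=seh=she=hse=ht$, so $t\in Z_S(E(S))$. Moreover $t^\ast t=e s^\ast s e=s^\ast s\,e$, and hence $s\,t^\ast t=s(s^\ast s)e=se=t$; this is exactly the relation $t\leq s$. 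Note in passing that $t^\ast t=s^\ast s\,e$ lies below both $e$ and $s^\ast s$.

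Next I would split into two cases according to whether $t$ vanishes. If $t=0$, simply take $f=e$: then $sf=se=t=0$ and $s^\ast s\,f=s^\ast s\,e=t^\ast t=0$, so $af=0$. If $t\neq 0$, then $t$ is a non-zero element of $Z_S(E(S))$, so quasi-fundamentality (equivalently, Lemma~\ref{l:qfundamental}) provides a non-zero idempotent $f\leq t$. From $f\leq t\leq s$ we obtain $sf=sf^\ast f=f$; from $f=f^\ast f\leq t^\ast t=s^\ast s\,e$ we obtain simultaneously $f\leq e$ and $s^\ast s\,f=f$. Therefore $af=sf-s^\ast s\,f=f-f=0$, as required. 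Only identities of $S$ and the coefficients $\pm 1$ enter, so this works verbatim over any commutative ring with unit, and $a$ is singular.

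The one genuinely delicate point — and the place where the hypotheses are actually used — is that quasi-fundamentality applies only to \emph{non-zero} elements of $Z_S(E(S))$, which is precisely what forces the case distinction $t=se=0$ versus $t\neq 0$. It is equally essential that the idempotent $f$ produced lie below $s$ itself and not merely below $se$, which is why establishing $t\leq s$ at the outset is the crux; everything else is routine bookkeeping, so I do not anticipate any further obstacle.
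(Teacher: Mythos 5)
Your proof is correct and follows essentially the same route as the paper's: both consider $t=se$, split on whether it vanishes, and in the non-zero case use quasi-fundamentality to extract a non-zero idempotent $f\leq se\leq s$ and check $af=0$. The only cosmetic difference is how $f\leq e$ is derived (you read it off from $f\leq t^\ast t=s^\ast s\,e$, the paper from $f=fse$), so nothing further is needed.
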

\begin{proof}
 Indeed, let $a=s-s^\ast s$ and  $e\in E(S)$ be a non-zero idempotent.  If $se=0$, then $ae=0$ and there is nothing to prove.  Otherwise, $0\neq se\in Z_S(E(S))$ and so $0\neq f\leq se\leq s$
 for some idempotent $f$ because $S$ is quasi-fundamental.  Then $f=fse$ and so $fe=f$, whence $f\leq e$.  Also $af = sf-s^\ast sf = f-s^\ast f=f-f=0$, as $f\leq s$ implies $f\leq s^\ast$.  Thus $a$ is singular.
\end{proof}

For a Hausdorff inverse semigroup, we can apply Proposition~\ref{tightissingular:hauss} to recover the following result of~\cite{groupoidprimitive}, which describes simplicity of $\K S$ in a semigroup theoretic way, independent of $K$, for Hausdorff inverse semigroups (including $0$-$E$-unitary inverse semigroups and all examples considered by Munn in~\cite{Munnsimplealgebra}).

\begin{Cor}
\label{simpleHausdorfsemigroup}
Let $S$ be a Hausdorff inverse semigroup with zero and $K$ any field.
\begin{enumerate}
\item  If $S$ is congruence-free, then the tight ideal is the unique maximal ideal of $\K S$.
\item $\K S$ is simple if and only if $S$ is $0$-simple, fundamental and $E(S)$ is strongly $0$-disjunctive.
\end{enumerate}
\end{Cor}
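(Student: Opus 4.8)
The plan is to derive both statements by specializing the general theory already developed for arbitrary inverse semigroups with zero, using the one extra ingredient available in the Hausdorff case, namely that the singular ideal and the tight ideal of $\K S$ coincide (Proposition~\ref{tightissingular:hauss}).

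For part~(1), I would argue as follows. Since $S$ is congruence-free, Theorem~\ref{simple} shows that the singular ideal $I\lhd \K S$ is the unique maximal ideal of $\K S$. Because $S$ is Hausdorff, Proposition~\ref{tightissingular:hauss} identifies $I$ with the tight ideal $\tightid{K}{S}$. Combining these two facts immediately yields that $\tightid{K}{S}$ is the unique maximal ideal of $\K S$, which is exactly the assertion of~(1).

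For part~(2), I would invoke the characterization of simplicity in Corollary~\ref{simplicity}: $\K S$ is simple if and only if $S$ is fundamental, $0$-simple, and the singular ideal vanishes. Using once more that $S$ is Hausdorff, the singular ideal equals the tight ideal, and by Corollary~\ref{c:tightpres} the tight ideal vanishes precisely when $E(S)$ is strongly $0$-disjunctive (since that ideal is generated by the elements $e-\bigvee F$ ranging over covers $F$ of $e$, and such a generator is nonzero exactly when $e$ has a non-trivial finite cover). Substituting ``$\tightid{K}{S}=0$'' for ``singular ideal vanishes'' in Corollary~\ref{simplicity} and rephrasing via strong $0$-disjunctivity gives the stated equivalence; note that strong $0$-disjunctivity implies ordinary $0$-disjunctivity, so no separate hypothesis on $E(S)$ is lost.

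There is essentially no genuine obstacle beyond citing the right earlier results in the right order; the only point requiring a moment's care is the translation between ``the tight ideal vanishes'' and ``$E(S)$ is strongly $0$-disjunctive'', which is immediate from Corollary~\ref{c:tightpres}. It is also worth remarking, for consistency, that fundamentality together with $0$-simplicity and strong $0$-disjunctivity force $S$ to be congruence-free by Theorem~\ref{simplesg}, so in the situation of~(2) the unique maximal ideal produced by~(1) is the zero ideal, as it must be.
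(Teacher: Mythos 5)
Your proposal is correct and follows exactly the route the paper intends: the paper derives this corollary by combining Theorem~\ref{simple} and Corollary~\ref{simplicity} with Proposition~\ref{tightissingular:hauss} (singular ideal equals tight ideal in the Hausdorff case) and the observation, already recorded in the text, that the tight ideal vanishes precisely when $E(S)$ is strongly $0$-disjunctive by Corollary~\ref{c:tightpres}. Nothing is missing.
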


Note that Corollary~\ref{simpleHausdorfsemigroup} was proved in~\cite{groupoidprimitive} using groupoids, whereas the above proof is inverse semigroup theoretic.
We shall see later that, for non-Hausdorff inverse semigroups, simplicity of $\K S$ can depend on the characteristic of $K$ rather than on just algebraic properties of $S$, and so a characterization in terms of $S$ cannot be expected.
One might hope to find some structural description of inverse semigroups with a simple contracted inverse semigroup algebra in characteristic $0$ based on the semigroup structure and we pose that as a challenging problem for the future.

\section{Simplicity of ample groupoid algebras}\label{s:groupoidcase}

In~\cite{nonhausdorffsimple}, the authors give a complete characterization of simplicity of ample groupoid algebras in the case of a second countable ample groupoid. The Steinberg algebra of a second-countable ample groupoid $\G$ is simple if and only if $\G$ is minimal, effective and the ideal of singular functions, that is, of functions whose support has empty interior is trivial.  In fact, the proof in~\cite{nonhausdorffsimple} works for any ample groupoid that is minimal, effective and topologically principal.  In this section we extend their results to all ample groupoids (not just the topologically principal ones) via inverse semigroups.  We will need to work with Boolean inverse semigroups.   We shall use the uniqueness theorem (Theorem~\ref{uniqueness}) to show that the singular ideal of an additively congruence-free Boolean semigroup is the unique maximal ideal of the contracted semigroup algebra containing the tight ideal.  As a consequence, we obtain the results of~\cite{nonhausdorffsimple} for the general case in a topology-free way.  More generally, we characterize when the quotient of the algebra of a Boolean inverse semigroup by its singular ideal is simple.  This will be used to determine the ample groupoids whose algebras are simple modulo the ideal of singular functions.

\subsection{Boolean inverse semigroups}
We begin by showing that the construction $S\mapsto \tight S$ is functorial on the category of Boolean inverse semigroups and additive homomorphisms.  It will then follow that being additively congruence-free is a necessary condition for simplicity of $\tight S$.

\begin{Prop}
\label{functor}
The assignment $S\to \tight S$ is a functor from the category of Boolean inverse semigroups with additive homomorphisms to the category of $K$-algebras.  Moreover, the natural homomorphism $\eta \colon S\to \tight S$ is injective and is the universal additive homomorphism from $S$ to a $K$-algebra, where a homomorphism $\varphi$ is said to be additive if  it is zero-preserving and $ef=0$ implies $\varphi(e\vee f) = \varphi(e)+\varphi(f)$ for $e,f\in E(S)$.
\end{Prop}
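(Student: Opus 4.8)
The plan is to prove the three assertions of Proposition~\ref{functor} in the following order: first injectivity of $\eta$, then the universal property (which simultaneously yields functoriality as a formal consequence). For injectivity, recall from Corollary~\ref{c:tightpres} that $\tight S = \K S/\tightid{K}{S}$ with $\tightid{K}{S}$ generated by the elements $e - \bigvee F$ for $F$ a cover of $e$, and that $\eta(s) = \chi_{(s,D(s^\ast s)\cap\widehat{E(S)}_T)}$ as a function on $\G_T(S)$. Since $S$ is Boolean, by Proposition~\ref{Booleanalg} the tight characters of $E(S)$ are exactly the ultracharacters, and in particular $\widehat{E(S)}_T$ is non-empty on each $D(e)$ with $e\ne 0$ (every non-zero idempotent lies in an ultrafilter). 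Thus $\eta(s)=0$ would force $(s,D(s^\ast s)\cap\widehat{E(S)}_T)=\emptyset$, hence $D(s^\ast s)\cap\widehat{E(S)}_T=\emptyset$, hence $s^\ast s=0$, hence $s=0$; and on basis elements $\eta(s)\ne\eta(t)$ for distinct non-zero $s,t$ follows because the compact local bisections $(s,D(s^\ast s)\cap\widehat{E(S)}_T)$ are distinct for distinct $s$ (they sit in the inverse semigroup of compact local bisections of $\G_T(S)$, which is isomorphic to $S$, as recalled in the Boolean inverse semigroups subsection). I should also note that $\eta$ itself is additive in the stated sense: $ef=0$ implies $\chi_{D(e)\cap\widehat{E(S)}_T}$ and $\chi_{D(f)\cap\widehat{E(S)}_T}$ have disjoint supports and sum to $\chi_{D(e\vee f)\cap\widehat{E(S)}_T}$, which is $\eta(e\vee f)$.

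For the universal property, let $\varphi\colon S\to A$ be an additive homomorphism into a $K$-algebra, meaning $\varphi(0)=0$ and $\varphi(e\vee f)=\varphi(e)+\varphi(f)$ whenever $ef=0$ for $e,f\in E(S)$. Extend $\varphi$ linearly to $\K S\to A$; I must show this factors through $\tightid{K}{S}$, i.e.\ that $\varphi(e-\bigvee F)=0$ for every cover $F=\{f_1,\dots,f_n\}$ of $e$. By Proposition~\ref{Booleanalg}(2), since $E(S)$ is a Boolean algebra, $F$ covers $e$ if and only if $e = f_1\vee\cdots\vee f_n$. Replacing the $f_i$ by the orthogonalized family $f_1,\ f_2\setminus f_1,\ f_3\setminus(f_1\vee f_2),\dots$ (which has the same join and the same image sum under any additive $\varphi$, since $\varphi$ restricted to $E(S)$ is a Boolean algebra homomorphism — one applies the additivity hypothesis together with the standard inclusion–exclusion identity $e\vee f = (e\setminus f)\vee f$ with $(e\setminus f)f=0$), I reduce to an orthogonal cover, where $\varphi(e) = \varphi(f_1\vee\cdots\vee f_n) = \varphi(f_1)+\cdots+\varphi(f_n) = \varphi(\bigvee F)$ directly. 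Hence $\varphi(e-\bigvee F)=0$, so $\varphi$ descends to $\overline\varphi\colon\tight S\to A$ with $\overline\varphi\circ\eta = \varphi$; uniqueness of $\overline\varphi$ is automatic because $\eta(S)$ spans $\tight S$.

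Functoriality is then formal: given an additive homomorphism $\psi\colon S\to T$ of Boolean inverse semigroups, the composite $\eta_T\circ\psi\colon S\to \tight T$ is an additive homomorphism into a $K$-algebra (additivity of $\eta_T$ was noted above, and additive homomorphisms compose), so by the universal property it induces a unique $K$-algebra map $\tight\psi\colon \tight S\to\tight T$ with $\tight\psi\circ\eta_S = \eta_T\circ\psi$; functoriality on identities and composites follows from uniqueness in the universal property. I expect the main obstacle to be the reduction to orthogonal covers in the universal-property step: one must be careful that the hypothesis on $\varphi$ — additivity only on \emph{orthogonal} pairs of idempotents — genuinely suffices to control $\varphi$ on arbitrary joins, and the cleanest route is precisely the telescoping/orthogonalization trick above, which is exactly the content of the equivalence of conditions (4) and (5) in Theorem~\ref{bias.hom} transported to the (possibly non-injective) algebra-valued setting. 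Everything else is bookkeeping with Corollary~\ref{c:tightpres} and Proposition~\ref{Booleanalg}.
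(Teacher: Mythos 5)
Your proposal is correct, and its mathematical content coincides with the paper's proof; the differences are organizational. For injectivity you use exactly the paper's argument (non-commutative Stone duality identifying $s$ with the compact local bisection $(s,D(s^\ast s)\cap\widehat{E(S)}_T)$, plus non-emptiness of $D(e)\cap\widehat{E(S)}_T$ for $e\neq 0$). For the universal property the paper simply cites Corollary~\ref{c:grpdpres}, which says the tight ideal of a Boolean inverse semigroup is already generated by the orthogonal-join relations $e+f-(e\vee f)$ with $ef=0$; you instead start from the cover presentation of Corollary~\ref{c:tightpres} and re-derive this via Proposition~\ref{Booleanalg}(2) and the telescoping orthogonalization $f_1,\,f_2\setminus f_1,\dots$ --- which is precisely the content of the proof of Corollary~\ref{c:grpdpres} (where Theorem~\ref{bias.hom} is applied to $E(\Gamma)$ and the Boolean algebra generated by its image), so you have unpacked a citation rather than found a new route; your care about additivity being assumed only on orthogonal pairs is exactly the point that proof addresses. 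Finally, for functoriality the paper checks directly that an additive $\varphi\colon S\to T$ carries the generators of $\tightid{K}{S}$ into $\tightid{K}{T}$, whereas you deduce it formally from the universal property; both work, and your version has the small bonus of making Remark~\ref{r:functor} (the formula for the induced map) follow from uniqueness rather than from inspection.
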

\begin{Proof}
Recall that by non-commutative Stone duality~\cite{LawsonLenz}, if $\Gamma$ is the Boolean inverse semigroup of compact local bisections of $\G_T(S)$, then $s\mapsto (s,D(s^\ast s)\cap \widehat{E(S)}_T)$ is an isomorphism of $S$ with $\Gamma$.  Viewing $\tight S$ as the Steinberg algebra of $\G_T(S)$, we have that $\eta(s) = \chi_{(s,D(s^\ast s)\cap \widehat{E(S)}_T)}$ and hence $\eta$ is injective.  Corollary~\ref{c:grpdpres} shows the kernel $\tightid{K}{S}$ of the map $\K S\to \tight S$ induced by $\eta$ is generated by all $e+f -(e\vee f)$  with $ef=0$, where $e\vee f$ is taken in $E(S)$.  In particular, $\eta$ is the universal additive map of $S$ into a $K$-algebra.

To see that the assignment $S\to \tight S$ is a functor,  let $\varphi\colon S\to T$ be an additive homomorphism of Boolean inverse semigroups. Suppose that $ef=0$ with $e,f\in E(S)$.  Then $\varphi(e)\varphi(f)=0$ and $\varphi(e\vee f) = \varphi(e)\vee \varphi(f)$.  It follows that the homomorphism $\varphi\colon \K S\to \K T$ takes the generating set of $\tightid{K}{S}$ into $\tightid{K}{T}$ and hence $\varphi$ induces a homomorphism $\tight S\to \tight T$.
\end{Proof}

\begin{Rem}
\label{r:functor}
The proof shows that the effect of the functor on a morphism $\varphi\colon S\to T$ is the algebra homomorphism $\overline{\varphi}\colon \tight S\to \tight T$ given by $\overline{\varphi}(s+\tightid{K}{S}) = \varphi(s)+\tightid{K}{T}$.
\end{Rem}

Let us now restate the uniqueness theorem (Theorem~\ref{uniqueness}) in the context of Boolean inverse semigroups, taking into account Proposition~\ref{tight} (that the singular ideal contains the tight ideal), Proposition~\ref{tightissingular:hauss} and that Boolean algebras are $0$-disjunctive by Proposition~\ref{Booleanalg}.

\begin{Thm}[Uniqueness]
\label{uniqueness2}
Let $S$ be a quasi-fundamental Boolean inverse semigroup and $K$ a field. Let $I$ be the singular ideal of $\K S$ and $\tightid{K}{S}$ the tight ideal.  Identify $\tight S$ with $\K S/\tightid{K}{S}$.  We view $S$ as embedded in $\tight S$ in the natural way.  The following are equivalent for an ideal $J\lhd \tight S$.
\begin{itemize}
\item [(1)]  $J\cap S=\{0\}$.
\item [(2)]  $J\cap E(S)=\{0\}$.
\item [(3)]  $J\subseteq I/\tightid{K}{S}$.
\end{itemize}
Moreover, if $S$ is fundamental, then these are equivalent to the following:
\begin{itemize}
\item [(4)] The canonical mapping $S\to \tight S/J$ is injective;
\item [(5)]  The canonical mapping $E(S)\to \tight S/J$ is injective.
\end{itemize}
In particular, if $S$ is Hausdorff, then $J\cap E(S)\neq \{0\}$ for every non-zero ideal $J\lhd \tight S$.
\end{Thm}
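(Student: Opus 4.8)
The plan is to obtain Theorem~\ref{uniqueness2} as a direct translation of Theorem~\ref{uniqueness} across the canonical surjection $\pi\colon \K S\to \tight S$, whose kernel is the tight ideal $\tightid{K}{S}$, using the lattice isomorphism between the ideals of $\tight S$ and the ideals of $\K S$ that contain $\tightid{K}{S}$.

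First I would check that Theorem~\ref{uniqueness} applies to $\K S$: the semigroup $S$ is quasi-fundamental by hypothesis, and since $S$ is a Boolean inverse semigroup, its idempotent semilattice $E(S)$ is a Boolean algebra, hence $0$-disjunctive by Proposition~\ref{Booleanalg}. Now fix an ideal $J\lhd \tight S$ and set $\widetilde J=\pi^{-1}(J)$, an ideal of $\K S$ containing $\tightid{K}{S}$ and satisfying $\pi(\widetilde J)=J$. Since $\eta=\pi|_S$ embeds $S$ into $\tight S$ (Proposition~\ref{functor}), and since $\tightid{K}{S}$ itself contains no nonzero element of $S$ --- indeed $\tightid{K}{S}\subseteq I$ by Proposition~\ref{tight} and $I$ contains no nonzero element of $S$ by Proposition~\ref{embed:singular.quotient} --- a nonzero $s\in S$ lies in $\widetilde J$ if and only if $\eta(s)\in J$; hence $\widetilde J\cap S=\{0\}$ iff $J\cap S=\{0\}$, and likewise (applying this to idempotents) $\widetilde J\cap E(S)=\{0\}$ iff $J\cap E(S)=\{0\}$. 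Moreover, using $\tightid{K}{S}\subseteq I$ once more, $\pi^{-1}(I/\tightid{K}{S})=I$, so $\widetilde J\subseteq I$ iff $J=\pi(\widetilde J)\subseteq I/\tightid{K}{S}$. Feeding $\widetilde J$ into Theorem~\ref{uniqueness} then yields the equivalence of (1), (2) and (3).

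For the ``moreover'' clause, suppose in addition that $S$ is fundamental. Since $\tightid{K}{S}\subseteq \widetilde J$, there is a natural isomorphism $\K S/\widetilde J\cong \tight S/J$, under which the canonical map $S\to \K S/\widetilde J$ becomes the composite of $\eta$ with the quotient $\tight S\to \tight S/J$; thus conditions (4) and (5) of Theorem~\ref{uniqueness}, applied to $\widetilde J$, coincide with conditions (4) and (5) here, and the additional equivalences follow. Finally, if $S$ is Hausdorff, then $I=\tightid{K}{S}$ by Proposition~\ref{tightissingular:hauss} (and $S$ is then automatically fundamental by Proposition~\ref{p:qv=fund.hs}), so $I/\tightid{K}{S}=\{0\}$ and condition (3) reduces to $J=\{0\}$; by the equivalence of (2) and (3), every nonzero ideal of $\tight S$ meets $E(S)$ nontrivially.

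There is no serious obstacle here: this is essentially a restatement of Theorem~\ref{uniqueness}, and the only point requiring care is checking that the ideal correspondence along $\pi$ interacts correctly with the conditions ``$J\cap S=\{0\}$'' and ``$J\cap E(S)=\{0\}$'', which boils down to the fact, supplied by Propositions~\ref{tight} and~\ref{embed:singular.quotient}, that $\tightid{K}{S}$ absorbs no nonzero semigroup element (and in particular no nonzero idempotent).
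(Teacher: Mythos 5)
Your proposal is correct and follows exactly the route the paper intends: the paper presents this theorem as a restatement of Theorem~\ref{uniqueness} modulo the tight ideal, citing Proposition~\ref{tight}, Proposition~\ref{tightissingular:hauss} and Proposition~\ref{Booleanalg}, and your argument simply fills in the (routine) details of the ideal correspondence along the quotient $\K S\to \tight S$.
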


It follows from Lemma~\ref{l:converse.uniq} that $S$ being quasi-fundamental is necessary for the conclusions of Theorem~\ref{uniqueness2} to hold.
In the additively congruence-free case, we obtain an analogue of Theorem~\ref{simple}.

\begin{Cor}
\label{uniquemax}
Let $S$ be a quasi-fundamental, additively $0$-simple Boolean inverse semigroup and $K$ a field.  Let $\tightid{K}{S}$ be the tight ideal of $\K S$ and $I$ the singular ideal.  Identify $\tight S$ with $\K S/\tightid{K}{S}$.  Then $I/\tightid{K}{S}$ is the unique maximal ideal of $\tight S$ and $(\K S/\tightid{K}{S})/(I/\tightid{K}{S})\cong \K S/I$ is the unique simple quotient of $\tight S$.  This applies in particular, if $S$ is additively congruence-free.
\end{Cor}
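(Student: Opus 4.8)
The plan is to follow the proof of Theorem~\ref{simple} almost verbatim, with $\K S$ replaced by $\tight S=\K S/\tightid{K}{S}$, ordinary $0$-simplicity replaced by additive $0$-simplicity, and Theorem~\ref{uniqueness2} playing the role of Theorem~\ref{uniqueness}. First, $I/\tightid{K}{S}$ is a genuine ideal of $\tight S$ since $\tightid{K}{S}\subseteq I$ by Proposition~\ref{tight}, and it is proper because $I$ is proper by Proposition~\ref{embed:singular.quotient}. Thus it remains to prove that any ideal $J\lhd\tight S$ with $J\not\subseteq I/\tightid{K}{S}$ is all of $\tight S$; granting this, $I/\tightid{K}{S}$ is the unique maximal ideal of $\tight S$, and the third isomorphism theorem identifies the resulting unique simple quotient $\tight S/(I/\tightid{K}{S})$ with $\K S/I$. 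The final clause is then immediate: an additively congruence-free Boolean inverse semigroup is fundamental and additively $0$-simple by Theorem~\ref{addcongfree}, and fundamental inverse semigroups are quasi-fundamental.

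So fix $J\lhd\tight S$ with $J\not\subseteq I/\tightid{K}{S}$. As $S$ is quasi-fundamental, the equivalence of conditions~(2) and~(3) in Theorem~\ref{uniqueness2} forces $J\cap E(S)\neq\{0\}$, so we may pick a nonzero idempotent $e\in E(S)\cap J$, using throughout the embedding $S\hookrightarrow\tight S$ of Theorem~\ref{uniqueness2}. Then $\eta(SeS)\subseteq J$ because $e\in J$ and $J$ is an ideal. Now $SeS$ is a two-sided, downward-closed ideal of $S$ — if $x\leq y\in SeS$ then $x=yx^\ast x\in SeS$ — and its closure under compatible joins is exactly the additive ideal of $S$ generated by $e$. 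Since $e\neq 0$ and $S$ is additively $0$-simple, this additive ideal is all of $S$.

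Therefore every $s\in S$ is a compatible join of elements of $SeS$. Orthogonalising such a join in the usual way (set $r_1=s_1$ and $r_i=s_i\ominus(r_1\vee\cdots\vee r_{i-1})$) yields pairwise orthogonal $r_1,\ldots,r_n$ with $s=r_1\vee\cdots\vee r_n$ and $r_i\leq s_i$ for suitable $s_i\in SeS$, whence $r_i\in SeS$ by downward closure. Since $\eta$ carries a join of pairwise orthogonal elements to the corresponding sum in $\tight S$ (by the additivity of $\eta$ recorded in Proposition~\ref{functor}), we get $\eta(s)=\eta(r_1)+\cdots+\eta(r_n)\in J$. As $\tight S$ is spanned over $K$ by $\eta(S)$, this gives $J=\tight S$, completing the argument.

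There is no essential difficulty; the one spot requiring a little care is the translation of additive $0$-simplicity — a statement about additive ideals of the Boolean inverse semigroup $S$ — into the algebra $\tight S$. This is precisely what the orthogonalisation step above accomplishes, and it is the analogue of invoking ``$SeS=S$'' in the proof of Theorem~\ref{simple}. Note that one genuinely wants orthogonal joins here rather than an inclusion–exclusion expansion over meets, since for a non-Hausdorff $S$ those meets need not exist.
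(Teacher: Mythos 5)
Your proof is correct and is essentially the paper's argument: both use Theorem~\ref{uniqueness2} to extract a non-zero idempotent $e\in E(S)\cap J$ and then invoke additive $0$-simplicity plus the fact that $\eta(S)$ spans $\tight S$. The only (cosmetic) difference is that the paper pulls $J$ back to the additive ideal $\eta^{-1}(J)$ of $S$, which lets it quote additive $0$-simplicity directly and avoids your explicit orthogonalisation of compatible joins.
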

\begin{Proof}
Let $\eta\colon S\to \tight S$ be the natural embedding.  It is additive by Proposition~\ref{functor}.
If $J\lhd \tight S$ with $J\nsubseteq I/\tightid{K}{S}$, then $J$ contains an element of the form $\eta(e)$ with $0\neq e\in E(S)$ by Theorem~\ref{uniqueness2} since $S$ is quasi-fundamental.  Then $L=\eta^{-1}(J)$ is an additive ideal of $S$ containing $e$.   Since $S$ is additively $0$-simple by Theorem~\ref{addcongfree} and $L\neq 0$, we deduce that $L=S$ and hence $J= \tight S$ as $\eta(S)$ spans $\tight S$.  The final statement follows because an additively congruence-free Boolean inverse semigroup is fundamental and additively $0$-simple by Proposition~\ref{addcongfree}.
\end{Proof}

We now characterize simplicity of the essential algebra of a Boolean inverse semigroup.  This will later be translated into the language of ample groupoids.

\begin{Thm}
\label{simplicity3}
Let $S$ be a Boolean inverse semigroup and $K$ a field.  Let $I$ be the singular ideal of $\K S$.  Then $(\K S/\tightid{K}{S})/(I/\tightid{K}{S})\cong \K S/I$ is simple if and only if:
\begin{itemize}
\item [(1)] $S$ is quasi-fundamental;
\item [(2)] $S$ is additively $0$-simple.
\end{itemize}
\end{Thm}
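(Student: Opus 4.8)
The plan is to obtain the theorem from the uniqueness machinery of Section~\ref{s:simpsemialg}: Corollary~\ref{uniquemax} gives the ``if'' direction essentially for free, and for the converse one combines Lemma~\ref{l:converse.uniq} with an analysis of how the singular ideal behaves under passing to a quotient by an additive ideal. For the ``if'' direction, assume (1) and (2); then $S$ is a quasi-fundamental, additively $0$-simple Boolean inverse semigroup, so Corollary~\ref{uniquemax} applies verbatim and $(\K S/\tightid{K}{S})/(I/\tightid{K}{S})\cong \K S/I$ is the unique simple quotient of $\tight S$, in particular simple. Conversely, suppose $\K S/I$ is simple. Since $\K S/I\neq 0$ the semigroup $S$ is non-zero, and since $I$ is proper (Proposition~\ref{embed:singular.quotient}) the only ideal of $\K S$ strictly containing $I$ is $\K S$, which contains the non-zero idempotent $s^\ast s\in E(S)$ for any $s\neq 0$; thus Lemma~\ref{l:converse.uniq} applies and shows $S$ is quasi-fundamental, which is (1).

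It remains to prove (2). Keep the assumption that $\K S/I$ is simple, equivalently that $I/\tightid{K}{S}$ is a maximal ideal of $\tight S\cong\K S/\tightid{K}{S}$, and suppose toward a contradiction that $S$ has a proper non-zero additive ideal $L$. Identify $S$ with the Boolean inverse semigroup of compact local bisections of $\G=\G_T(S)$, so that $\tight S=K\G$; by~\cite{LawsonVdovina} (cf.\ the proof of Proposition~\ref{translation}) $L$ corresponds to a proper non-empty open invariant subset $W$ of $\G^0=\widehat{E(S)}_T$ with closed invariant complement $X$, and $K\G|_W$ (the ideal of functions supported over $W$) is a proper non-zero ideal of $K\G$. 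Because $K\G|_W$ contains $\eta(e)$ for some non-zero $e\in L\cap E(S)$ and $\eta(e)$ is not singular by Proposition~\ref{embed:singular.quotient}, the ideal $\mathcal J:=K\G|_W+I_{\mathrm{sing}}(\G)$ strictly contains $I_{\mathrm{sing}}(\G)$; by maximality it equals $K\G$. Applying the restriction homomorphism $\rho\colon K\G\to K\G|_X$, whose kernel is $K\G|_W$, we get $\rho(I_{\mathrm{sing}}(\G))=\rho(\mathcal J)=K\G|_X$, and I must show this is impossible. When $\mathrm{Int}(X)\neq\emptyset$ this is quick: $\mathrm{Int}(X)$ is a non-empty open invariant set, so it contains a tight character $\theta$, and for any $f\in E(S)$ with $\theta(f)=1$ one has $f\notin L$ and the non-empty open set $D(f)\cap\mathrm{Int}(X)$ is contained in the support of $\eta(f)+k$ for every $k\in K\G|_W$; hence $\rho(\eta(f))$ has no singular preimage in $K\G$, contradicting $\rho(I_{\mathrm{sing}}(\G))=K\G|_X$.

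The remaining case, where $W$ is dense (so $X$ is nowhere dense), is the step I expect to be the main obstacle. Here one must instead show that $\rho$ maps $I_{\mathrm{sing}}(\G)$ into a \emph{proper} ideal of $K\G|_X$ --- the natural target being the singular ideal of $K\G|_X$, which is proper by Proposition~\ref{embed:singular.quotient} applied to the compact local bisections of $\G|_X$. In inverse-semigroup language this amounts to showing that the algebra homomorphism $\widetilde q\colon\K S\to\K(S\idq L)$ induced by the additive quotient map $S\to S\idq L$ sends singular elements to singular elements. A singular element need not restrict to a singular one across an arbitrary closed invariant subspace, so the argument must use that we have already arranged $S$ to be quasi-fundamental (equivalently, $\G$ topologically free, by Proposition~\ref{translation}) and that $S\idq L$ is again Boolean, with $E(S\idq L)$ a quotient Boolean algebra of $E(S)$. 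Concretely, given $a\in I$ and a non-zero $e'\in E(S\idq L)$, I would lift $e'$ by Lallement's Lemma to a non-zero $e\in E(S)\setminus L$ and exploit condition~\ref{singdef:right} --- together with the finiteness of $\supp(a)$ and the resulting control on the idempotents $f\leq e$ with $af=0$ --- to find such an $f$ lying outside $L$; its image then witnesses condition~\ref{singdef:right} for $\widetilde q(a)$ below $e'$. Once $\widetilde q(I)$ is confined to the proper singular ideal of $\K(S\idq L)$, the equality $\mathcal J=K\G$ forces $\widetilde q(I)=\K(S\idq L)$, the desired contradiction. Therefore $S$ is additively $0$-simple, which completes the proof.
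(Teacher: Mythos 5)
Your ``if'' direction and your derivation of condition (1) coincide with the paper's proof: Corollary~\ref{uniquemax} gives simplicity from (1) and (2), and Lemma~\ref{l:converse.uniq} gives quasi-fundamentality from simplicity exactly as you say. The problem is condition (2). Your argument reduces, in the case where the open invariant set $W$ is dense, to the claim that the additive quotient map $\widetilde q\colon \K S\to \K(S\idq L)$ carries singular elements to singular elements (equivalently, that restriction to the nowhere dense closed invariant subspace $X$ carries $\sing{K}{\G}$ into $\sing{K}{\G|_X}$). You correctly identify this as the main obstacle, but you do not prove it, and it is not a routine step: the support of a singular function has empty interior in $\G$, yet its intersection with $\G|_X$ can acquire non-empty \emph{relative} interior precisely because $X$ is thin in $\G^0$; and the mechanism that makes the analogous statement work in Lemma~\ref{l:converse.uniq} --- that the quotient there is idempotent-separating, so a witness $0\neq f\leq e$ with $af=0$ survives in the quotient --- fails here, since idempotents of $L$ are killed by $\widetilde q$. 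Your proposed fix (use finiteness of $\supp(a)$ to locate a witness $f\leq e$ with $af=0$ and $f\notin L$) is only a plan; nothing in the sketch rules out that \emph{every} such witness lies in $L$, since $L\cap e^{\downarrow}$ can be a dense but proper ideal of the Boolean algebra $e^{\downarrow}$. So as written the proof of (2) has a genuine gap.

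The paper avoids this question entirely with a short algebraic argument: given a non-zero additive ideal $J$ of $S$, the ideal $KJ$ of $\K S$ is not contained in $I$ (Proposition~\ref{embed:singular.quotient}), so simplicity forces $I+KJ=\K S$. Writing $e=a+\sum_{t\in F}c_tt$ with $a\in I$ and $F\subseteq J$ finite, right-multiplying by $e$ to arrange $t=te$, and setting $f=\bigvee_{t\in F}t^\ast t\in J$ (an additive ideal is closed under this join) with $f\leq e$, one gets $e\setminus f=a(e\setminus f)\in I\cap E(S)=\{0\}$, hence $e=f\in J$; thus $E(S)\subseteq J$ and $J=S$. You may want to either adopt this computation or supply a genuine proof of your restriction claim before relying on it.
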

\begin{Proof}
If $S$ is quasi-fundamental and additively $0$-simple, then $\K S/I$ is simple by Corollary~\ref{uniquemax}.  Suppose now that $\K S/I$ is simple.  Then the only ideal of $\K S$ properly containing $I$ is $\K S$, which clearly contains non-zero idempotents from $E(S)$.  Thus $S$ is quasi-fundamental by Lemma~\ref{l:converse.uniq}.

Next we show that $S$ is additively $0$-simple.  Let $J$ be a non-zero additive ideal of $S$ (resetting notation).  Note that $J\nsubseteq I$ by Proposition~\ref{embed:singular.quotient}.   Then $KJ$ is an ideal of $\K S$ not contained in $I$,  and so, by simplicity of $\K S/I$, we must have that $I+KJ=\K S$.  Let $e\in E(S)$.  Then $e=a+\sum_{t\in J} c_tt$ with $a\in I$ and $c_t\in K$.  Since $I$ and $J$ are ideals, multiplying both sides on the right by $e$ we may assume without loss of generality $t=te$ for all $t$ with $c_t\neq 0$.  Let $F=\{t\in J:c_t\neq 0\}$ and put  $f=\bigvee_{t\in F} t^\ast t$.  Then $f\in J$ because $J$ is an additive ideal and $f\leq e$ as $te=t$ for all $t\in F$ implies $t^\ast t=t^\ast te\leq e$.  Therefore, $e\setminus f = a(e\setminus f)+\sum_{t\in J} c_tt(e\setminus f) = a(e\setminus f)\in I$ as $t(e\setminus f)=0$ in $S$ for $t\in F$ because $t^\ast t\leq f$ by definition.  But then $e\setminus f=0$ by Proposition~\ref{embed:singular.quotient}.  Thus $e=f\in J$ and hence $E(S)\subseteq J$.  Therefore, $S=SE(S)=J$.  We conclude that $S$ is additively $0$-simple.
\end{Proof}

To describe simplicity of tight algebras of Boolean inverse semigroups, we observe that quasi-fundamental is equivalent to fundamental when the singular ideal coincides with the tight ideal.

\begin{Prop}\label{p:qfcoinf}
Let $S$ be a Boolean inverse semigroup such that $\tightid{K}{S}$ coincides with the singular ideal of $\K S$ for a commutative ring $K$.  Then $S$ is fundamental if and only if $S$ is quasi-fundamental.
\end{Prop}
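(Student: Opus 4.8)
The plan is short, since nearly everything needed has already been assembled. One implication is automatic: a fundamental inverse semigroup is always quasi-fundamental, as observed immediately after the definition of quasi-fundamental. So the real content is the converse under the standing hypothesis that the tight ideal $\tightid{K}{S}$ equals the singular ideal $I$ of $\K S$: namely, that a quasi-fundamental Boolean inverse semigroup $S$ is fundamental. Equivalently, one must show that every $0\neq s\in Z_S(E(S))$ satisfies $s=s^\ast s$, i.e. $s\in E(S)$, so that $Z_S(E(S))=E(S)$.

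First I would note that, since $S$ is Boolean, $E(S)$ has a least element $0$, so $S$ is an inverse semigroup with zero and the earlier results are applicable. Now fix $0\neq s\in Z_S(E(S))$. By Proposition~\ref{p:qfundvsfund}, the element $a=s-s^\ast s$ is singular, that is, $a\in I$. Invoking the hypothesis $I=\tightid{K}{S}$ gives $s-s^\ast s\in \tightid{K}{S}$, which is exactly the statement that $\eta(s)=\eta(s^\ast s)$ in $\K S/\tightid{K}{S}$, identified with $\tight S$. Since $\eta\colon S\to\tight S$ is injective by Proposition~\ref{functor}, this forces $s=s^\ast s\in E(S)$. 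Hence $Z_S(E(S))=E(S)$ and $S$ is fundamental.

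There is essentially no obstacle here; the only point that needs a moment's care is that the two auxiliary facts are available in precisely the form required. Proposition~\ref{p:qfundvsfund} applies because it is stated for an arbitrary commutative ring $K$ and any quasi-fundamental inverse semigroup with zero, and the injectivity of $\eta$ furnished by Proposition~\ref{functor} (which rests on non-commutative Stone duality and holds over any nonzero coefficient ring) is what translates the algebraic identity $s-s^\ast s\in\tightid{K}{S}$ back into the equality $s=s^\ast s$ in $S$. The degenerate case $K=0$ is of course excluded implicitly, since there $\K S=0$ and no inverse semigroup would be forced to be fundamental.
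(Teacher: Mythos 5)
Your proof is correct and follows exactly the paper's own argument: apply Proposition~\ref{p:qfundvsfund} to see that $s-s^\ast s$ is singular, hence lies in $\tightid{K}{S}$ by hypothesis, and then use the injectivity of $\eta$ from Proposition~\ref{functor} to conclude $s=s^\ast s$. Nothing further is needed.
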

\begin{proof}
It suffices to show that if $S$ is quasi-fundamental, then it is fundamental. Let $0\neq s\in Z_S(E(S))$. Then $s-s^\ast s$ is singular over $K$ by Proposition~\ref{p:qfundvsfund}.  Hence $s+\tightid{K}{S}=s^\ast s+\tightid{K}{S}$ and so $s=s^\ast s$ by Proposition~\ref{functor}.  It follows that $S$ is fundamental.
\end{proof}

The next result is the simplicity characterization of Steinberg algebras of ample groupoids in the language of Boolean inverse semigroups.

\begin{Cor}
\label{simplicity2}
Let $S$ be a Boolean inverse semigroup and $K$ a field.  Then $\tight S$ is simple if and only if:
\begin{itemize}
\item [(1)] $S$ is fundamental;
\item [(2)] $S$ is additively $0$-simple;
\item [(3)] the singular ideal of $\K S$ coincides with the tight ideal.
\end{itemize}
\end{Cor}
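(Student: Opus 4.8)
The plan is to deduce Corollary~\ref{simplicity2} as a piece of bookkeeping from Theorem~\ref{simplicity3}, Proposition~\ref{p:qfcoinf}, Proposition~\ref{tight} and Proposition~\ref{embed:singular.quotient}, working throughout with the identification $\tight S\cong \K S/\tightid{K}{S}$ furnished by Corollary~\ref{c:tightpres}. Write $I$ for the singular ideal of $\K S$. By Proposition~\ref{tight} we have $\tightid{K}{S}\subseteq I$, so that $I/\tightid{K}{S}$ is a well-defined ideal of $\tight S$, and the quotient $\K S/I$ is canonically identified with $(\K S/\tightid{K}{S})/(I/\tightid{K}{S}) = \tight S/(I/\tightid{K}{S})$.

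For the forward implication, suppose $\tight S$ is simple. The first thing to extract is condition~(3). The ideal $I/\tightid{K}{S}$ of $\tight S$ is proper because $I$ is a proper ideal of $\K S$ by Proposition~\ref{embed:singular.quotient}, so simplicity of $\tight S$ forces $I/\tightid{K}{S}=\o$, that is, $I=\tightid{K}{S}$, which is~(3). In particular $\K S/I=\tight S$ is simple, so Theorem~\ref{simplicity3} applies and tells us that $S$ is quasi-fundamental and additively $0$-simple; the latter is~(2). Finally, since~(3) says that $\tightid{K}{S}$ coincides with the singular ideal of $\K S$, Proposition~\ref{p:qfcoinf} upgrades quasi-fundamental to fundamental, which is~(1).

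For the converse, assume~(1)--(3). A fundamental inverse semigroup is quasi-fundamental, so~(1) and~(2) let us apply Theorem~\ref{simplicity3} and conclude that $\K S/I$ is simple. By~(3) we have $\K S/I=\K S/\tightid{K}{S}=\tight S$, so $\tight S$ is simple, as required.

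I do not expect any genuine obstacle here: all of the substantive work has already been done in Theorem~\ref{simplicity3} and Proposition~\ref{p:qfcoinf}. The only point that needs a little care is the order of the deductions in the forward direction: one must extract~(3) \emph{before} attempting to extract~(1), since it is precisely the coincidence of the singular and tight ideals that makes Proposition~\ref{p:qfcoinf} applicable, and Theorem~\ref{simplicity3} on its own only ever yields \emph{quasi}-fundamentality. With~(3) in hand the passage from quasi-fundamental to fundamental is immediate, and the argument closes.
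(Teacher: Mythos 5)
Your proof is correct and follows essentially the same route as the paper: extract (3) first from properness of the singular ideal (Proposition~\ref{embed:singular.quotient}) together with the containment $\tightid{K}{S}\subseteq I$, then invoke Theorem~\ref{simplicity3} for quasi-fundamentality and additive $0$-simplicity, and finally upgrade to fundamentality via Proposition~\ref{p:qfcoinf}; the converse is the same appeal to Theorem~\ref{simplicity3}. Your remark about the necessary order of deductions in the forward direction is exactly the point the paper's proof implicitly relies on.
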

\begin{Proof}
Let $I$ denote the singular ideal of $\K S$ and $\tightid{K}{S}$ the tight ideal.  If (1)--(3) hold, then $\K S/\tightid{K}{S}=\K S/I$ is simple by Theorem~\ref{simplicity3}.  Conversely, assume that $\tight S$ is simple.  Since $\tightid{K}{S}\leq I\lneq \K S$, we must have $I=\tightid{K}{S}$ and hence $\K S/I$ is simple.  We deduce from Theorem~\ref{simplicity3} that $S$ is quasi-fundamental and additively $0$-simple.  But then $S$ is fundamental by Proposition~\ref{p:qfcoinf}.  We conclude that  (1)--(3) are necessary.
\end{Proof}

In particular, for Hausdorff Boolean inverse semigroups, having no non-trivial homomorphic images in the category of Boolean inverse semigroups is equivalent to having a simple tight algebra over some, or any, field.

\begin{Cor}
\label{Hausdorffsimplegroupoid}
If $S$ is a Hausdorff Boolean inverse semigroup and $K$ is a field, then $\tight S$ is simple if and only if $S$ is additively congruence-free.
\end{Cor}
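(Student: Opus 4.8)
The plan is to deduce this as a one-line corollary of the machinery already in place, namely Corollary~\ref{simplicity2}, after observing that Hausdorffness automatically disposes of its third hypothesis. Recall that Corollary~\ref{simplicity2} says $\tight S$ is simple precisely when (1) $S$ is fundamental, (2) $S$ is additively $0$-simple, and (3) the singular ideal of $\K S$ coincides with the tight ideal $\tightid{K}{S}$. So the entire task reduces to checking that, for a Hausdorff Boolean inverse semigroup, condition (3) comes for free, and that conditions (1) and (2) together are exactly the statement that $S$ is additively congruence-free.

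For condition (3): a Boolean inverse semigroup that is Hausdorff (in the order-theoretic sense used throughout, equivalently — by the remark just before the statement — one possessing binary meets with respect to the natural partial order) is in particular a Hausdorff inverse semigroup, so Proposition~\ref{tightissingular:hauss} applies and yields that the singular ideal of $\K S$ equals $\tightid{K}{S}$ for our field $K$. Hence (3) is satisfied with no extra hypotheses. For the equivalence of ``(1) and (2)'' with additive congruence-freeness, this is precisely Theorem~\ref{addcongfree}, which states that a non-zero Boolean inverse semigroup is additively congruence-free if and only if it is fundamental and additively $0$-simple. Combining, for non-zero $S$ we get $\tight S$ simple $\iff$ $S$ fundamental and additively $0$-simple $\iff$ $S$ additively congruence-free.

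The only point that needs a word is the degenerate case $S=\{0\}$: then $\tight S$ is the zero algebra, which is not simple, and $S$ is not additively congruence-free (that notion is, by definition, reserved for non-zero Boolean inverse semigroups), so the equivalence holds vacuously in this case as well. Beyond this bookkeeping there is no real obstacle: all the substance has already been carried out in Proposition~\ref{tightissingular:hauss}, Theorem~\ref{addcongfree}, and Corollary~\ref{simplicity2}, and the proof is essentially a citation chain. If anything, the mild subtlety is simply making sure the ``Hausdorff Boolean inverse semigroup'' hypothesis is correctly matched to the general-inverse-semigroup version of Proposition~\ref{tightissingular:hauss}, which it is.
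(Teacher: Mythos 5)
Your proof is correct and matches the paper's intended derivation: the corollary is stated without proof immediately after Corollary~\ref{simplicity2}, and the implicit argument is exactly your citation chain through Proposition~\ref{tightissingular:hauss} (Hausdorff implies singular ideal equals tight ideal) and Theorem~\ref{addcongfree} (additively congruence-free iff fundamental and additively $0$-simple). The handling of the degenerate zero semigroup is a reasonable extra bit of bookkeeping.
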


\subsection{Ample groupoids}
We now translate the results of the previous subsection into the language of ample groupoids.  We then recover the main result of~\cite{nonhausdorffsimple} without the assumption of second countability or topological principality (and avoiding all topology), as well as the older results on uniqueness theorems and simplicity for Hausdorff groupoids~\cite{operatorsimple1,operatorguys2,groupoidprimitive}.

If $\G$ is an ample groupoid and $K$ is a commutative ring with unit, we define the \emph{support} of $f\colon \G\to K$ to be $\supp(f)=f^{-1}(K\setminus \{0\})$.  Note that we do not take a closure here as is typical in analysis.   A function $f\in K\G$ is defined to be \emph{singular} if the interior of $\supp(f)$ is empty.  If $\G$ is Hausdorff, then every non-zero function is non-singular, being locally constant.  The set of singular functions is an ideal that we denote by $\sing{K}{\G}$. The proof that $\sing{K}{\G}$ is an ideal  given in~\cite{nonhausdorffsimple} for fields works over any base ring.   The $C^*$-algebraic analogue of the  ideal of singular functions is studied in~\cite{EP19,KM}.

Let $S$ be an inverse semigroup and $K$ a commutative ring with unit.
Let $I \lhd \K S$ denote the singular ideal.
 Let $\Psi\colon \K S\to \tight S$ be the composition of the isomorphism of Theorem~\ref{isom} with the restriction homomorphism $\stein S\to \tight S$, where we are now viewing $\tight S$ as a Steinberg algebra.

\begin{Prop}
\label{identify:singular}
The singular ideal $I$ of $\K S$ is $\Psi^{-1}(\sing{K}{\G_T(S)})$.
\end{Prop}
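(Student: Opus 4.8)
The plan is to prove the two inclusions $I\subseteq\Psi^{-1}(\sing{K}{\G_T(S)})$ and $\Psi^{-1}(\sing{K}{\G_T(S)})\subseteq I$ separately; the workhorse in both directions is the identity $\Psi(af)=\Psi(a)\ast\eta(f)$ for $f\in E(S)$, which holds because $\Psi$ is an algebra homomorphism and $\Psi(f)=\eta(f)$. Since $\eta(f)=\chi_{D(f)\cap\widehat{E(S)}_T}$ is supported on units, $\Psi(af)$ is exactly the restriction of $\Psi(a)$ to the open set of arrows of $\G_T(S)$ whose domain lies in $D(f)\cap\widehat{E(S)}_T$; hence $af=0$ forces $\Psi(a)$ to vanish on that whole set, and in all cases $\supp\Psi(af)\subseteq\supp\Psi(a)$. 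I identify $\tight S$ with $K\G_T(S)$, so that $\Psi(a)$ is the function $[t,\varphi]\mapsto\sum_{[u,\varphi]=[t,\varphi]}a_u$ (Corollary~\ref{isomnice}) and $\eta(s)=\chi_{(s,D(s^\ast s)\cap\widehat{E(S)}_T)}$. Finally I use (from the basis of $\widehat{E(S)}_T$; cf.\ the proof of Corollary~\ref{c:tightpres}) that a nonempty basic open bisection of $\G_T(S)$ can be taken of the form $V=(t,D(e)\cap D(e_1)^c\cap\cdots\cap D(e_n)^c\cap\widehat{E(S)}_T)$ with $e_1,\dots,e_n\le e\le t^\ast t$, and that $V\ne\emptyset$ exactly when some $0\ne z\le e$ has $ze_i=0$ for all $i$.

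\emph{For the inclusion $I\subseteq\Psi^{-1}(\sing{K}{\G_T(S)})$.} Let $a$ be singular. Since basic open bisections form a basis, it is enough to show that no nonempty $V$ as above is contained in $\supp\Psi(a)$. Picking $0\ne z\le e$ with $ze_i=0$ and applying condition~\ref{singdef:right} to $z$, I obtain $0\ne f\le z$ with $af=0$. Then $D(f)\cap\widehat{E(S)}_T$ is a nonempty subset of $D(e)\cap D(e_1)^c\cap\cdots\cap D(e_n)^c\cap\widehat{E(S)}_T$ (because $f\le z\le e$ and $fe_i\le ze_i=0$), so for any $\varphi$ in it the arrow $[t,\varphi]$ lies in $V$ and satisfies $\Psi(a)([t,\varphi])=0$, since $af=0$ and $d([t,\varphi])=\varphi$. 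Hence $V\not\subseteq\supp\Psi(a)$, so $\supp\Psi(a)$ has empty interior.

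\emph{For the inclusion $\Psi^{-1}(\sing{K}{\G_T(S)})\subseteq I$.} I would argue by induction on $|\supp a|$ that $\Psi(a)$ singular forces $a$ to satisfy condition~\ref{singdef:right}; the case $a=0$ is trivial. Fix $0\ne e\in E(S)$; I must produce $0\ne f\le e$ with $af=0$. If $ae=0$, take $f=e$. Otherwise, replacing $a$ by $ae$ keeps $\Psi(a)$ singular and does not increase $|\supp a|$; if it strictly decreases $|\supp a|$ then the induction hypothesis gives $ae\in I$, and condition~\ref{singdef:right} for $ae$ at $e$ produces $f$ (as $(ae)f=af$). So I may assume $s^\ast s\le e$ for every $s\in\supp a$, and I pick $t\in\supp a$. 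The bisection $(t,D(t^\ast t)\cap\widehat{E(S)}_T)$ is nonempty (any ultracharacter through $t^\ast t$ is tight), so singularity of $\Psi(a)$ supplies $\varphi_0\in D(t^\ast t)\cap\widehat{E(S)}_T$ with $\Psi(a)([t,\varphi_0])=0$, that is, $\sum_{s\in T_0}a_s=0$ where $T_0=\{s\in\supp a:[s,\varphi_0]=[t,\varphi_0]\}$. As $t\in T_0$ and $a_t\ne0$, we have $|T_0|\ge2$. For each $s\in T_0$ pick $v_s\le s,t$ with $\varphi_0(v_s^\ast v_s)=1$ (this is the definition of the germ relation), and set $g=\prod_{s\in T_0}v_s^\ast v_s\in E(S)$: then $\varphi_0(g)=1$ so $g\ne0$, $g\le t^\ast t\le e$, and $sg=tg$ for all $s\in T_0$ since $sv_s^\ast v_s=v_s=tv_s^\ast v_s$. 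Hence $ag=\sum_{s\in\supp a\setminus T_0}a_s(sg)$, the $T_0$-terms collapsing to $(\sum_{s\in T_0}a_s)(tg)=0$, so $|\supp(ag)|<|\supp a|$. Since $\Psi(ag)=\Psi(a)\ast\eta(g)$ has support inside $\supp\Psi(a)$, it is singular, so $ag\in I$ by induction, and condition~\ref{singdef:right} applied to $ag$ at $g$ yields $0\ne f\le g\le e$ with $af=a(gf)=(ag)f=0$, as required.

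\emph{Anticipated main difficulty.} The first inclusion is a short topological argument once the ``domain slice'' observation is made. The substance lies in the second: singularity of $\Psi(a)$ only exhibits a single germ $[t,\varphi_0]$ on the bisection over $t$ where $\Psi(a)$ vanishes, so one only gets the linear relation $\sum_{s\in T_0}a_s=0$ among support coefficients, and the real work is to upgrade this to an honest cancellation inside $\K S$. The device that achieves it is the idempotent $g=\prod_{s\in T_0}v_s^\ast v_s$, which simultaneously forces every member of $T_0$ to agree with $tg$, so their coefficients cancel and $|\supp(ag)|$ strictly drops, enabling the induction. I do not expect to need any hypothesis on $S$ (such as being quasi-fundamental or having a $0$-disjunctive semilattice of idempotents).
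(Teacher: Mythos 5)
Your proof is correct, and while the easy inclusion $I\subseteq\Psi^{-1}(\sing{K}{\G_T(S)})$ is essentially the paper's argument in contrapositive form (non-cover gives $z$, condition~\ref{singdef:right} gives $f\leq z$ with $af=0$, and the domain-slice identity $\Psi(af)=\Psi(a)\ast\eta(f)$ kills $\Psi(a)$ on a nonempty open piece of the given bisection), your treatment of the hard inclusion is genuinely different. The paper instead proves the image statement $\sing{K}{\G_T(S)}\subseteq\Psi(I)$: it takes $g\notin\Psi(I)$, notes that any preimage $a$ is then non-singular, and uses the magic element $t$ of Proposition~\ref{magic} to show the entire bisection $(t,D(t^\ast t)\cap\widehat{E(S)}_T)$ lies in $\supp(g)$; it then converts $\Psi(I)=\sing{K}{\G_T(S)}$ into the preimage statement via the correspondence theorem, which requires knowing $\ker\Psi=\tightid{K}{S}\subseteq I$ (Proposition~\ref{tight}). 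You replace all of this by a direct induction on $|\supp(a)|$: a single vanishing germ $[t,\varphi_0]$ yields the relation $\sum_{s\in T_0}a_s=0$, and the idempotent $g=\prod_{s\in T_0}v_s^\ast v_s$ collapses all of $T_0$ onto $tg$, so the support strictly drops and the induction closes. Each route has its advantages: the paper's reuses machinery already in place (magic elements, Proposition~\ref{tight}) and its forward direction simultaneously re-proves that $\sing{K}{\G_T(S)}$ is an ideal; yours is self-contained for the hard direction, needs no hypotheses on $S$ (as you note), avoids the correspondence-theorem detour, and in fact recovers Proposition~\ref{tight} as a corollary, since $\ker\Psi\subseteq\Psi^{-1}(\sing{K}{\G_T(S)})\subseteq I$. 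All the individual steps check out, including the observation that $\supp\Psi(ag)\subseteq\supp\Psi(a)$ preserves singularity and that $|T_0|\geq 2$ forces the strict drop in support size.
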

\begin{Proof}
We show that $\Psi(I)=\sing{K}{\G_T(S)}$.  Since $\Psi$ is a surjective ring homomorphism, this will then imply that $\sing{K}{\G_T(S)}$ is an ideal (independently from~\cite{nonhausdorffsimple}). As the singular ideal contains the tight ideal by Proposition~\ref{tight}, we will then be able to conclude that  $I=\Psi^{-1}(\sing{K}{\G_T(S)})$ by the correspondence theorem.

Let $X=\widehat{E(S)}_T$ denote the tight spectrum of $E(S)$.
Let $a=\sum_{s\in \S}a_ss \in \K S$, and first suppose that $g:=\Psi(a)\notin \sing{K}{\G_T(S)}$; we show that $a\notin I$.  By assumption, $\supp(g)$ contains a non-empty basic open neighborhood $(t, D(e) \cap D(e_1)^c \cap \cdots \cap D(e_n)^c\cap X)$ with $e_1, \ldots, e_n <e\leq t^\ast t$. Necessarily, $e_1, \ldots, e_n$ do not cover $e$, by non-emptiness. So we obtain that there exists an idempotent $0 \neq f \leq e$ such that $fe_i=0$ for $i=1, \ldots, n$, and so  $(t, D(f)\cap X)\subseteq(t, D(e) \cap D(e_1)^c \cap \cdots \cap D(e_n)^c\cap X)\subseteq \supp(g)$.  Also $(t,D(f)\cap X)$ is non-empty because $f$ is contained in some ultrafilter. We claim that $af'\neq 0$ for all $0\neq f'\leq f$ and hence $a\notin I$  by (R). Indeed, $(t,D(f')\cap X)\subseteq (t,D(f)\cap X)\subseteq \supp(g)$. Let $\varphi$ be the character associated to an ultrafilter containing $f'$.  Then $[t,\varphi]\in (t,D(f')\cap X)$.  Now
\[\Psi(af')([t,\varphi]) = (g\ast \chi_{(f',D(f')\cap X)})([t,\varphi]) = g([t,\varphi])\neq 0.\]  Thus $af'\neq 0$.  So we have that $\psi(I)\subseteq \sing{K}{\G_T(S)}$.

Suppose now that $g\notin \Psi(I)$; we show that $g$ is non-singular.  Write again $g=\Psi(a)$ with
$a=\sum_{s\in \S}a_ss$, necessarily non-singular, say with magic element $t$ (note that $t\neq 0$). We claim that then $(t, D(t^\ast t)\cap X) \subseteq \supp (g)$. First note that $(t,D(t^\ast t)\cap X)\neq \emptyset$ because it contains any germ $[t,\chi_{\mathcal F}]$ where $\mathcal F$ is an ultrafilter containing $t^\ast t\neq 0$.  Let $[t, \varphi]\in(t, D(t^\ast t)\cap X)$.  In particular, $\varphi(t^\ast t)=1$. Consider the set
\[F=\{t\} \cup \{s: a_s \neq 0, [s, \varphi]=[t, \varphi]\}.\]
Since $F\setminus \{t\} \subseteq \supp(a)$, the set $F$ is finite.
For each $s \in F$, choose $u_s \leq s,t$ such that $\varphi(u_s^\ast u_s)=1$ (this exists since $[s,\varphi]=[t,\varphi]$).
Let $u=t \cdot \prod_{s \in F}u_s^\ast u_s\leq t$ and note that $u \leq s$ for all $s \in F$, and \[\varphi(u^\ast u)=\varphi \left(\prod\limits_{s \in F}t^\ast tu_s^\ast u_s\right)=1,\] whence $u\neq 0$ and
$[u, \varphi]=[t, \varphi]$.
Denote the set
$\{s \in \S: [s,\varphi]=[t, \varphi]\}$ by $Y$ and note that
$\supp(a) \cap Y \subseteq F \subseteq \{s \in \S: s\geq u\}\subseteq Y$.
Therefore, by Corollary~\ref{isomnice},
\[g([t,\varphi])=\sum\limits_{s \in Y}a_s=\sum\limits_{s \in Y\cap \supp(a)}a_s=
\sum\limits_{s\geq u}a_s,\]
but this is not $0$ as $t$ was magic for $a$ and $0\neq u \leq t$. So $[t, \varphi]\in\supp(g)$.    This completes the proof.
\end{Proof}

\begin{Rem}
\label{r:singular.ideal}
Since $\sing{K}{\G_T(S)}$ is an ideal by the previous proof, the singular functions form an ideal for any groupoid over any base commutative ring $K$, in light of the fact that every ample groupoid $\G$ is isomorphic to the tight groupoid of its inverse semigroup $\Gamma$ of compact local bisections.  Namely, the ideal of singular functions of  $K\G$ is the image of the singular ideal of $\K \Gamma$ under the natural surjective homomorphism $\K\Gamma\to K\G$ sending $U$ to $\chi_U$.
\end{Rem}

In analogy with the $C^*$-algebra setting~\cite{EP19,KM}, we define the \emph{essential algebra} of $\G$ over $K$ to be $K\G/\sing{K}{\G}$.

The following is the Cuntz-Krieger style uniqueness theorem for the essential algebras of not necessarily Hausdorff groupoids (without countability assumptions).

\begin{Thm}[Groupoid uniqueness]
Let $\G$ be a topologically free ample groupoid and $K$ a field.  Let $\sing{K}{\G}\lhd K\G$ be the ideal of singular functions.  Then any non-zero ideal of the essential algebra  $K\G/\sing{K}{\G}$ contains a coset $\chi_U+\sing{K}{\G}$ with $U$ a non-empty compact open subset of $\G^0$.  In particular, if $\G$ is Hausdorff, then any non-zero ideal of $K\G$ contains an element $\chi_U$ with $U$ a non-empty compact open subset of $\G^0$.
\end{Thm}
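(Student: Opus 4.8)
The plan is to transport the statement to the inverse semigroup side and invoke the uniqueness theorem for Boolean inverse semigroups (Theorem~\ref{uniqueness2}). Let $\Gamma$ be the inverse semigroup of compact local bisections of $\G$; it is a Boolean inverse semigroup and $\G\cong\G_T(\Gamma)$. Since $\G$ is topologically free, $\Gamma$ is quasi-fundamental by Proposition~\ref{translation}(3), so Theorem~\ref{uniqueness2} applies to $\Gamma$.

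Next I would fix the identifications. By Theorem~\ref{isom} followed by restriction to the tight spectrum, $K\G$ is identified with $\tight{\Gamma}=\K\Gamma/\tightid{K}{\Gamma}$, and under this identification each idempotent $U\in E(\Gamma)$ --- that is, each compact open subset of $\G^0$ --- corresponds to the characteristic function $\chi_U$. By Proposition~\ref{identify:singular} (equivalently Remark~\ref{r:singular.ideal}), the ideal $\sing{K}{\G}$ of singular functions corresponds to $I/\tightid{K}{\Gamma}$, where $I$ is the singular ideal of $\K\Gamma$; hence
\[
K\G/\sing{K}{\G}\;\cong\;\K\Gamma/I\;\cong\;\tight{\Gamma}/(I/\tightid{K}{\Gamma}).
\]

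Now let $\bar J$ be a non-zero ideal of $K\G/\sing{K}{\G}$. By the correspondence theorem it pulls back to an ideal $J\lhd\tight{\Gamma}$ with $J\supsetneq I/\tightid{K}{\Gamma}$, so in particular $J\nsubseteq I/\tightid{K}{\Gamma}$. Since $\Gamma$ is quasi-fundamental, the equivalence of conditions~(2) and~(3) in Theorem~\ref{uniqueness2} forces $J\cap E(\Gamma)\neq\{0\}$; hence there is a non-empty compact open subset $U\subseteq\G^0$ with $\chi_U\in J$ under the identification $\tight{\Gamma}\cong K\G$, and its image in the quotient is the required coset $\chi_U+\sing{K}{\G}\in\bar J$. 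For the Hausdorff case there is nothing more to do: every non-zero element of $K\G$ is non-singular, so $\sing{K}{\G}=0$ and $K\G/\sing{K}{\G}=K\G$, and the statement specializes verbatim.

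The only step that needs genuine care --- rather than being a real obstacle --- is keeping the chain of isomorphisms $K\G\cong\tight{\Gamma}$ and $K\G/\sing{K}{\G}\cong\K\Gamma/I$ straight, and in particular checking that the idempotent produced by Theorem~\ref{uniqueness2} on the semigroup side corresponds on the groupoid side to the characteristic function of a non-empty compact open subset of the unit space. This is precisely what Proposition~\ref{identify:singular}, together with the identification of $E(\Gamma)$ with the compact open subsets of $\G^0$, provides, so once the dictionary is set up the argument is short.
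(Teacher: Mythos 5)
Your proof is correct and follows exactly the paper's route: the paper's own proof is a one-line reduction citing Proposition~\ref{translation}, Proposition~\ref{identify:singular}, and Theorem~\ref{uniqueness2} applied to the Boolean inverse semigroup $\Gamma$ of compact local bisections via $\G\cong\G_T(\Gamma)$, which is precisely the dictionary you set up and apply. Your write-up just makes the identifications explicit where the paper leaves them implicit.
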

\begin{Proof}
In light of Proposition~\ref{translation} and Proposition~\ref{identify:singular}, this is immediate from Theorem~\ref{uniqueness2} applied to the Boolean inverse semigroup $\Gamma$ of compact local bisections, using the isomorphism $\G\cong \G_T(\Gamma)$.
\end{Proof}

Note that the condition that $\G$ is topologically free is necessary in order for the conclusion of the above theorem to hold by Lemma~\ref{l:converse.uniq} and Proposition~\ref{translation}.

The translation of Corollary~\ref{uniquemax} into the language of groupoids via Proposition~\ref{translation} is the following, again using the isomorphism $\G\cong \G_T(\Gamma)$.
\begin{Thm}
Let $\G$ be a topologically free and minimal ample groupoid and $K$ a field.  Let $\sing{K}{\G}\lhd K\G$ be the ideal of singular functions.  Then $\sing{K}{\G}$ is the unique maximal ideal of $K\G$ and $K\G/\sing{K}{\G}$ is simple.
\end{Thm}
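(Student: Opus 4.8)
The plan is to obtain this statement purely by transporting Corollary~\ref{uniquemax} along non-commutative Stone duality. First I would let $\Gamma$ be the Boolean inverse semigroup of compact local bisections of $\G$ and invoke Exel's isomorphism $\G\cong\G_T(\Gamma)$. By Corollary~\ref{c:grpdpres} this induces an isomorphism of $K$-algebras $K\G\cong\tight{\Gamma}$, and we may identify $\tight{\Gamma}$ with $\K\Gamma/\tightid{K}{\Gamma}$, the image of $\Gamma$ in $\tight{\Gamma}$ being realized as the compact local bisections $(s,D(s^\ast s)\cap\widehat{E(\Gamma)}_T)$.

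Next I would verify that the hypotheses transfer. Since $\G$ is topologically free, $\Gamma$ is quasi-fundamental by Proposition~\ref{translation}(3); since $\G$ is minimal, $\Gamma$ is additively $0$-simple by Proposition~\ref{translation}(2). Thus $\Gamma$ satisfies the hypotheses of Corollary~\ref{uniquemax}: writing $I$ for the singular ideal of $\K\Gamma$, that corollary gives that $I/\tightid{K}{\Gamma}$ is the unique maximal ideal of $\tight{\Gamma}$ and that $(\K\Gamma/\tightid{K}{\Gamma})/(I/\tightid{K}{\Gamma})\cong\K\Gamma/I$ is the unique simple quotient of $\tight{\Gamma}$.

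The remaining task is to check that $I/\tightid{K}{\Gamma}$ corresponds to $\sing{K}{\G}$ under the isomorphism $\tight{\Gamma}\cong K\G$. This is exactly Proposition~\ref{identify:singular} (equivalently Remark~\ref{r:singular.ideal}): the singular ideal $I$ of $\K\Gamma$ is the preimage of $\sing{K}{\G_T(\Gamma)}$ under the composite surjection $\K\Gamma\to\stein{\Gamma}\to\tight{\Gamma}$, so its image in $\tight{\Gamma}$ is precisely $\sing{K}{\G}$ once one identifies $\G$ with $\G_T(\Gamma)$. Carrying the conclusion of Corollary~\ref{uniquemax} across this identification then yields that $\sing{K}{\G}$ is the unique maximal ideal of $K\G$ and that $K\G/\sing{K}{\G}$ is simple.

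The only point demanding real care is this last identification: one must be sure the three maps in play --- the algebra isomorphism $K\G\cong\K\Gamma/\tightid{K}{\Gamma}$ of Corollary~\ref{c:grpdpres}, the restriction homomorphism $\stein{\Gamma}\to\tight{\Gamma}$ appearing in Proposition~\ref{identify:singular}, and the groupoid isomorphism $\G\cong\G_T(\Gamma)$ --- are mutually compatible, so that ``singular function on $\G$'' and ``image of a singular element of $\K\Gamma$'' name the same ideal. Granting that compatibility, everything else is bookkeeping through Proposition~\ref{translation} and Corollary~\ref{uniquemax}, and no new argument is needed.
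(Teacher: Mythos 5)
Your proposal is correct and follows exactly the paper's own route: the paper proves this theorem precisely by translating Corollary~\ref{uniquemax} into groupoid language via Proposition~\ref{translation} and the isomorphism $\G\cong\G_T(\Gamma)$, with Proposition~\ref{identify:singular} (Remark~\ref{r:singular.ideal}) supplying the identification of the singular ideal of $\K\Gamma$ with $\sing{K}{\G}$. The compatibility of the maps you flag at the end is indeed the only point needing care, and it is exactly what Proposition~\ref{identify:singular} establishes.
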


Also, we have the following characterization of simplicity of the essential algebra of an ample groupoid, which is an exact analogue of the $C^*$-algebraic result in~\cite{KM} and was called Theorem~A$^\prime$ in the introduction.  The result is a direct translation of Theorem~\ref{simplicity3} into groupoid language via Proposition~\ref{identify:singular} and Proposition~\ref{translation}.

\begin{Thm}
\label{amplesimpleess}
Let $\G$ be an ample groupoid and $K$ a field.  Then the essential algebra $K\G/\sing{K}{\G}$ is simple if and only if $\G$ is minimal and topologically free.
\end{Thm}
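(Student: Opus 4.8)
The plan is to reduce the statement to the Boolean-inverse-semigroup criterion already established in Theorem~\ref{simplicity3}, using Exel's realization of $\G$ as a tight groupoid. First I would set $\Gamma$ to be the inverse semigroup of compact local bisections of $\G$, which is a Boolean inverse semigroup, and invoke Exel's theorem to get $\G\cong\G_T(\Gamma)$. Composing the isomorphism $\K\Gamma\cong\stein\Gamma$ of Theorem~\ref{isom} with restriction to the tight spectrum (equivalently, via the Buss--Meyer presentation of Corollary~\ref{c:grpdpres}) identifies the Steinberg algebra $K\G$ with $\tight\Gamma=\K\Gamma/\tightid{K}{\Gamma}$.

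Next I would track the ideal of singular functions through this identification. By Proposition~\ref{identify:singular}, the singular ideal $I$ of $\K\Gamma$ equals $\Psi^{-1}(\sing{K}{\G_T(\Gamma)})$, where $\Psi\colon\K\Gamma\to\tight\Gamma$ is the surjection above; since $\tightid{K}{\Gamma}\subseteq I$ by Proposition~\ref{tight}, this says precisely that $\sing{K}{\G}$ corresponds to $I/\tightid{K}{\Gamma}$ under $\tight\Gamma\cong K\G$ (compare Remark~\ref{r:singular.ideal}). Hence the essential algebra satisfies
\[ K\G/\sing{K}{\G}\;\cong\;(\K\Gamma/\tightid{K}{\Gamma})/(I/\tightid{K}{\Gamma})\;\cong\;\K\Gamma/I. \]
Applying Theorem~\ref{simplicity3} to $\Gamma$, this algebra is simple if and only if $\Gamma$ is quasi-fundamental and additively $0$-simple. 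Finally, Proposition~\ref{translation} converts these two conditions back into properties of $\G$: $\Gamma$ is quasi-fundamental exactly when $\G$ is topologically free, and $\Gamma$ is additively $0$-simple exactly when $\G$ is minimal. Chaining the equivalences yields the theorem.

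The hard part here is essentially nonexistent: the substantive work has already been carried out in Theorem~\ref{simplicity3} (which rests on the uniqueness theorem) and in Proposition~\ref{identify:singular}, so what remains is purely a dictionary argument. The one point that deserves a moment's care is the compatibility of the various isomorphisms --- that the image of $I$ under $\Psi$ is literally the ideal $\sing{K}{\G}$ of singular functions of $\G$, and not merely an ideal abstractly isomorphic to it --- but this is exactly the content of Proposition~\ref{identify:singular} (and Remark~\ref{r:singular.ideal}) combined with the naturality of Exel's isomorphism $\G\cong\G_T(\Gamma)$.
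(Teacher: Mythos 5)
Your proposal is correct and follows exactly the route the paper takes: the paper derives Theorem~\ref{amplesimpleess} as a direct translation of Theorem~\ref{simplicity3} into groupoid language via Proposition~\ref{identify:singular} and Proposition~\ref{translation}, using Exel's isomorphism $\G\cong\G_T(\Gamma)$. You simply spell out the dictionary in more detail than the paper does, which is fine.
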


\begin{Rem}
\label{r:cong.free}
Let $S$ be a congruence-free inverse semigroup. Then the essential algebra of $S$ is always simple by Theorem~\ref{simple} and hence the tight groupoid $\G_T(S)$ is always minimal and topologically free by Theorem~\ref{amplesimpleess}.
\end{Rem}

In fact, we show that if $S$ is quasi-fundamental and $E(S)$ is $0$-disjunctive, then the groupoid $\G_T(S)$ is always topologically free, generalizing~\cite[Corollary~5.11]{groupoidprimitive}  to the non-Hausdorff setting. This is in contrast to an example that we shall provide later showing that $S$ can be fundamental with $E(S)$ $0$-disjunctive, but the groupoid $\G_T(S)$ is not effective.

\begin{Cor}\label{c:top.free.from.qf}
Let $S$ be a quasi-fundamental inverse semigroups with $0$-disjunctive semilattice of idempotents.  Then the tight groupoid $\G_T(S)$ is topologically free.
\end{Cor}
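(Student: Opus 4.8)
The plan is to reduce the statement to the machinery already assembled in the paper rather than to argue directly with germs of partial homeomorphisms. Concretely, $\G_T(S)$ is an ample groupoid, so let $\Gamma$ be its inverse semigroup of compact local bisections; by Exel's theorem $\G_T(S)\cong \G_T(\Gamma)$. By Proposition~\ref{translation}(3), topological freeness of $\G_T(S)$ is equivalent to $\Gamma$ being quasi-fundamental. So it suffices to show: \emph{if $S$ is quasi-fundamental with $0$-disjunctive $E(S)$, then the Boolean inverse semigroup $\Gamma$ of compact local bisections of $\G_T(S)$ is quasi-fundamental.}

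First I would recall (from Section~\ref{s:prelim}) that $S$ embeds into $\Gamma$ via $s\mapsto (s,D(s^\ast s)\cap\widehat{E(S)}_T)$, and that this embedding is a bijection of $E(S)$ with the nonzero idempotents of $\Gamma$ only after the relevant identifications; more to the point, $\widehat{E(S)}_T$ is a Boolean-space-like completion, so $E(\Gamma)$ is the generalized Boolean algebra generated by (the image of) $E(S)$, and every nonzero idempotent of $\Gamma$ dominates a nonzero basic element coming from some $D(e)\cap\widehat{E(S)}_T$. Then the core step: take $0\neq U\in Z_\Gamma(E(\Gamma))$; I want a nonzero idempotent of $\Gamma$ below $U$. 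By Proposition~\ref{p:centralizerofidems} this centralizing condition says precisely $U\subseteq \int(\Is(\G_T(S)))$, i.e. $U$ is a compact local bisection consisting of isotropy arrows. Shrinking if necessary, I may assume $U=(s,W)$ for some $s\in S$ and some nonempty open $W\subseteq D(s^\ast s)\cap\widehat{E(S)}_T$, with every germ $[s,\varphi]$ for $\varphi\in W$ an isotropy element, i.e. $\beta_s(\varphi)=\varphi$ for all $\varphi\in W$; after intersecting with a basic set I may take $W = D(e)\cap D(e_1)^c\cap\cdots\cap D(e_n)^c\cap\widehat{E(S)}_T$ with $e_i<e\leq s^\ast s$ not a cover of $e$.

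The key claim I would then prove is that $se\in Z_S(E(S))$ (or at least that some nonzero $t\le s$ with $t^\ast t$ nonzero lies in $Z_S(E(S))$): for a germ $[s,\varphi]$ over $\varphi\in W$ to fix $\varphi$ means $\varphi(s^\ast fs)=\varphi(f)$ for all $f\in E(S)$, and since tight characters are dense and $W$ has nonempty interior in $\widehat{E(S)}_T$, I can pick an ultracharacter in $W$ and push this through to the semigroup level; a short filter argument (exactly the kind used in the proof of Corollary~\ref{c:tightpres}) will produce a nonzero idempotent $f_0\leq e$ with $f_0 s^\ast f s = f_0 f$ for a cofinal set of $f$, forcing $s f_0\in Z_S(E(S))$ with $sf_0\neq 0$. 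Now quasi-fundamentality of $S$ gives a nonzero idempotent $g\leq sf_0\leq s$ in $E(S)$, and one checks $g$, viewed in $\Gamma$, satisfies $0\neq g\leq U$: indeed $g\leq s$ in $S$ forces the bisection $(g,D(g)\cap\widehat{E(S)}_T)\subseteq (s,\cdot)$, and shrinking $g$ further against $e,e_1,\dots,e_n$ (using $0$-disjunctivity, exactly as in the proof of Theorem~\ref{uniqueness}) we may assume its support lands inside $W$. Hence $\Gamma$ is quasi-fundamental.

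I expect the main obstacle to be the passage from the \emph{topological/germ} statement ``every arrow in $U$ is isotropy'' to the \emph{algebraic} statement ``$sf_0\in Z_S(E(S))$ for some nonzero $f_0\leq e$.'' The subtlety is that the germ identification collapses $s$ and $t$ whenever they agree below some idempotent in the filter, so ``$\beta_s$ fixes $\varphi$'' does not literally say $s$ centralizes $E(S)$; one must localize correctly, using that $W$ contains a nonempty basic open set of tight (indeed ultra-) characters and that $0$-disjunctivity lets one separate idempotents, to extract a genuine semigroup element below $s$ that centralizes $E(S)$. Once that translation is done, quasi-fundamentality of $S$ and Proposition~\ref{translation}(3) close the argument immediately. (One should also double-check the degenerate case $W$ basic with $n=0$, which is the easy case, and the Hausdorff subcase, where this recovers \cite[Corollary~5.11]{groupoidprimitive} since quasi-fundamental equals fundamental there by Proposition~\ref{p:qv=fund.hs}.)
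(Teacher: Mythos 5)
Your route is genuinely different from the paper's. The paper never touches germs or characters at this point: it applies Lemma~\ref{l:converse.uniq} to the Boolean inverse semigroup $\Gamma$ of compact local bisections of $\G_T(S)$, using that $\lambda\colon \K S\to \K\Gamma$, $s\mapsto (s,D(s^\ast s)\cap\widehat{E(S)}_T)$, induces an isomorphism of $\K S/I$ with the quotient of $\K\Gamma$ by its singular ideal (Propositions~\ref{tight} and~\ref{identify:singular}); any ideal of $\K\Gamma$ properly containing the singular ideal then pulls back to an ideal of $\K S$ properly containing $I$, which by Theorem~\ref{uniqueness} (applied to $S$, which is where quasi-fundamentality and $0$-disjunctivity enter) contains some $0\neq e\in E(S)$, and $\lambda(e)$ is the required nonzero idempotent of $\Gamma$. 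You instead argue directly that $\Gamma$ is quasi-fundamental by exhibiting an idempotent of $\Gamma$ below each basic bisection $(s,W)\subseteq \Is(\G_T(S))$. That is a legitimate, more hands-on alternative, closer in spirit to the corrected Lalonde--Milan statement the paper discusses.

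As written, though, your central step has a gap. You assert that a ``short filter argument'' produces $0\neq f_0\leq e$ with $f_0s^\ast fs=f_0f$ \emph{for a cofinal set of} $f$, and conclude $sf_0\in Z_S(E(S))$. Centralizing $E(S)$ requires $f_0f=f_0s^\ast fs$ for \emph{every} $f\in E(S)$, and a computation inside a single ultrafilter cannot deliver that uniformity; ``cofinal'' is not enough. The claim is nevertheless true, by a different (and short) argument: write $W=D(e)\cap D(e_1)^c\cap\cdots\cap D(e_n)^c\cap\widehat{E(S)}_T$ and, using that $e_1,\ldots,e_n$ do not cover $e$ (forced by $W\neq\emptyset$), pick \emph{any} $0\neq f_0\leq e$ with $f_0e_i=0$ for all $i$. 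If $f_0f\neq f_0s^\ast fs$ for some $f\in E(S)$, then $0$-disjunctivity yields $0\neq z$ below one of $f_0f$, $f_0s^\ast fs$ and orthogonal to the other; the character $\varphi$ of any ultrafilter containing $z$ is tight, lies in $W$ (as $z\leq f_0$), and satisfies $\varphi(f)\neq\varphi(s^\ast fs)$, so $[s,\varphi]\in (s,W)$ is not an isotropy arrow --- a contradiction. Hence $sf_0\in Z_S(E(S))\setminus\{0\}$, quasi-fundamentality gives an idempotent $0\neq g\leq sf_0$, and since $g\leq f_0$ one gets $\emptyset\neq D(g)\cap\widehat{E(S)}_T\subseteq W$ for free; note that your proposed final step of ``shrinking $g$ against $e_1,\ldots,e_n$'' afterwards could actually fail (if $g$ happened to lie below some $e_i$ there is nothing to shrink to), so the orthogonality of $f_0$ to the $e_i$ must be arranged up front. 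With these repairs your argument closes; the trade-off is that the paper's proof is a few lines given its uniqueness machinery, while yours is self-contained at the level of the groupoid of germs.
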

\begin{proof}
Let $K$ be a field and $I$ the singular ideal of $\K S$.  Let $\Gamma$ be the inverse semigroup of compact local bisections of $\G_T(S)$.  Note since every non-zero idempotent is contained in an ultrafilter, if $0\neq e\in E(S)$, then the compact open set $D(e)\cap \widehat{E(S)}_T$ is non-empty.  We now apply Lemma~\ref{l:converse.uniq} to $\Gamma$ to show that $\Gamma$ is quasi-fundamental.

Via the isomorphism $\G_T(S)\cong \G_T(\Gamma)$ of~\cite{Exel},  the natural homomorphism $\lambda\colon \K S\to \K\Gamma$ given by $s\mapsto (s,D(s^*s)\cap \widehat{E(S)}_T)$ induces an isomorphism  $\K S/I\to \K\Gamma/\sing{K}{\G}$ by Proposition~\ref{tight} and Proposition~\ref{identify:singular}.  It follows that if $J$ is an ideal of $\K\Gamma$ properly containing $\sing{K}{\G}$, then $\lambda^{-1}(J)$ is an ideal of $\K S$ properly containing $I$.  Therefore, there is $0\neq e\in \lambda^{-1}(J)$ by Theoren~\ref{uniqueness}.  But then $0\neq D(e)\cap \widehat{E(S)}_T=\lambda(e)\in J$ and so we conclude that $\Gamma$ is quasi-fundamental by Lemma~\ref{l:converse.uniq}.  Therefore, $\G_T(S)$ is topologically free by Proposition~\ref{translation}.
\end{proof}

Note that if $K\G$ has no non-zero singular functions, then topological freeness is equivalent to effectiveness of $\G$ by Proposition~\ref{p:qfcoinf}, Proposition~\ref{identify:singular} and Proposition~\ref{translation}.
In particular, we recover the main result of~\cite{nonhausdorffsimple} without the second countability (or topological principality) assumption by translating Corollary~\ref{simplicity2} into the language of groupoids.  However, the proof of Corollary~\ref{simplicity2} is entirely algebraic.  This is one of the main results of the paper and was termed Theorem~A in the introduction.

\begin{Thm}
\label{amplesimple}
Let $\G$ be an ample groupoid and $K$ a field.  Then $K\G$ is simple if and only if:
\begin{itemize}
\item [(1)] $\G$ is effective;
\item [(2)] $\G$ is minimal;
\item [(3)] $K\G$ has no non-zero singular functions.
\end{itemize}
Moreover, effectiveness can be replaced by topological freeness in (1).
\end{Thm}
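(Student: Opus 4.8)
The approach is to reduce the theorem to Corollary~\ref{simplicity2} by passing to the inverse semigroup of compact local bisections. Let $\Gamma$ be the Boolean inverse semigroup of compact local bisections of $\G$. By Exel's theorem $\G\cong\G_T(\Gamma)$, so $K\G\cong\tight{\Gamma}$, the Steinberg algebra of $\G_T(\Gamma)$ (cf.\ Corollary~\ref{c:grpdpres} and the discussion around Proposition~\ref{functor}). Hence $K\G$ is simple if and only if $\tight{\Gamma}$ is simple, and Corollary~\ref{simplicity2} tells us the latter holds precisely when (i) $\Gamma$ is fundamental, (ii) $\Gamma$ is additively $0$-simple, and (iii) the singular ideal $I$ of $\K\Gamma$ coincides with the tight ideal $\tightid{K}{\Gamma}$.

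The remaining work is a translation through Proposition~\ref{translation} and Proposition~\ref{identify:singular}. By Proposition~\ref{translation}, condition (i) is equivalent to $\G$ being effective and (ii) is equivalent to $\G$ being minimal, giving (1) and (2). For (iii), I would use the chain of ideals $\tightid{K}{\Gamma}\subseteq I\subseteq\K\Gamma$: the first inclusion is Proposition~\ref{tight}, and $\tightid{K}{\Gamma}$ is exactly the kernel of the canonical surjection $\Psi\colon\K\Gamma\to\tight{\Gamma}$ by Corollary~\ref{c:tightpres}. Applying Proposition~\ref{identify:singular} to $\Gamma$ gives $\Psi(I)=\sing{K}{\G_T(\Gamma)}=\sing{K}{\G}$. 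Since $\tightid{K}{\Gamma}\subseteq I$, we get $I=\tightid{K}{\Gamma}$ if and only if $I\subseteq\ker\Psi$, i.e.\ $\Psi(I)=0$, i.e.\ $\sing{K}{\G}=\{0\}$, which is condition (3). This establishes the equivalence of simplicity of $K\G$ with (1)--(3).

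For the ``moreover'' clause I would argue as follows. Under hypothesis (3) the properties ``fundamental'' and ``quasi-fundamental'' coincide for $\Gamma$: indeed, (3) says (via the translation just made) that the tight ideal of $\K\Gamma$ equals its singular ideal, so Proposition~\ref{p:qfcoinf} applies. Feeding this through Proposition~\ref{translation}, which pairs ``fundamental'' with ``effective'' and ``quasi-fundamental'' with ``topologically free'', shows that whenever $K\G$ has no non-zero singular functions, $\G$ is effective if and only if $\G$ is topologically free. Since effective groupoids are always topologically free, the characterization is unchanged when (1) is replaced by topological freeness: if $K\G$ is simple then (1)--(3) hold, hence $\G$ is topologically free; conversely, topological freeness together with (3) forces effectiveness, so (1)--(3) hold and $K\G$ is simple.

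I do not expect a real obstacle, since the substantive content has already been built into Corollary~\ref{simplicity2} and Proposition~\ref{identify:singular}. The one point requiring a bit of care is the bookkeeping with the three ideals $\tightid{K}{\Gamma}\subseteq I\subseteq\K\Gamma$ and the correspondence theorem, so that the statements ``$I=\tightid{K}{\Gamma}$'', ``$\Psi(I)=0$'' and ``$K\G$ has no non-zero singular functions'' are genuinely matched up; and one should recall that the identification $K\G\cong\tight{\Gamma}$ and the equality $\Psi(I)=\sing{K}{\G}$ were both proved earlier with no Hausdorff hypothesis, so the argument goes through verbatim in the non-Hausdorff case.
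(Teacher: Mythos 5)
Your proposal is correct and follows essentially the same route as the paper: the paper obtains this theorem precisely by translating Corollary~\ref{simplicity2} through Proposition~\ref{translation} and Proposition~\ref{identify:singular}, and derives the ``moreover'' clause from Proposition~\ref{p:qfcoinf} exactly as you do. Your bookkeeping with the chain $\tightid{K}{\Gamma}\subseteq I\subseteq \K\Gamma$ is the right way to match condition (iii) of Corollary~\ref{simplicity2} with the vanishing of $\sing{K}{\G}$.
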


As a corollary we obtain the following classical result, first proved over the complex numbers in~\cite{operatorsimple1}, and in general in~\cite{operatorguys2,groupoidprimitive}.

\begin{Cor}
If $\G$ is a Hausdorff ample groupoid and $K$ is a field, then $K\G$ is simple if and only if $\G$ is effective and minimal.
\end{Cor}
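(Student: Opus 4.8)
The plan is to obtain this corollary as an immediate specialization of Theorem~\ref{amplesimple}, the only point being that its condition (3) is automatically satisfied in the Hausdorff case. First I would recall, as discussed in Section~\ref{s:prelim}, that when $\G$ is Hausdorff the Steinberg algebra $K\G$ consists exactly of the locally constant, compactly supported $K$-valued functions on $\G$; concretely, every $f\in K\G$ is a finite $K$-linear combination of characteristic functions of compact local bisections, and on the (compact open) set where any such combination is non-zero it is locally constant. Hence $\supp(f)=f^{-1}(K\setminus\{0\})$ is compact open, in particular open. Therefore, if $f\neq 0$ then $\supp(f)$ has non-empty interior (it \emph{is} its interior), so $f$ is not singular; that is, $\sing{K}{\G}=0$. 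Consequently condition (3) of Theorem~\ref{amplesimple} holds vacuously, and that theorem reduces to: $K\G$ is simple if and only if $\G$ is effective and minimal. (Equivalently, for Hausdorff $\G$ topological freeness and effectiveness coincide, so the last sentence of Theorem~\ref{amplesimple} gives nothing new here.)

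An alternative, purely inverse-semigroup-theoretic route, which I would at most mention, goes through Corollary~\ref{simplicity2}: letting $\Gamma$ be the Boolean inverse semigroup of compact local bisections of $\G$, one has $K\G\cong\tight\Gamma$ via $\G\cong\G_T(\Gamma)$, and $\G$ Hausdorff forces $\Gamma$ to be a Hausdorff (Boolean) inverse semigroup, so its singular ideal coincides with its tight ideal by Proposition~\ref{tightissingular:hauss}; thus condition (3) of Corollary~\ref{simplicity2} is automatic, while by Proposition~\ref{translation} conditions (1) and (2) there translate to $\G$ being effective and minimal, respectively.

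There is no real obstacle: the corollary is a direct consequence of the main theorem once one observes the absence of non-zero singular functions for Hausdorff groupoids. The only point deserving a moment's care is the description of elements of $K\G$ in the non-unital case, but since every element is still a finite combination of characteristic functions of compact open bisections, its support is compact open and the argument above applies verbatim. Hence both implications follow at once from Theorem~\ref{amplesimple}.
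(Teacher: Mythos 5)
Your proposal is correct and matches the paper's (implicit) argument exactly: the corollary is stated as an immediate consequence of Theorem~\ref{amplesimple}, relying on the observation made earlier in the paper that for Hausdorff $\G$ every element of $K\G$ is locally constant with compact open support, so there are no non-zero singular functions and condition (3) holds automatically. Your alternative route through Corollary~\ref{simplicity2} and Proposition~\ref{tightissingular:hauss} is also sound but adds nothing beyond the main argument.
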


\section{Descent and the singular ideal}

Fix an ample groupoid $\G$ for this section.  Our goal is to show that simplicity of $K\G$ depends only on the characteristic of $K$.  We do this by applying descent theory to the singular ideal.

In this section we shall use letters like $R$ to denote commutative rings with unit and reserve $K$ for fields.
If $\varphi\colon R_1\to R_2$ is a (unital) ring homomorphism, we have an induced base change homomorphism, which we denote abusively by $\varphi\colon R_1\G\to R_2\G$, given by $(\varphi(f))(\gamma) = \varphi(f(\gamma))$.  Note that this homomorphism is well defined because if $U$ is a compact local bisection, then $\varphi(\chi_U)=\chi_U$ and hence the spanning set of $R_1\G$ is mapped into the spanning set for $R_2\G$.  Note that $\varphi$ is injective (respectively, surjective) on the level of Steinberg algebras if and only if it is injective (respectively, surjective) on the level of rings.  The next proposition shows that the singular ideal is functorial in the base ring.

\begin{Prop}
\label{base:change}
If $\varphi\colon R_1\to R_2$ is a ring homomorphism, then $\varphi(\sing{R_1}{\G})\subseteq \sing{R_2}{\G}$.
\end{Prop}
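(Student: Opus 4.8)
The plan is to reduce the statement to a completely elementary observation: base change can only shrink supports, and having empty interior passes to subsets. I would first recall, from the discussion preceding the proposition, that for $f\in R_1\G$ the function $\varphi(f)$ defined by $(\varphi(f))(\gamma)=\varphi(f(\gamma))$ is genuinely an element of $R_2\G$: since $\varphi$ is unital, $\varphi(\chi_U)=\chi_U$ for every compact local bisection $U$, so $\varphi$ carries the spanning set of $R_1\G$ into that of $R_2\G$. This is the only place where anything about the algebra structure of $R_i\G$ enters.

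The key step is the inclusion of supports $\supp(\varphi(f))\subseteq\supp(f)$ for every $f\in R_1\G$. Indeed, if $\gamma\in\supp(\varphi(f))$, then $\varphi(f(\gamma))=(\varphi(f))(\gamma)\neq 0$, and since $\varphi(0)=0$ this forces $f(\gamma)\neq 0$, i.e.\ $\gamma\in\supp(f)$. Combining this with the trivial topological fact that $A\subseteq B$ with $\int(B)=\emptyset$ implies $\int(A)=\emptyset$ — because $\int(A)$ is open and contained in $A\subseteq B$, hence in $\int(B)$ — I would conclude: if $f\in\sing{R_1}{\G}$, so that $\int(\supp(f))=\emptyset$, then $\int(\supp(\varphi(f)))=\emptyset$, that is, $\varphi(f)\in\sing{R_2}{\G}$. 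Since $f$ was arbitrary, $\varphi(\sing{R_1}{\G})\subseteq\sing{R_2}{\G}$.

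There is essentially no obstacle here: all the content lies in the monotonicity $\supp(\varphi(f))\subseteq\supp(f)$ together with the inheritance of empty interior by subsets, and the only external ingredient — well-definedness of base change on Steinberg algebras — has already been established in the text immediately preceding the proposition.
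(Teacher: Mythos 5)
Your proof is correct and is essentially the paper's argument in contrapositive form: the paper shows that if $\varphi(f)$ is non-singular then $f$ does not vanish on some open set, while you phrase the same pointwise observation as the support inclusion $\supp(\varphi(f))\subseteq\supp(f)$. Both hinge on the single fact that $\varphi(f(\gamma))\neq 0$ forces $f(\gamma)\neq 0$, so there is nothing further to add.
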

\begin{Proof}
Suppose that $f\in R_1\G$ and $\varphi(f)$ is non-singular.  Then there is an open subset $U$ such that $\varphi(f)$ does not vanish on $U$. But this means that $\varphi(f(\gamma))\neq 0$ for all $\gamma\in U$ and hence $f$ does not vanish on $U$.  Therefore, $f$ is non-singular.
\end{Proof}

The next lemma is the trivial observation that the singular ideal restricts.

\begin{Lem}
\label{restriction}
Let $R_1$ be a subring of $R_2$.  Then $\sing{R_1}{\G} = R_1\G\cap \sing{R_2}{\G}$. In particular, if $\sing{R_2}{\G}=0$, then $\sing{R_1}{\G}=0$.
\end{Lem}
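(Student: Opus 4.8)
The plan is to reduce everything to the observation that singularity of an element of a Steinberg algebra depends only on its support as a subset of $\G$, and that this support is insensitive to enlarging the codomain ring. First I would record the inclusion $R_1\G\subseteq R_2\G$: an element $f\in R_1\G$ is an $R_1$-linear combination $\sum r_i\chi_{U_i}$ of characteristic functions of compact local bisections, hence in particular an $R_2$-linear combination of the same functions, so it lies in $R_2\G$. This inclusion is exactly the base-change map along $R_1\hookrightarrow R_2$, which is injective on Steinberg algebras by the remark preceding Proposition~\ref{base:change}; so we may genuinely regard $R_1\G$ as a subset of $R_2\G$.

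Next I would check that for $f\in R_1\G$ the set $\supp(f)$ is the same whether $f$ is viewed as $R_1$-valued or as $R_2$-valued. By definition $\supp(f)=f^{-1}(R_i\setminus\{0\})$, and since $R_1$ is a subring of $R_2$ its zero element coincides with that of $R_2$; hence $f(\gamma)=0$ in $R_1$ if and only if $f(\gamma)=0$ in $R_2$, for every $\gamma\in\G$. Thus the two candidate supports agree as subsets of $\G$. Since an element is singular precisely when $\int(\supp(f))=\emptyset$ --- a condition about the subset $\supp(f)\subseteq\G$ and the topology of $\G$ alone --- it follows that $f\in R_1\G$ is singular over $R_1$ if and only if it is singular over $R_2$. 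This is exactly the asserted equality $\sing{R_1}{\G}=R_1\G\cap \sing{R_2}{\G}$.

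The final assertion is then immediate: if $\sing{R_2}{\G}=0$, then $\sing{R_1}{\G}=R_1\G\cap\{0\}=\{0\}$. There is no real obstacle in this argument; the only point requiring (minor) care is the bookkeeping identification $R_1\G\subseteq R_2\G$ together with the coincidence of the two notions of support, and both are formal consequences of the definitions.
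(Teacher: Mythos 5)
Your proof is correct and is essentially the paper's own argument: the paper's one-line proof likewise observes that singularity of $f\in R_1\G$, being a condition on $\supp(f)\subseteq\G$ (and zero in $R_1$ coincides with zero in $R_2$), is the same whether $f$ is viewed as $R_1$-valued or $R_2$-valued. Your extra bookkeeping about the injectivity of the base-change inclusion $R_1\G\subseteq R_2\G$ is a reasonable elaboration but adds nothing beyond what the paper takes as read.
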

\begin{Proof}
It is clear from the definition of singularity that a mapping $f\colon \G\to R_1$ from $R_1\G$ is singular if and only if it is singular when viewed as a function to $R_2$.
\end{Proof}

It follows that non-triviality of the singular ideal passes from smaller rings to bigger rings. We now wish to show that non-triviality of the singular ideal descends from larger fields to smaller fields.  The key tool will be Galois descent.

Let $L/K$ be a Galois extension (perhaps infinite).  Then the Galois group $G=\Gal(L/K)$ is a profinite group with respect to the Krull topology.  An action of $G$ on a set $X$ is said to be \emph{continuous} if the stabilizer of each $x\in X$ is an open subgroup.  This is equivalent to the action map $G\times X\to X$ being jointly continuous, where $X$ is endowed with the discrete topology. For example, the natural action of $G$ on $L$ is continuous. See~\cite{Berhuy} for details.

A \emph{semilinear action} of $G$ on an $L$-vector space $V$ is a continuous action of $G$ by automorphisms of the additive group of $V$ such that $\sigma(\ell v) = \sigma(\ell) \sigma(v)$ for all $\ell\in L$ and $v\in V$.  Note that $G$ in fact acts by $K$-linear automorphisms of $V$.  The set \[V^G = \{v\in V: \sigma(v) = v, \forall \sigma\in G\}\] of $G$-fixed vectors is a $K$-vector subspace of $L$.

The canonical example of a semilinear action of $G$ is one of the form $V=L\otimes_K W$ where $W$ is a $K$-vector space and $\sigma(\ell\otimes w) = \sigma(\ell)\otimes w$.  Here $V^G= 1\otimes W\cong W$.  The Galois descent theorem says that all $G$-semilinear actions arise in this way~\cite[Theorem III.8.21]{Berhuy}.

\begin{Thm}[Galois descent]
Let $G=\Gal(L/K)$ have a continuous semilinear action on an $L$-vector space $V$.  Then the natural mapping $\Phi\colon L\otimes_K V^G\to V$ given by $\ell\otimes v\mapsto \ell v$ is a $G$-equivariant $L$-vector space isomorphism.  In particular, $V^G=0$ if and only if $V=0$.
\end{Thm}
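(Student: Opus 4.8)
The plan is to reduce the claim to a finite Galois extension and prove that case by hand, using the linear independence of distinct field automorphisms. The formal properties of $\Phi$ are immediate: it is well defined and $L$-linear by construction (where $L$ acts on $L\otimes_K V^G$ through the first factor), and it is $G$-equivariant for the semilinear action $\sigma(\ell\otimes v)=\sigma(\ell)\otimes v$, since for $v\in V^G$ we get $\sigma(\Phi(\ell\otimes v))=\sigma(\ell v)=\sigma(\ell)\sigma(v)=\sigma(\ell)v=\Phi(\sigma(\ell)\otimes v)$. So everything comes down to showing $\Phi$ is bijective, after which the ``in particular'' follows trivially.

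Next I would reduce to the finite case. Continuity of the action says each $v\in V$ is fixed by $H_\alpha:=\Gal(L/L_\alpha)$ for some finite Galois subextension $L_\alpha/K$, so $V=\bigcup_\alpha V^{H_\alpha}$ is a directed union (finite Galois subextensions are closed under compositum). Each $V^{H_\alpha}$ is an $L_\alpha$-vector space on which the finite group $G/H_\alpha=\Gal(L_\alpha/K)$ acts semilinearly, $(V^{H_\alpha})^{G/H_\alpha}=V^G$, and $\Phi$ restricts to the analogous map $L_\alpha\otimes_K V^G\to V^{H_\alpha}$. Because $V^G$ is a $K$-vector space the transition maps $L_\alpha\otimes_K V^G\to L_\beta\otimes_K V^G$ are injective and $L\otimes_K V^G=\varinjlim_\alpha(L_\alpha\otimes_K V^G)$, so it is enough to treat each finite $L_\alpha/K$.

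For a finite Galois $L/K$ with group $G$: for surjectivity, attach to each $c\in L$ the averaging operator $\pi_c(v)=\sum_{\sigma\in G}\sigma(c)\,\sigma(v)$, which lands in $V^G$ by reindexing. By the linear independence of the automorphisms $\{\sigma\colon L\to L\}_{\sigma\in G}$, no nonzero $L$-linear functional on $L^{|G|}$ kills all of the vectors $(\sigma(c))_{\sigma\in G}$, $c\in L$, so these span $L^{|G|}$; choosing $c_1,\dots,c_m$ and $d_1,\dots,d_m$ in $L$ with $\sum_j d_j\sigma(c_j)=\delta_{\sigma,\mathrm{id}}$ for all $\sigma$ gives $\sum_j d_j\pi_{c_j}(v)=v$, so every $v\in V$ lies in the $L$-span of $V^G$. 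For injectivity, take a nonzero element $\sum_{i=1}^n\ell_i\otimes v_i$ of $\ker\Phi$ with the $v_i\in V^G$ chosen $K$-linearly independent, all $\ell_i\neq 0$, and $n$ smallest possible; after scaling $\ell_1=1$, applying $\sigma\in G$ to $\sum_i\ell_i v_i=0$ and using $\sigma(v_i)=v_i$ gives $\sum_i\sigma(\ell_i)v_i=0$, and subtraction yields a strictly shorter such relation unless every $\ell_i$ is $G$-fixed, hence in $L^G=K$ --- contradicting the $K$-linear independence of the $v_i$. Thus $\Phi$ is bijective.

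I expect the surjectivity step of the finite case to be the real obstacle; its substance is extracting the scalars $c_j,d_j$ from the linear independence of characters, and an alternative is to invoke the normal basis theorem instead. The reduction to finite extensions is routine directed-colimit bookkeeping, and the injectivity argument is the standard minimal-counterexample trick.
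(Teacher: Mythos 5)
Your proof is correct. Note, however, that the paper does not prove this theorem at all: it is quoted as a known result with a citation to Berhuy's book (Theorem III.8.21 there), so there is no in-paper proof to compare against. What you have written is the standard self-contained argument: reduce to finite Galois subextensions via continuity (the directed-union bookkeeping is fine, since $V^G$ is $K$-free so the transition maps $L_\alpha\otimes_K V^G\to L_\beta\otimes_K V^G$ are injective), get surjectivity in the finite case from Dedekind's independence of characters via the averaging operators $\pi_c$, and get injectivity from the minimal-length relation trick together with $L^G=K$. All steps check out; in particular the extraction of scalars $c_j,d_j$ with $\sum_j d_j\sigma(c_j)=\delta_{\sigma,\mathrm{id}}$ is a legitimate consequence of the $L$-span of $\{(\sigma(c))_{\sigma\in G}:c\in L\}$ being all of $L^{|G|}$. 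This is essentially the textbook proof, and it is consistent with the paper's toolkit, which already invokes Dedekind's lemma to see that the trace map is nonzero.
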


We also recall the trace map.  If $E/K$ is a finite Galois extension with (finite) Galois group $H=\Gal(E/K)$, then $\mathrm{Tr}_{E/K}\colon E\to K$ is defined by $\mathrm{Tr}_{E/K}(a) = \sum_{\sigma\in H}\sigma(a)$ (this clearly takes values in $E^H=K$).  It is well known that the trace does not vanish identically on $E$.  Indeed, the elements of $H$ are linearly independent over $E$ by Dedekind's lemma on independence of characters and hence $\sum_{\sigma\in H}\sigma$ is not identically $0$.

We shall apply Galois descent to Steinberg algebras.  If $L/K$ is Galois, then each $\sigma\in G=\Gal(L/K)$ induces a ring automorphism $\sigma\colon L\G\to L\G$ via base change, and so $G$ acts on $L\G$.

\begin{Prop}
\label{ideal:descent}
If $L/K$ is Galois and $G=\Gal(L/K)$, then the action of $G$ on $L\G$ by base change automorphisms is continuous and semilinear.  Moreover, the singular ideal $\sing{L}{\G}$ is $G$-invariant.  Furthermore, $(L\G)^G = K\G$ and $(\sing{L}{\G})^G=\sing{K}{\G}$.
\end{Prop}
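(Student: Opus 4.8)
The plan is to verify the four assertions one at a time. Since elements of a Steinberg algebra are genuine functions on $\G$, each claim reduces to a pointwise computation, the only point needing a little thought being continuity of the action.

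First I would record that every $\sigma\in G$ acts on $L\G$ by a ring automorphism: base change along $\sigma$ is a ring homomorphism that fixes each $\chi_U$, base change along $\sigma^{-1}$ is its two-sided inverse, and both $(\sigma\tau)f=\sigma(\tau f)$ and additivity of $\sigma$ are immediate upon evaluating at a point of $\G$. Semilinearity is then the identity $(\sigma(\ell f))(\gamma)=\sigma(\ell\, f(\gamma))=\sigma(\ell)\,\sigma(f(\gamma))=(\sigma(\ell)\,\sigma(f))(\gamma)$ for $\ell\in L$, $f\in L\G$, $\gamma\in\G$. For continuity, I would observe that $f=\sum_{i=1}^n\ell_i\chi_{U_i}$ takes only finitely many values (each value is one of the at most $2^n$ partial sums of the $\ell_i$), say $a_1,\dots,a_m$; its stabilizer is precisely $\Gal(L/K(a_1,\dots,a_m))$, which is open because $L/K$, being a Galois extension, is algebraic, so $K(a_1,\dots,a_m)/K$ is finite.

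Next, $G$-invariance of $\sing{L}{\G}$ follows because $\sigma$ is a bijection of $L$ fixing $0$, so $\supp(\sigma f)=\supp(f)$ and singularity depends only on whether the interior of the support is empty (one may alternatively invoke Proposition~\ref{base:change} for $\sigma$ and $\sigma^{-1}$). For $(L\G)^G=K\G$, the inclusion $\supseteq$ is clear since functions in $K\G$ take values in $K\subseteq L^G$; conversely, if $f\in(L\G)^G$ then $f(\gamma)\in L^{\Gal(L/K)}=K$ for all $\gamma$, and to see such an $f$ lies in $K\G$ I would fix a $K$-basis $\{b_\lambda\}$ of $L$ with $b_{\lambda_0}=1$, write $f=\sum_i\ell_i\chi_{U_i}$ and expand each $\ell_i$ over the basis to obtain $f=\sum_\lambda b_\lambda g_\lambda$ with $g_\lambda\in K\G$, and then use $f(\gamma)\in K=b_{\lambda_0}K$ together with uniqueness of basis expansions to force $g_\lambda=0$ for $\lambda\neq\lambda_0$, so $f=g_{\lambda_0}\in K\G$. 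Finally $(\sing{L}{\G})^G=\sing{L}{\G}\cap(L\G)^G=\sing{L}{\G}\cap K\G=\sing{K}{\G}$ by Lemma~\ref{restriction}.

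There is no real obstacle here. The step most worth spelling out carefully is the continuity argument, where one uses that a Galois extension is algebraic so that the fixed field of a finitely generated subextension has finite index in $G$ — together with the complementary Galois-theoretic fact $L^{\Gal(L/K)}=K$ — and in the identification of $(L\G)^G$ one should make sure the coefficients $g_\lambda$ coming from the basis expansion genuinely lie in $K\G$ rather than merely in the $K$-valued functions.
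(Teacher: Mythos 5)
Your proof is correct, and its skeleton (semilinearity by pointwise evaluation, continuity via open stabilizers, $G$-invariance of the singular ideal, and the closing computation $(\sing{L}{\G})^G=\sing{L}{\G}\cap K\G=\sing{K}{\G}$ via Lemma~\ref{restriction}) matches the paper's. The genuine difference is in the key step $(L\G)^G\subseteq K\G$. The paper passes to the Galois closure $E$ of $K(a_1,\dots,a_n)$ inside $L$, which is a finite Galois extension, and multiplies $f$ by an element $a\in E$ with $\mathrm{Tr}_{E/K}(a)\neq 0$ (non-vanishing of the trace coming from Dedekind's lemma on independence of characters), obtaining $\mathrm{Tr}_{E/K}(a)f=\sum_i\mathrm{Tr}_{E/K}(aa_i)\chi_{U_i}\in K\G$. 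You instead fix a $K$-basis of $L$ containing $1$, split $f$ into components $g_\lambda\in K\G$, and use uniqueness of coordinates at each point $\gamma$ to kill all components except the one along $1$. Your route is more elementary and actually proves the stronger statement that any element of $L\G$ taking values in $K$ lies in $K\G$ for an \emph{arbitrary} field extension $L/K$; the Galois hypothesis enters only through $L^{\Gal(L/K)}=K$, whereas the trace argument needs the reduction to a finite separable subextension. Your continuity argument is also marginally sharper: you identify the stabilizer of $f$ exactly as $\Gal(L/K(a_1,\dots,a_m))$ for the finitely many values of $f$, while the paper only bounds it below by the intersection of the stabilizers of the coefficients; both give openness. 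Everything else is a routine match, and I see no gaps.
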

\begin{Proof}
The action is semilinear, for if $\ell\in L$, $f\in L\G$ and $\gamma\in G$, then \[\sigma(\ell f)(\gamma) = \sigma (\ell f(\gamma)) = \sigma(\ell) \sigma(f(\gamma)) = (\sigma(\ell)\sigma(f))(\gamma)\] and so $\sigma(\ell f)=\sigma(\ell) \sigma(f)$. To show continuity of the action, let $f\in K\G$.  Then $f=\sum_{i=1}^n c_i\chi_{U_i}$ with the $U_i$ compact local bisections and $c_i\in L$.  If $H_i$ is the stabilizer of $c_i$, then $H_i$ is open and the stabilizer of $f$ contains $H_1\cap\cdots \cap H_n$.  Therefore the stabilizer of $f$ is open (as any subgroup containing an open subgroup is open, being a union of cosets of that open subgroup).

Clearly, $K\G\subseteq (L\G)^G$.  Suppose $f\in (L\G)^G$ is non-zero.  Then $f(\gamma)\in K$ for all $\gamma\in \G$.  We must show that $f\in K\G$.  Write $f=\sum_{i=1}^n a_i\chi_{U_i}$ with the $U_i$ compact local bisections and $a_i\in L$.  Let $E$ be the Galois closure of $K(a_1,\ldots, a_n)$ in $L$  over $K$ (i.e., adjoin all roots of the minimal polynomials of the $a_i$ over $K$).  Then $E/K$ is a finite Galois extension and $f\in E\G$. The finite Galois group $H=\Gal(E/K)$ acts semilinearly on $E\G$ and fixes $f$, as $f$ takes values in $K$.  Choose $a\in E$ with $0\neq \mathrm{Tr}_{E/K}(a)\in K$.  Then
\[\mathrm{Tr}_{E/K}(a)f = \sum_{\sigma\in H}\sigma(a)f =   \sum_{\sigma\in H}\sigma(af) = \sum_{i=1}^n \sum_{\sigma\in H}\sigma(aa_i)\chi_{U_i}=\sum_{i=1}^n\mathrm{Tr}_{E/K}(aa_i)\chi_{U_i}\in K\G\]  and so $f\in K\G$.  Thus $K\G=(L\G)^G$.

Proposition~\ref{base:change} shows that $\sing{L}{\G}$ is $G$-invariant.  Hence $\sing{L}{\G}^G = \sing{L}{\G}\cap (L\G)^G = \sing{L}{\G}\cap K\G = \sing{K}{\G}$ by Lemma~\ref{restriction}.
\end{Proof}

The following is an immediate consequence of Galois descent and Proposition~\ref{ideal:descent}.

\begin{Cor}
\label{descent1}
If $L/K$ is Galois, then $L\G\cong L\otimes_K K\G$ and $\sing{L}{\G}\cong L\otimes_K \sing{K}{\G}$.
\end{Cor}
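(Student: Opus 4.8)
The plan is to apply the Galois descent theorem directly to $V=L\G$, equipped with the continuous semilinear action of $G=\Gal(L/K)$ furnished by Proposition~\ref{ideal:descent}. That proposition already identifies $(L\G)^G=K\G$, so the theorem yields that the natural map $\Phi\colon L\otimes_K K\G\to L\G$, $\ell\otimes f\mapsto \ell f$, is a $G$-equivariant $L$-vector space isomorphism. Since $\Phi$ is evidently multiplicative (scalars from $L$ multiply entrywise and hence commute with convolution, so $(\ell_1 f_1)\ast(\ell_2 f_2)=\ell_1\ell_2(f_1\ast f_2)$), it is in fact an isomorphism of $L$-algebras, giving the first assertion $L\G\cong L\otimes_K K\G$.

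For the second assertion, recall from Proposition~\ref{ideal:descent} (via Proposition~\ref{base:change}) that $\sing{L}{\G}$ is a $G$-invariant $L$-subspace of $L\G$. Hence the semilinear action of $G$ on $L\G$ restricts to an action on $\sing{L}{\G}$ by additive, $K$-linear automorphisms satisfying $\sigma(\ell v)=\sigma(\ell)\sigma(v)$; this restricted action is again continuous because the $G$-stabilizer of a vector in $\sing{L}{\G}$ is the same as its stabilizer in $L\G$, which is open. Thus $\sing{L}{\G}$ carries a continuous semilinear $G$-action, and Galois descent applies to it as well, producing an $L$-vector space isomorphism $L\otimes_K (\sing{L}{\G})^G\xrightarrow{\ \sim\ }\sing{L}{\G}$. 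By the last part of Proposition~\ref{ideal:descent}, $(\sing{L}{\G})^G=\sing{K}{\G}$, which gives $\sing{L}{\G}\cong L\otimes_K\sing{K}{\G}$.

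Because the isomorphisms produced by the Galois descent theorem are the natural maps $\ell\otimes v\mapsto \ell v$, they are compatible with inclusions: the embedding $L\otimes_K\sing{K}{\G}\hookrightarrow L\otimes_K K\G$ is carried by $\Phi$ onto the inclusion $\sing{L}{\G}\hookrightarrow L\G$, so the two displayed isomorphisms fit together coherently. There is no real obstacle here: everything of substance — semilinearity, continuity, and the computation of the fixed spaces — was already established in Proposition~\ref{ideal:descent}, and the only genuine verification remaining is the routine observation that restricting a continuous semilinear action to a $G$-invariant subspace leaves it continuous and semilinear.
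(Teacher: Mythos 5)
Your proposal is correct and matches the paper's intended argument: the paper simply declares the corollary an immediate consequence of the Galois descent theorem and Proposition~\ref{ideal:descent}, which is precisely the application you spell out (descent applied to $L\G$ and to the $G$-invariant subspace $\sing{L}{\G}$, using the computed fixed spaces $K\G$ and $\sing{K}{\G}$). The extra verifications you include — multiplicativity of $\Phi$ and continuity of the restricted action — are routine and consistent with what the paper leaves implicit.
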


An application of Corollary~\ref{descent1} is the following lemma.

\begin{Lem}
\label{galois:case}
Let $L/K$ be a Galois extension.  Then $\sing{K}{\G}=0$ if and only if $\sing{L}{\G}=0$.
\end{Lem}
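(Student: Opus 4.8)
The plan is to read both implications straight off the descent results already in hand, so that essentially no new work is required.

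For the direction ``$\sing{K}{\G}=0 \Rightarrow \sing{L}{\G}=0$'', I would invoke Corollary~\ref{descent1}, which gives the $L$-vector space isomorphism $\sing{L}{\G}\cong L\otimes_K \sing{K}{\G}$. If $\sing{K}{\G}=0$, the right-hand side is $L\otimes_K 0=0$, whence $\sing{L}{\G}=0$.

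For the converse ``$\sing{L}{\G}=0 \Rightarrow \sing{K}{\G}=0$'', I would simply apply Lemma~\ref{restriction} with the subring inclusion $K\subseteq L$: it yields $\sing{K}{\G}=K\G\cap \sing{L}{\G}=K\G\cap 0 = 0$. (One could instead re-use Corollary~\ref{descent1} together with the faithful flatness of the field extension $L/K$ to cancel the tensor factor, but the restriction lemma is both more elementary and already recorded, so I would go that route.)

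Since each implication is an immediate consequence of a previously proved statement, I do not anticipate any genuine obstacle here; all of the substance has already been absorbed into Proposition~\ref{ideal:descent} and Corollary~\ref{descent1}. The only point deserving a word of care is that Corollary~\ref{descent1} is being applied to a possibly infinite Galois extension, which is exactly the generality in which the Galois descent theorem and Proposition~\ref{ideal:descent} were set up, so the hypothesis ``$L/K$ Galois'' in the lemma statement is precisely what is needed.
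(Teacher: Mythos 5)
Your proof is correct and follows essentially the same route as the paper, which states the lemma as an immediate application of Corollary~\ref{descent1} (itself a consequence of Galois descent and Proposition~\ref{ideal:descent}); the forward direction via $\sing{L}{\G}\cong L\otimes_K\sing{K}{\G}$ and the converse via Lemma~\ref{restriction} are exactly the intended deductions. No gaps.
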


Let $K$ be a field of characteristic $p>0$.  Then a field extension $L/K$ is purely inseparable if, for all $a\in L$, there exists $n\geq 0$ with $a^{p^n}\in K$.  A field extension $L/K$ in characteristic zero is considered purely inseparable if and only if $L=K$.

\begin{Lem}
\label{purelyinsep}
Suppose that $K$ is a field of characteristic $p>0$ and that $L/K$ is purely inseparable.  If $\sing{K}{\G}=0$, then $\sing{L}{\G}=0$.
\end{Lem}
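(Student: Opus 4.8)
\textbf{The plan} is to exploit the Frobenius endomorphism of $L$. Suppose towards a contradiction that $\sing{L}{\G}\neq 0$ and fix a non-zero $f\in\sing{L}{\G}$; the goal is to manufacture a non-zero element of $\sing{K}{\G}$. Write $f=\sum_{i=1}^{n}a_i\chi_{U_i}$ with $a_i\in L$ and $U_i$ compact local bisections of $\G$. Since $L/K$ is purely inseparable, each $a_i$ satisfies $a_i^{p^{m_i}}\in K$ for some $m_i\ge 0$; taking $q=p^{m}$ with $m=\max_i m_i$ and using that $K$ is closed under $p$-th powers, we get $a_i^{q}\in K$ for every $i$.

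Next I would set $g=\sum_{i=1}^{n}a_i^{q}\chi_{U_i}\in K\G$. The point is that $g(\gamma)=f(\gamma)^{q}$ for every $\gamma\in\G$: writing $T=\{i:\gamma\in U_i\}$, we have $f(\gamma)=\sum_{i\in T}a_i$ and $g(\gamma)=\sum_{i\in T}a_i^{q}$, and $\bigl(\sum_{i\in T}a_i\bigr)^{q}=\sum_{i\in T}a_i^{q}$ because $\ch K=p$ and $q$ is a power of $p$ (the ``freshman's dream'': every mixed multinomial coefficient that occurs is divisible by $p$). Consequently $\supp(g)=\{\gamma:f(\gamma)^{q}\neq 0\}=\{\gamma:f(\gamma)\neq 0\}=\supp(f)$, so $g$ is singular because $f$ is, and $g\neq 0$ because $f\neq 0$ (the field $L$ has no zero divisors). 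Thus $0\neq g\in\sing{K}{\G}$, contradicting $\sing{K}{\G}=0$.

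I do not anticipate a genuine obstacle: the argument is elementary. If one prefers a more structural formulation, the ``coefficientwise $q$-th power'' map $L\G\to L\G$ sending $h$ to $\gamma\mapsto h(\gamma)^{q}$ is in fact an injective ring endomorphism --- additivity is the freshman's dream, injectivity comes from $x\mapsto x^{q}$ being injective on $L$, and multiplicativity follows from the convolution formula $(h_1*h_2)(\gamma)=\sum_{\alpha\beta=\gamma}h_1(\alpha)h_2(\beta)$ together with commutativity of $L$ --- but only injectivity and the preservation of supports are used above. The sole point requiring (trivial) care is that the inseparability exponent must be chosen uniformly over the finitely many coefficients of $f$, which is possible precisely because each individual element of $L$ admits such an exponent over $K$.
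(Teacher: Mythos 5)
Your proof is correct and takes essentially the same route as the paper: the paper also chooses a uniform inseparability exponent, applies the corresponding power of the Frobenius coefficient-wise to land in $K\G$, observes that this preserves singularity, and concludes from $\sing{K}{\G}=0$ together with injectivity of the Frobenius that $f=0$. Your contrapositive formulation via the pointwise identity $g(\gamma)=f(\gamma)^{q}$ and the equality of supports is the same computation, just phrased as producing a non-zero element of $\sing{K}{\G}$ rather than killing $f$ directly.
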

\begin{Proof}
Let $\Phi\colon L\to L$ be the Frobenius endomorphism $\Phi(a)=a^p$.  Since $L$ is a field, $\Phi$ is injective.  We then have an induced injective homomorphism $\Phi\colon L\G\to L\G$ by base change. Let $f\in \sing{L}{\G}$ and write $f=\sum_{i=1}^n c_i\chi_{U_i}$ with the $U_i$ compact local bisections and the $c_i\in L$.  Then because $L/K$ is purely inseparable, there exists $r\geq 0$ with $\Phi^r(c_i)\in K$ for all $i=1,\ldots, n$.  Then $\Phi^r(f) = \sum_{i=1}^n\Phi^r(c_i)\chi_{U_i}\in \sing{L}{\G}\cap K\G = \sing{K}{\G}=0$ by Proposition~\ref{base:change} and  Lemma~\ref{restriction}.  Since $\Phi^r$ is injective, we conclude that $f=0$, as was required.
\end{Proof}

Recall that an extension $L/K$ is purely transcendental if $L\cong K(X)$ as a $K$-algebra where $X$ is a set of variables.

\begin{Lem}
\label{purelytrans}
If $L/K$ is purely transcendental and $\sing{K}{\G}=0$, then $\sing{L}{\G}=0$.
\end{Lem}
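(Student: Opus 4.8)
The plan is to reduce to a polynomial ring over $K$ and then argue coordinatewise in the monomial basis. First I would note that any $f\in\sing{L}{\G}$ is an $L$-linear combination of characteristic functions $\chi_{U_1},\ldots,\chi_{U_n}$ of compact local bisections, and its coefficients, being elements of $K(X)$, involve only finitely many of the transcendentals; so $f$ in fact lies in $K(x_1,\ldots,x_m)\G$ for some finite subset $\{x_1,\ldots,x_m\}\subseteq X$, and since singularity of $f$ depends only on $\supp(f)\subseteq\G$, it suffices to treat the case $L=K(x_1,\ldots,x_m)$. Writing each coefficient as $p_i/q_i$ with $p_i,q_i\in R:=K[x_1,\ldots,x_m]$ and $q_i\neq 0$, and setting $d=q_1\cdots q_n$, the element $df$ has all its coefficients in $R$, so $df\in R\G$; moreover $\supp(df)=\supp(f)$ because $L$ is a field, so $df$ is still singular, and $f=0$ will follow once we show $df=0$. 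Hence I may assume $f\in R\G$ with $f$ singular.

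Next, using that the monomials $\{x^\alpha\}$ form a $K$-basis of $R$, I would regroup the finite expression $f=\sum_i c_i\chi_{U_i}$ by monomial to write $f=\sum_\alpha x^\alpha g_\alpha$ with $g_\alpha\in K\G$ and only finitely many $g_\alpha$ nonzero. For a fixed arrow $\gamma\in\G$, the value $f(\gamma)=\sum_\alpha x^\alpha g_\alpha(\gamma)$ is an element of $R$ which vanishes if and only if every coefficient $g_\alpha(\gamma)\in K$ vanishes; hence $\supp(f)=\bigcup_\alpha\supp(g_\alpha)$. Since $\supp(f)$ has empty interior and each $\supp(g_\alpha)$ is contained in it, each $g_\alpha$ is singular over $K$, so $g_\alpha\in\sing{K}{\G}=0$ by hypothesis, and therefore $f=\sum_\alpha x^\alpha g_\alpha=0$.

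I do not expect a genuine obstacle here: the only point needing a little care is the support identity $\supp(f)=\bigcup_\alpha\supp(g_\alpha)$, which comes down to the elementary fact that a polynomial over $K$ is zero exactly when all of its coefficients are. It is worth remarking that this approach deliberately avoids specialization homomorphisms $R\to K$ (evaluation at points of $K^m$), which would be inadequate when $K$ is finite, and it uses neither Proposition~\ref{base:change} nor Galois descent; it invokes only the running hypothesis $\sing{K}{\G}=0$ together with the fact, immediate from the definition, that scaling a function by a nonzero field element does not change its support.
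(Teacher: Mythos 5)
Your proof is correct, and it takes a genuinely different route from the paper's. After the common reduction (finitely many variables, clearing denominators, and the fact that singularity is preserved under restriction of scalars, i.e.\ Lemma~\ref{restriction}), the paper specializes: it applies the evaluation homomorphisms $\varepsilon_{\vec a}\colon K[x_1,\ldots,x_n]\G\to K\G$ at all points $\vec a\in K^n$ together with Proposition~\ref{base:change}, and concludes $g=0$ from the fact that a polynomial vanishing at every point of $K^n$ is zero --- which is only valid for $K$ infinite, so the paper must treat finite $K$ separately by passing to the algebraic closure $\overline K$ (Galois over $K$ since finite fields are perfect) and invoking Lemma~\ref{galois:case}. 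You instead expand $f$ coefficientwise in the monomial basis, $f=\sum_\alpha x^\alpha g_\alpha$ with $g_\alpha\in K\G$, and use the identity $\supp(f)=\bigcup_\alpha \supp(g_\alpha)$, which rests only on the fact that a polynomial is zero exactly when all its coefficients are. This is uniform in $K$, eliminates the finite/infinite case split, and removes the dependence of this lemma on both Proposition~\ref{base:change} and Galois descent; the one point that deserves a sentence of justification is that the $g_\alpha$ defined by regrouping a chosen expression $f=\sum_i c_i\chi_{U_i}$ do lie in $K\G$ and satisfy $f(\gamma)=\sum_\alpha x^\alpha g_\alpha(\gamma)$ pointwise, which is immediate. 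Your argument is, if anything, simpler than the one in the paper.
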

\begin{Proof}
Without loss of generality, we may assume that $L=K(X)$ with $X$ a set of variables.  First we consider the case that $K$ is infinite.  If $f\in \sing{K(X)}{\G}$, then \[f=\sum_{i=1}^m \frac{p_i(x_1,\ldots, x_n)}{q_i(x_1,\ldots, x_n)}\chi_{U_i}\] with the $U_i$ compact local bisections,  $x_1,\ldots, x_n\in X$ and $p_i,q_i\in K[x_1,\ldots, x_n]$ with the $q_i\neq 0$.  Putting $q=q_1\cdots q_m$, we have that $g=qf\in K[x_1,\ldots, x_n]\G\cap \sing{K(X)}{\G}=\sing{K[x_1,\ldots, x_m]}{\G}$ (by Lemma~\ref{restriction}).  For each $\vec a=(a_1,\ldots, a_n)\in K^n$, we have a homomorphism $\varepsilon_{\vec a}\colon K[x_1,\ldots,x_n]\to  K$ given by $\varepsilon_{\vec a}(p) = p(a_1,\ldots, a_n)$.  Moreover, because $K$ is infinite we have that $p=0$ if and only if $\varepsilon_{\vec a}(p)=0$ for all $\vec a\in K^n$.  By base change, we have a corresponding homomorphism $\varepsilon_{\vec a}\colon K[x_1,\ldots x_n]\G\to K\G$ and, by Proposition~\ref{base:change}, we have that $\varepsilon_{\vec a}(g)\in \sing{K}{\G}=0$ for all $\vec a\in K^n$.  We deduce that $g=0$, and hence $f=0$.  Thus $\sing{K(X)}{\G}=0$.

Next assume that $K$ is finite.  Let $\overline K$ be an algebraic closure of $K$.  Then $\overline K$ is infinite and is Galois over $K$, as $K$ is perfect.  Therefore, $\sing{\overline K}{\G}=0$ by Lemma~\ref{galois:case}.  Thus, $\sing{\overline K(X)}{\G}=0$ by the previous case and hence $\sing{K(X)}{\G}=0$ by Lemma~\ref{restriction}.  This completes the proof.
\end{Proof}

We are now ready to prove that vanishing of the singular ideal depends only on the characteristic of the field.

\begin{Thm}
\label{fulldescent}
Let $L/K$ be a field extension and $\G$ an ample groupoid.  Then $\sing{L}{\G}=0$ if and only if $\sing{K}{\G}=0$.  In particular, vanishing of the singular ideal depends only on the characteristic.
\end{Thm}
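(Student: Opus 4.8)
The plan is as follows. One implication is immediate: since $K\subseteq L$ we have $K\G\subseteq L\G$, so Lemma~\ref{restriction} gives $\sing{K}{\G}=K\G\cap\sing{L}{\G}$, and hence $\sing{L}{\G}=0$ forces $\sing{K}{\G}=0$. For the converse, assume $\sing{K}{\G}=0$; the idea is to reach an algebraically closed overfield of $L$ through a tower of extensions of the three special kinds for which the singular ideal is already under control --- purely transcendental (Lemma~\ref{purelytrans}), Galois (Lemma~\ref{galois:case}), and purely inseparable (Lemma~\ref{purelyinsep}) --- and then to descend the vanishing back to $L$ by Lemma~\ref{restriction}.

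Concretely, first I would choose a transcendence basis $X$ of $L$ over $K$, so that $K(X)/K$ is purely transcendental and $L/K(X)$ is algebraic; Lemma~\ref{purelytrans} then yields $\sing{K(X)}{\G}=0$. Next, fix an algebraic closure $\overline{K(X)}$ of $K(X)$ with $K(X)\subseteq L\subseteq \overline{K(X)}$, and let $M$ be the separable closure of $K(X)$ inside $\overline{K(X)}$. The extension $M/K(X)$ is Galois (separable by construction, and normal because the minimal polynomial over $K(X)$ of any element of $M$ is separable and all of its roots are again separable over $K(X)$, hence lie in $M$), so Lemma~\ref{galois:case} upgrades $\sing{K(X)}{\G}=0$ to $\sing{M}{\G}=0$. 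Finally, $\overline{K(X)}/M$ is purely inseparable (and trivial in characteristic $0$), so Lemma~\ref{purelyinsep} gives $\sing{\overline{K(X)}}{\G}=0$. Since $L$ is an intermediate field, Lemma~\ref{restriction} now yields $\sing{L}{\G}=L\G\cap\sing{\overline{K(X)}}{\G}=0$, completing the equivalence.

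For the ``in particular'' clause, given two fields $K$ and $K'$ of the same characteristic, let $F$ be their common prime subfield ($\mathbb Q$ or $\mathbb F_p$); applying the equivalence just established to the extensions $K/F$ and $K'/F$ shows $\sing{K}{\G}=0\iff\sing{F}{\G}=0\iff\sing{K'}{\G}=0$, so vanishing of the singular ideal depends only on the characteristic. The only delicate point in the whole argument is the field-theoretic decomposition, in particular the fact that the separable closure $M$ is Galois over $K(X)$ even when this extension has infinite degree; but Lemma~\ref{galois:case} already allows infinite Galois extensions, so nothing new is needed, and every other step is a direct appeal to one of the preceding lemmas.
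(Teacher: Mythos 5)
Your proposal is correct and follows essentially the same route as the paper: one direction by Lemma~\ref{restriction}, and for the converse the same tower $K\subseteq K(X)\subseteq K(X)^{\mathrm{sep}}\subseteq \overline{K(X)}\supseteq L$, handled in order by Lemma~\ref{purelytrans}, Lemma~\ref{galois:case}, Lemma~\ref{purelyinsep}, and a final application of Lemma~\ref{restriction}. The only cosmetic difference is that the paper takes a transcendence basis of $\overline L/K$ rather than of $L/K$, which yields the identical decomposition.
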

\begin{Proof}
If $\sing{L}{\G}=0$, then $\sing{K}{\G}=0$ by Lemma~\ref{restriction}.  Suppose that $\sing{K}{\G}=0$. Fix an algebraic closure $\overline L$ of $L$. Choose a transcendence basis $S$ of $\overline L/K$.  Then $K(S)/K$ is purely transcendental and $\overline L/K(S)$ is algebraic.  By Lemma~\ref{purelytrans}, we have that $\sing{K(S)}{\G}=0$.  Let $K(S)^{\mathrm{sep}}$ be the separable closure of $K(S)$ in $\overline L$.  Then $K(S)^{\mathrm{sep}}/K(S)$ is Galois and $\overline L/K(S)^{\mathrm{sep}}$ is purely inseparable ($\overline L=K(S)^{\mathrm{sep}}$ if the characteristic is $0$).  By Lemma~\ref{galois:case}, we have that $\sing{K(S)^{\mathrm{sep}}}{\G}=0$ and hence $\sing{\overline L}{\G}=0$ by Lemma~\ref{purelyinsep}. Therefore, $\sing{L}{\G}=0$ by another application of Lemma~\ref{restriction}. This completes the proof.
\end{Proof}

\begin{Cor}
\label{reducetoprime}
If $\G$ is an ample groupoid and $L$ is a field with prime field $K$, then $L\G$ is simple if and only if $K\G$ is simple. In particular, simplicity depends only on the characteristic of the field.
\end{Cor}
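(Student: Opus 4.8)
The plan is to combine the simplicity criterion of Theorem~\ref{amplesimple} with the descent result Theorem~\ref{fulldescent}. By Theorem~\ref{amplesimple}, for any field $F$ the Steinberg algebra $F\G$ is simple if and only if $\G$ is effective, $\G$ is minimal, and $F\G$ has no non-zero singular functions, that is, $\sing{F}{\G}=0$. The first two conditions are properties of the topological groupoid $\G$ alone and make no reference to the coefficient field, so the only field-dependent ingredient is the vanishing of the singular ideal.

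First I would apply Theorem~\ref{fulldescent} to the field extension $L/K$, which makes sense precisely because $K$, being the prime field of $L$, is a subfield of $L$. That theorem gives $\sing{L}{\G}=0$ if and only if $\sing{K}{\G}=0$. Chaining equivalences then yields: $L\G$ is simple $\iff$ $\G$ is effective and minimal and $\sing{L}{\G}=0$ $\iff$ $\G$ is effective and minimal and $\sing{K}{\G}=0$ $\iff$ $K\G$ is simple. This establishes the first assertion of the corollary.

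For the final sentence, I would note that every field of characteristic $0$ has prime field (isomorphic to) $\mathbb{Q}$ and every field of characteristic $p>0$ has prime field (isomorphic to) $\mathbb{F}_p$, so any two fields $L_1,L_2$ of the same characteristic share a common prime field $K$ up to isomorphism; since isomorphic coefficient fields give isomorphic Steinberg algebras, applying the first part to each of $L_1$ and $L_2$ shows $L_i\G$ is simple if and only if $K\G$ is simple, hence $L_1\G$ is simple if and only if $L_2\G$ is. Thus simplicity of $K\G$ depends only on $\ch K$.

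I do not expect any genuine obstacle here: the substantive analytic content is already carried out in Theorem~\ref{fulldescent} (via Galois descent, the Frobenius, and specialization over an infinite field), and the present corollary is a routine bookkeeping combination of that theorem with Theorem~\ref{amplesimple}. The only point warranting a moment's care is that Theorem~\ref{fulldescent} is stated for an arbitrary field extension, which is exactly the generality needed since the prime field sits inside $L$; in particular no separate treatment of the characteristic-$0$ and characteristic-$p$ cases is required.
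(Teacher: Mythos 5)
Your proposal is correct and matches the paper's own argument: both reduce simplicity to minimality, effectiveness, and vanishing of the singular ideal via Theorem~\ref{amplesimple}, and then invoke Theorem~\ref{fulldescent} for the extension $L/K$ to transfer the vanishing of the singular ideal between the field and its prime field. The only difference is that you spell out the (routine) observation that fields of equal characteristic have isomorphic prime fields, which the paper leaves implicit.
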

\begin{Proof}
By Theorem~\ref{amplesimple}, in order for the Steinberg algebra of $\G$ over a field to be simple, we must have that $\G$ is minimal and effective, and in that case, simplicity boils down to the singular ideal being trivial by Theorem~\ref{amplesimple}.  The corollary then follows from Theorem~\ref{fulldescent}.
\end{Proof}

\begin{Cor}
If $S$ is an inverse semigroup with zero and $L$ is a field with prime field $K$, then $L_0S$ is simple if and only if $\K S$ is simple. In particular, simplicity depends only on the characteristic of the field.
\end{Cor}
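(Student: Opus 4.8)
The plan is to reduce this corollary to its groupoid counterpart, Corollary~\ref{reducetoprime}, via the universal groupoid construction. The bridge is Theorem~\ref{isom}, which is stated over an arbitrary commutative unital ring and hence applies over any field $F$: it provides an isomorphism $F_0S\cong F\G(S)$ of $F$-algebras, where $\G(S)$ denotes the ample universal groupoid of $S$. Since $\G(S)$ is ample, every statement about simplicity of $F_0S$ becomes a statement about simplicity of the Steinberg algebra $F\G(S)$.

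Concretely, I would first apply Theorem~\ref{isom} with coefficients in $L$, obtaining $L_0S\cong L\G(S)$, and then again with coefficients in the prime field $K$ of $L$, obtaining $\K S\cong K\G(S)$. Next I would invoke Corollary~\ref{reducetoprime} for the ample groupoid $\G=\G(S)$, which says precisely that $L\G(S)$ is simple if and only if $K\G(S)$ is simple. Combining these three facts yields that $L_0S$ is simple if and only if $\K S$ is simple. For the concluding sentence, I would observe that the prime field of a field is determined by its characteristic (namely $\mathbb{Q}$ if $\ch L=0$ and $\mathbb{F}_p$ if $\ch L=p$), so the displayed equivalence shows that simplicity of $L_0S$ depends only on $\ch L$.

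There is no real obstacle here beyond bookkeeping: the single point to be careful about is that Theorem~\ref{isom} is being applied with coefficients in a genuine field rather than a fixed ground ring, which is unproblematic since the isomorphism it records is valid over all commutative unital rings. One could instead bypass groupoids altogether and re-run the present section's descent arguments directly on $\K S$, using the singular ideal of $\K S$ from Section~\ref{s:simpsemialg} (which maps onto $\sing{K}{\G_T(S)}$ by Proposition~\ref{identify:singular}) together with the characterization of simplicity in Corollary~\ref{simplicity}; but routing through Corollary~\ref{reducetoprime} is shorter and requires no new work.
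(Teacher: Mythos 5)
Your proposal is correct and follows essentially the same route as the paper: the paper's proof likewise reduces to Corollary~\ref{reducetoprime} by identifying $F_0S$ with the Steinberg algebra of the (contracted) universal groupoid over any coefficient field $F$, and it even records the same alternative you mention, namely running the descent argument directly on $\K S$ (made easier there by the observation that $\S$ is a basis, so $L_0S=L\otimes_K \K S$).
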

\begin{Proof}
This follows from Corollary~\ref{reducetoprime} because the contracted semigroup algebra of an inverse semigroup is the Steinberg algebra of its (contracted) universal groupoid.  Alternatively, the above descent method could be applied directly to inverse semigroups and the proof would be easier since $\S$ is a basis for the contracted inverse semigroup algebra over any field, and so $L_0S=L\otimes_K \K S$ for any field extension $L/K$.
\end{Proof}

To prove that the existence of non-zero singular functions descends from characteristic $0$ to characteristic $p$, we need to do some preliminary work.

Let $R$ be a commutative ring with unit and $I$ an ideal. We write $\pi\colon R\to R/I$ for the quotient map. Let $\G$ be an ample groupoid with inverse semigroup $\Gamma$ of compact local bisections.  Denote by $I\G$ the set of all elements of the form $\sum_{U\in \Gamma} c_U\chi_U$ with the $c_U\in I$.  Denote by $I_0\Gamma$ the ideal in $R_0\Gamma$ whose elements are linear combinations $\sum_{U\in \Gamma^\sharp}c_UU$ with $c_U\in I$.  Notice that $I_0\Gamma$ is the kernel of the natural reduction homomorphism $\pi\colon R_0\Gamma\to [R/I]_0\Gamma$.

\begin{Lem}\label{l:reduct}
Let $\G$ be an ample groupoid and $I$ an ideal of $R$.  Then $I\G$ is the kernel of the reduction homomorphism $\pi\colon R\G\to [R/I]\G$ induced by the canonical surjection $\pi\colon R\to R/I$.
\end{Lem}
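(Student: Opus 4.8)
The plan is to avoid manipulating functions in $R\G$ directly: in the non‑Hausdorff case an element of $R\G$ need not be locally constant and admits no convenient normal form, so the naive ``decompose $f$ on its level sets and read off the coefficients'' argument is not available. Instead I would deduce the statement from the presentations of $R\G$ and $[R/I]\G$ furnished by Corollary~\ref{c:grpdpres}, applied over the two coefficient rings simultaneously, by a diagram chase. The inclusion $I\G\subseteq\ker\pi$ is immediate, since if $f=\sum_U c_U\chi_U$ with all $c_U\in I$ then $\pi(f)(\gamma)=\pi\bigl(\sum_{U\ni\gamma}c_U\bigr)=0$.

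For the reverse inclusion, let $\Gamma$ be the inverse semigroup of compact local bisections of $\G$. By Corollary~\ref{c:grpdpres} the canonical maps $\Psi_R\colon R_0\Gamma\to R\G$ and $\Psi_{R/I}\colon[R/I]_0\Gamma\to[R/I]\G$ are surjective with kernels $J_R$ and $J_{R/I}$ respectively, each generated as an ideal by the \emph{same} set of elements $U+V-(U\cup V)$ with $U,V\in E(\Gamma)$ and $U\cap V=\emptyset$ — these generators have coefficient $1$, so they make sense unchanged in either semigroup algebra. Write $\rho\colon R_0\Gamma\to[R/I]_0\Gamma$ for the coefficient‑wise reduction, which is surjective with kernel $I_0\Gamma$; checking on the generators $cU$ one has $\pi\circ\Psi_R=\Psi_{R/I}\circ\rho$. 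Since $\rho$ is a surjective ring homomorphism fixing each generator $U+V-(U\cup V)$, it carries $J_R$ onto $J_{R/I}$, whence $\rho^{-1}(J_{R/I})=I_0\Gamma+J_R$ (the inclusion $\subseteq$ using $\ker\rho=I_0\Gamma$ together with $\rho(J_R)=J_{R/I}$). Therefore, using surjectivity of $\Psi_R$,
\begin{align*}
\ker\pi &= \Psi_R\bigl(\Psi_R^{-1}(\ker\pi)\bigr) = \Psi_R\bigl(\ker(\pi\circ\Psi_R)\bigr) = \Psi_R\bigl(\ker(\Psi_{R/I}\circ\rho)\bigr)\\
&= \Psi_R\bigl(\rho^{-1}(J_{R/I})\bigr) = \Psi_R(I_0\Gamma),
\end{align*}
the last equality because $\Psi_R(J_R)=0$; and $\Psi_R(I_0\Gamma)=\bigl\{\sum_U c_U\chi_U : c_U\in I\bigr\}=I\G$ by definition.

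The only genuine content is the reverse inclusion in the non‑Hausdorff setting, and the step that makes it go through is the identification $\rho(J_R)=J_{R/I}$, which rests on Corollary~\ref{c:grpdpres} providing a presentation of the Steinberg algebra whose relations are defined over $\mathbb{Z}$, i.e.\ independent of the coefficient ring; everything after that is a formal diagram chase. As a by‑product the argument shows that $I\G$ is an ideal of $R\G$, being the image of the ideal $I_0\Gamma$ under $\Psi_R$.
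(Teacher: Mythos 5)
Your proposal is correct and follows essentially the same route as the paper: both arguments rest on Corollary~\ref{c:grpdpres} giving $\ker\Psi_R$ and $\ker\Psi_{R/I}$ the same coefficient-free generators $U+V-(U\cup V)$, so that the reduction $R_0\Gamma\to[R/I]_0\Gamma$ carries one kernel onto the other, and then conclude by lifting. The paper phrases this element-wise (lift $f$ to $a$, adjust by an element of $\ker\Psi_R$ to land in $I_0\Gamma$) while you package it as a diagram chase with ideals, but the content is identical.
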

\begin{proof}
If $f\in I\G$, then $f=\sum_{U\in \Gamma}a_U\chi_U$ with $a_U\in I$ and $\pi(f(\gamma))= \sum_{\gamma\in U} \pi(a_U)=0$.  Suppose conversely that $f\in \ker \pi$, that is, $f(\gamma)\in I$ for all $\gamma\in G$.  Write $f=\sum_{U\in \Gamma}a_U \chi_U$ and let $a=\sum_{U\in \Gamma^\sharp}a_UU\in R_0\Gamma$.

For a ring $R'$, let $\Psi_{R'}\colon R'_0\Gamma\to R'\G$ be the canonical surjection given by $\Psi_{R'}(U)=\chi_U$,  and note that $\ker \Psi_{R'}$ is generated as an ideal by the elements of the form $U+V-(U\cup V)$ with $U,V\subseteq \G^0$ disjoint compact open sets by Corollary~\ref{c:grpdpres}.  It follows that under the  natural surjection $\pi\colon R_0\Gamma\to [R/I]_0\Gamma$, we have that $\pi(\ker \Psi_R) = \ker \Psi_{R/I}$ as $\pi(U+V-(U\cup V)) = U+V - (U\cup V)$.  Also observe that $\pi\Psi_R = \Psi_{R/I}\pi$ by definition.

 Since $\Psi_R(a)=f$, we have that $\Psi_{R/I}(\pi(a)) = \pi(f) =0$ and so $\pi(a)\in \ker \Psi_{R/I}$.   Thus we can find $b\in \ker \Psi_R$ with $\pi(b)=\pi(a)$, that is, with $a-b\in \ker \pi=I_0\Gamma$.  Thus $a=b+c$ where $c=\sum_{U\in \Gamma^\sharp}c_UU$ with $c_U\in I$.  As $b\in \ker \Psi_R$ we deduce that $f=\Psi_R(a)=\Psi_R(c)=\sum_{U\in \Gamma^\sharp}c_U\chi_U\in I\G$.  The result follows.
\end{proof}

For a prime $p$, let $\mathbb F_p$ denote the field of $p$ elements.

\begin{Cor}
\label{c:p-adic}
Let $f\in \mathbb Z\G$ and let $\pi\colon \mathbb Z\G\to \mathbb F_p\G$ be the homomorphism induced by reduction modulo $p$ with $p$ prime.  If $f\neq 0$, then $f=p^mh$ with $h\in \mathbb Z\G$ and $\pi(h)\neq 0$, for some $m\geq 0$.
\end{Cor}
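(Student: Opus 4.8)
The plan is to recover the exponent $m$ as the minimum value of the $p$-adic valuation of $f$ on its support, and to use Lemma~\ref{l:reduct} to control divisibility. The key technical input is Lemma~\ref{l:reduct} applied with $R=\mathbb{Z}$ and the ideal $I=p^k\mathbb{Z}$: it identifies, for every $k\geq 0$, the kernel of the reduction homomorphism $\pi_k\colon \mathbb{Z}\G\to(\mathbb{Z}/p^k\mathbb{Z})\G$ with $p^k\mathbb{Z}\G$, the set of elements admitting a representation $\sum_{U\in\Gamma}c_U\chi_U$ with all $c_U\in p^k\mathbb{Z}$. Equivalently, $g\in\mathbb{Z}\G$ lies in $p^k\mathbb{Z}\G$ if and only if $g(\gamma)\in p^k\mathbb{Z}$ for every $\gamma\in\G$; this equivalence is the non-obvious point in the non-Hausdorff setting, where the coefficients $c_U$ in a representation of $g$ are not determined by $g$ itself. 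Note that $\pi=\pi_1$.

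First I would observe that $f$, being a finite $\mathbb{Z}$-linear combination $\sum_{i=1}^n a_i\chi_{U_i}$ of characteristic functions of compact local bisections, takes only finitely many values in $\mathbb{Z}$, each of the form $\sum_{i\in J}a_i$ for some $J\subseteq\{1,\dots,n\}$. Since $f\neq 0$, its support $\supp(f)$ is non-empty, so $m:=\min\{\,v_p(f(\gamma)):\gamma\in\supp(f)\,\}$, where $v_p$ denotes the $p$-adic valuation on $\mathbb{Z}$, is a well-defined non-negative integer, being the minimum of a non-empty finite set of natural numbers.

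Next, since $v_p(f(\gamma))\geq m$ for all $\gamma\in\supp(f)$ and $f(\gamma)=0\in p^m\mathbb{Z}$ for $\gamma\notin\supp(f)$, every value of $f$ lies in $p^m\mathbb{Z}$, so $f\in p^m\mathbb{Z}\G$ by the identification above. Writing $f=\sum_{U\in\Gamma}c_U\chi_U$ with $c_U\in p^m\mathbb{Z}$ and setting $h:=\sum_{U\in\Gamma}(c_U/p^m)\chi_U\in\mathbb{Z}\G$, we get $f=p^m h$. Finally, to see $\pi(h)\neq 0$: if $\pi(h)=0$, then $h\in p\mathbb{Z}\G$ by Lemma~\ref{l:reduct} (taking $k=1$), so every value of $h$ is divisible by $p$, whence every value of $f=p^m h$ is divisible by $p^{m+1}$. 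But by the choice of $m$ there is some $\gamma\in\supp(f)$ with $v_p(f(\gamma))=m<m+1$, a contradiction. Hence $f=p^m h$ with $h\in\mathbb{Z}\G$ and $\pi(h)\neq 0$, as required.

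The only genuine obstacle is the one already isolated in Lemma~\ref{l:reduct}: passing from ``$f$ takes values in $p^k\mathbb{Z}$'' to ``$f$ admits a representation with all coefficients in $p^k\mathbb{Z}$''. Once that kernel computation is in hand, the argument is simply the remark that the wanted power of $p$ is the minimum of the finitely many $p$-adic valuations of the nonzero values of $f$.
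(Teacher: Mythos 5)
Your proof is correct and follows essentially the same route as the paper's: both exploit the finiteness of $f(\G)$ to extract the exact power $p^m$, and both invoke Lemma~\ref{l:reduct} to pass between divisibility of the values of a function and divisibility of the coefficients in some representation, concluding $\pi(h)\neq 0$. The only cosmetic difference is that the paper takes $m$ to be the largest exponent with $f\in p^m\mathbb{Z}\G$ (bounded because $p^m$ must divide the finitely many nonzero values), whereas you recover $m$ as the minimum $p$-adic valuation of the nonzero values and apply the lemma once more to deduce $f\in p^m\mathbb{Z}\G$.
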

\begin{proof}
Note that since each characteristic function of a compact local bisection takes on only finitely many values, $f(\G)$ is finite.  Note that if $f\in p^n\mathbb Z\G$, then $p^n$ divides each element of $f(\G)$.  Since $f\neq 0$, it follows that there is a largest $m\geq 0$ with $f\in p^m\mathbb ZG$.   So $f = p^m h$ with $h\in \mathbb ZG$ and $h\notin p\mathbb ZG$.  Thus $\pi(h)\neq 0$ by Lemma~\ref{l:reduct}.
\end{proof}

\begin{Cor}
\label{c:any.field.then.c}
Let $\G$ be an ample groupoid.  If $\sing{F}{\G}=0$ for some field of positive characteristic, then $\sing{K}{\G}=0$ for all fields $K$ of characteristic $0$.  Hence $\G$ has a simple algebra over fields of characteristic $0$ whenever it has a simple algebra over some field of positive characteristic.
\end{Cor}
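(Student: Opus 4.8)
The plan is to reduce everything to prime fields and then run a $p$-adic divisibility argument. By Theorem~\ref{fulldescent}, vanishing of the singular ideal depends only on the characteristic, so the hypothesis $\sing{F}{\G}=0$ for a field $F$ of characteristic $p>0$ is equivalent to $\sing{\mathbb F_p}{\G}=0$, and it suffices to prove $\sing{\mathbb Q}{\G}=0$: this forces $\sing{K}{\G}=0$ for every field $K$ of characteristic $0$, again by Theorem~\ref{fulldescent}.

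I would argue by contradiction. Suppose $0\neq f\in\sing{\mathbb Q}{\G}$. Clearing denominators, there is a nonzero integer $N$ with $Nf\in\mathbb Z\G$, and then $Nf\in\mathbb Z\G\cap\sing{\mathbb Q}{\G}=\sing{\mathbb Z}{\G}$ by Lemma~\ref{restriction}, with $Nf\neq 0$. Applying Corollary~\ref{c:p-adic} with the prime $p$, write $Nf=p^m h$ with $h\in\mathbb Z\G$ and $\pi(h)\neq 0$, where $\pi\colon\mathbb Z\G\to\mathbb F_p\G$ is reduction modulo $p$. The step that needs a (short) check is that $h$ is again singular over $\mathbb Z$: since $\mathbb Z$ is an integral domain, multiplication by $N$ and by $p^m$ does not change which coordinates of a function are nonzero, so $\supp(h)=\supp(p^m h)=\supp(Nf)=\supp(f)$, which has empty interior; hence $h\in\sing{\mathbb Z}{\G}$. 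Then Proposition~\ref{base:change} gives $\pi(h)\in\pi(\sing{\mathbb Z}{\G})\subseteq\sing{\mathbb F_p}{\G}=0$, contradicting $\pi(h)\neq 0$. Therefore $\sing{\mathbb Q}{\G}=0$.

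For the last assertion, if $\G$ has a simple algebra over some field $F$ of positive characteristic, then by Theorem~\ref{amplesimple} the groupoid $\G$ is minimal and effective (properties independent of the coefficient field) and $\sing{F}{\G}=0$; by the previous paragraph $\sing{K}{\G}=0$ for every field $K$ of characteristic $0$, so $K\G$ is simple by Theorem~\ref{amplesimple} once more. The argument presents no serious obstacle; the only point requiring any care is the verification that dividing out the power of $p$ produced by Corollary~\ref{c:p-adic} preserves singularity, which is immediate since $\mathbb Z$ is a domain and singularity is a condition on the (unchanged) support.
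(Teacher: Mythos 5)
Your proposal is correct and follows essentially the same route as the paper: reduce to $\mathbb{Q}$ versus $\mathbb{F}_p$ via Theorem~\ref{fulldescent}, clear denominators, factor out the maximal power of $p$ using Corollary~\ref{c:p-adic}, and push forward with Proposition~\ref{base:change}. The only cosmetic difference is that you verify singularity of $h$ directly from the support (valid since $\mathbb{Z}$ is a domain), whereas the paper notes $h=\tfrac{s}{p^m}f\in\mathbb{Z}\G\cap\sing{\mathbb Q}{\G}=\sing{\mathbb Z}{\G}$ via Lemma~\ref{restriction}; both are sound.
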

\begin{Proof}
For the first statement, by Theorem~\ref{fulldescent}, it suffices to show that if $\sing{\mathbb Q}{\G}\neq 0$, then $\sing{\mathbb F_p}{\G}\neq 0$ for all primes $p$.  Suppose $0\neq f\in \sing{\mathbb Q}{\G}$.  Write \[f= \sum_{i=1}^n \frac{r_i}{s_i}\chi_{U_i}\] with the $U_i$ compact local bisections and the $r_i,s_i\in \mathbb Z\setminus \{0\}$.  Then, putting $s=s_1\cdots s_n$, we have that $0\neq sf\in \mathbb Z\G$. Then by Corollary~\ref{c:p-adic} we can write $sf=p^mh$ with $h\in \mathbb Z\G$ and $\pi(h)\neq 0$ where $\pi\colon \mathbb Z\G\to \mathbb F_p\G$ is the reduction homomorphism.  Note that $h = \frac{s}{p^m}f\in \mathbb Z\G\cap \sing{\mathbb Q}{\G}=\sing{\mathbb Z}{\G}$ (by Lemma~\ref{restriction}) and so $0\neq \pi(h)\in \sing{\mathbb F_p}{\G}$ by Proposition~\ref{base:change}.   Thus $\sing{\mathbb F_p}{\G}\neq 0$, as required.  The final statement follows from Corollary~\ref{reducetoprime} and Theorem~\ref{amplesimple}.
\end{Proof}

\section{Examples from self-similar group actions}

We close the paper with examples that illustrate some unexpected phenomena regarding the simplicity of non-Hausdorff ample groupoid and inverse semigroup algebras.
For Hausdorff inverse semigroups and ample groupoids, simplicity of their algebras is independent of the field: it depends only on properties of the semigroup or groupoid.
Clark \textit{et al.}~\cite{nonhausdorffsimple} give an example of a non-Hausdorff ample groupoid whose Steinberg algebra is non-simple over fields of characteristic 2 but simple otherwise coming from the Nekrashevych algebra of the Grigorchuk group, which is a self-similar group (see also~\cite{Nekrashevychgpd}). However, this example is not of the form $\stein S$ with $\G(S)$ the universal groupoid of an inverse semigroup $S$, so whether simplicity of inverse semigroup algebras could depend on the characteristic of the field was still unknown.

We provide here the first examples of inverse semigroups whose contracted algebras are simple over fields of some characteristics, but not over others. Our examples show that Corollary~\ref{c:any.field.then.c} contains the only implication between simplicity over fields of different characteristics.  We provide both second countable examples and also examples that are minimal and effective but not topologically principal, and therefore cannot be handled by the methods of~\cite{nonhausdorffsimple,Nekrashevychgpd}. We also construct the first examples of congruence-free inverse semigroups with a strongly $0$-disjunctive semilattice of idempotents whose contracted semigroup algebras fail to be simple over any field. Our first such example has a universal (equivalently, tight) groupoid which is not effective, which disproves a claim in~\cite{LalondeMilan}. We also give a family of examples where the universal (equivalently, tight) groupoid is effective, thereby answering a question posed in~\cite{nonhausdorffsimple}, which had been independently answered by Nekrashevych in an unpublished result. These examples arise from self-similar group actions over infinite alphabets and are, in fact, inverse hulls of left cancellative monoids with least common multiples.
We also give the first examples of  minimal and effective ample groupoids that are not topologically principal, both in the Hausdorff and non-Hausdorff settings.  These examples are tight groupoids associated to self-similar group actions.

   The theory of self-similar groups had its origins in the work of Glushkov, Aleshin, Grigorchuk, Sushchanskii, Sidki, and others, on torsion groups generated by finite automata (see~\cite{GNS} for some history)  before being formalized by Nekrashevych~\cite{selfsimilar}.  However, self-similar groups can also be traced all the way back to thesis of Perrot in 1972 (see~\cite{ExPadKatsura,LawsonCorrespond} for details), and in the context of semigroup algebras, examples of this form appear in Munn's work on simple contracted inverse semigroup algebras~\cite{Munntwoexamples,Munnsimplealgebra}, both before self-similar groups were introduced in their modern form.

\subsection{Self-similar group actions}
Recall that a \emph{self-similar group action} of a group $G$ over an alphabet $X$ (of cardinality at least $2$) is defined as follows~\cite{selfsimilar}.   One has a mapping $G\times X\to X$, denoted $(g,x)\mapsto g(x)$, and a mapping $G\times X\to G$, denoted $(g,x)\mapsto g|_x$.  One can then extend the two mappings to words in the free monoid $X^*$  on $X$ via $g(xu) = g(x)g|_x(u)$ and $g|_{xu} = (g|_x)|_u$ for $x\in X$ and $u\in X^*$.  Note that $g(\varepsilon)=\varepsilon$ and $g|_{\varepsilon}=g$, where $\varepsilon$ denotes the empty word.  One observes that $G\times X^*\to G$ is a right action of the free monoid $X^\ast$ on $G$ (as a set) and one requires that $G\times X^*\to X^*$ is a left action of $G$ on $X^*$ (as a set),  $1|_x=1$ for all $x\in X$ and $(gh)|_x = g|_{h(x)}h|_x$ for all $g,h\in G$ and $x\in X$.  See~\cite{LawsonCorrespond} for details.  We say that the self-similar action of $G$ is \emph{faithful} if $G$ acts faithfully on $X^*$, in which case one says that $G$ is a self-similar group. A faithful self-similar group action of $G$ on $X^\ast$ is the same thing as a faithful, length-preserving action such that, for every $g\in G$ and $x\in X$, there is an element $g|_x\in G$ with $g(xw)=g(x)g|_x(w)$ for all $w\in X^\ast$.

The self-similar action also defines an action of $G$ on the Cayley graph of $X^\ast$ (the $|X|$-regular rooted tree) by root fixing automorphisms, and it is often convenient to have that picture in mind.
Notice that this action naturally extends to $X^\omega$ (the boundary of the tree) via the formula
\begin{equation}\label{eq:boundary.form}
g(x_1x_2x_3 \ldots )=g(x_1)g|_{x_1}(x_2)g|_{x_1x_2}(x_3)\ldots.
\end{equation}

Let $M=X^*G$.  This a left cancellative monoid in which each pair of elements with a common multiple has a least common multiple, and which can also be understood as the Zappa-Sz\'ep product of $X^\ast$ and $G$~\cite{LawsonWallis}.   The product in $M$ is given by $(ug)(vh) = (ug(v))(g|_v h)$, where $g,h\in G$ and $u,v\in X^*$. Since $M$ is left cancellative, its left regular action is by injective mappings and hence $M$ embeds in the symmetric inverse monoid on $M$.  The inverse monoid $S$ generated by $M$ is called the \emph{inverse hull} of $M$.  Its non-zero elements are uniquely of the form $ugv^\ast$ with $u,v\in X^*$ (where $v^\ast$ is the inverse map to left multiplication by $v$) and $g\in G$ because of the least common multiple property; see~\cite{ExelSteinbergHull} for more details on inverse hulls.

 Formally, $S$ consists of a zero element $0$ and  products of the form
$ugv^\ast$ where $u, v \in X^\ast$,  $g \in G$, and
multiplication is defined by the rules
\[
\begin{array}{ccc}
gx=g(x)g|_{x}, &
x^\ast g=g|_{g^{-1}(x)}(g^{-1}(x))^\ast, &
x^\ast y=\begin{cases}
1 & \hbox{if}\ x=y \\
0 & \hbox{otherwise}
\end{cases}
\end{array}
\]
for $x,y\in X$.
We can immediately see that $u g v^\ast \cdot w h z^\ast \neq 0$ if and only if $v$ and $w$ are prefix comparable, in which case one can use the rules to reduce the product to the normal form.
The submonoid of $S$ consisting of $0$ and all elements of the form $uv^\ast$ with $u,v\in X$ is well known and is called the polycyclic inverse monoid $P_X,$  cf.~\cite[Chapter~9]{Lawson}.

Notice that the non-zero idempotents of $S$ are the elements of the form $ww^\ast$ with $w\in X^*$ and that $ww^\ast\leq uu^\ast$ if and only if $u$ is a prefix of $w$.  It was shown by Munn~\cite{Munnsimplealgebra} that $E(P_X)$, which coincides with $E(S)$, is strongly $0$-disjunctive if and only if $X$ is infinite.  The filters on $E(P_X)$ are well known to be in bijection with the set $X^\ast\cup X^\omega$ of both finite and (right) infinite words over the alphabet $X$.  The filter associated to a word $w$ consists of all $uu^\ast$ with $u$ a prefix of $w$.  The ultrafilters correspond to the infinite words and the principal filters (those generated by a single idempotent) to the finite words.  If $X$ is infinite, all these filters are tight, whereas if $X$ is finite only the ultrafilters are tight.

It is shown in~\cite[Proposition~6.2]{LawsonCorrespond} that if $|X|\geq 2$ and the action is faithful, then $S$ is congruence-free, but this is also not hard to see directly.
There is an an action of $S$ on $X^\ast$ by partial one-to one maps coming from the actions of $X^\ast$ and $G$ on $X^*$:
\begin{nalign}
\label{spectaction}
u gv^\ast \colon vX^\ast &\to uX^\ast\\
vw &\mapsto u g(w).
\end{nalign}
It is faithful if and only if the self-similar action of $G$ is faithful.
Moreover, under the identification of $E(S)$ and $X^\ast$, this is precisely the classical Munn representation of $S$~\cite{Lawson}.  Since an inverse semigroup is fundamental if and only if its Munn representation is faithful, this shows that $G$ acts faithfully on $X^*$ if and only if $S$ is fundamental.    Since $u^\ast u=1$, it is obvious that all the non-zero idempotents generate the same ideal (in fact, are in the same Green's $\mathcal D$-class) and so $S$ is $0$-simple.  It is also easily verified to be $0$-disjunctive as long as $|X|\geq 2$ since if $u$ is a proper prefix of $v$, say $v=uz$, and $x\in X$ differs from the first letter of $z$, then $(ux)(ux)^\ast vv^\ast=0$. Thus faithful self-similar actions give rise to congruence-free inverse semigroups, where the simplicity of the contracted semigroup algebra can only fail by having a non-trivial singular ideal.  Moreover, if the alphabet is infinite, the semilattice of idempotents is strongly $0$-disjunctive and so the tight and universal groupoids of our inverse semigroup coincide.

We also describe the action of $S$ on its spectrum $\widehat{E(S)}$.  If we identify $\widehat{E(S)}$ with $X^\ast\cup X^{\omega}$, then
\begin{nalign}
\label{spectralaction}
u gv^\ast \colon v(X^\ast\cup X^{\omega}) &\to u(X^\ast\cup X^{\omega})\\
vw &\mapsto u g(w)
\end{nalign}
where $g$ acts as per \eqref{eq:boundary.form} in the case $w\in X^\omega$ (since the prefixes of $g(w)$ are precisely the images of the prefixes of $w$ under $g$).  Under this identification the basic compact open set $D(ww^\ast)$ of $\widehat{E(S)}$ corresponds to $w(X^\ast\cup X^{\omega})$, and so we shall denote this latter set by $D(w)$ for succinctness. A basic compact  open neighborhood is a set of the form \[D(u)\cap D(uw_1)^c \cap \ldots \cap D(uw_n)^c,\]
which contains all finite and infinite words that start with $u$ not followed by $w_i$, $1 \leq i \leq n$, (with possibly $n=0$).

Let us recall the structure of the universal groupoid $\G(S)$ of an inverse semigroup $S$ coming from a self-similar group action.
The universal groupoid $\G(S)$ has $X^\ast \cup X^\omega$ as its set of objects, the arrows are germs are of the form $[\alpha g\beta^\ast, \beta w]$ with $d([\alpha g\beta^\ast, \beta w])=\beta w$, $r([\alpha g\beta^\ast, \beta w])=\alpha g(w)$. A basic compact open neighborhood of a germ $[\alpha g\beta^\ast, \beta w]$ is a compact local bisection $(\alpha g \beta^\ast, U)$, where $U$ is a basic compact open set in $X^\ast\cup X^\omega$, as described above, containing $\beta w$.
If $|X|<\infty$, then the tight groupoid of $S$ is the restriction to the closed invariant subspace $X^{\omega}$. If $|X|=\infty$, then the tight groupoid and the universal groupoid coincide.
From the point of view of simplicity of Steinberg algebras, we are therefore interested in tight groupoids associated to actions over finite alphabets and universal (equivalently, tight) groupoids associated to actions over infinite alphabets. The latter are of special interest to us as their algebras are isomorphic to contracted inverse semigroup algebras. We focus on the case of a finite alphabet in~\cite{simplicityNekr}, where we give a more direct characterization of the simplicity of the algebra of the tight groupoid and provide an algorithm to decide simplicity of the algebra for contracting self-similar groups.  We also give a concrete representation of the essential algebra of $S$ over $K$ as an algebra of operators on $KX^{\omega}$.

For any $X$, the tight groupoid of $S$ is minimal and topologically free by Remark~\ref{r:cong.free}, as $S$ is congruence-free. If $|X|$ is finite, it is always effective by~\cite[Section~17]{ExPadKatsura}\footnote{The paper~\cite{ExPadKatsura} uses the term ``essentially principal'' for effective.}.  We proceed to characterize effectiveness of the universal groupoid when $X$ is infinite.

We say that a word $w\in X^\ast$ is \emph{strongly fixed} by $g\in G$ if $g(w)=w$ and $g|_w=1$; if the action is faithful this is equivalent to saying that $g$ fixes $wX^\ast$ (or equivalently $w(X^\ast\cup X^\omega)$). Notice that $gww^\ast=g(w)g|_ww^\ast$ is idempotent if and only if $g$ strongly fixes $w$, explaining the importance of this notion.

\begin{Prop}
\label{prop:effective}
Let $G$ be a group with a faithful self-similar action on $X^\ast$ where $X$ is infinite, and let $S$ be the corresponding inverse semigroup. Then $\G(S)$ is topologically free. Furthermore $\G(S)$ is effective if and only if whenever the set of letters strongly fixed by $g\in G$ is cofinite in $X$, we must have that $g$ is the identity.
\end{Prop}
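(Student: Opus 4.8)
The plan is to handle topological freeness by citation and then to prove the effectiveness criterion through a direct analysis of the germs of $\G(S)$. Since $X$ is infinite, every filter on $E(S)$ is tight, so $\G_T(S)=\G(S)$, and $S$ is congruence-free as observed above; hence $\G(S)$ is topologically free by Remark~\ref{r:cong.free}, or alternatively by Corollary~\ref{c:top.free.from.qf} since $S$ is fundamental, hence quasi-fundamental, with $0$-disjunctive semilattice of idempotents.

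For effectiveness, I would first record three facts about a germ $[\alpha g\beta^\ast,\beta w]$, where $\alpha,\beta\in X^\ast$, $g\in G$ and $w\in X^\ast\cup X^\omega$. First, it lies in $\Is(\G)$ if and only if $\alpha g(w)=\beta w$. Second, by computing which idempotents $e\in E(S)$ satisfy $e\le\alpha g\beta^\ast$ (these come from prefixes strongly fixed by $g$, using that $gww^\ast$ is idempotent exactly when $g$ strongly fixes $w$), one sees that the germ lies in $\G^0$ if and only if $\alpha=\beta$ and $g$ strongly fixes some finite prefix of $w$, with the understanding that $g$ strongly fixes $\varepsilon$ precisely when $g=1$. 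Third, the germ lies in $\int(\Is(\G))$ if and only if some basic neighborhood $(\alpha g\beta^\ast,\beta V)$ with $w\in V$ satisfies $\alpha g(y)=\beta y$ for all $y\in V$; and since every basic open subset $V$ of $X^\ast\cup X^\omega$ contains its finite ``base word'' $v_0$, which is a prefix of every element of $V$, evaluating at $y=v_0$ and comparing lengths forces $\alpha=\beta$ and $g(v_0)=v_0$. Granting these, the forward implication goes as follows: if $\G(S)$ is not effective, choose a germ in $\int(\Is(\G))\setminus\G^0$; by the third fact it has the form $[\beta g\beta^\ast,\beta w]$, and there is a basic open $V\ni w$ with base word $v_0$ a prefix of $w$ such that $g$ fixes $V$ pointwise. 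Setting $h=g|_{v_0}$, the identity $g(v_0 z)=v_0 h(z)$ shows that $h$ fixes every word that avoids the finitely many forbidden prefixes of $V$; in particular $h$ strongly fixes every letter that does not begin one of those forbidden words, and this is a cofinite set. Moreover $h\ne 1$, for otherwise $g$ would strongly fix the prefix $v_0$ of $w$ and the second fact would place the germ in $\G^0$. Hence $h\in G\setminus\{1\}$ has a cofinite set of strongly fixed letters.

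Conversely, suppose $h\in G\setminus\{1\}$ strongly fixes every letter outside a finite set $L$. I would consider the germ $[h,\varepsilon]$ (that is, $\alpha=\beta=\varepsilon$, $g=h$, $w=\varepsilon$), which lies in $\Is(\G)$ by the first fact, together with the basic neighborhood $V=D(\varepsilon)\cap\bigcap_{x\in L}D(x)^c$. For $y\in V$ one has $h(y)=y$: this is immediate when $y=\varepsilon$, and when $y=xz$ with $x\notin L$ it holds since $h$ strongly fixes $x$. Thus $[h,\varepsilon]\in\int(\Is(\G))$, whereas $h\ne 1$ forces $[h,\varepsilon]\notin\G^0$ by the second fact, so $\G(S)$ is not effective. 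The main obstacle is establishing the second and third facts carefully: one must identify exactly which idempotents of $S$ lie below $\alpha g\beta^\ast$ in order to recognize when a germ of $\G(S)$ is a unit, and one must exploit that every basic open subset of $X^\ast\cup X^\omega$ contains a finite word to force $\alpha=\beta$; once these are in hand, extracting $h=g|_{v_0}$ and verifying the converse are routine.
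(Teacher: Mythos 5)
Your proposal is correct and follows essentially the same route as the paper: both analyze basic open subsets of $\Is(\G(S))$ to force $\alpha=\beta$ and $g(v_0)=v_0$, both extract the section $g|_{v_0}$ as the non-identity element strongly fixing a cofinite set of letters, and both use the germ $[h,\varepsilon]$ with neighborhood $D(\varepsilon)\cap\bigcap_{x\in L}D(x)^c$ for the other direction. Citing Remark~\ref{r:cong.free} for topological freeness is fine (the paper does the same before giving its direct argument), and phrasing the two implications as contrapositives is only an organizational difference.
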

\begin{proof}
We first begin by describing the basic open sets in $\Is(\G(S))$. Assume
\[U=(\alpha g \beta ^\ast, D(\beta w)\cap D(\beta w w_1)^c \cap \ldots \cap D(\beta ww_n)^c)\]
is any basic compact local bisection contained in $\Is(\G(S))$. Then, if $u$ is a finite word, we have $\alpha g\beta^\ast(\beta u)=\beta u$ if and only if $\alpha g(u)=\beta u$.  Since $|g(u)|=|u|$, this occurs if and only if $\alpha=\beta$ and $g(u)=u$.  In particular, since $[\alpha g\beta^\ast,\beta w]\in U$ is an isotropy element, we deduce that $\alpha=\beta$, $g(w)=w$ and $U$ is of the form
\[(\beta g \beta ^\ast, D(\beta w)\cap D(\beta w w_1)^c \cap \ldots \cap D(\beta ww_n)^c)\]
where for any $u\in D(w)$, of which none of $w_1,\ldots, w_n$ is a prefix, we have $g(u)=u$.

Although we have already observed that $\G(S)$ is topologically free as a consequence of Remark~\ref{r:cong.free}, we provide here a short direct proof.
 Denote the first letter of $w_i$ by $x_i$, and suppose that $x \in X\setminus\{x_1, \ldots, x_n\}$.
Then $[\beta g \beta^\ast, \beta wx]=[\beta g \beta^\ast \beta wx(\beta wx)^\ast, \beta wx]=[\beta wx  (\beta wx)^\ast, \beta wx] \in U \cap \G(S)^0$, so $U \cap \G(S)^0\neq \emptyset$, which shows that $\int(\Is(\G(S))\setminus \G(S)^0)=\emptyset$ that is $\G(S)$ is topologically free.

Assume furthermore that whenever the set of letters strongly fixed by a group element is cofinite in $X$, than that element is the identity.
Note we also have $[\beta g\beta^\ast,\beta wxv]\in U$ for all $v\in X^\ast$, and so
$wxv=g(wxv)=wg|_w(xv)$ for any $v \in X^\ast$, that is, $g|_w$ strongly fixes $X\setminus\{x_1, \ldots, x_n\}$, and so $g=1$ by hypothesis. Thus $U \subseteq \G(S)^0$, so $\int(\Is(\G(S))) \subseteq \G^0$ and thus $\int(\Is(\G(S))) = \G(S)^0$ because $\G(S)^0$ is open. Then $\G(S)$ is effective, which shows the `left-to-right' direction of the second statement.

Lastly assume that $\G(S)$ is effective, and suppose that $g \in G$ strongly fixes every letter in $X \setminus \{x_1, \ldots, x_n\}$, and consider the compact local bisection
$$U=(g, D(\varepsilon) \cap D(x_1)^c \cap \ldots \cap D(x_n)^c).$$
Note that if $\varepsilon\neq w\in D(\varepsilon) \cap D(x_1)^c \cap \ldots \cap D(x_n)^c$, then the first letter of $w$ is strongly fixed and so $g(w)=w$, whereas $g(\varepsilon)=\varepsilon$ is always true.  Therefore, we have that $U \subseteq \Is(\G(S))$. In particular,
$[g, \varepsilon] \in \int(\Is(\G(S)))=\G(S)^0$, forcing $g=1$ (as no element strictly below $g$ is defined at $\varepsilon$). This proves the claim.
\end{proof}

We prove, for completeness, a well-known proposition (going back to Nekrashevych~\cite{Nekcstar} in a different formulation for finite alphabets) describing the Hausdorff property for inverse semigroups associated to self-similar group actions.
To show that an inverse semigroup $S$ is Hausdorff, it suffices to show that, for each $s\in S$, the set of idempotents below $s$ is finitely generated as an order ideal; see~\cite[Proposition~2.2]{groupoidprimitive}.

\begin{Prop}
\label{p:describe.hausdorff}
Let $G$ be a self-similar group acting faithfully over an alphabet $X$ with $|X|\geq 2$.  Then the associated inverse semigroup $S$ is a Hausdorff inverse semigroup if and only if, for each $g\in G$,  there is a finite set $F_g\subseteq X^\ast$ such that $F_gX^\ast$ is the set of elements strongly fixed by $g$.
\end{Prop}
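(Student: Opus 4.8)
The plan is to reduce the Hausdorff condition to a description of the set of idempotents lying below an arbitrary element of $S$, and then to feed this into~\cite[Proposition~2.2]{groupoidprimitive} in one direction and into the bare definition of Hausdorffness in the other. Throughout I identify $E(S)$ with $X^\ast$ via $w\mapsto ww^\ast$, under which $ww^\ast\le vv^\ast$ exactly when $v$ is a prefix of $w$; an order ideal of $E(S)$ then corresponds to a subset $D\subseteq X^\ast$ with $DX^\ast=D$, and such an ideal is finitely generated precisely when $D=FX^\ast$ for some finite $F\subseteq X^\ast$. Writing $\mathrm{SF}(g)$ for the set of words strongly fixed by $g$, one checks immediately from $g(wz)=g(w)g|_w(z)$ and $g|_{wz}=(g|_w)|_z$ that $\mathrm{SF}(g)X^\ast=\mathrm{SF}(g)$; so the content of the condition in the proposition is precisely the \emph{finiteness} of a generating set for $\mathrm{SF}(g)$.

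The key computation is that, for $u,v,w\in X^\ast$ and $g\in G$, one has $ww^\ast\le ugv^\ast$ in $S$ if and only if $u=v$, $w=uw_1$ for some $w_1\in X^\ast$, and $w_1\in\mathrm{SF}(g)$. I would prove this by reducing $ugv^\ast\cdot ww^\ast$ to normal form using the rewriting rule $gw_1=g(w_1)\,g|_{w_1}$ and its dual: the product is nonzero only when $v$ and $w$ are prefix comparable; if $v=wv_1$ with $v_1\ne\varepsilon$ then $ugv^\ast\cdot ww^\ast=ugv^\ast$, which is never an idempotent; and if $w=vw_1$ then $ugv^\ast\cdot ww^\ast=\bigl(ug(w_1)\bigr)\,g|_{w_1}\,w^\ast$, and comparing this with the normal form $w\cdot 1\cdot w^\ast$ and using $|g(w_1)|=|w_1|$ forces in turn $|u|=|v|$, then $u=v$, then $g(w_1)=w_1$ and $g|_{w_1}=1$. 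It follows that if $u\ne v$ the only idempotent below $ugv^\ast$ is $0$, while for $s=ugu^\ast$ the set $\{e\in E(S):e\le s\}\setminus\{0\}$ is in bijection with $\mathrm{SF}(g)$ via $w_1\mapsto(uw_1)(uw_1)^\ast$; under the identification above this set of idempotents corresponds to $u\,\mathrm{SF}(g)\subseteq X^\ast$.

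Given this, both implications are short. For ``if'', assume $\mathrm{SF}(g)=F_gX^\ast$ with $F_g$ finite for every $g\in G$. Then for each $s=ugv^\ast$ the order ideal $\{e\in E(S):e\le s\}$ is either $\{0\}$ (when $u\ne v$) or the order ideal generated by the finite set $\{(uf)(uf)^\ast : f\in F_g\}$ (when $u=v$); in either case it is finitely generated, so $S$ is Hausdorff by~\cite[Proposition~2.2]{groupoidprimitive}. For ``only if'', assume $S$ is Hausdorff. Since $S$ is the inverse hull of the monoid $M=X^\ast G$, the identity $1$ belongs to $S$, and $1^{\downarrow}=E(S)$ because $s\le 1$ iff $s=s^\ast s$. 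Applying the definition of Hausdorffness to $s=g$ and $t=1$ yields $g^{\downarrow}\cap 1^{\downarrow}=F^{\downarrow}$ for some finite $F\subseteq S$; but by the key computation with $u=v=\varepsilon$ we have $g^{\downarrow}\cap 1^{\downarrow}=\{e\in E(S):e\le g\}=\{0\}\cup\{ww^\ast:w\in\mathrm{SF}(g)\}$, so $F\subseteq E(S)$, say $F=\{v_1v_1^\ast,\dots,v_nv_n^\ast\}$ after discarding $0$, and hence $\mathrm{SF}(g)=\{v_1,\dots,v_n\}X^\ast$. Taking $F_g=\{v_1,\dots,v_n\}$ completes the argument.

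The only genuine work is the normal-form reduction of $ugv^\ast\cdot ww^\ast$: one must handle the prefix-comparability cases and run the length argument that pins down $u=v$, and this is where essentially all the semigroup combinatorics sits. It is elementary but fiddly; everything afterwards is the formal translation of ``finitely generated order ideal'' into ``$F_gX^\ast$'' together with the two invocations of the black boxes \cite[Proposition~2.2]{groupoidprimitive} and the definition of Hausdorffness.
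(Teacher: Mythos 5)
Your proposal is correct and follows essentially the same route as the paper: both hinge on the computation that $ww^\ast\leq ugv^\ast$ forces $u=v$, $w=vz$ with $z$ strongly fixed by $g$, and both then invoke the criterion of~\cite[Proposition~2.2]{groupoidprimitive} that Hausdorffness amounts to the idempotents below each element forming a finitely generated order ideal. (One trivial imprecision: in the case $v=wv_1$ with $v_1\neq\varepsilon$ the product $ugv^\ast$ \emph{can} be an idempotent when $u=v$ and $g=1$; the correct reason it cannot equal $ww^\ast$ is uniqueness of the normal form, since $v\neq w$ --- but this does not affect the argument.)
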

\begin{proof}
Let $s=ugv^\ast\in S$.  We claim that $ww^\ast \leq s$ (that is, $sww^\ast =ww^\ast$) if and only if  $u=v$ and $w=vz$ such that $g$ strongly fixes $z$.
First note that $s^\ast s= vv^\ast$ and so we must have that $v$ is a prefix of $w$ in order for $sww^\ast =ww^\ast$ to hold.  Moreover, if $w=vz$, then $sww^\ast = ug(z)g|_zz^\ast v^\ast=ug(z)g|_zw^\ast$.  The right hand side is $ww^\ast$ precisely when $g|_z=1$, $u=v$ and $g(z)=z$ (using $|g(z)|=|z|$).

So if $F_g\subseteq X^\ast$ denotes the unique minimal set of elements such that $F_gX^\ast$ is the set of words strongly fixed by $G$, then the set of idempotents below $ugv^\ast$ is empty if $u\neq v$, and otherwise consists of those $ww^\ast$ with $w\in vF_gX^\ast$.  In the latter case, this set is finitely generated as an order ideal if and only if $F_g$ is finite.
\end{proof}

Let $G$ be any self-similar group acting faithfully over an alphabet $A$ with $|A|\geq 2$. We can always obtain a faithful self-similar action of $G$ on an infinite alphabet satisfying Proposition~\ref{prop:effective}.
Put $X=A \times \mathbb N$, and define the self-similar action of $G$ over $X$ as the original action applied index-wise, that is, $g(a,i)= (g(a),i)$, $g|_{(a,i)}=g|_a$ for $a\in A$. If $Z$ is cofinite subset of $X$, then there is some index $j$ such that $A\times \{j\}\subseteq Z$  and so if $g$ strongly fixes $Z$, it must act trivially on $A^\ast$ (as seen by restricting the action of $g$ to $(A\times \{j\})^\ast$, which it strongly fixes). Hence $g$ is the identity by faithfulness. We call this action the \emph{countable inflation} of the self-similar group $G$.

Returning to the case of a general self-similar action of $G$ over an alphabet $X$, notice that $G$ acts freely on the right of the monoid $M=X^\ast G$ by multiplication and the set $X^*$ is a complete set of orbit representatives.  Hence, if $K$ is a commutative ring with unit, then $KM$ is a free right $KG$-module with basis $X^*$.  So each element of $KM$ can be uniquely written in the form $\sum_{w\in X^*}wa_w$ with $a_w\in KG$.  We shall exploit this fact in the sequel.    Observe that $a\in \K S$ is singular if and only if for all $u\in X^*$, there exists $v\in X^*$ with $auv=0$.

\begin{Prop}\label{zerofromG}
Let $G$ be a group with a self-similar action on $X^\ast$, let $M=X^\ast G$ be the associated left cancellative monoid and $S$ the inverse hull of $M$.
Let $a=\sum_{w\in X^*}wa_w\in KM$ with $a_w\in KG$.  Let $u\in X^*$.  Then $au=0$ if and only if $a_wu=0$ for all $w\in X^*$. Consequently, $a$ is singular in $\K S$ if and only if each $a_w$ is singular in $\K S$.
\end{Prop}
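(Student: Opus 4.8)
The plan is to work entirely inside the free right $KG$-module $KM$ with basis $X^\ast$, using the product rules of $M$ for the bookkeeping. For the first assertion, fix $u\in X^\ast$. From the product rule $(u_1g_1)(u_2g_2)=(u_1g_1(u_2))(g_1|_{u_2}g_2)$ one gets $gu=g(u)\,g|_u$ for $g\in G$, and $w(zg)=(wz)g$ for $w,z\in X^\ast$, $g\in G$. Since the action is length-preserving, $|g(u)|=|u|$, so writing $a_wu=\sum_{z\in X^\ast}z\,c_{w,z}$ in the free-module basis we have $c_{w,z}=0$ unless $|z|=|u|$; consequently $w(a_wu)=\sum_{|z|=|u|}(wz)\,c_{w,z}$, which is supported in $M$ on elements whose word part has length $|w|+|u|$ and begins with $w$. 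Hence for $w\ne w'$ the supports of $w(a_wu)$ and $w'(a_{w'}u)$ are disjoint: the word lengths differ if $|w|\ne|w'|$, and otherwise the word parts have distinct prefixes of equal length. Since $a$ has finite support, $au=\sum_{w}w(a_wu)$ is a finite sum with pairwise disjoint supports, so $au=0$ forces each $w(a_wu)=0$; as $M$ is left cancellative, left multiplication by $w$ is injective on $KM$, whence $a_wu=0$ for every $w$ (trivially so when $a_w=0$). The converse implication is immediate.

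For the \emph{consequently} statement I would invoke the observation recorded just before the proposition: $a\in\K S$ is singular if and only if for every $u\in X^\ast$ there is $v\in X^\ast$ with $a(uv)=0$ (note $KG\subseteq KM\subseteq\K S$, so each $a_w$ lies in $\K S$ and the same criterion applies to it). By the first part, $a(uv)=0$ iff $a_w(uv)=0$ for all $w$, so $a$ is singular iff for every $u$ there is a \emph{single} $v$ with $a_w(uv)=0$ for all $w$ at once. The direction ``$a$ singular $\Rightarrow$ each $a_w$ singular'' is then trivial. For the reverse direction, given $u$, let $w_1,\dots,w_k$ be the (finitely many) indices with $a_{w_i}\ne0$ and build $v$ inductively: choose $v_1$ with $a_{w_1}(uv_1)=0$, then $v_2$ with $a_{w_2}(uv_1v_2)=0$, and so on. Once $a_{w_i}(uv_1\cdots v_i)=0$, appending further letters preserves this, since $a_{w_i}(uv_1\cdots v_j)=(a_{w_i}(uv_1\cdots v_i))v_{i+1}\cdots v_j=0$ by associativity. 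Then $v=v_1\cdots v_k$ satisfies $a_{w_i}(uv)=0$ for all $i$, and $a_w(uv)=0$ trivially for the remaining $w$, so $a(uv)=0$ by the first part; hence $a$ is singular.

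The only real subtlety is the length/prefix bookkeeping in the first part --- namely that $a_wu$ occupies a single length level of $X^\ast$, which is exactly what makes the pieces $w(a_wu)$ have pairwise disjoint supports and rules out any cancellation in $au=\sum_w w(a_wu)$ --- together with the quantifier-order point in the easy half of the second assertion, which the finite induction above resolves since right multiplication cannot revive a term that has already been annihilated. Neither ingredient is deep; the proposition is in essence a normal-form computation in $M$.
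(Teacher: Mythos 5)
Your proof is correct and follows essentially the same route as the paper's: the first assertion is the same unique-factorization/length argument ($wv=w'v'$ with $|v|=|v'|=|u|$ forces $w=w'$, $v=v'$) in the free right $KG$-module $KM$, and the second assertion rests on the same reformulation of singularity as ``for every $u$ there is $v$ with $a(uv)=0$''. The only (harmless) divergence is in the direction ``each $a_w$ singular $\Rightarrow a$ singular'', where the paper simply cites that the singular elements form an ideal, while you chain annihilating suffixes by a finite induction; both work.
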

\begin{Proof}
As $au=\sum_{w\in X^*}wa_wu$, clearly if each $a_wu=0$ then $au=0$.  For the converse, let $n=|u|$ and note that, for $g\in G$, we have that $gu = g(u)g|_u$ and $|g(u)|=|u|=n$.   Thus $a_wu$ is of the form $\sum_{|v|=n}vb_{w,v}$ with $b_{w,v}\in KG$.  So if $au=0$, then  \[0=au = \sum_{w\in X^*}\sum_{|v|=n}wvb_{w,v}.\] In particular, notice that $wv=w'v'$ for $|v|=|v'|=n$ if and only if $w=w'$ and $v=v'$   and so we deduce that $b_{w,v}=0$ for all $w,v$ and thus $a_wu=0$ for all $w\in X^*$.

For the final statement, since the singular elements form an ideal, obviously if each $a_w$ is singular, then $a$ is singular.  The converse follows from what we have just proved because an element $c\in \K S$ is singular if and only if, for each word $v\in X^*$, there is a word $z\in X^*$ such that $cvz=0$.  But if $avz=0$, then $a_wvz=0$ for all $w\in X^*$.
\end{Proof}

\subsection{Simplicity can depend on the characteristic of the field}
Let $\mathcal P$ be a set of prime numbers (possibly empty).  In this section we construct a congruence-free inverse semigroup $S$ so that $\K S$ is simple if and only if the characteristic of $K$ does not belong to $\mathcal P$.

Let $C_p$ be a cyclic group of order $p$ and put $G=\bigoplus_{p\in \mathcal P} C_p\times C_p$; if $g\in G$, then $g_p\in C_p\times C_p$ will denote the $p$-component of $G$.  Notice that if $p\in \mathcal P$, then $G$ has $p+1$ subgroups of index $p$, namely those of the form $H\oplus\bigoplus_{q\in \mathcal P\setminus \{p\}} C_q\times C_q$ with $H\leq C_p\times C_p$ a subgroup of index (and order) $p$.  Let $A_p =\coprod_{[G:G_0]=p} G/G_0$ and let $A=\coprod_{p\in \mathcal P}A_p$.  The group $G$ acts on each $A_p$, and hence on $A$, in the natural way. Moreover, the action on $A$ is faithful since the projections of $G$ to the $C_p$ with $p\in \mathcal P$ separate points.    For $p\in \mathcal P$, let $\pi_{p}\colon G\to G$ be the composition \[G\twoheadrightarrow \bigoplus_{q\in \mathcal P\setminus \{p\}} C_q\times C_q\hookrightarrow G\] changing the $p$-component to $1$. We define a self-similar group action of $G$ on $A^*$ by using the above action on $A$ and putting $g|_a=\pi_p(g)$ for $a\in A_p$.  It is straightforward to verify that this provides a faithful self-similar group action by observing that the action of an element $g\in G$ on a word $w$ changes the leftmost letter $x$ of $w$ belonging to $A_p$ to $g_p(x)$ for any $p$ with $g_p\neq 1$,  and leaves the remaining letters as they were.  We consider the countable inflation of $G$ acting on
$X=A\times \mathbb N$ defined above.

Let $S$ be the inverse hull of the monoid $M=X^\ast G$ and let $K$ be a field.  Since $X$ is infinite, we have that $E(S)$ is strongly $0$-disjunctive and so the (contracted) universal groupoid of $S$ and its tight groupoid coincide.

We now study when an element $c=\sum_{g\in G}c_gg\in KG$ is singular in $\K S$.  Let \[\varphi(c) =\{p\in \mathcal P: \exists g\in \supp(c), g_p\neq 1\}\] be the set of primes `occurring' in $c$.  Since $g|_{(a,i)} = \pi_p(g)$ for $a\in A_p$, we see that $g|_{(a,i)}$ agrees with $g$, except perhaps in the component $p$, where it now becomes $1$.  Thus the following proposition is immediate.

\begin{Prop}\label{p:cutsupport}
Let $g\in G$ and suppose that $w\in X^*$ contains at least one letter $(a,i)$ with $a\in A_{p}$ for each prime $p\in \mathcal P$ with $g_p\neq 1$. Then $g|_w=1$.
\end{Prop}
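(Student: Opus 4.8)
The plan is to compute $g|_w$ explicitly as an iterated application of the projections $\pi_p$ and then to read off its components.

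First I would isolate the elementary building block underlying the countable inflation: for any $h\in G$ and any letter $(a,i)\in X=A\times\mathbb N$ with $a\in A_p$, we have $h|_{(a,i)}=h|_a=\pi_p(h)$, and this value is independent of the particular $a\in A_p$ and of $i\in\mathbb N$. Next, writing $w=x_1x_2\cdots x_n$ with $x_k=(a_k,i_k)$ and $a_k\in A_{p_k}$ (each letter of $X$ lies in exactly one of the sets $A_p\times\mathbb N$, since $A=\coprod_{p\in\mathcal P}A_p$), I would unwind the right-action identity $g|_{uv}=(g|_u)|_v$ to obtain
\[
g|_w=\bigl(\cdots\bigl((g|_{x_1})|_{x_2}\bigr)\cdots\bigr)|_{x_n}=\pi_{p_n}\bigl(\pi_{p_{n-1}}(\cdots\pi_{p_1}(g)\cdots)\bigr).
\]

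The key (elementary) observation is then that the maps $\pi_p$ are pairwise commuting idempotent endomorphisms of $G=\bigoplus_{q\in\mathcal P}C_q\times C_q$: the map $\pi_p$ merely replaces the $p$-component by $1$ and fixes every $q$-component with $q\neq p$. Hence the composite displayed above depends only on the set $T=\{p_1,\dots,p_n\}=\{p\in\mathcal P\mid w\text{ has a letter in }A_p\}$ of primes occurring in $w$, and equals the single operation that trivialises exactly the $p$-components for $p\in T$ and fixes the $p$-components for $p\notin T$.

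Finally I would conclude by examining components of $g|_w$. If $p\in T$, then the $p$-component of $g|_w$ is $1$ by the previous step. If $p\notin T$, then the hypothesis — that $w$ contains a letter of $A_p$ whenever $g_p\neq 1$ — forces $g_p=1$, and no $\pi_q$ with $q\in T$ disturbs the $p$-component, so the $p$-component of $g|_w$ is again $g_p=1$. Therefore every component of $g|_w$ is trivial, i.e. $g|_w=1$, as claimed. I do not anticipate a genuine obstacle here; the only point requiring a little care is keeping straight the order in which the letters are restricted in the identity $g|_{uv}=(g|_u)|_v$, and confirming that reordered or repeated applications of the $\pi_p$ collapse to the single operation described above — which is precisely the statement that the $\pi_p$ are commuting idempotents.
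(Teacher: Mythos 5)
Your proof is correct and rests on the same key fact as the paper's, namely that $g|_{(a,i)}=\pi_p(g)$ for $a\in A_p$, so that $g|_w$ is an iterated application of the projections $\pi_p$. The paper organises this as an induction on the number of primes $p$ with $g_p\neq 1$ (peeling off the first relevant letter each time), whereas you compute the full composite directly and invoke the commuting-idempotent structure of the $\pi_p$; the two arguments are essentially the same.
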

\begin{Proof}
We induct on the number $n$ of primes $p$ with $g_p\neq 1$. There is nothing to prove if $n=0$.  Else, factor $w=u(a,i)v$ such that $(a,i)\in X$ is the first occurrence of a letter $(a,i)\in A_p\times \mathbb N$ with $g_p\neq 1$.  Then $g|_u=g$ and so $g|_w = (g|_u)|_{(a,i)v} = g|_{(a,i)v}=\pi_p(g)|_v=1$, where the last equality is by induction.
\end{Proof}

An immediate consequence of Proposition~\ref{p:cutsupport} is the following.

\begin{Cor}\label{c:full.action}
Let $c=\sum_{g\in G}c_g\in KG$ and $v\in X^*$.  Let $\varphi(c)\subseteq \{p_1,\ldots, p_n\}$ and $a_i\in A_{p_i}$ for $i=1,\ldots, n$.  Then \[c(a_1,j)(a_2,j)\cdots (a_n,j)v = \sum_{b_1\cdots b_n}\left(\sum_{g(a_1\cdots a_n)=b_1\cdots b_n}c_g(b_1,j)\cdots (b_n,j)v\right)\] with the $b_i\in A_{p_i}$.
\end{Cor}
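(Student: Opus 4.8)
The plan is to carry out the product directly in the monoid $M=X^\ast G$ (the Zappa--Sz\'ep product of $X^\ast$ and $G$), which sits inside $\K S$ with $KM$ a free right $KG$-module on the basis $X^\ast$. Writing $w=(a_1,j)(a_2,j)\cdots(a_n,j)\in X^\ast$, the element under consideration is $cwv=\sum_{g\in G}c_g\,gwv$, expanded over the basis $G$ of $KG$.

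The first step is to observe that all the relevant restrictions collapse. For $g\in\supp(c)$, the set of primes $p$ with $g_p\neq 1$ is contained in $\varphi(c)\subseteq\{p_1,\dots,p_n\}$, and for each such $p=p_i$ the word $w$ contains the letter $(a_i,j)$ with $a_i\in A_{p_i}$; thus $w$ contains a letter of $A_p\times\mathbb N$ for every prime $p$ with $g_p\neq 1$, and Proposition~\ref{p:cutsupport} yields $g|_w=1$. Consequently $g\cdot w=g(w)\cdot g|_w=g(w)$ in $M$, so $gwv=g(w)\,v$ (plain concatenation of words in $X^\ast$).

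The second step computes $g(w)$ explicitly. Since the action on $X=A\times\mathbb N$ is the countable inflation, $g(a,i)=(g(a),i)$ and $g|_{(a,i)}=g|_a=\pi_p(g)$ for $a\in A_p$, and $g$ acts on a letter $x\in A_p$ through its $p$-component, namely $g(x)=g_p(x)$. As $p_1,\dots,p_n$ are distinct, $(a_i,j)$ is the unique letter of $w$ lying in $A_{p_i}\times\mathbb N$, so peeling letters off the front of $w$ and inducting on $n$ — at each step using that $g(a_i)=g_{p_i}(a_i)$ is unaffected by applying any projection $\pi_q$ with $q\neq p_i$ — gives $g(w)=(g(a_1),j)(g(a_2),j)\cdots(g(a_n),j)$. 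The same computation on the word $a_1a_2\cdots a_n\in A^\ast$ shows $g(a_1\cdots a_n)=g(a_1)g(a_2)\cdots g(a_n)$, so setting $b_i:=g(a_i)\in A_{p_i}$ the word $b_1\cdots b_n$ is exactly $g(a_1\cdots a_n)$.

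Combining, $cwv=\sum_{g\in G}c_g\,(g(a_1),j)\cdots(g(a_n),j)\,v$, and collecting the terms according to the value of $b_1\cdots b_n=g(a_1\cdots a_n)$ gives
\[cwv=\sum_{b_1\cdots b_n}\ \sum_{g(a_1\cdots a_n)=b_1\cdots b_n}c_g\,(b_1,j)\cdots(b_n,j)\,v,\]
which is the claimed identity. I do not expect a genuine obstacle: the only point requiring care is the self-similar bookkeeping in the middle step — checking that every restriction $g|_w$ reduces to $1$ and that the images $g(a_i)$ are not disturbed by the partial projections one accumulates while reading $w$ — after which the rest is routine manipulation of the free $KG$-module $KM$.
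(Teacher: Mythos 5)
Your argument is correct and is exactly the computation the paper has in mind: the paper states Corollary~\ref{c:full.action} as an ``immediate consequence'' of Proposition~\ref{p:cutsupport} without writing out a proof, and your two steps --- using $\varphi(c)\subseteq\{p_1,\dots,p_n\}$ to conclude $g|_{(a_1,j)\cdots(a_n,j)}=1$ for every $g\in\supp(c)$, then expanding the action letter by letter so that $g(a_1\cdots a_n)=g(a_1)\cdots g(a_n)$ and collecting terms --- are precisely the omitted details. No gaps.
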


Let $\rho\colon X\to A$ be the mapping $\rho(a,i)=a$ and note that $\rho$ induces a length-preserving, $G$-equivariant homomorphism $\rho\colon X^*\to A^*$.  Call a word $w\in X^*$ $c$-full if $\rho(w)$ contains a letter $a\in A_p$ for each $p\in \varphi(c)$.
Clearly any word is a prefix of a $c$-full word. By Proposition~\ref{p:cutsupport} if $w$ is $c$-full and $g,h\in \supp(c)$, then $g|_w=1=h|_w$, and it is not hard to see that $g(w)=h(w)$ if and only if $g,h$ agree on $\rho(w')$, where $w'$ is the word obtained from $w$ by erasing each letter except for the first occurrence of a letter from  each alphabet $A_p\times \mathbb N$ with $p\in \varphi(c)$.
  From this observation, the following criterion for singularity is straightforward, but we include a formal proof for completeness.

\begin{Prop}\label{p:singular.criterion.const}
Let $c=\sum_{g\in G}c_gg\in KG$ and suppose that $\varphi(c)\subseteq \{1,\ldots, p_n\}\subseteq \mathcal P$.  Then $c$ is singular if and only if, for all $\sigma\in S_n$ and $a_i,b_i\in A_{p_{\sigma(i)}}$, the equality \[\sum_{g(a_1\cdots a_n)=b_1\cdots b_n}c_g=0\] holds.
\end{Prop}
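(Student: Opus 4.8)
The plan is to combine the characterization of singular elements recalled just above the statement of Proposition~\ref{zerofromG} (an element $a\in\K S$ is singular if and only if for every $u\in X^\ast$ there is $v\in X^\ast$ with $auv=0$, the products being computed in $KM\subseteq\K S$) with Corollary~\ref{c:full.action}, whose right-hand side is exactly an expansion in which the coefficients are the sums appearing in the statement. The one extra computational ingredient, which I extract from the proof of Proposition~\ref{p:cutsupport}, is the following description of the action on $c$-full words: if $w\in X^\ast$ is $c$-full and $g\in\supp(c)$, then $g|_w=1$, and $g(w)$ is obtained from $w$ by replacing, for each prime $p\in\varphi(c)$, the leftmost letter of $w$ lying in $A_p\times\mathbb N$, say $(y,i)$, by $(g_p(y),i)$, and leaving every other letter unchanged; equivalently, if $r_1,\dots,r_m$ enumerates $\varphi(c)$ in the order in which these primes first occur in $w$, with corresponding leftmost letters $(y_1,i_1),\dots,(y_m,i_m)$, then $g(w)$ is $w$ with $(y_k,i_k)$ altered to $(g_{r_k}(y_k),i_k)$. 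The permutation $\sigma\in S_n$ in the statement is precisely the device recording this order of first occurrence, while the entries of $\{p_1,\dots,p_n\}\setminus\varphi(c)$ play no role and are padded with arbitrarily chosen letters that every element of $\supp(c)$ fixes (since $g_p=1$ for $g\in\supp(c)$ and $p\notin\varphi(c)$).

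For the reverse direction, assume every displayed sum vanishes and fix $u\in X^\ast$. Choose $v\in X^\ast$ so that $w:=uv$ is $c$-full (possible since $\varphi(c)$ is finite: append one letter from $A_p\times\mathbb N$ for each $p\in\varphi(c)$ not already represented in $u$). Then $cw=\sum_{g\in\supp(c)}c_g\,g(w)$ because $g|_w=1$ for $g\in\supp(c)$, so the coefficient of a word $z$ in $cw$ is $\sum_{g\in\supp(c),\,g(w)=z}c_g$. If $z$ is not obtained from $w$ by altering only the letters $(y_k,i_k)$ to letters of $A_{r_k}$, this is an empty sum; otherwise write the altered letters as $(z_k,i_k)$ with $z_k\in A_{r_k}$, choose $\sigma\in S_n$ with $p_{\sigma(k)}=r_k$ for $k\le m$ and $p_{\sigma(m+1)},\dots,p_{\sigma(n)}$ the remaining primes, set $a_k=y_k,\ b_k=z_k$ for $k\le m$ and $a_k=b_k$ an arbitrary letter of $A_{p_{\sigma(k)}}$ for $k>m$. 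Using $g(a_1\cdots a_n)=g_{p_{\sigma(1)}}(a_1)\cdots g_{p_{\sigma(n)}}(a_n)$ and the fact that the conditions indexed by $k>m$ are automatic on $\supp(c)$, the coefficient equals $\sum_{g(a_1\cdots a_n)=b_1\cdots b_n}c_g=0$. Hence $cw=cuv=0$, so $c$ is singular.

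For the forward direction, assume $c$ is singular, fix $\sigma\in S_n$ and letters $a_i,b_i\in A_{p_{\sigma(i)}}$, and put $u=(a_1,1)(a_2,1)\cdots(a_n,1)\in X^\ast$; since $\{p_{\sigma(1)},\dots,p_{\sigma(n)}\}=\{p_1,\dots,p_n\}\supseteq\varphi(c)$, the word $u$ already contains a letter from every $A_{p_i}\times\mathbb N$, so $uv$ is $c$-full for any $v$. Pick $v$ with $cuv=0$. Applying Corollary~\ref{c:full.action} after renaming the primes $q_i:=p_{\sigma(i)}$ (legitimate because $\varphi(c)\subseteq\{q_1,\dots,q_n\}$) expands $c\,(a_1,1)\cdots(a_n,1)\,v$ as a sum over tuples $(\beta_1,\dots,\beta_n)$ with $\beta_i\in A_{p_{\sigma(i)}}$, the coefficient at $(\beta_1,1)\cdots(\beta_n,1)v$ being $\sum_{g(a_1\cdots a_n)=\beta_1\cdots\beta_n}c_g$. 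As these words are pairwise distinct, $cuv=0$ forces each such sum to vanish; taking $\beta_i=b_i$ gives the required identity. The degenerate case $n=0$ (equivalently $\varphi(c)=\emptyset$, i.e.\ $\supp(c)\subseteq\{1\}$) is immediate, since the condition then reads $c_1=0$ while $c_1\cdot1$ is singular iff $c_1=0$. I expect the only friction to be bookkeeping: matching the order of first occurrence of primes in a $c$-full word with an element of $S_n$ and correctly discarding the primes in $\{p_1,\dots,p_n\}\setminus\varphi(c)$; once that dictionary is in place, both directions follow directly from Corollary~\ref{c:full.action} and the sum-free characterization of singularity.
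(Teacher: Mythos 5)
Your proof is correct. The necessity direction is essentially the paper's own argument: take $u=(a_1,1)\cdots(a_n,1)$, find $v$ with $cuv=0$, expand via Corollary~\ref{c:full.action}, and use that the resulting words are pairwise distinct to kill each coefficient. For sufficiency, however, you take a genuinely different route. The paper proceeds by induction on $n$: it peels off one letter $(a_1,i)$ at a time, writes $c(a_1,i)=\sum_{b_1}(b_1,i)d_{b_1}$ with $d_{b_1}=\sum_{g(a_1)=b_1}c_g\,g|_{a_1}$, checks that each $d_{b_1}$ inherits the vanishing hypothesis with $\varphi(d_{b_1})\subseteq\{p_2,\ldots,p_n\}$, and invokes Proposition~\ref{zerofromG} to assemble singularity of $c(a_1,i)$ from singularity of the sections. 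You instead complete $u$ directly to a $c$-full word $w$ and compute $cw=\sum_g c_g\,g(w)$ outright, using the structural fact that for $g\in\supp(c)$ one has $g|_w=1$ and $g(w)$ alters only the first occurrence of a letter from each $A_p\times\mathbb N$ with $p\in\varphi(c)$; the coefficient of each word in $cw$ is then literally one of the displayed sums (or an empty sum), so $cw=0$. This is precisely the ``straightforward observation'' the paper alludes to in the paragraph preceding the proposition before opting for the inductive formal proof. Your version is more direct and makes the combinatorial meaning of the permutation $\sigma$ explicit, at the cost of carrying the bookkeeping of first occurrences and the padding of primes in $\{p_1,\ldots,p_n\}\setminus\varphi(c)$; the paper's induction trades that bookkeeping for the mild overhead of verifying the hypothesis passes to the sections $d_{b_1}$. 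Both are complete arguments.
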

\begin{Proof}
Necessity is a straightforward consequence of Corollary~\ref{c:full.action}.  Let $a_1,\ldots, a_n\in A$ be as above. Then there exists $v\in X^*$ with $c(a_1,1)\cdots (a_n,1)v=0$.  Corollary~\ref{c:full.action} then yields that $\sum_{g(a_1\cdots a_n)=b_1\cdots b_n}c_g=0$.

For sufficiency, we induct on $n$.  If $n=0$, then $c=c_11$ and the assumed condition says that $c_1=0$ (where $a_1\cdots a_n$ is the empty word in this case).  Assume that it is true for $n-1$ element subsets of $\mathcal P$. We need to show that given $u\in X^*$, there exists $v\in X^*$ with $cuv=0$.  Notice that if $g_p=1$, then $g\cdot (a,i) = (a,i)g$ for any $a\notin A_p$ by construction.  Thus $c(a,i) = (a,i)c$ for any $a\notin \bigcup_{j=1}^n A_{p_j}$ and, moreover, $(a,i)cz=0$ if and only if $cz=0$ for any word $z\in X^*$ by Proposition~\ref{zerofromG}.  Thus it suffices to handle $u$ of the form $(a_1,i)u'$ where $a\in A_{p_j}$ for some $j=1,\ldots, n$. Reordering the indices, we may assume that $a_1\in A_{p_1}$.  Then \[c(a_1,i) = \sum_{b_1\in A_{p_1}}(b_1,i)\sum_{g(a_1)=b_1}c_gg|_{a_1}=\sum_{b_1\in A_{p_1}}(b_1,i)d_{b_1}\] where $d_{b_1} = \sum_{g(a_1)=b_1}c_g g|_{a_1}\in KG$.  Notice that  $\varphi(d_{b_1})\subseteq \{p_2,\ldots, p_n\}$ and so we can use the inductive hypothesis to show that each $d_{b_1}$ is singular.  Note that, for $\sigma$ a permutation of $2,\ldots, n$ and $a_i,b_i\in A_{p_{\sigma(i)}}$ we have that $g(a_1\cdots a_n) = g(a_1)g|_{a_1}(a_2\cdots a_n)$ and so the condition $g(a_1\cdots a_n) = b_1\cdots b_n$ is equivalent to $g(a_1)=b_1$ and $g|_{a_1}(a_2\cdots a_n) = b_2\cdots b_n$.  Thus the sum of the coefficients of $h\in \supp(d_{b_1})$ with $h(a_2\cdots a_n)=b_2\cdots b_n$ is precisely \[\sum_{g(a_1)=b_1}\left(\sum_{g|_{a_1}(a_2\cdots a_n)=b_2\cdots b_n}c_g\right) = \sum_{g(a_1\cdots a_n) = b_1\cdots b_n}c_g=0\] and so by induction each $d_{b_1}$ is singular and hence $c(a_1,i)$ is singular.  Thus we can find $v\in X^*$ with $0=c(a_1,i)u'v=cuv$ as required.
\end{Proof}

We shall only apply the proposition in the case $\varphi(c)=\{1,\ldots, p_n\}$; the more general formulation was just needed for the induction.
Our next lemma shows that restricting the support of an element of $\K S$ to $KG$ results in a singular element.  This is one of the few places that we use that there are infinitely many copies of the alphabet $A$ in $X$.

\begin{Lem}\label{l:restrict.to.group}
If $c'=\sum_{s\in S} c_ss\in \K S$ is singular, then so too is $c=\sum_{g\in G}c_gg$.
\end{Lem}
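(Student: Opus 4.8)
The plan is to reduce everything to a statement about words and then use that $X=A\times\mathbb N$ has infinitely many copies of $A$. As observed just before the lemma, an element $d\in\K S$ is singular if and only if for every $w_0\in X^\ast$ there is $w\in w_0X^\ast$ with $d\cdot w=0$ (this is Proposition~\ref{singdef} together with the fact that the non-zero idempotents of $S$ are the $ww^\ast$ and $ww^\ast\le w_0w_0^\ast$ iff $w_0$ is a prefix of $w$). So fix $w_0$; I must produce $w\ge w_0$ with $c\cdot w=0$. I would take $w=w_0\beta$ with $\beta$ chosen so that: (i) $|w|>\max\{|v_s|:s\in\supp(c')\}$ and the first letter of $\beta$ lies in a copy of $A$ not occurring in any $v_s$, so that $v_s\le w$ forces $v_s\le w_0$ and hence $w/v_s$ has $\beta$ as a suffix; and (ii) $w$ is \emph{full}, i.e. contains a letter of $A_p\times\mathbb N$ for every prime $p$ occurring in some $g\in\supp(c)$ or some $g_s$ with $s\in\supp(c')$, so that by Proposition~\ref{p:cutsupport} every $g|_w$ and every $g_s|_{w/v_s}$ collapses to $1$. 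For such a $w$, each $s\cdot w=u_sg_sv_s^\ast w$ reduces via the rewriting rules of $S$ to a plain word, so $c'\cdot w\in KX^\ast\subseteq KM$, and similarly $c\cdot w=\sum_{g\in\supp(c)}c_g\,g(w)$. Since the singular elements form an ideal, $c'\cdot w$ is singular; but a singular element $\sum_x\lambda_x x$ of $KM$ vanishes by Proposition~\ref{zerofromG} (each scalar $\lambda_x$, viewed in $KG$, would be singular, and a non-zero scalar of $KG$ is never singular — multiplying it by an idempotent never kills it). Hence $c'\cdot w=0$.

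Now $c'\cdot w=c\cdot w+(c'-c)\cdot w$, where $(c'-c)\cdot w=\sum_{s\in\supp(c')\setminus G,\ v_s\le w}c_s\,u_sg_s(w/v_s)$. If the word set $\{g(w):g\in\supp(c)\}$ is disjoint from $\{u_sg_s(w/v_s):s\in\supp(c')\setminus G,\ v_s\le w\}$, then $c'\cdot w=0$ forces $c\cdot w=0$, and $w$ witnesses singularity of $c$ at $w_0$; so the whole problem is to arrange this disjointness. A collision $g(w)=u_sg_s(w/v_s)$ forces $|u_s|=|v_s|$, $g(v_s)=u_s$, and that $g_s$ and $g|_{v_s}$ act identically on $w/v_s=(w_0/v_s)\beta$, i.e. that the group element $k_{g,s}:=g_s^{-1}(g|_{v_s})$ fixes $(w_0/v_s)\beta$. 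Since $\beta$ is still free, I would kill these collisions by picking the remaining letters of $\beta$ from \emph{further fresh copies} of $A$: using that $G$ acts faithfully on $A$ (so any non-trivial group element moves a letter) and that reading past a letter of $A_p\times\mathbb N$ only deletes the $p$-component of a restriction, one chooses $\beta$, keeping (i) and (ii), so that $(w_0/v_s)\beta$ is moved by each of the finitely many non-trivial $k_{g,s}$. This is exactly where ``infinitely many copies'' is used.

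The point I expect to be the real obstacle is the case in which this cannot be done: when $k_{g,s}$ not merely fixes but \emph{strongly} fixes $w_0/v_s$, equivalently when the non-group generator $s$ already coincides with the group element $g$ on the cylinder $D(w_0)$ (that is, $s\,w_0w_0^\ast=g\,w_0w_0^\ast$ in $S$), so that $s\cdot w=g(w)$ for every $w\ge w_0$ regardless of $\beta$. To dispose of this I would show it cannot occur for singular $c'$: if some $s\in\supp(c')\setminus G$ agreed with some $g\in\supp(c')\cap G$ on a cylinder, then restricting $c'$ to that cylinder and multiplying by the corresponding word would, through Proposition~\ref{zerofromG} and the identification of the singular ideal in Proposition~\ref{identify:singular}, yield a non-zero singular element of $KG$ — impossible, since a non-zero element of $KG$ has image in $\tight S$ with support of non-empty interior (again using faithfulness of the $G$-action on $A$ and fresh copies to separate the finitely many relevant group elements on a single word). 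Making this exclusion precise, and packaging the fresh-copy choice of $\beta$ so that finitely many $k_{g,s}$ are simultaneously moved, is the technical heart; the remaining steps are routine bookkeeping with the normal form of $S$.
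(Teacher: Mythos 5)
Your strategy diverges from the paper's at the very first step: you try to verify condition \ref{singdef:right} for $c$ at \emph{every} cylinder $D(w_0)$, whereas the paper only verifies the combinatorial criterion of Proposition~\ref{p:singular.criterion.const}, which leaves it free to \emph{choose} the test word to begin with letters $(a_1,j)\cdots(a_n,j)$ carrying a fresh index $j$. With that choice every $s=u g_s v_s^\ast\in\supp(c')$ with $v_s\neq\varepsilon$ is annihilated by the very first letter, so the collision between the $G$-part of $\supp(c')$ and the rest never has to be confronted, and Proposition~\ref{zerofromG} then isolates the $G$-component among the surviving terms $ug$. Because you insist on an arbitrary $w_0$, you must handle the terms $u g_s v_s^\ast$ with $\varepsilon\neq v_s\leq w_0$, and that is where your argument breaks.

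The gap is in your disposal of the ``real obstacle.'' Your exclusion rests on the assertion that a non-zero element of $KG$ has non-singular image in $\tight S$. That assertion is false and in fact contradicts the point of the whole section: the first half of the proof of Theorem~\ref{thm.construction} exhibits $c_0=\sum_{g\in C_p\times C_p}g$ as a non-zero element of $KG$ that \emph{is} singular when $\ch K=p\in\mathcal P$ (its support in $\G(S)$ consists of germs $[g,w]$ with $w$ avoiding $A_p\times\mathbb N$, a set with empty interior). Moreover the configuration you want to exclude genuinely occurs for singular $c'$: with $\ch K=p$ and $q\in\mathcal P\setminus\{p\}$, take $x=(a,1)$ with $a\in A_q$ and $c'=c_0+xc_0x^\ast$. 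This is a sum of elements of the singular ideal, hence singular, yet $h(x)=x$ and $h|_x=h$ for every $h\in C_p\times C_p$, so $hxx^\ast=xhx^\ast$ and each $xhx^\ast\in\supp(c')\setminus G$ agrees with $h\in\supp(c')\cap G$ on the cylinder $D(x)$. For $w_0=x$ the two word sets you need to be disjoint coincide for \emph{every} choice of $\beta$ (indeed $c'\cdot w=2\,c\cdot w$, which in characteristic $2$ vanishes identically and tells you nothing about $c\cdot w$), so neither your disjointness argument nor your fallback is available, even though the conclusion of the lemma ($c=c_0$ singular) is true. The repair is exactly the paper's device: do not test at arbitrary $w_0$; reduce to Proposition~\ref{p:singular.criterion.const} and test only at words whose initial letters carry an index absent from all $u_s,v_s$.
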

\begin{Proof}
We verify the condition in Proposition~\ref{p:singular.criterion.const} for $\varphi(c)=\{p_1,\ldots,p_n\}$.  Let $\sigma\in S_n$ and $a_i\in A_{p_{\sigma(i)}}$.  Choose an index $j$ such that no symbol $(a,j)$ appears in any $u,v\in X^*$ with $s=ugv^\ast\in \supp(c')$, $g\in G$.    Since $c'$ is singular, there exists $v\in X^*$ with $c'(a_1,j)\cdots (a_n,j)v=0$.  Recall that $M=X^*G$.  If $s\in \supp(c')\setminus M$, then $s(a_1,j)=0$ by choice of the index $j$.
Thus $0=c'(a_1,j)\cdots (a_n,j)v = \sum_{m\in M}c_mm(a_1,j)\cdots (a_n,j)v$.  Proposition~\ref{zerofromG} (with $w$ empty) and Corollary~\ref{c:full.action} then yield \[0=\sum_{g\in G}c_gg(a_1,j)\cdots (a_n,j)v = \sum_{b_1\cdots b_n}\sum_{g(a_1\cdots a_n)=b_1\cdots b_n}c_g(b_1,j)\cdots (b_n,j)v\] with the $b_i\in A_{p_{\sigma(i)}}$.  Thus $\sum_{g(a_1\cdots a_n)=b_1\cdots b_n}c_g=0$.  This completes the proof that $c$ is singular.
\end{Proof}

\begin{Cor}
\label{c:reduce.to.group}
The algebra $\K S$ contains a non-zero singular element if and only if $KG$ contains a non-zero element singular in $\K S$.
\end{Cor}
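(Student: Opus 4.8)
The plan is to deduce Corollary~\ref{c:reduce.to.group} from Lemma~\ref{l:restrict.to.group} together with the fact that the singular elements of $\K S$ form an ideal. The forward implication is immediate: if $c\in KG$ is a non-zero element that is singular in $\K S$, then, since $KG$ is a subalgebra of $\K S$, $c$ is already a non-zero singular element of $\K S$.

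For the reverse implication, suppose $c'=\sum_{s\in\S}c_s s\in\K S$ is a non-zero singular element, with each $s\in\supp(c')$ written in its normal form $ugv^\ast$ ($u,v\in X^\ast$, $g\in G$). One cannot simply pass to the ``group part'' $\sum_{g\in G}c_g g$ of $c'$, as that may vanish. Instead I would first left- and right-multiply $c'$ to turn one of its normal-form terms into a group element: choose $s_0=u_0g_0v_0^\ast\in\supp(c')$ and set $d=u_0^\ast c' v_0\in\K S$. Since the singular elements form an ideal, $d$ is singular, and $d\neq 0$: indeed, for a normal-form term $s=ugv^\ast$ one has $u_0^\ast s v_0=(u_0^\ast u)\,g\,(v^\ast v_0)$ in $S$, and by the multiplication rules of the inverse hull $u_0^\ast u$ is $0$, a word $z$, or a word-inverse $z^\ast$ according as $u_0,u$ are prefix-incomparable, $u_0$ is a prefix of $u$, or $u$ is a proper prefix of $u_0$ (with $u_0^\ast u_0=1$), and dually for $v^\ast v_0$. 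A short case analysis then shows $u_0^\ast s v_0\in G$ exactly when $u=u_0$ and $v=v_0$, in which case it equals $g$ — here one uses that an element of the form $zg$, $gz^\ast$, or $z^\ast g$ with $z$ a non-empty word has domain or range a proper subset of $X^\ast$ and hence is never in $G$. Therefore the coefficient of $g_0$ in $d$ is precisely $c_{s_0}\neq 0$, so the group part $c:=\sum_{g\in G}d_g g$ of $d$ is a non-zero element of $KG$. Applying Lemma~\ref{l:restrict.to.group} to the singular element $d$ shows that $c$ is singular in $\K S$, and this $c$ is the element we want.

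I do not anticipate a genuine obstacle: the only point requiring care is the case analysis establishing that $d=u_0^\ast c'v_0$ has non-zero group part, and this is routine given the normal-form description of the elements of $S$; the rest is bookkeeping.
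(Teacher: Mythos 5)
Your overall strategy is the paper's: translate a support element into $G$ by multiplying on the left and right, check that the resulting element of $\K S$ has non-zero ``group part'', and then invoke Lemma~\ref{l:restrict.to.group}. However, there is a genuine gap in the key step. You claim that for $s=ugv^\ast$ one has $u_0^\ast s v_0\in G$ exactly when $u=u_0$ and $v=v_0$, and you justify this by noting that $zg$, $gz^\ast$ and $z^\ast g$ with $z$ a non-empty word are never in $G$. But $z^\ast g$ is not the end of the computation: in the case where $u$ is a \emph{proper} prefix of $u_0$, say $u_0=uz$, and $v$ is a prefix of $v_0$, say $v_0=vw$, one gets $u_0^\ast s v_0 = z^\ast g w = z^\ast g(w)\, g|_w$, which lies in $G$ (and equals $g|_w$) whenever $g(w)=z$. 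The simplest instance already occurs in the polycyclic monoid: with $u_0=v_0=x$ and $s=1=\varepsilon 1\varepsilon^\ast$ one has $x^\ast\cdot 1\cdot x=1\in G$ even though $u=v=\varepsilon\neq x$. Consequently the coefficient of $g_0$ in $d=u_0^\ast c'v_0$ is \emph{not} in general $c_{s_0}$: other terms of $\supp(c')$ can contribute and cancel it (e.g.\ $c'=xx^\ast-1$ with $s_0=xx^\ast$ gives $d=0$).

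The terms $t=ugv^\ast$ that cause trouble are precisely those with $u_0=uz$ and $v_0=vg^{-1}(z)$, and for such $t$ one checks $tv_0v_0^\ast=s_0$, i.e.\ $s_0<t$ in the natural partial order. This is exactly why the paper chooses $s_0$ to be a \emph{maximal} element of $\supp(c')$ with respect to the natural partial order: then no $t\in\supp(c')$ lies strictly above $s_0$, the extra contributions disappear, and the coefficient of $g_0$ in $d$ (equivalently, of $1$ in $v_0^\ast s_0^\ast c' v_0$) really is $c_{s_0}\neq 0$. Adding that maximality hypothesis, and replacing your case analysis by the observation that $v_0^\ast s_0^\ast t v_0=1$ forces $s_0\leq t$, repairs the argument and reproduces the paper's proof.
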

\begin{proof}
For the non-trivial direction, suppose that $a\in K_0S$ is a non-zero singular element.  Choose $s\in \supp(a)$ which is maximal with respect to the natural partial order.  Note that if $s=ugv^\ast$, then $s^\ast s = vv^\ast$ and so $v^\ast s^\ast sv=1$.  We claim that the coefficient of $1$ in $b=v^\ast s^\ast av$ is $a_s\neq 0$.  Since $b$ is singular and contains an element of $G$ in its support, Lemma~\ref{l:restrict.to.group} will then provide us the desired non-zero singular element in $KG$.

The coefficient of $1$ in $b$ is $\sum_{v^\ast s^\ast tv=1} a_t$.  In particular, $a_s$ appears in this sum.  If $t\in \supp(a)$ and $v^\ast s^\ast tv=1$, then $ss^\ast ts^\ast s = ss^\ast ss^\ast ts^\ast s=sv(v^\ast s^\ast tv)v^\ast = svv^\ast = ss^\ast s=s$ and so $s\leq t$.  Thus, by maximality of $s$, we obtain that $s=t$.  Hence the coefficient of $1$ in $b$ is indeed $a_s$.  This completes the proof.
\end{proof}

The following theorem proves the second item of Theorem~C.

\begin{Thm}
\label{thm.construction}
Let $K$ be a field and $S$ as constructed above.  Then $\K S$ is simple if and only if the characteristic of $K$ does not belong to $\mathcal P$.  In particular, if $\mathcal P$ consists of all primes, then $\K S$ is only simple in characteristic zero.
\end{Thm}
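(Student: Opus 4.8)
The plan is to reduce simplicity of $\K S$ to the vanishing of the singular ideal of $\K S$, and then to a purely combinatorial statement about the group algebra $KG$. Since $X=A\times\mathbb N$ is infinite and the self-similar action of $G$ is faithful, the inverse semigroup $S$ is congruence-free, so by Theorem~\ref{simple} the singular ideal $I\lhd\K S$ is the unique maximal ideal of $\K S$; hence $\K S$ is simple precisely when $I=0$. By Corollary~\ref{c:reduce.to.group}, $I\neq 0$ if and only if $KG$ contains a nonzero element which is singular in $\K S$, so it suffices to prove that such an element exists if and only if $\ch K\in\mathcal P$.

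For the implication $\ch K\in\mathcal P\Rightarrow$ non-simple, suppose $\ch K=p\in\mathcal P$ and set $c=\sum_{h\in C_p\times C_p}h\in KG$, where $C_p\times C_p$ is identified with the $p$-component of $G$. Then $c\neq 0$, and $\varphi(c)=\{p\}$ because every nonidentity element of $C_p\times C_p$ has nontrivial $p$-component and all other components vanish. By Proposition~\ref{p:singular.criterion.const} with $n=1$, $c$ is singular if and only if $\#\{h\in C_p\times C_p: h(a)=b\}=0$ in $K$ for all $a,b\in A_p$. Now $A_p$ decomposes as a disjoint union of $(C_p\times C_p)$-orbits, each of the form $(C_p\times C_p)/H$ with $|H|=p$; the action on each orbit is transitive with point stabilizers of order $p$, so $\#\{h: h(a)=b\}$ is $0$ when $a,b$ lie in different orbits and is $p$ otherwise. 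Since $p\equiv 0$ in $K$, the element $c$ is singular, and hence $\K S$ is not simple.

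For the implication $\ch K\notin\mathcal P\Rightarrow$ simple --- which I expect to carry the main weight --- let $c=\sum_{g\in G}c_gg\in KG$ be singular in $\K S$; I will show $c=0$. Let $\varphi(c)=\{p_1,\dots,p_n\}$, put $\bar G=\prod_{i=1}^n(C_{p_i}\times C_{p_i})$, and write $\bar g=(g_{p_1},\dots,g_{p_n})\in\bar G$ for $g\in G$. Using the description of the self-similar action (cf.\ Proposition~\ref{p:cutsupport} and Corollary~\ref{c:full.action}), for $a_i,b_i\in A_{p_i}$ one has $g(a_1\cdots a_n)=b_1\cdots b_n$ exactly when $g_{p_i}(a_i)=b_i$ for every $i$; hence Proposition~\ref{p:singular.criterion.const} asserts precisely that the element $\tilde d=\sum_{\bar h\in\bar G}d_{\bar h}\,\bar h$ of $K\bar G$, where $d_{\bar h}=\sum_{g\in\supp(c),\ \bar g=\bar h}c_g$, acts as $0$ on the permutation module $K[\prod_{i=1}^n A_{p_i}]$. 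The crucial point is that this module is \emph{faithful} over $K\bar G$: because $\ch K\neq p_i$, every linear character of $C_{p_i}\times C_{p_i}$ occurs in the permutation module $K[A_{p_i}]$ --- the trivial character occurs in every transitive summand, and a nontrivial character $\chi$ occurs in $K[(C_{p_i}\times C_{p_i})/\ker\chi]$ since $\ker\chi$ has index $p_i$ --- so (extending scalars to $\overline K$ if need be) $K[C_{p_i}\times C_{p_i}]$ embeds into $\operatorname{End}_K(K[A_{p_i}])$, and tensoring these embeddings over $K$ gives an embedding $K\bar G\hookrightarrow\operatorname{End}_K\!\big(\bigotimes_i K[A_{p_i}]\big)=\operatorname{End}_K(K[\prod_i A_{p_i}])$. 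Consequently $\tilde d=0$, i.e.\ $d_{\bar h}=0$ for all $\bar h$. Finally, by the very definition of $\varphi(c)$, every element of $\supp(c)$ has trivial components outside $\{p_1,\dots,p_n\}$, so $g\mapsto\bar g$ is injective on $\supp(c)$; thus each $d_{\bar h}$ equals a single coefficient $c_g$ or is $0$, and $d_{\bar h}=0$ for all $\bar h$ forces $c=0$.

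Combining the two implications with the reductions of the first paragraph proves the theorem, and the ``in particular'' clause is the special case where $\mathcal P$ is the set of all primes. The main obstacle will be the faithfulness of the permutation module in positive characteristic: one genuinely needs the full alphabet $A_{p_i}$, comprising all $p_i+1$ orbits, since $K[(C_{p_i}\times C_{p_i})/H]$ is never a faithful module on its own; the character computation above, valid precisely because $\ch K\neq p_i$, together with the exactness of $\otimes_K$ used to pass to the product $\bar G$, is what makes the argument go through.
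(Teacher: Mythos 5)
Your proposal is correct and follows the paper's overall strategy: reduce to vanishing of the singular ideal via congruence-freeness, reduce to elements of $KG$ via Corollary~\ref{c:reduce.to.group}, use the element $\sum_{h\in C_p\times C_p}h$ for the positive-characteristic direction, and invoke Proposition~\ref{p:singular.criterion.const} throughout. The only place you genuinely diverge is the converse direction. The paper applies Maschke's theorem to $KH$ with $H=\bigoplus_{p\in\varphi(c)}C_p\times C_p$, notes that each homomorphism $\chi\colon KH\to E$ has cyclic image on $C_p\times C_p$ so that $\ker\chi|_{C_p\times C_p}$ contains an index-$p$ subgroup $H_p$, picks the letter $a_p\in A_p$ with stabilizer built from $H_p$, and checks $\chi(c)=0$ by summing over cosets of $B=\bigoplus_p H_p$, which is the stabilizer of the word $a_1\cdots a_n$. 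You instead recast Proposition~\ref{p:singular.criterion.const} as the statement that $c$ annihilates the permutation module $K[\prod_i A_{p_i}]$ and prove that module is faithful over $K\bar G$ because every linear character of $C_{p_i}\times C_{p_i}$ occurs in $K[A_{p_i}]$ (all $p_i+1$ index-$p_i$ subgroups appear as point stabilizers), then tensor the resulting embeddings. The two arguments encode the same underlying fact --- for every character there is an orbit whose point stabilizer lies in its kernel --- but your module-theoretic packaging is arguably cleaner and makes transparent exactly why all $p+1$ orbits in $A_p$ are needed. Two small points are worth spelling out if you write this up: faithfulness of $K[A_{p_i}]$ should be checked after extending scalars to a splitting field (harmless, since an element acting as zero over $K$ still acts as zero over $\overline{K}$), and the identification of the singularity criterion with annihilation of the permutation module uses that the action of $g$ on a word $a_1\cdots a_n$ with the $a_i$ in distinct $A_{p_i}$ is coordinatewise, which is exactly Corollary~\ref{c:full.action}.
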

\begin{Proof}
Since $S$ is congruence-free, simplicity boils down to the singular ideal vanishing by Corollary~\ref{simplicity}.
Suppose first that the characteristic $p$ of $K$ belongs to $\mathcal P$.  Put $G_p=C_p\times C_p\leq G$.  Let $c=\sum_{g\in G_p}g\in KG$.  So $c_g=1$ if $g\in G_p$ and is $0$, otherwise.  Note that $\varphi(c)=\{p\}$.  We show that $c$ satisfies the criterion in Proposition~\ref{p:singular.criterion.const}.  Let $a\in A_p$.  Then the stabilizer $H$ of $a$ in $G_p$ is an index $p$ subgroup by construction, and hence $|H|=p$.  Thus if $b\in Ga=G_pa$, then there are exactly $p$ elements $g$ of $G_p$ with $g(a)=b$ and so $\sum_{g(a)=b} c_g=p=0$, as the characteristic of $K$ is $p$. If $b\notin Ga$, the sum is over the empty set and hence $0$.  Thus $c$ is singular by Proposition~\ref{p:singular.criterion.const}.

Now we assume that the characteristic of $K$ does not belong to $\mathcal P$ (it could be $0$).  If $\K S$ contains a non-zero singular element, then there is one belonging to $KG$ by Corollary~\ref{c:reduce.to.group}.  We show that if $c=\sum_{g\in G}c_gg$ is singular, then $c=0$ using Proposition~\ref{p:singular.criterion.const}.  Again, putting $G_p=C_p\times C_p$ for $p\in \mathcal P$, let $H = \bigoplus_{p\in \varphi(c)}G_p$ and note that $c\in KH$.  Moreover, $H$ is a finite abelian group whose order is not divisible by the characteristic of $K$ and hence $KH\cong K_1\times\cdots \times K_r$ where the $K_i$ are finite extensions of $K$ by Maschke's theorem.  Thus to show that $c=0$, it suffices to show that if $\chi\colon KH\to E$ is a homomorphism, with $E$ a finite extension of $K$, then $\chi(c)=0$.

First note that since finite subgroups of $E^\times$ are cyclic, we must have $\chi(G_p)$ is cyclic for each $p\in \varphi(c)$.  Hence, $\ker \chi|_{G_p}$ contains a subgroup $H_p\leq G_p$ of index $p$ for each $p\in \varphi(c)$.  There is an element $a_p\in A_p$ with stabilizer $H_p\oplus \bigoplus_{q\neq p} G_q$ in $G$ by construction of $A_p$.  Let $B=\bigoplus_{p\in \varphi(c)}H_p$.  Then $B\leq \ker\chi|_H$.  Moreover, if $\varphi(c) = \{p_1,\ldots, p_n\}$ and $a_i = a_{p_i}$, then $B$ is the stabilizer of the word $a_1\cdots a_n$ in $H$.  Indeed, $h(a_1\cdots a_n) = h_{p_1}(a_1)h_{p_2}(a_2)\cdots h_{p_n}(a_n)$ by definition of the action, since the $p_i$ are distinct.  Hence, if $g,h\in H$, then  $g(a_1\cdots a_n) = h(a_1\cdots a_n)$ if and only if $gB=hB$.  Let $T\subseteq H$ be a complete set of coset representatives for $H/B$.  Then, for each $t\in T$, we have \[\sum_{gB=tB}c_g=\sum_{g(a_1\cdots a_n) = t(a_1\cdots a_n)}c_g=0\] by Proposition~\ref{p:singular.criterion.const}.    Since $B\leq \ker \chi|_H$, whence $\chi$ is constant on cosets of $B$, we obtain
\[\chi(c) = \sum_{g\in H}c_g\chi(g) = \sum_{t\in T}\sum_{gB=tB}c_g\chi(g) = \sum_{t\in T}\chi(t)\sum_{gB=tB}c_g=0\] as was required.  We conclude that $c=0$ and so the singular ideal of $\K S$ is zero.  Since $S$ is congruence-free, the result follows from Corollary~\ref{simplicity}.
\end{Proof}

\begin{Rem}
 Note that $\K S=K\G_T(S)=K\G(S)$ and hence this result shows that the algebras of second countable minimal and effective groupoids can be non-simple over fields of any prescribed set of prime characteristics.  In light of Corollary~\ref{c:any.field.then.c}, the only thing that remains is to provide a second countable minimal effective groupoid whose algebra is not simple over any field, which we proceed to do shortly.
 \end{Rem}

Second countable effective ample groupoids are topologically principal and this played a critical role in proving sufficiency of the vanishing of the ideal of singular functions for simplicity in~\cite{nonhausdorffsimple}.  In~\cite[Example~6.4]{operatorsimple1} an example of an effective groupoid in which all isotropy groups are non-trivial was given, but it is not minimal and they asked whether minimal effective ample groupoids that are not topologically principal exist.  We provide here minimal and effective tight groupoids of inverse semigroups associated to self-similar group actions by uncountable groups in which each isotropy group is uncountable.  A Hausdorff example is given, as well as non-Hausdorff examples, the latter having simple algebras outside of a prescribed infinite set of prime characteristics.

As a warmup, we provide a Hausdorff example of a minimal and effective groupoid in which each isotropy group is uncountable. This groupoid will have a simple algebra over every field by the results of~\cite{operatorsimple1,operatorguys2,groupoidprimitive}.  It seems worth including such an example to show that the phenomenon of minimal and effective but not topologically principal ample groupoids can occur in the Hausdorff world.  Our example will be the tight groupoid of the inverse semigroup associated to an uncountable self-similar group action over a finite alphabet.

  Let $X$ be a finite set with at least $3$ elements and let $G=S_X^{\mathbb N}$, where $S_X$ is the symmetric group on $X$.  By the support of an element $\sigma\in G$, we mean the set of coordinates $i$ with $\sigma_i\neq 1$. The elements of finite support form a countable subgroup.  The action of $G$ on $X^\ast$ is given by $\sigma(x_1\cdots x_k) = \sigma_1(x_1)\sigma_2(x_2)\cdots \sigma_k(x_k)$ for $\sigma\in G$.  The action is self-similar with $\sigma|_x = T(\sigma)$ where $T$ is the shift map $T(\sigma)_i = \sigma_{i+1}$. Let $S$ be the corresponding inverse semigroup. Notice that if $\sigma\in G$ has infinite support, then every section of $\sigma$ is non-trivial and hence it strongly fixes no element.

\begin{Prop}
\label{nontopprinc}
The tight groupoid $\G_T(S)$ of  the inverse semigroup $S$ associated to $S_X^{\mathbb N}$ acting self-similarly on $X^\ast$ is Hausdorff, minimal and effective with every isotropy group uncountable for any finite set $X$ of cardinality at least $3$.
\end{Prop}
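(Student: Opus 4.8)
The plan is to establish the four assertions one at a time, the first three by short arguments and the last by a direct computation of isotropy. For Hausdorffness I would apply Proposition~\ref{p:describe.hausdorff}: given $\sigma\in G=S_X^{\mathbb N}$, if $\sigma$ has infinite support then $\sigma|_w=T^{|w|}(\sigma)\neq 1$ for every $w\in X^\ast$, so no word is strongly fixed and we may take $F_\sigma=\emptyset$; if the support of $\sigma$ lies in $\{1,\dots,n\}$ then for $|w|\geq n$ one automatically has $\sigma|_w=1$ and $\sigma(w)=w$ reduces to $\sigma_i(x_i)=x_i$ for $i\leq n$, so the strongly fixed words are exactly $F_\sigma X^\ast$ where $F_\sigma=\{x_1\cdots x_n:\sigma_i(x_i)=x_i,\ 1\leq i\leq n\}$ is finite. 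Hence $S$ is Hausdorff, so $\G(S)$ is Hausdorff, and since $X$ is finite $\G_T(S)=\G(S)|_{X^\omega}$ is a subspace of a Hausdorff space and is therefore Hausdorff.

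For minimality I would note that the self-similar action of $G$ on $X^\ast$ is faithful — if $\sigma_i\neq 1$, choose $x$ with $\sigma_i(x)\neq x$ and observe that $\sigma$ changes the $i$-th letter of $x^i$ — so $S$ is congruence-free as recalled earlier, and then $\G_T(S)$ is minimal by Remark~\ref{r:cong.free}. For effectiveness I would simply invoke \cite[Section~17]{ExPadKatsura}, which gives that the tight groupoid of any self-similar group action over a finite alphabet is effective.

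The substantive point is the uncountability of the isotropy. Fix $\xi=\xi_1\xi_2\cdots\in X^\omega=\G_T(S)^0$. Since $|X|\geq 3$, each point stabilizer $\Stab_{S_X}(\xi_n)$ is nontrivial, so $N=\prod_{n\geq 1}\Stab_{S_X}(\xi_n)$ is a subgroup of $G$ of cardinality $2^{\aleph_0}$. Every $g\in N$ fixes each finite prefix of $\xi$ and hence $g(\xi)=\xi$, so $[g,\xi]$ is an arrow of $\G_T(S)$ with $d([g,\xi])=\xi=r([g,\xi])$, and $g\mapsto[g,\xi]$ is a group homomorphism from $N$ into the isotropy group at $\xi$. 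The crux is to compute its kernel: $[g,\xi]=[1,\xi]$ means some $u\leq g$ and $u\leq 1$ in $S$ is defined at $\xi$; such a $u$ is of the form $g\,ww^\ast=g(w)\,g|_w\,w^\ast$ with $w$ a prefix of $\xi$, and $g\,ww^\ast\leq 1$ forces $g(w)=w$ (automatic, since $g$ fixes the prefixes of $\xi$) and $g|_w=T^{|w|}(g)=1$, i.e. $g$ has finite support. So the kernel is the countable subgroup of finitely supported elements of $N$, whence the image of $N$ in the isotropy group has cardinality $2^{\aleph_0}$. As $\xi$ was arbitrary, every isotropy group of $\G_T(S)$ is uncountable.

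The main obstacle will be this last computation, specifically pinning down the criterion for when two germs $[g,\xi]$, $[h,\xi]$ with $g,h$ fixing $\xi$ coincide — namely that $g$ and $h$ agree outside a finite set of coordinates — which rests on the identity $g|_w=T^{|w|}(g)$ for prefixes $w$ of $\xi$, followed by the routine cardinality count. The Hausdorffness verification and the appeals to Remark~\ref{r:cong.free} and \cite{ExPadKatsura} are comparatively mechanical.
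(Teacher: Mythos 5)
Your proposal is correct and follows essentially the same route as the paper's proof: Hausdorffness via Proposition~\ref{p:describe.hausdorff} by splitting into the infinite- and finite-support cases, minimality from congruence-freeness, effectiveness by citing \cite[Section~17]{ExPadKatsura}, and uncountable isotropy by mapping the (uncountable, coordinate-wise) stabilizer of $\xi$ into the isotropy group and observing that the kernel consists of finitely supported elements, hence is countable. The only cosmetic differences are that you invoke Remark~\ref{r:cong.free} where the paper cites \cite[Proposition~5.12]{groupoidprimitive} for minimality, and you identify the kernel exactly where the paper only needs the containment.
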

\begin{proof}
Note that since $X$ is finite, the tight groupoid $\G_T(S)$ of $S$ is the closed subgroupoid of $\G(S)$ with object set $X^\omega$. Since the inverse semigroup $S$ is congruence-free, the groupoid $\G_T(S)$ will be minimal by~\cite[Proposition~5.12]{groupoidprimitive}.  It is effective by~\cite[Section~17]{ExPadKatsura}.  We verify it is Hausdorff using Proposition~\ref{p:describe.hausdorff}. If $\sigma\in S_X^{\mathbb N}$ has infinite support, then it strongly fixes no element, whereas if $\sigma$ has finite support and $n$ is greatest with $\sigma_n\neq 1$, then the set of elements strongly fixed by $\sigma$ is $FX^\ast$ where $F$ is the finite set of words of length $n$ fixed by $\sigma$, as every section of $\sigma$ at a word of length $n$ is trivial and no section at a smaller length word is trivial.

Finally, we verify that each isotropy group is uncountable.
First note that $gww^\ast = g(w)g|_ww^\ast$ is an idempotent if and only if $g(w)=w$ and $g|_w=1$, that is, $g$ strongly fixes $w$.  In particular, elements of infinite support have no idempotents below them and hence have non-trivial germs at every infinite word. Let $n=|X|\geq 3$.  The stabilizer $H$ in $G$ of $w\in X^{\omega}$ is isomorphic to $S_{n-1}^{\mathbb N}$ (since the action of $G$ on $X^\omega$ is coordinate-wise) and hence is uncountable as $n\geq 3$. There is a homomorphism from $H$ to the isotropy group of $\G$ at $w$ given by $h\mapsto [h,w]$ and the kernel, by the above discussion, is contained in the set of elements of $H$ of finite support, which is countable (actually, the kernel is exactly the set of elements of $H$ of finite support).  Thus the image of $H$ in the isotropy group of $w$ is uncountable.   This completes the proof.
\end{proof}

For the non-Hausdorff examples, we modify our previous construction coming from a set of primes by replacing the direct sum by a direct product.
So let $\mathcal P$ be an infinite set of primes and let $\overline G= \prod_{p\in\mathcal P} C_p\times C_p$.  Notice that $G=\bigoplus_{p\in \mathcal P}C_p\times C_p$ is a subgroup of $\overline G$ and that $\overline G$ is uncountable because $\mathcal P$ is infinite.  Let $\pi_p\colon \overline G\to \overline G$ be the composition \[\overline G\twoheadrightarrow \prod_{q\in \mathcal P\setminus \{p\}} C_q\times C_q\hookrightarrow \overline G.\]  Let $A_p=\coprod_{[C_p\times C_p:H]=p}\overline G/(H\times \prod_{q\in \mathcal P\setminus \{p\}} C_q\times C_q)$ and let $A=\coprod_{p\in \mathcal P} A_p$.  This is essentially the same alphabet as before and $\overline G$ acts faithfully on $A$, extending the action of $G$ from before.  We again define a faithful self-similar action of $\overline G$ on $A$ by defining $g|_{a} = \pi_p(g)$ if $a\in A_p$.  The action of an element $g\in \overline G$ on a finite word $w\in A^*$ is again given by having $g_p$ act on the first occurrence of a letter from $A_p$ and leaving all other letters alone.

As before, we put $X=A\times \mathbb N$ and  consider the countable inflation of the self-similar group $\overline G$ acting on $X^\ast$.  The action of $G$ on $X^\ast$ is the same as before. Let $S$ be the inverse semigroup associated to this self-similar group.  Note that since $X$ is infinite, $\G(S)=\G_T(S)$.  We shall prove that $\K S$ is simple if and only if the characteristic of $K$ does not belong to $\mathcal P$.  Note that $\G(S)$ is minimal by~\cite[Proposition~5.2]{groupoidprimitive} and effective by our observation after Proposition~\ref{p:describe.hausdorff} that groupoids associated to countable inflations are always effective.

\begin{Prop}\label{p:isotropy.non-trivial}
Every isotropy group of $\G(S)$ is uncountable.
\end{Prop}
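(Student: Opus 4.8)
The plan is to exhibit, for each $\varphi\in\widehat{E(S)}=X^\ast\cup X^\omega$, a group homomorphism from the stabilizer $\mathrm{Stab}_{\overline G}(\varphi)=\{g\in\overline G:g(\varphi)=\varphi\}$ into the isotropy group of $\G(S)$ at $\varphi$ whose source is uncountable and whose kernel is countable; the isotropy group will then be uncountable. The homomorphism is $\rho\colon g\mapsto[g,\varphi]$: this germ lies in the isotropy group because $d([g,\varphi])=\varphi=g(\varphi)=r([g,\varphi])$, and it is a homomorphism since $g(\varphi)=\varphi$ makes the products composable and $[g,\varphi][h,\varphi]=[g,h(\varphi)][h,\varphi]=[gh,\varphi]$.

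The first step is to identify $\ker\rho$. Since $\varphi$ lies in the domain of an idempotent $tt^\ast$ precisely when the word $t$ is a prefix of $\varphi$, and since elements of $S$ have a unique normal form $ugv^\ast$, one checks that $[g,\varphi]=[1,\varphi]$ if and only if $g$ strongly fixes some finite prefix $u$ of $\varphi$, i.e.\ $g(u)=u$ and $g|_u=1$. The key point, proved exactly as Proposition~\ref{p:cutsupport} (the countable inflation changes nothing here), is that for $u\in X^\ast$ the section $g|_u$ equals $g$ with its $p$-component deleted for every prime $p$ such that $u$ contains a letter of $A_p\times\mathbb N$; hence $g|_u=1$ forces $g$ to be supported on that finite set of primes, so $g$ lies in a finite subgroup $\bigoplus_p(C_p\times C_p)$. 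Thus the set of elements of $\overline G$ strongly fixing a fixed finite word is finite, and as $\varphi$ has only countably many finite prefixes, $\ker\rho$ is a countable union of finite sets, hence countable.

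The second step is to bound $\mathrm{Stab}_{\overline G}(\varphi)$ from below. For each prime $p$ let $K_p\le C_p\times C_p$ be all of $C_p\times C_p$ if $\varphi$ contains no letter of $A_p\times\mathbb N$, and otherwise the stabilizer in $C_p\times C_p$ of the first letter of $\varphi$ lying in $A_p\times\mathbb N$; by construction of $A_p$ this stabilizer is one of the index-$p$ subgroups of $C_p\times C_p$, so $|K_p|\ge p\ge2$ in all cases. Writing $\varphi=y_1y_2\cdots$ and tracing the boundary formula $g(\varphi)=g(y_1)\,g|_{y_1}(y_2)\,g|_{y_1y_2}(y_3)\cdots$, the group element acting on $y_j$ is $g$ with the $p$-components deleted for all primes already occurring among $y_1,\dots,y_{j-1}$; if $y_j$'s prime occurred earlier that component is gone so $y_j$ is fixed, and otherwise $y_j$ is the first letter of its prime $q$, so the acting component is $g_q\in K_q$, which stabilizes $y_j$ by definition of $K_q$. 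Hence $\prod_{p\in\mathcal P}K_p\subseteq\mathrm{Stab}_{\overline G}(\varphi)$, and since $\mathcal P$ is infinite and every $K_p$ is nontrivial this product, hence $\mathrm{Stab}_{\overline G}(\varphi)$, is uncountable.

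Combining the two steps, $\mathrm{Stab}_{\overline G}(\varphi)/\ker\rho$ is an uncountable group modulo a countable subgroup, hence uncountable, and it embeds into the isotropy group of $\G(S)$ at $\varphi$; therefore every isotropy group of $\G(S)$ is uncountable. The only real work is the two ``section'' bookkeeping computations — pinning down which coordinates of $g$ survive in $g|_u$ and which group element acts at each position of $\varphi$ — but both are immediate from the definition of the self-similar action of $\overline G$ on $X^\ast$ together with Proposition~\ref{p:cutsupport}; the conceptual content is simply that, unlike $G=\bigoplus_p(C_p\times C_p)$, the group $\overline G=\prod_p(C_p\times C_p)$ remains ``spread out'' over infinitely many primes even after one first letter of each $A_p$ is consumed.
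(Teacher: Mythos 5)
Your proof is correct and follows essentially the same route as the paper's: map the stabilizer of a point into its isotropy group via $g\mapsto[g,\varphi]$, show the stabilizer contains an infinite product of non-trivial groups (hence is uncountable), and show the elements with trivial germ form a countable set (the paper notes they lie in the countable group $G$; your refinement that they form a countable union of finite sets is the same fact made slightly more explicit). The only cosmetic difference is that the paper phrases the conclusion in terms of uncountably many elements of $\overline G\setminus G$ in the stabilizer rather than a quotient by the kernel, but the argument is identical in substance.
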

\begin{proof}
 Elements of $\overline G\setminus G$ do not strongly fix any element because $g|_w\neq 1$ for all $w\in X^*$ by construction of the action.  Thus no germ $[g,w]$ with $g\in \overline G\setminus G$ is trivial by the argument we saw in the proof of Proposition~\ref{nontopprinc}.  Next observe that the stabilizer in $\overline G$ of a finite word $w$ is of the form $H=H_{p_1}\times \cdots \times H_{p_k}\times \prod_{q\in \mathcal P\setminus \{p_1,\ldots, p_n\}}C_q\times C_q$ where $H_{p_i}$ has index $p_i$ in $C_{p_i}\times C_{p_i}$, and hence contains uncountably many elements of $\overline G\setminus G$.  Thus the corresponding isotropy group is uncountable by the same argument as in the proof of Proposition~\ref{nontopprinc} (the elements with trivial germs form a subgroup of the countable group $G$, but the stabilizer is uncountable).  Similarly, if $w$ is an infinite word, then there is a subset $\mathcal P'$ of $\mathcal P$ (perhaps infinite) such that the stabilizer of $w$ in $\overline G$ is of the form $\prod_{p\in \mathcal P'}H_p\times \prod_{q\in \mathcal P\setminus \mathcal P'}C_q\times C_q$ where $H_p\leq C_p\times C_p$ has index $p$.  Thus the stabilizer of $w$ is again uncountable and hence contains uncountably many elements of $\overline G\setminus G$, but the subgroup of elements stabilizing $w$ with trivial germs is countable, being contained in $G$.  Thus the isotropy group of $w$ in $\G(S)$ is uncountable.  This completes the proof.
\end{proof}

To prove our simplicity result, observe that Proposition~\ref{zerofromG}, of course, still applies, and Proposition~\ref{p:singular.criterion.const} still describes the elements of $KG$ which are singular in $\K S$ since the set of idempotents and the action of $G$ have not changed.  We have to make a minor modification of the proof of Lemma~\ref{l:restrict.to.group} in order to make it apply in our current setting.

\begin{Lem}\label{l:restrict.to.group.2}
If $c'=\sum_{s\in S} c_ss\in \K S$ is singular, then so too is $c=\sum_{g\in G}c_gg$.
\end{Lem}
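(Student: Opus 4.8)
\begin{Proof}
The plan is to follow the proof of Lemma~\ref{l:restrict.to.group} almost verbatim, and to locate the single point where replacing $G$ by $\overline G$ forces a change. As there, I will verify the criterion of Proposition~\ref{p:singular.criterion.const} for $c=\sum_{g\in G}c_gg$. Since $c$ is a finite linear combination of elements of $G=\bigoplus_{p\in\mathcal P}C_p\times C_p$, each of finite support, the set $\varphi(c)=\{p_1,\dots,p_n\}$ is finite and $\supp(c)$ lies in the finite subgroup $H=\prod_{i=1}^{n}C_{p_i}\times C_{p_i}$ of $G$. Fix $\sigma\in S_n$ and $a_i\in A_{p_{\sigma(i)}}$, and choose $j\in\mathbb N$ so that no symbol $(a,j)$ occurs in any word $u$ or $v$ with $ugv^\ast\in\supp(c')$, which is possible because $\supp(c')$ is finite. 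Singularity of $c'$ gives a word $v$ with $c'(a_1,j)\cdots(a_n,j)v=0$. Exactly as in Lemma~\ref{l:restrict.to.group}, the elements $ugv^\ast$ of $\supp(c')$ with $v\neq\varepsilon$ kill $(a_1,j)$ on the right, so only the part of $c'$ lying in $X^\ast\overline G$ survives; writing that part as $\sum_{u\in X^\ast}u\,a_u$ with $a_u\in K\overline G$, Proposition~\ref{zerofromG} --- which holds verbatim with $\overline G$ in place of $G$ --- reduces the equation to $a_u(a_1,j)\cdots(a_n,j)v=0$ for every $u$, and at $u=\varepsilon$ this reads $a_\varepsilon(a_1,j)\cdots(a_n,j)v=0$, where $a_\varepsilon=\sum_{g\in\overline G}c_gg$ is a finite sum and $c$ is its restriction to $G$.

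Here the argument departs from the finitely generated case, since $a_\varepsilon$ lies in $K\overline G$ rather than in $KG$. To descend to $c$, I would carry out the same letter-by-letter computation used for Corollary~\ref{c:full.action} --- which applies because the letters $(a_i,j)$ all lie over $A_{p_1},\dots,A_{p_n}$ --- to get
\[
g(a_1,j)\cdots(a_n,j)v=(b^g_1,j)\cdots(b^g_n,j)\,g'\,v ,
\]
where $b^g_i=g_{p_{\sigma(i)}}(a_i)$ depends only on the components of $g$ at $p_1,\dots,p_n$, and $g'$ is obtained from $g$ by setting those $n$ components to $1$. Now group the vanishing sum according to the tuple $(b^g_1,\dots,b^g_n)$. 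Distinct tuples produce elements whose word parts begin with distinct length-$n$ prefixes, so the corresponding blocks have disjoint supports and vanish separately, giving $\sum_{g\,:\,b^g=\vec b}c_g\,g'v=0$ for every $\vec b$. For $g\in G$ one has $\varphi(g)\subseteq\{p_1,\dots,p_n\}$, hence $g'=1$, so this block reads $\big(\sum_{g\in G,\;b^g=\vec b}c_g\big)\,v+\sum_{g\in\overline G\setminus G,\;b^g=\vec b}c_g\,g'v=0$.

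The crux, and the one step I expect to be a genuine obstacle, is to show that the second sum contributes nothing to the coefficient of the basis element $v$, so that reading off that coefficient yields $\sum_{g\in G,\;b^g=\vec b}c_g=0$. This is precisely where the standing hypotheses are used: $\overline G=\prod_{p\in\mathcal P}C_p\times C_p$ is a full direct product rather than a direct sum, and $X=A\times\mathbb N$ is infinite. Every $g\in\overline G\setminus G$ has infinite support, so $g'$ --- obtained by deleting only $n$ components --- still has infinite support; hence $g'$ strongly fixes no finite word, and in particular $g'v\neq v$ in $S$. Thus $v$ lies outside the support of $\sum_{g\in\overline G\setminus G}c_g\,g'v$, and the displayed identity follows. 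Finally, for $g\in G\subseteq H$ the condition $b^g=\vec b$ is precisely $g(a_1\cdots a_n)=b_1\cdots b_n$, so this verifies the criterion of Proposition~\ref{p:singular.criterion.const} for all $\sigma$, $a_i$ and $b_i$, and $c$ is singular. The remaining bookkeeping --- the explicit letter-by-letter computation and the injectivity of left multiplication by the word $(b_1,j)\cdots(b_n,j)$, which follows from $\big((b_1,j)\cdots(b_n,j)\big)^\ast(b_1,j)\cdots(b_n,j)=1$ --- is routine.
\end{Proof}
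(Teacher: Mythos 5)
Your proof is correct and follows essentially the same route as the paper's: choose a fresh index $j$, use singularity of $c'$ against the word $(a_1,j)\cdots(a_n,j)v$, discard the summands outside $X^\ast\overline G$, and exploit the fact that every $g\in\overline G\setminus G$ has $g|_w\neq 1$ for all $w\in X^\ast$ so that its contributions cannot collide with the word-only terms coming from $G$. The paper states this as a direct non-cancellation between the $G$-summand and the $(\overline G\setminus G)$-summand rather than grouping by the prefix $\vec b$ and reading off the coefficient of $v$, but the substance is identical.
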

\begin{Proof}
We verify the condition in Proposition~\ref{p:singular.criterion.const} for $\varphi(c)=\{p_1,\ldots,p_n\}$.  Let $\sigma\in S_n$ and $a_i\in A_{p_{\sigma(i)}}$.  Choose an index $j$ such that no symbol $(a,j)$ appears in any $u,v\in X^*$ with $s=ugv^\ast\in \supp(c')$, $g\in \overline G$.

Since $c'$ is singular, there exists $v\in X^*$ with $c'(a_1,j)\cdots (a_n,j)v=0$.  Recall that $M=X^*\overline G$.  If $s\in \supp(c')\setminus M$, then $s(a_1,j)=0$ by choice of the index $j$.
Thus $0=c'(a_1,j)\cdots (a_n,j)v = \sum_{m\in M}c_mm(a_1,j)\cdots (a_n,j)v$.  Proposition~\ref{zerofromG} (with $w$ empty) and Corollary~\ref{c:full.action} then yield
\begin{align*}
0&= \sum_{g\in \overline G}c_gg(a_1,j)\cdots (a_n,j)v\\ &=\sum_{g\in G}c_gg(a_1,j)\cdots (a_n,j)v+\sum_{g\in \overline G\setminus G}c_gg(a_1,j)\cdots (a_n,j)v \\ &= \sum_{b_1\cdots b_n}\sum_{g(a_1\cdots a_n)=b_1\cdots b_n}c_g(b_1,j)\cdots (b_n,j)v+\sum_{\overline G\setminus G}c_gg((a_1,j),\cdots, (a_n,j)v)g|_{(a_1,j)\cdots (a_n,j)v}
\end{align*}
 with the $b_i\in A_{p_{\sigma(i)}}$. But if $g\in \overline G\setminus G$, then $g|_w\neq 1$ for all $w\in X^*$ and so terms from the second summand cannot cancel out terms from the first summand.  Thus we must have $\sum_{g(a_1\cdots a_n)=b_1\cdots b_n}c_g=0$.  This completes the proof that $c$ is singular.
\end{Proof}

The proof of Corollary~\ref{c:reduce.to.group} now goes through \textit{mutatis mutandis}, using Lemma~\ref{l:restrict.to.group.2} in place of Lemma~\ref{l:restrict.to.group}, to show that $\K S$ has a non-zero singular element if and only if there is one in $KG$ and thus $\K S$ has a simple algebra precisely when the inverse semigroup in Theorem~\ref{thm.construction} does and so we have proved the following.

\begin{Thm}
\label{thm.construction2}
Let $K$ be a field and $\mathcal P$ an infinite set of primes.  Then there exists a minimal and effective groupoid $\G$ such that $K\G$ is simple if and only if the characteristic of $K$ does not belong to $\mathcal P$ and, moreover, every isotropy group of $\G$ is uncountable (and hence $\G$ is not topologically principal).  Moreover, $\G$ is the (contracted) universal groupoid of an inverse semigroup.
\end{Thm}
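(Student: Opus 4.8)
The plan is to take $\G=\G(S)$ for the inverse semigroup $S$ just constructed (the countable inflation of the faithful self-similar action of $\overline G=\prod_{p\in\mathcal P}C_p\times C_p$ on $A^\ast$) and to verify the three assertions by assembling facts already in hand. By construction $\G$ is the contracted universal groupoid of $S$, and since $X=A\times\mathbb N$ is infinite we have $\G=\G(S)=\G_T(S)$; by Theorem~\ref{isom}, $K\G\cong\K S$, so the simplicity question for $K\G$ is the same as the simplicity question for $\K S$. We have already recorded that $\G$ is minimal (as $S$ is congruence-free) and effective (because $S$ arises from a countable inflation, cf.\ the discussion after Proposition~\ref{p:describe.hausdorff}), and Proposition~\ref{p:isotropy.non-trivial} shows that every isotropy group of $\G$ is uncountable; in particular no object has trivial isotropy, so $\G$ is not topologically principal. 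Thus only the characterization of the fields over which $\K S$ is simple remains.

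For that, first note that since the self-similar action of $\overline G$ on $X^\ast$ is faithful and $|X|\geq 2$, the inverse semigroup $S$ is congruence-free: it is $0$-simple, fundamental, and $E(S)$ is strongly $0$-disjunctive because $X$ is infinite (hence in particular $0$-disjunctive, and $\tightid{K}{S}=0$). Therefore by Corollary~\ref{simplicity} (equivalently Theorem~\ref{simple}), $\K S$ is simple if and only if its singular ideal vanishes. Since the semilattice $E(S)$ and the action on $X^\ast$ of the subgroup $G=\bigoplus_{p\in\mathcal P}C_p\times C_p\leq\overline G$ are unchanged from the construction of Theorem~\ref{thm.construction}, Proposition~\ref{p:singular.criterion.const} still describes exactly which elements of $KG$ are singular in $\K S$. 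Combining Lemma~\ref{l:restrict.to.group.2} with the (verbatim) argument of Corollary~\ref{c:reduce.to.group}, one obtains that $\K S$ contains a non-zero singular element if and only if $KG$ does; and this last condition was settled in the proof of Theorem~\ref{thm.construction}: for $\ch K=p\in\mathcal P$ the element $\sum_{g\in C_p\times C_p}g$ is singular, while for $\ch K\notin\mathcal P$ a Maschke-theorem and character argument forces every singular element of $KG$ to be zero. Hence $K\G\cong\K S$ is simple precisely when $\ch K\notin\mathcal P$, which gives the stated dichotomy; and when $\mathcal P$ is the set of all primes this means $K\G$ is simple only in characteristic zero.

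The single genuinely new ingredient — and therefore the main obstacle — is Lemma~\ref{l:restrict.to.group.2}, namely transporting the reduction ``singular in $\K S$ $\Rightarrow$ the $KG$-part is singular'' from the direct-sum group $G$ to the direct-product group $\overline G$. The subtlety is that when one expands $c'(a_1,j)\cdots(a_n,j)v$, the ``new'' group elements of $\overline G\setminus G$ could a priori contribute terms that cancel terms coming from $G$; the resolution exploits the structural fact that $g|_w\neq 1$ for \emph{every} finite word $w\in X^\ast$ whenever $g\in\overline G\setminus G$, so such terms always carry a nontrivial section at the end and cannot interact with the terms from $G$ (whose sections eventually become trivial). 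Once this is granted, everything else is a transcription of the arguments for Theorem~\ref{thm.construction}. Finally, I would point out that infinitude of $\mathcal P$ is used only to make $\overline G$, and hence every stabilizer and every isotropy group, uncountable — the simplicity dichotomy itself holds verbatim for any $\mathcal P$.
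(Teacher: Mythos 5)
Your proposal is correct and follows essentially the same route as the paper: reduce to the vanishing of the singular ideal via congruence-freeness, use Proposition~\ref{p:singular.criterion.const} to control singular elements of $KG$, and isolate Lemma~\ref{l:restrict.to.group.2} (with the non-cancellation coming from $g|_w\neq 1$ for $g\in\overline G\setminus G$) as the one genuinely new step needed to push Corollary~\ref{c:reduce.to.group} through for the direct product $\overline G$. Your closing remark that infinitude of $\mathcal P$ is only needed for uncountability of the isotropy groups is also consistent with the paper.
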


The simple algebras appearing in Theorem~\ref{thm.construction2} do not seem amenable to the techniques of~\cite{nonhausdorffsimple,Nekrashevychgpd}, which require the groupoids to be topologically principal.

\subsection{Minimal and effective ample groupoids with singular functions over all fields}
We now construct from self-similar group actions minimal and effective second countable ample groupoids which have non-zero singular functions over all fields.

In~\cite{groupoidprimitive}, the first author showed that a Hausdorff fundamental inverse semigroup with a $0$-disjunctive semilattice of idempotents has an effective tight groupoid.
In~\cite[Proposition~5.4]{LalondeMilan}, the authors claim that the interior of the isotropy subgroupoid of $\G_T(S)$ is $\G_T(Z_S(E(S)))$ so long as $E(S)$ is $0$-disjunctive.  This would, in particular, imply that  that any fundamental inverse semigroup with a $0$-disjunctive semilattice of idempotents has an effective tight groupoid, which we unfortunately found to be untrue for non-Hausdorff inverse semigroups.  (Their more general result about the interior of the isotropy subgroupoid is true for Hausdorff inverse semigroups, but needs to be argued along the lines of the fundamental case in~\cite{groupoidprimitive}.)   In examples where $S$ is fundamental and $E(S)$ is $0$-disjunctive, but the tight groupoid is not effective,  our uniqueness theorem (Theorem~\ref{uniqueness}) still applies despite the tight groupoid not being effective.

The following is an example of a congruence-free inverse semigroup with strongly $0$-disjunctive semilattice of idempotents and a non-effective universal (equivalently tight) groupoid.

Let $X=\{x,y\}$, and let $Z$ be an infinite set disjoint from $X$. Consider the self-similar action of the group $C_2=\{1,a\}$ (with identity $1$) over the alphabet $X \cup Z$ defined by $a(x)=y, a(y)=x, a(z)=z$ for $z\in Z$, and $a|_z=1$ for any $z \in (X \cup Y)^\ast$. Let $S$ be the inverse semigroup associated to this (faithful) self-similar action, and consider the universal groupoid $\G(S)$ of $S$.
Then $a \in G$ does not satisfy the assumption of Proposition~\ref{prop:effective} and, indeed, $[a, \varepsilon] \in (a, D(\empty)\setminus (D(x) \cup D(y)) \subseteq \int(\Is(\G(S)))$, but $[a, \varepsilon] \notin \G(S)^0$ as no element strictly below $a$ is defined at $\varepsilon$. This contradicts~\cite[Proposition~5.4]{LalondeMilan}.

By~\cite{groupoidprimitive}, the groupoid algebra, and therefore the contracted semigroup algebra of $S$ cannot be simple, as the groupoid is not effective.   And sure enough,
$$f=e-xx^\ast-yy^\ast-a+xy^\ast+yx^\ast$$
is in the singular ideal. If $z \in Z$, then
$fz=z-az=z-z=0$, while for instance
$$fx=ex-x-ax+y=x-x-y+y=0,$$
and $fy=0$ similarly. Hence clearly $f$ satisfies~\ref{singdef:right}.

In~\cite{nonhausdorffsimple}, the authors ask if there are any minimal and effective second countable ample groupoids whose $\mathbb C$-algebra is not simple, that is,  contains non-zero singular functions. We provide examples here.  After we had placed this paper on the ArXiv, Enrique Pardo informed us that Volodymr Nekrashevych (unpublished) had provided a self-similar group action over a finite alphabet such that the tight groupoid has non-zero singular functions over every field.  Nonetheless, our examples are universal groupoids of inverse semigroups and provide the first examples of congruence-free inverse semigroups with a strongly $0$-disjunctive semilattice, effective universal groupoid and a non-simple algebra over every field.

Our examples will arise from actions of weakly regular branch groups. Let us recall some definitions and notation, cf.~\cite{Branching}. Suppose $G$ is a self-similar group over the finite alphabet $A=\{a_1, \ldots, a_m\}$, and let $\Stab_1(G)=\{g \in G: g(a)=a \hbox{ for all } a \in A\}$. The map
\begin{gather*}\psi \colon \Stab_1(G) \to G^m\\
g \mapsto (g|_{a_1},\ldots, g|_{a_m})
\end{gather*}
is then an embedding.

A self-similar group is called \emph{spherically transitive} if acts transitively on the words of length $n$ for each $n\geq 0$.
A self-similar group $G$ is called a
\emph{weakly regular branch group} if it is spherically transitive and there is a non-trivial subgroup $H\leq \Stab_1(G)$ with $H^m \subseteq \psi(H)$. Note that $H$ is necessarily infinite. One sometimes says that $G$ is weakly branch over $H$.

Weakly regular branch groups include a lot of the well-known self-similar groups: the Grigorchuk group, the Gupta-Sidki groups, and the Basilica group~\cite{GNS,Branching}.  All three of these examples are amenable groups.  The following proposition establishes the third item of Theorem~C.

\begin{Prop}
If $G$ is a weakly regular branch group, then  universal groupoid of the inverse semigroup $S$ corresponding to the countable inflation of $G$ gives rise to a second countable minimal and effective ample groupoid $\G(S)$ with non-zero singular functions over every field.  Thus $K\G(S)\cong \K S$ is not simple for any field $K$.  The inverse semigroup $S$ is congruence-free and has a strongly $0$-disjunctive semilattice of idempotents.
\end{Prop}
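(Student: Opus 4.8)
The plan is to read off the groupoid-theoretic properties of $\G(S)$ from the general facts assembled above about inverse hulls of self-similar actions, and then to exhibit by hand a non-zero element in the singular ideal of $\K S$ that survives in every characteristic.

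First the structural part. Write $X=A\times\mathbb N$ for the inflated alphabet and $M=X^\ast G$, so $S$ is the inverse hull of $M$. Since $G$ acts faithfully on $A^\ast\cong(A\times\{0\})^\ast\subseteq X^\ast$, the inflated action on $X^\ast$ is faithful; as $|X|=\infty$, the discussion following \eqref{spectralaction} shows that $S$ is $0$-simple, fundamental, and $E(S)$ is strongly $0$-disjunctive, whence $S$ is congruence-free by Theorem~\ref{simplesg} and $\G(S)=\G_T(S)$. Because $G$ and $X$ are countable, $S$ is countable and $\G(S)$ is second countable. Minimality of $\G(S)$ is immediate from congruence-freeness via Remark~\ref{r:cong.free} (or \cite[Proposition~5.2]{groupoidprimitive}). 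For effectiveness I would apply Proposition~\ref{prop:effective}: if some $1\neq g\in G$ strongly fixed a cofinite subset of $X$, that subset would contain a whole block $A\times\{j\}$, forcing $g(a)=a$ and $g|_a=1$ for every $a\in A$ and hence, by faithfulness, $g=1$; so no non-identity element strongly fixes a cofinite set of letters, and this is precisely the effectiveness criterion of Proposition~\ref{prop:effective} for countable inflations.

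The heart of the argument is to produce, for every field $K$, a non-zero $f$ in the singular ideal of $\K S$ (equivalently, a non-zero singular function in $K\G(S)\cong\K S$). I would first construct $f$ over $\mathbb Z$, with at least one coefficient equal to $\pm1$; then base change along $\mathbb Z\to K$ keeps $f$ singular by Proposition~\ref{base:change} and non-zero since the $\pm1$ coefficient persists in any characteristic, so it suffices to treat $K=\mathbb Z$. The element $f$ is built from the weakly branch structure: starting from $1\neq h\in H\le\Stab_1(G)$ and iterating the inclusion $H^m\subseteq\psi(H)$, one can place a fixed non-trivial element of $H$ at arbitrarily deep vertices of the tree $A^\ast$ and localize it there (obtaining elements of $H$ that act as $h$ on a prescribed subtree $vA^\ast$ and trivially off it), and then form a finite $\mathbb Z$-combination of such elements and of idempotents $ww^\ast$. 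The role of the countable inflation is crucial here: because each letter of $A$ comes in infinitely many copies, one has enough room to extend any given prefix $w\in X^\ast$ by fresh-index letters that steer the resulting word off the finitely many cylinders on which the chosen combination is ``active''; using Proposition~\ref{zerofromG} to reduce products $f\cdot ww^\ast$ in $\K S$ to computations with the $KG$-coefficients (and the recursion $g|_{uv}=(g|_u)|_v$), one checks that $f\cdot ww^\ast=0$ for a cofinal family of non-zero idempotents $ww^\ast$, i.e.\ that $f$ satisfies condition~\ref{singdef:right} of Proposition~\ref{singdef}. The main obstacle I anticipate is arranging this cancellation so that it is \emph{genuine} over $\mathbb Z$ rather than merely valid modulo a prime — unlike the elements $\sum_{g\in C_p\times C_p}g$ of Theorem~\ref{thm.construction}, here the vanishing must come from an honest $+1/-1$ matching of basis elements block-by-block, which is what forces one to use simultaneously both the weakly branch hypothesis (to keep the ``active region'' of the element thin and localizable) and the infinite alphabet (to supply the steering letters).

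Finally, combining these pieces: $\K S\cong K\G(S)$ by Theorem~\ref{isom}, and $S$ is congruence-free with a non-trivial singular ideal, so $\K S$ — and hence $K\G(S)$ — fails to be simple over every field $K$ by Corollary~\ref{simplicity}. (Non-simplicity of the reduced $C^\ast$-algebra, asserted in Theorem~C, would require a separate $C^\ast$-theoretic argument not included here.)
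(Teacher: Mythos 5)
The structural half of your argument — congruence-freeness, strong $0$-disjunctivity of $E(S)$, second countability, minimality, and effectiveness via Proposition~\ref{prop:effective} — is correct and follows the same route as the paper. The problem is the part you yourself identify as the heart of the matter: you describe a \emph{strategy} for producing a non-zero integral element of the singular ideal (localize copies of $h\in H$ at deep vertices, combine them with idempotents, steer products off the active cylinders with fresh-index letters), but you never write down the element, and you explicitly leave unresolved ``the main obstacle'' of arranging a genuine $\pm1$ cancellation over $\mathbb Z$. That unresolved step is exactly where the content of the proof lives, so as written the proposal does not establish the statement.

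The paper's construction is much shorter than what you sketch and is worth internalizing. Choose $h_1\neq h_2$ in $H\setminus\{1\}$ and use $H^m\subseteq\psi(H)$ to find $g_1,g_2\in H$ with $\psi(g_1)=(1,h_1,1,\ldots,1)$ and $\psi(g_2)=(h_2,1,\ldots,1)$, and set $g_3=g_1g_2$. Then $a=(1-g_1)(1-g_2)=1-g_1-g_2+g_3$ lies in $\mathbb ZG\subseteq \K S$, is non-zero over every field because $1,g_1,g_2,g_3$ are four distinct group elements appearing with coefficients $\pm1$, and is annihilated by \emph{every} letter of $X$: since $g_2$ strongly fixes each $a_j$ with $j\neq 1$, the right factor gives $(1-g_2)(a_j,i)=0$ for $j\neq 1$, while for $j=1$ one has $(1-g_2)(a_1,i)=(a_1,i)(1-h_2)$ and then the left factor kills it because $g_1$ strongly fixes $a_1$. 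Hence $a\,ww^\ast=0$ for every non-empty word $w$, condition~\ref{singdef:right} holds, and $a$ is singular by Proposition~\ref{identify:singular}; no iteration of the branching embedding, no auxiliary idempotent summands, and no cofinal family of steering idempotents are needed. Relatedly, your claim that the infinite alphabet is what supplies the cancellation is slightly off target: the cancellation is letter-by-letter and would hold over $A$ itself; the countable inflation is there to make $E(S)$ strongly $0$-disjunctive (so that $\G(S)=\G_T(S)$ and $\K S\cong K\G(S)$) and to force effectiveness of $\G(S)$.
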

\begin{proof}
Let the original alphabet be $A=\{a_1, \ldots, a_m\}$, and let $X=A \times \mathbb N$ be the infinite alphabet.
Let $S$ be the inverse semigroup corresponding to the self-similar action over the alphabet $X$.
Since $S$ is $0$-simple, we have that $\G(S)$ is minimal by~\cite[Proposition~5.2]{groupoidprimitive} and effective by the observation after Proposition~\ref{p:describe.hausdorff}. What remains is to show that it has non-trivial singular functions over any field $K$.

Choose a subgroup $H\leq G$  over which $G$ weakly branches for its action on $A^\ast$.  Note that $H$ stabilizes the letters of $A$, and hence $X$, by definition of weakly branching.
 Let $h_1, h_2 \in H \setminus \{1\}$ be two distinct elements.
Let $g_1, g_2, g_3\in H$ be elements with
\begin{align*}
\psi(g_1)&=(1,h_1,1, \ldots,1)\\
\psi(g_2)&=(h_2,1,1 \ldots,1)\\
\psi(g_3)&=(h_2,h_1,1, \ldots,1)
\end{align*}
 and note that $g_3=g_1g_2=g_2g_1$.
Then we claim
\[f=\chi_{(1,D(\varepsilon))}-\chi_{(g_1, D(\varepsilon))}-\chi_{(g_2, D(\varepsilon))}+\chi_{(g_3, D(\varepsilon))}\]
is a non-zero singular function in the Steinberg algebra $\stein S$.  It is not difficult to show that  $\supp(f) = \{[1,\varepsilon], [g_1,\varepsilon], [g_2, \varepsilon], [g_3,\varepsilon]\}$ and none of these is an isolated point, but we use Proposition~\ref{identify:singular} instead.  Note that $f$ is the element of $K\G(S)$ corresponding to $0\neq 1-g_1-g_2+g_3=(1-g_1)(1-g_2)=(1-g_2)(1-g_1)\in \K S$.   Also observe that $(1-g_1)(a_1,i) = (1-g_2)(a_2,i)=(1-g_1)(a_j,i)=0$ for $j\geq 3$ as $g_1$ strongly fixes $a_1$ and $a_j$ with $j\geq 3$ and $g_2$ strongly fixes $a_2$.  Thus $(1-g_1)(1-g_2)$ is in the singular ideal and hence $f$ is singular by Proposition~\ref{identify:singular} using that $\G(S)=\G_T(S)$.
This proves the claim.
\end{proof}

Note that in our proof we do not need the spherical transitivity property in the definition of a weakly branch group.

\section*{Acknowledgments}
The authors would like to thank Enrique Pardo for some helpful comments and a careful reading of the first version of this paper.  We also wish to thank the anonymous referee who suggested that simplicity of the essential algebra of a groupoid should be equivalent to minimality and topological freeness.

\def\malce{\mathbin{\hbox{$\bigcirc$\rlap{\kern-7.75pt\raise0,50pt\hbox{${\tt
  m}$}}}}}\def\cprime{$'$} \def\cprime{$'$} \def\cprime{$'$} \def\cprime{$'$}
  \def\cprime{$'$} \def\cprime{$'$} \def\cprime{$'$} \def\cprime{$'$}
  \def\cprime{$'$} \def\cprime{$'$}

\end{document}